\newtheorem{theo}{Theorem}[section]
\newtheorem{definition}[theo]{Definition}
\newenvironment{defi}{\begin{definition}\rm}{\end{definition}}
\newtheorem{remarque}[theo]{Remark}
\newenvironment{remark}{\begin{remarque}\rm}{\end{remarque}}
\newtheorem{exemple}[theo]{Example}
\newenvironment{ex}{\begin{exemple}\rm}{\end{exemple}}
\newtheorem{lemma}[theo]{Lemma}
\newtheorem{coro}[theo]{Corollary}
\newtheorem{nota}[theo]{Notation}
\newenvironment{notation}{\begin{nota}\rm}{\end{nota}}
\newtheorem{prf1}{\it {Idea of the proof}}
\newtheorem{prf}{\it{Proof}}
\newenvironment{demo}{\begin{prf}\rm}{\hfill$\Box$\end{prf}}
\newtheorem*{theorem*}{Theorem}
\def\varLim@#1#2{%
	\vtop{\m@th\ialign{##\cr
			\hfil$#1\operator@font Lim$\hfil\cr
			\noalign{\nointerlineskip\kern1.5\ex@}#2\cr
			\noalign{\nointerlineskip\kern-\ex@}\cr}}%
}
\def\varinjLim{%
	\mathop{\mathpalette\varLim@{\rightarrowfill@\textstyle}}\nmlimits@
}
\def\moverlay{\mathpalette\mov@rlay}
\def\mov@rlay#1#2{\leavevmode\vtop{%
		\baselineskip\z@skip \lineskiplimit-\maxdimen
		\ialign{\hfil$\m@th#1##$\hfil\cr#2\crcr}}}
\newcommand{\charfusion}[3][\mathord]{
	#1{\ifx#1\mathop\vphantom{#2}\fi
		\mathpalette\mov@rlay{#2\cr#3}
	}
	\ifx#1\mathop\expandafter\displaylimits\fi}
\newcommand{\bigcupdot}{\charfusion[\mathop]{\bigcup}{\cdot}}
\def\N{\mathbb{ N}}
\def\Z{\mathbb{ Z}}
\DeclareMathOperator{\supp}{Supp}
\DeclareMathOperator{\ord}{ord}
\def\d{\mathbb{ \delta}}
\title{About the algebraic closure of formal power series in several variables.} 
\author{Michel Hickel and Micka\"{e}l Matusinski}
\address{Michel Hickel and Mickaël Matusinski, Univ. Bordeaux, CNRS, Bordeaux INP, IMB, UMR 5251, F-33400 Talence, France}
\subjclass[2020]{13J05, 13F25, 14J99  and 12-08}
\keywords{multivariate power series, algebraic closure, implicitization, closed form for coefficients}
\begin{document}

\begin{abstract}
	Let $K$ be a field of characteristic zero. We deal with the algebraic closure of the field of fractions of the ring of formal power series $K[[x_1,\ldots,x_r]]$, $r\geq 2$. More precisely, we view the latter as a subfield of an iterated Puiseux series field $\mathcal{K}_r$. On the one hand, given $y_0\in \mathcal{K}_r$ which is algebraic, we provide an  algorithm that reconstructs the space of all polynomials which annihilates $y_0$ up to a certain order (arbitrarily high). On the other hand, given a polynomial $P\in K[[x_1,\ldots,x_r]][y]$ with simple roots, we derive a closed form formula for the coefficients of a root $y_0$ in terms of the coefficients of $P$ and a fixed initial part of $y_0$.
\end{abstract}

\maketitle
\tableofcontents

\section{Introduction.}
Let $K$ be a field of characteristic zero and $\overline{K}$ its algebraic closure. Let $\underline{x}:=(x_1,\ldots,x_r)$ be an $r$-tuple of indeterminates where $r\in\Z$, $r\geq 2$. Let $K[\underline{x}]$ and $K[[\underline{x}]]$ denote respectively the domains of polynomials and of formal power series in $r$ variables with coefficients in $K$, and $K(\underline{x})$ and $K((\underline{x}))$ their fraction fields. Both fields embed naturally into $K((x_r))((x_{r-1}))\cdots((x_1))$, the latter being naturally endowed with the lexicographic valuation in the variables $(x_1,\ldots,x_r)$ (see Section \ref{section:preliminaries}).
By iteration of the classical Newton-Puiseux theorem (see e.g. \cite[Theorem 3.1]{walker_alg-curves} and \cite[p. 314, Proposition]{rib-vdd_ratio-funct-field}), one can derive a description of an algebraic closure of $K((x_r))((x_{r-1}))\cdots((x_1))$ in terms of iterated fractional Laurent series (see \cite[Theorem 3]{rayner_puiseux-multivar}\cite[p.151]{sathaye:newt-puiseux-exp_abh-moh-semigr}):
\begin{theorem*}
	The following field, where $L$ ranges over the finite extensions of $K$ in $\overline{K}$: $$\mathcal{L}_r:= \displaystyle\varinjlim_{p\in\mathbb{N}^*} \displaystyle\varinjlim_L L((x_r^{1/p}))((x_{r-1}^{1/p}))\cdots ((x_1^{1/p}))$$  is the algebraic closure of $K((x_r))((x_{r-1}))\cdots((x_1))$. 
\end{theorem*}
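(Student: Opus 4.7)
The plan is to proceed in two complementary steps. First, I would verify that every element of $\mathcal{L}_r$ is algebraic over $F := K((x_r))\cdots((x_1))$; this is a direct degree computation on each ``block'' of the directed limit. Second, I would show that $\mathcal{L}_r$ contains every element of an algebraic closure $\overline{F}$, by induction on $r$: the base case $r=1$ is the classical Newton--Puiseux theorem over an arbitrary ground field of characteristic zero, and the inductive step applies that theorem to the outermost variable $x_1$ while invoking the inductive hypothesis on the inner coefficient field.

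For the first step, fix a finite extension $L/K$ and an integer $p\geq 1$, and consider the block $G := L((x_r^{1/p}))\cdots((x_1^{1/p}))$. Setting $u_i := x_i^{1/p}$, one has $F = K((u_r^p))\cdots((u_1^p))$ and $G = L((u_r))\cdots((u_1))$. A short induction on the nesting level yields $[G:F] = [L:K]\cdot p^r$, resting on two ingredients: a basis of a finite extension $E'/E$ remains a basis after forming Laurent series in a new variable on both sides (free extension of scalars under completion), and adjoining $u^{1/p}$ to $E((u^p))$ gives a degree-$p$ extension with explicit basis $\{1,u,\ldots,u^{p-1}\}$. Hence each $G$ is finite over $F$, and the directed union $\mathcal{L}_r$ is algebraic over $F$.

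For the second step, set $F' := K((x_r))\cdots((x_2))$ so that $F = F'((x_1))$, and inductively suppose $\overline{F'} = \mathcal{L}_{r-1}$. Given $\alpha\in\overline{F}$, the one-variable Newton--Puiseux theorem applied to $F'((x_1))$ produces a finite extension $M/F'$ and an integer $p\geq 1$ with $\alpha \in M((x_1^{1/p}))$; write $\alpha = \sum_{n\geq n_0} a_n x_1^{n/p}$ with $a_n \in M$. Choosing finitely many generators of $M/F'$ and embedding each into $\mathcal{L}_{r-1}$ produces blocks that, by the directed structure of $\mathcal{L}_{r-1}$ (compositum of the corresponding $L$'s, least common multiple of the $q$'s), are contained in a single block $L((x_r^{1/q}))\cdots((x_2^{1/q}))$ containing $M$. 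Setting $N := \mathrm{lcm}(p,q)$ and rewriting $x_1^{n/p} = x_1^{n(N/p)/N}$ shows $\alpha \in L((x_r^{1/N}))\cdots((x_1^{1/N})) \subset \mathcal{L}_r$, hence $\overline{F}\subset \mathcal{L}_r$ and equality follows from the first step.

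The one delicate point I anticipate is this harmonization at the end: reconciling the Puiseux denominator $p$ obtained from the outer Newton--Puiseux expansion in $x_1$ with the denominator $q$ coming from the inductive embedding of $M$ into $\mathcal{L}_{r-1}$, and verifying that a finitely generated algebraic extension $M/F'$ genuinely sits inside a single block of the directed limit rather than being scattered across infinitely many. Once this directed-limit bookkeeping is in place, the argument reduces cleanly to the one-variable classical Newton--Puiseux theorem applied level by level.
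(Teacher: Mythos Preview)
Your proposal is correct and follows precisely the approach the paper indicates. Note, however, that the paper does not supply its own proof of this statement: it is quoted as a known result, attributed to Rayner and Sathaye, with the remark that it is obtained ``by iteration of the classical Newton--Puiseux theorem'' (citing Walker for the algebraically closed case and Ribenboim--van den Dries for the version over an arbitrary characteristic-zero coefficient field). Your two-step argument---finiteness of each block over $F$, then induction on $r$ applying the one-variable Newton--Puiseux theorem to the outermost variable---is exactly this iteration, carried out in detail. The ``delicate point'' you flag (that a finite extension $M/F'$ lands in a single block of $\mathcal{L}_{r-1}$, and harmonizing the denominators via $\mathrm{lcm}$) is handled correctly by the directed structure of the limit.
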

Within this framework, there are several results concerning those iterated fractional Laurent series which are solutions  of polynomial equations with coefficients either in $K(\underline{x})$ or $K((\underline{x}))$. More precisely, the authors provide necessary constraints on the supports of such a series (see \cite[Theorem 3.16]{mcdonald_puiseux-multivar}, \cite[ Th\'eor\`eme 2]{gonzalez-perez_singul-quasi-ord}, \cite[Theorem 13]{soto-vicente:polyhedral-cones}  \cite[Theorem 1]{aroca-ilardi:puiseux-multivar}, \cite[Theorem 1]{soto-vicente_puiseux-multivar}). More recently, Aroca, Decaup and Rond study more precisely the support of Laurent-Puiseux power series which are algebraic over $K[[\underline{x}]]$ (with certain results for $K$ of positive characteristic) \cite{aroca-rond:support-alg-series,aroca-decaup-rond:support-alg-laurent-series}. As asserted in \cite[2nd Theorem in p.56]{hickel-matu:puiseux-alg-multivar}, one can prove the following result (see the proof in Section \ref{section:preliminaries}), 
which could also be derived from the methods in  \cite[Theorem 1]{soto-vicente_puiseux-multivar} or \cite[Theorem 1]{aroca-ilardi:puiseux-multivar}:
\begin{theorem*} 
	The following field $\mathcal{K}_r$, where $L$ ranges over the finite extensions of $K$ in $\overline{K}$, is an algebraically closed extension of $K(\underline{x})$ and $K((\underline{x}))$ in $\mathcal{L}_r$:\\ 
	$$\mathcal{K}_r\ :=\ \displaystyle \varinjlim_{(p,\underline{q})\in \mathbb{N}^*\times\mathbb{N}^{r-1}} \displaystyle\varinjlim_L\ \ 
	L\left(\left(\,\left( \displaystyle\frac{x_1}{x_2^{q_1}}\right)^{1/p},\ldots,  \left( \displaystyle\frac{x_{r-1}}{x_r^{q_{r-1}}}\right)^{1/p} ,x_r^{1/p}\right)\right).$$
	
\end{theorem*}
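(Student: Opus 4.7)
The plan is to handle the containments and the algebraic closedness of $\mathcal{K}_r$ in $\mathcal{L}_r$ separately. For the containments $K(\underline{x}) \subset K((\underline{x})) \subset \mathcal{K}_r \subset \mathcal{L}_r$, I would use the substitution $x_r = u_r^p$ and $x_i = u_i^p \, x_{i+1}^{q_i}$ for $i < r$, which iteratively expresses $x_i = u_i^p u_{i+1}^{p q_i} \cdots u_r^{p q_i q_{i+1} \cdots q_{r-1}}$ as a monomial in the $u_j$'s with positive integer exponents. The corresponding matrix of exponents is upper triangular with $p$ on the diagonal, hence invertible; this guarantees that each target monomial $u^\beta$ receives at most one contribution, so the substitution extends to a well-defined $K$-algebra embedding $K[[\underline{x}]] \hookrightarrow L[[u_1,\ldots,u_r]]$, and then to a field embedding $K((\underline{x})) \hookrightarrow L((u_1,\ldots,u_r))$. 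The latter embeds into the iterated Laurent field $L((u_r))\cdots((u_1))$ and thence into $\mathcal{L}_r$, which yields all three containments.

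For algebraic closedness, I would take a monic $P(y) \in \mathcal{K}_r[y]$ and, using the direct-limit structure, assume all coefficients lie in a single $L((u_1,\ldots,u_r))$. Applying the first theorem of the excerpt with $u_1,\ldots,u_r$ in place of $x_1,\ldots,x_r$, every root $y_0$ of $P$ lies in the corresponding iterated Puiseux field $\varinjlim_{p',L'} L'((u_r^{1/p'}))\cdots ((u_1^{1/p'}))$, so that $y_0$ admits an iterated fractional Puiseux expansion in the $u$-variables.

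The critical step is then to show that $y_0$, rewritten in the original variables, lies in one of the members of the direct limit defining $\mathcal{K}_r$, not merely in $\mathcal{L}_r$. Here I would invoke the polyhedral support theorems for algebraic Laurent--Puiseux series cited in the introduction: the support of $y_0$ is contained in a translate of a strongly convex rational polyhedral cone. The linear change of coordinates from $u$-exponents to $x$-exponents is given by an invertible matrix depending on $p$ and the $q_i$, so the polyhedral structure of the support is preserved in $x$-space. A short convex-geometry argument then produces integers $q'_1,\ldots,q'_{r-1},p'' \geq 1$ such that this $x$-space cone is contained in the positive $\mathbb{Q}$-span of the exponent vectors of $(x_1/x_2^{q'_1})^{1/p''}, \ldots, (x_{r-1}/x_r^{q'_{r-1}})^{1/p''}, x_r^{1/p''}$; this places $y_0$ in the corresponding member of the direct limit defining $\mathcal{K}_r$.

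The main obstacle is this last support step. The iterated Puiseux expansion of $y_0$ in the $u$-variables yields only lexicographic well-orderedness of the support, which does not immediately translate into the required polyhedral containment in the $x$-coordinates. Bridging these two descriptions relies on the nontrivial support theorems cited above; once a cone is exhibited, matching it to one of the specific cones defining $\mathcal{K}_r$ is a straightforward combinatorial step in elementary linear algebra over $\mathbb{Q}$.
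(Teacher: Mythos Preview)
Your approach is correct in outline but genuinely different from the paper's. You argue \emph{top-down}: first place a root $y_0$ in the large field $\mathcal{L}_r$ (via iterated Newton--Puiseux in the $u$-variables), then invoke the external polyhedral support theorems of McDonald, Aroca--Ilardi, Soto--Vicente, etc.\ to trap the support of $y_0$ in a translate of a strongly convex rational cone, and finally fit that cone into one of the specific triangular cones that index $\mathcal{K}_r$. The paper instead argues \emph{bottom-up} and self-containedly: it reduces (via Hensel's Lemma) to a monic $P(\underline{\xi},y)\in L[[\underline{\xi}]][y]$ with $a_i(\underline{0})=0$, then applies its own Monomialization Lemma~\ref{lemme:monomialisation} to the discriminant $\Delta_P$. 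By Remark~\ref{rem:monom}(i), the monomializing change of variables $\eta_i=\xi_i/\xi_{i+1}^{\rho_i}$ keeps one inside $\mathcal{K}_r$, and once $\Delta_P$ is a monomial times a unit, the Abhyankar--Jung theorem directly yields roots in $L'[[\eta_1^{1/p'},\ldots,\eta_r^{1/p'}]]\subset\mathcal{K}_r$.

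What each buys: your route makes the polyhedral geometry of supports the conceptual center, and the paper explicitly notes this alternative is viable. The paper's route avoids importing the support theorems and is more effective (the Monomialization Lemma gives explicit bounds $\rho_i\le 1+\beta_{i+1}$ on the new exponents). One caution on your final ``straightforward'' step: to inscribe an arbitrary strongly convex rational cone $\sigma$ with $\sigma\setminus\{0\}$ lex-positive into one of the triangular cones, you do need a short induction (choose $q'_1$ strictly large enough that $a_2=n_2+q'_1 n_1>0$ on every extremal ray with $n_1>0$, so that $a_2=0$ forces $n_1=n_2=0$; then iterate). This is exactly the content of the paper's Monomialization Lemma phrased dually, so the two proofs meet at this point.
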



Let $\tilde{y}_0\in \mathcal{K}_r$ and $\tilde{f},\tilde{g}\in L\left[\left[\left(\frac{x_1}{x_2^{q_1}}\right)^{1/p},\ldots, \left(\frac{x_{r-1}}{x_r^{q_{r-1}}}\right)^{1/p} ,x_r^{1/p}\right]\right]$ such that $\tilde{y}_0=\frac{\tilde{f}}{\tilde{g}}$. Let $\underline{\alpha}$ be the lexicographic valuation of $\tilde{g}$ (where it is understood that the valuation of $x_i^{1/p}$ is equal to $1/p$ times the valuation of $x_i$). Denote $\tilde{g}=a\underline{x}^{\underline{\alpha}}(1-\varepsilon)$ with $\varepsilon$ having positive valuation. We expand:
$$ \tilde{y}_0=\frac{\tilde{f}}{\tilde{g}}=\tilde{f}\,a^{-1}\underline{x}^{-\underline{\alpha}}\sum_{k\in\N}\varepsilon^k$$
as a generalized power series $\displaystyle\sum_{\underline{n}\in(\Z^r,\leq_{\textrm{lex}})} c_{\underline{n}/p}\underline{x}^{\underline{n}/p}$ (the latter is well defined by \cite[Theorem 3.4]{neumann:ord-div-rings}). We set: $$\mathrm{Supp} \left(\displaystyle\sum_{\underline{n}\in(\Z^r,\leq_{\textrm{lex}})} c_{\underline{n}/p}\underline{x}^{\underline{n}/p}\right):=\left\{\frac{1}{p}\underline{n}\in\left(\frac{1}{p}\Z^r,\leq_{\textrm{lex}}\right)\ |\ c_{\underline{n}/p}\neq 0\right\}.$$

Let us call the elements of $\mathcal{K}_r$ \emph{rational polyhedral Puiseux series}  (since one can observe that the support with respect to the variables $x_i$'s of such a series is included in the translation of some rational convex polyhedral cone). We are interested in those rational polyhedral Puiseux series that are algebraic over $K((\underline{x}))$, say the rational polyhedral Puiseux series which verify a polynomial equation $\tilde{P}(\underline{x},y)=0$ with coefficients which are themselves formal power series in $\underline{x}$:  $\tilde{P}(\underline{x},y)\in K[[\underline{x}]][y]\setminus\{0\}$. Let us call such a series \emph{algebroid}. If such a series $\tilde{y}_0$ admits a vanishing polynomial of degree at most $d$ in $y$, we will say that $\tilde{y}_0$ is \emph{algebroid of degree bounded by $d$}.

More precisely, we extend our previous work on algebraic (over $K(\underline{x})$) Puiseux series in several variables \cite{hickel-matu:puiseux-alg-multivar}, by dealing with the following analogous questions:\vspace{0.2cm}

\noindent$\bullet$ \textbf{Reconstruction of pseudo-vanishing polynomials for a given algebroid rational polyhedral Puiseux series.}\vspace{0.2cm}

In this part, for simplicity reasons, we will assume that $K$ is algebraically closed. For $\tilde{Q}(\underline{x},y)\in K[[\underline{x}]][y]$  a nonzero polynomial,  the \emph{$(\underline{x})$-adic order} of $\tilde{Q}$ is the maximum of the integers $k$ such that $\tilde{Q}(\underline{x},y)\in (\underline{x})^kK[[\underline{x}]][y]$ where $(\underline{x})$ denotes the ideal of $K[[\underline{x}]]$ generated by $x_1,\ldots,x_r$.

We consider $\tilde{y}_0=\frac{\tilde{f}}{\tilde{g}}$ with $\tilde{f},\tilde{g}\in K\left[\left[\left(\frac{x_1}{x_2^{q_1}}\right)^{1/p},\ldots, \left(\frac{x_{r-1}}{x_r^{q_{r-1}}}\right)^{1/p} ,x_r^{1/p}\right]\right]$ algebroid of degree bounded by $d$. For an arbitrarily large valuation $l\in \N$, we provide an algorithm which computes polynomials $\tilde{Q}(\underline{x},y)\in K[[\underline{x}]][y]$ such that the expansion of $\tilde{Q}(\underline{x},\tilde{y}_0)\in \mathcal{K}_r$ as a rational polyhedral Puiseux series has  valuation greater than $l$.

More precisely, let us denote $\zeta_i:=\left(\frac{x_i}{x_{i+1}^{q_i}}\right)^{1/p}$ for $i=1,\ldots,r-1$, and $\zeta_r:=x_r^{1/p}$. We suppose that for any $k\in\N$, one can compute all the coefficients of $\underline{\zeta}^{\underline{n}}$ with $n_1+\cdots+n_r\leq k$ in $\tilde{f}$ and $\tilde{g}$. Moreover, we assume that the lexicographic valuations with respect to $\underline{\zeta}$ of $\tilde{f}$ and $\tilde{g}$ are given.

\begin{theo}\label{theo:reconstr}
	Let $d\in\N^*$ and $\tilde{\nu}_0\in\N$. Let $\tilde{y}_0\in \mathcal{K}_r$ be algebroid of degree bounded by $d$.  We assume that there is a vanishing polynomial $\tilde{P}$  of degree bounded by $d$ and of $(\underline{x})$-adic order bounded by $\tilde{\nu}_0$. We consider formal power series $\tilde{f},\tilde{g}\in K\left[\left[\left(\frac{x_1}{x_2^{q_1}}\right)^{1/p},\ldots, \left(\frac{x_{r-1}}{x_r^{q_{r-1}}}\right)^{1/p} ,x_r^{1/p}\right]\right]$ such that $\tilde{y}_0=\frac{\tilde{f}}{\tilde{g}}$. Let $\underline{\beta}=(\beta_1,\ldots,\beta_r)$ be the lexicographic valuation of $\tilde{f}\tilde{g}$ with respect to the variables ${\zeta}_i:=\left(\frac{x_i}{x_{i+1}^{q_i}}\right)^{1/p}$, ${\zeta}_r:=x_r^{1/p}$, and $q_i':=q_i+\beta_{i+1}+1$ for $i=1,\ldots,r-1$.
	We set:
	$$\begin{array}{lccl}
		\tilde{L}:&\Z^r&\rightarrow &\Z\\
		&(n_1,\ldots,n_r)&\mapsto &n_r+q'_{r-1}n_{r-1}+q'_{r-1}q'_{r-2}n_{r-2}+\cdots+q'_{r-1}q'_{r-2}\cdots q'_1n_1.
	\end{array}$$
	The algorithm described in Section \ref{sect:reconstr-algebroid} provides for any $\nu\in\N$ a parametric description of the space of all the polynomials $\tilde{Q}_\nu(\underline{x},y)\in K[[\underline{x}]][y]$ with $\deg_y\tilde{Q}_\nu\leq d$ and of $(\underline{x})$-adic order bounded by $\tilde{\nu}_0$ such that, for any $\frac{1}{p}\underline{n}=\frac{1}{p}(n_1,\ldots,n_r)\in \mathrm{Supp}\,\tilde{Q}_\nu(\underline{x},\tilde{y}_0) $, one has:
	$$ \tilde{L}(\underline{n})\geq \nu.$$
\end{theo}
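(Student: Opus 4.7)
The plan is to reduce the search for $\tilde{Q}_\nu$ to the solution of a finite $K$-linear system, with the remaining coefficients of $\tilde{Q}_\nu$ entering as free parameters. Writing
\[
\tilde{Q}_\nu(\underline{x},y)\ =\ \sum_{j=0}^{d}\sum_{\underline{m}\in\N^r} c_{j,\underline{m}}\,\underline{x}^{\underline{m}}\,y^j,\quad c_{j,\underline{m}}\in K,
\]
and substituting $y=\tilde{y}_0$, the result expands as a rational polyhedral Puiseux series $\sum_{\underline{n}\in\Z^r} \ell_{\underline{n}}(\mathbf{c})\,\underline{x}^{\underline{n}/p}$, where each $\ell_{\underline{n}}$ is a $K$-linear form in the unknowns. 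The theorem's condition is precisely the family of equations $\{\ell_{\underline{n}}=0\}_{\tilde{L}(\underline{n})<\nu}$, to which we adjoin the Zariski open condition that at least one $c_{j,\underline{m}}$ with $|\underline{m}|\leq \tilde{\nu}_0$ be nonzero.

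To compute the forms $\ell_{\underline{n}}$, we expand $\tilde{y}_0^j$ in $\underline{\zeta}$. Writing $\tilde{g}=a\underline{\zeta}^{\underline{\alpha}}(1-\varepsilon)$ with $\varepsilon$ of positive $\underline{\zeta}$-valuation, one has $\tilde{y}_0=a^{-1}\underline{\zeta}^{-\underline{\alpha}}\tilde{f}\sum_{k\geq 0}\varepsilon^k$, and any initial segment of $\tilde{y}_0^j$ in $\underline{\zeta}$ is obtained from finitely many coefficients of $\tilde{f}$ and $\tilde{g}$. Under the bijection $n_1=k_1$, $n_{i+1}=k_{i+1}-q_ik_i$ between $\underline{n}$ and the $\underline{\zeta}$-exponent $\underline{k}$, a direct computation yields
\[
\tilde{L}(\underline{n})\ =\ k_r\ +\ \sum_{i=1}^{r-1} q'_{r-1}q'_{r-2}\cdots q'_{i+1}\,(q'_i-q_i)\,k_i,
\]
and the equality $q'_i-q_i=\beta_{i+1}+1\geq 1$ makes this linear form in $\underline{k}$ have strictly positive integer coefficients. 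This is the decisive role of the choice of $q'_i$.

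The finiteness of the system now follows. The series part of $\tilde{y}_0^j$ lies in $K[[\underline{\zeta}]]$, so on its support $\tilde{L}\geq 0$; hence on the support of $\underline{x}^{\underline{m}}\tilde{y}_0^j$ one has the uniform lower bound $\tilde{L}(\underline{\mu}(\underline{m}))-j\,\tilde{L}(\underline{\alpha})$, where $\underline{\mu}(\underline{m})\in\N^r$ is the $\underline{\zeta}$-exponent of $\underline{x}^{\underline{m}}$. Since $\underline{\mu}$ is a proper $\N^r\to\N^r$ linear map and $\tilde{L}$ has positive coefficients in $\underline{k}$, only finitely many pairs $(j,\underline{m})$ with $0\leq j\leq d$ can contribute to any $\ell_{\underline{n}}$ with $\tilde{L}(\underline{n})<\nu$; the corresponding $|\underline{m}|$ are bounded by a function of $\nu$, $d$ and the given valuations. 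These are the \emph{relevant} unknowns; the remaining $c_{j,\underline{m}}$ may be prescribed freely.

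The algorithm thus truncates each $\tilde{y}_0^j$ in $\underline{\zeta}$ to a precision sufficient to determine every relevant $\ell_{\underline{n}}$, assembles the finite linear system $\{\ell_{\underline{n}}=0\}_{\tilde{L}(\underline{n})<\nu}$ in the relevant unknowns, and returns its affine solution space, with the non-relevant $c_{j,\underline{m}}$ included as free parameters, intersected with the openness condition. Consistency is ensured by the hypothesis, since $\tilde{P}(\underline{x},\tilde{y}_0)=0$ makes $\tilde{P}$ a solution of the system. I expect the principal obstacle to be the calibration step: one must verify that the choice $q'_i=q_i+\beta_{i+1}+1$, which depends only on the given lex-valuations, is simultaneously sufficient to guarantee strict positivity of $\tilde{L}$ on the elementary directions in $\underline{k}$, and sharp enough that an a priori $\underline{\zeta}$-truncation depth, effectively computable from $\nu$, $d$ and $\underline{\beta}$, suffices to extract every relevant $\ell_{\underline{n}}$ exactly from the given data.
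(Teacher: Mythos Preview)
Your reduction to a linear system in the unknowns $c_{j,\underline{m}}$ is sound, and your computation that $\tilde L$, rewritten in the $\underline\zeta$--exponents $\underline k$, has strictly positive integer coefficients is correct and important. There is, however, a genuine gap in the finiteness claim.

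First, the assertion that ``the series part of $\tilde y_0^{\,j}$ lies in $K[[\underline\zeta]]$'' is false in general. Writing $\tilde g=a\,\underline\zeta^{\underline\alpha}(1-\varepsilon)$ with $\underline\alpha$ the \emph{lexicographic} valuation only forces $\varepsilon$ to have positive lex--valuation, not to lie in $K[[\underline\zeta]]$: take $r=2$, $\tilde g=\zeta_2+\zeta_1$, so that $\underline\alpha=(0,1)$ and $\varepsilon=-\zeta_1\zeta_2^{-1}$. Putting $\tilde y_0$ back into a monomial times a genuine power series is exactly what the Monomialisation Lemma (Lemma~\ref{lemme:monomialisation}) does, and it requires passing from $\underline\zeta$ to the new variables $u_i=(x_i/x_{i+1}^{q'_i})^{1/p}$.

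Second, and more seriously, the system $\{\ell_{\underline n}=0\}_{\tilde L(\underline n)<\nu}$ is \emph{not} finite. In the $\underline u$--coordinates one has $\tilde y_0=\underline u^{\tilde{\underline n}^0}y_0$ with $y_0\in K[[\underline u]]$, and your own change of variables (with $q_i$ replaced by $q_i'$) collapses $\tilde L$ to the single last component $k'_r$ of the $\underline u$--exponent. Hence the constraint $\tilde L(\underline n)<\nu$ bounds only the $u_r$--degree, while the remaining $r-1$ exponents $k'_1,\ldots,k'_{r-1}$ range freely over $\N^{r-1}$; since $y_0$ may well have full support in $K[[\underline u]]$, this produces infinitely many nontrivial equations in your finitely many relevant unknowns. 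An infinite linear system in finitely many unknowns does stabilise, but one needs an \emph{a priori} bound on the depth at which stabilisation occurs in order to have an algorithm.

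This is precisely the content the paper supplies and that your proposal lacks. The paper writes $y_0=\sum_{\underline n}c_{\underline n}(\underline s)\,\underline t^{\underline n}$ with $\underline t=(u_r)$ and $\underline s=(u_1,\ldots,u_{r-1})$, proves that each $c_{\underline n}(\underline s)$ is algebraic over $K(\underline s)$ with controlled degrees (Lemmas~\ref{lemma:algebraicity} and~\ref{lemme:control-deg-supp}), and then uses the nested depth lemma (Theorem~\ref{propo:nested}) to bound, in terms of $d$, $\nu$ and the input data, the $\underline s$--depth at which the linear dependencies among the Wilczynski vectors are already visible. That bound is what turns the infinite system into an effective finite computation; your ``calibration'' paragraph correctly senses a difficulty but misidentifies it as being about the truncation of each individual $\ell_{\underline n}$ rather than about the infinitude of the index set itself.
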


Note that the condition $ \tilde{L}(\underline{n})\geq \nu$ for $\frac{1}{p}\underline{n}\in \mathrm{Supp}\,\tilde{Q}_\nu(\underline{x},\tilde{y}_0) $ implies that  \emph{infinitely }many coefficients of $\tilde{Q}_\nu(\underline{x},\tilde{y}_0) $ vanish since $\underline{n}\in \Z^r$. With more information on $\tilde{y}_0$, we can use other linear forms $\tilde{L}$, see Theorem \ref{theo:reconstr-optimise}.\\

\noindent$\bullet$ \textbf{Description of the  coefficients of an algebroid rational polyhedral Puiseux series in terms of the coefficients of a vanishing polynomial.} \vspace{0.2cm}

Now, let a polynomial $\tilde{P}(\underline{x},y)\in K[[\underline{x}]][y]$ with only simple roots and  a root $\tilde{y}_0\in \mathcal{K}_r$ be given.  Up to a change of coordinates (see Section \ref{section:preliminaries}), we reduce to the case of a polynomial $ {P}(\underline{u},y)\in K[[\underline{u}]][y]$ whose support has constraints (see Lemma \ref{lemma:support-equ}), and a simple root $y_0\in L[[\underline{u}]]$ (where $[L:K]<\infty$). In Theorem \ref{theo:FS} and Corollary \ref{coro:FS}, we provide a closed form formula for the coefficients of $y_0$ in terms of the coefficients of $P$ and the coefficients of a fixed initial part of $y_0$. This is obtained as a consequence of a generalization of the multivariate Flajolet-Soria formula for Henselian equations (\cite{flajolet-soria:coeff-alg-series, sokal:implicit-function}), see Theorem \ref{theo:formule-FS}.\vspace{0,2cm}

Our article is organized as follows. In Section \ref{section:preliminaries}, we prove a monomialization lemma (Lemma \ref{lemme:monomialisation}) which is a key to reduce to the case of formal power series annihilating a polynomial whose support has constraints (Lemma \ref{lemma:support-equ}). This is done by a change of variable (\ref{equ:eclt1}) corresponding to the lexicographic valuation. Moreover, we distinguish two sets $\underline{s}$ and $\underline{t}$ of variables and we show that our series $y_0$ can be expanded as $y_0=\sum_{\underline{n}}c_{\underline{n}}(\underline{s}) \underline{t}^{\underline{n}}$ where the $c_{\underline{n}}(\underline{s})\in K[[\underline{s}]]$ are algebraic power series (see Lemma \ref{lemma:algebraicity}) of bounded degree (see Lemma \ref{lemma:algebraicity}). Section \ref{sect:depth} is devoted to the proof of the nested depth lemma (Theorem \ref{propo:nested}). It is used in the subsequent sections to ensure the finiteness of the computations. We use elementary properties on B\'{e}zout's identity and the resultant of two polynomials. In Section \ref{section:Wilc}, we show how to reconstruct all the polynomials of given bounded degrees which vanish at given several algebraic power series. This is based on Section \ref{sect:depth} and our previous work on algebraic multivariate power series \cite{hickel-matu:puiseux-alg}. In Section \ref{sect:reconstr-algebroid}, we prove our first main result, Theorem \ref{theo:reconstr} and its variant Theorem \ref{theo:reconstr-optimise}. Sections \ref{section:hensel} and \ref{section:flajo-soria} are devoted to our second question. In Section \ref{section:hensel}, we study what we call strongly reduced Henselian equations (see Definition \ref{defi:equ-hensel-red}) and prove a generalisation of the multivariate Flajolet-Soria formula (see Theorem \ref{theo:formule-FS}). In Section \ref{section:flajo-soria}, we prove how to reduce to the case of a strongly reduced Henselian equation (see Theorem \ref{theo:FS}) and, in the case of an equation with only simple roots, we derive a closed form formula for the coefficients of a solution $y_0$ in terms of the coefficients of the equation and of a bounded initial part of $y_0$ (see Corollary \ref{coro:FS}).

\section{Preliminaries}\label{section:preliminaries} 

Let us denote $\mathbb{N}:=\mathbb{Z}_{\geq 0}$ and $\mathbb{N}^*:=\mathbb{N}\setminus\{0\}=\mathbb{Z}_{>0}$. 
For any set $\mathcal{E}$, we  denote by $|\mathcal{E}|$ its cardinal. We systematically write the vectors using underlined letters, e.g. $\underline{x}:=(x_1,\ldots,x_r)$, $\underline{n}:=(n_1,\ldots,n_r)$, and in particular $\underline{0}:=(0,\ldots,0)$. Moreover, $\underline{x}^{\underline{n}}:=x_1^{n_1}\cdots x_r^{n_r}$. The floor function will be denoted by $\lfloor q \rfloor$ for $q\in\mathbb{Q}$.

For a polynomial $P(y)=\sum_{i=0}^d a_iy^i$ with coefficients $a_i$ in a domain and $a_d\neq 0$, we consider that its discriminant $\Delta_P$ is equal to the resultant of $P$ and $\displaystyle\frac{\partial P}{\partial y}$ (instead of the more usual convention $\Delta_P=\displaystyle\frac{(-1)^{d(d-1)/2}}{a_d}\mathrm{Res}\left(P,\displaystyle\frac{\partial P}{\partial y}\right)$).

\begin{notation}\label{nota:FS}
	For any sequence of nonnegative  integers $\underline{m}=\displaystyle\left(m_{\underline{i},j}\right)_{\underline{i},j}$ \emph{with finite support} and any sequence of scalars $\underline{a}=\displaystyle\left(a_{\underline{i},j}\right)_{\underline{i},j}$ indexed by  $\underline{i}\in\mathbb{Z}^r$ and $j\in\mathbb{N}$, we set:
	\begin{itemize}
		\item $\underline{m}!:=\displaystyle\prod_{\underline{i},j}m_{\underline{i},j}!$;
		\item $\underline{a}^{\underline{m}}:=\displaystyle\prod_{\underline{i},j}{a_{\underline{i},j}}^{m_{\underline{i},j}}$;
		\item  $\ |\underline{m}|:=\displaystyle\sum_{\underline{i},j}m_{\underline{i},j}$, $\ \ ||\underline{m}||:= \displaystyle\sum_{\underline{i},j}m_{\underline{i},j}\, j\in \N\ $ and $\ g(\underline{m}) := \displaystyle\sum_{\underline{i},j}m_{\underline{i},j}\, \underline{i}\in \Z^r$.
	\end{itemize}
	In the case where $\underline{k}=(k_0,\ldots,k_l)$, we set 
	$  \|\underline{k}\| :=\displaystyle\sum_{j=0}^l{k_j}\,j$. In the case where $\underline{k}=(k_{\underline{i}})_{\underline{i}\in \Delta}$ where $\Delta$ is a finite subset of $\mathbb{Z}^r$, we set $g(\underline{k}):=\displaystyle\sum_{\underline{i}\in \Delta}{k_i}\,\underline{i}$.
\end{notation}

We will consider the following orders on tuples in $\mathbb{Z}^r$:
\begin{description}
	\item[The lexicographic order] $\underline{n} \leq_{\textrm{lex}} \underline{m}$ $:\Leftrightarrow$ $n_1<m_1$ or $(n_1=m_1\ \textrm{ and } n_2<m_2)$ or $\cdots$ or $(n_1=m_1,\ n_2=m_2,\ \ldots \ \textrm{ and } n_r<m_r)$. 
	\item[The graded lexicographic order] $\underline{n} \leq_{\textrm{grlex}} \underline{m}$ $:\Leftrightarrow$ $|\underline{n }|<|\underline{m}|$ or $(|\underline{n }|=|\underline{m}|\ \textrm{ and } \underline{n} \leq_{\textrm{lex}} \underline{m})$. 
	\item[The product (partial) order] $\underline{n} \leq \underline{m}$  $:\Leftrightarrow$ $n_1\leq m_1 \ \textrm{ and } n_2\leq m_2\ \cdots \ \textrm{ and } n_r\leq m_r$.
\end{description}
Note that we will apply also the lexicographic order on $\mathbb{Q}^r$. Similarly, one has the \textbf{anti-lexicographic order} denoted by $\leq_{\textrm{alex}}$.\vspace{0.1cm}\\
\noindent Considering the restriction of $\leq_{\textrm{grlex}}$ to $\N^r$ (for which $\N^r$ has order type $\omega$), we denote by $S(\underline{k})$ (respectively  $A(\underline{k})$ for $\underline{k}\neq 0$), the \textbf{successor element} (respectively  the \textbf{predecessor element}) of $\underline{k}$ in $(\mathbb{N}^r,\leq_{\textrm{grlex}})$.\\

Given a variable $x$ and a field $K$, we call \textbf{Laurent series} in $x$ with coefficients in $ K$ any formal series $\sum_{n\geq n^0}c_nx^n$ for some $n^0\in \Z$ and $c_n\in K$ for any $n$. They consist in a field, which is identified with the fraction field $K((x))$ of $K[[x]]$.
To view the fields $K(\underline{x})$ and $K((\underline{x}))$ as embedded into $K((x_r))((x_{r-1}))\cdots((x_1))$ means that the rational fractions or formal meromorphic fractions can be represented as iterated formal Laurent series, i.e. Laurent series in $x_1$ whose coefficients are Laurent series in $x_2$, whose coefficients... etc. This corresponds to the following approach. As in \cite{rayner_puiseux-multivar, sathaye:newt-puiseux-exp_abh-moh-semigr}, we identify $K((x_r))((x_{r-1}))\cdots((x_1))$ with the field of generalized power series (in the sense of \cite{hahn:nichtarchim}, see also \cite{rib:series-fields-alg-closed})  with coefficients in $K$ and exponents in $\mathbb{Z}^r$ ordered lexicographically, usually denoted by $K\left(\left(X^{\mathbb{Z}^r}\right)\right)^{\mathrm{lex}}$. By definition, such a generalized series is a formal expression $s=\displaystyle\sum_{\underline{n}\in \mathbb{Z}^r}c_{\underline{n}}X^{\underline{n}}$ (say a map $\mathbb{Z}^r\rightarrow K$) whose support $\supp(s):=\{\underline{n}\in \mathbb{Z}^r\ |\ c_{\underline{n}}\neq 0\}$ is well-ordered. The field $K\left(\left(X^{\mathbb{Z}^r}\right)\right)^{\mathrm{lex}}$ comes naturally equipped with the following valuation of rank $r$:
$$\begin{array}{lccl}v_{\underline{x}}:&K\left(\left(X^{\mathbb{Z}^r}\right)\right)^{\mathrm{lex}}&\rightarrow&(\mathbb{Z}^r\cup\{\infty\},\leq_{\textrm{lex}})\\
	&s\neq 0 &\mapsto& \min(\supp(s))\\
	&0&\mapsto & \infty
\end{array}$$
The identification of $K\left(\left(X^{\mathbb{Z}^r}\right)\right)$ and $K((x_r))((x_{r-1}))\cdots((x_1))$ reduces to the identification $$X^{(1,0,\ldots,0)}=x_1\ ,\ \ X^{(0,1,\ldots,0)}=x_2 \ ,\ \ \ldots\ ,\  \  X^{(0,\ldots,0,1)}=x_r.$$
By abuse of terminology, we call  $K\left(\left(X^{\mathbb{Z}^r}\right)\right)^{\mathrm{lex}}$ or $K((x_r))((x_{r-1}))\cdots((x_1))$ the field of \textbf{(iterated) multivariate Laurent series}.
Note also that this corresponds to the fact that the power series in the rings $K[\underline{x}]$ and $K[[\underline{x}]]$ are viewed as expanded along $(\mathbb{Z}^r,\leq_{\textrm{lex}})$. \\
Similarly, the field $\mathcal{L}_r$ is a union of fields of generalized series $L\left(\left(X^{(\mathbb{Z}^r)/p}\right)\right)^{\mathrm{lex}}$ and comes naturally equipped with the valuation of rank $r$:
$$\begin{array}{lccl}v_{\underline{x}}:&\mathcal{L}_r&\rightarrow&(\mathbb{Q}^r\cup\{\infty\},\leq_{\textrm{lex}})\\
	&s\neq 0&\mapsto& \min(\supp(s))\\
	&0&\mapsto & \infty.
\end{array}$$

We will need another representation of the elements in $K(\underline{x})$ and $K\left(\left(\underline{x}\right)\right)$, via the embedding of these fields into the field $K\left(\left(X^{\mathbb{Z}^r}\right)\right)^{\mathrm{grlex}}$ with valuation:
$$\begin{array}{lccl}w_{\underline{x}}:&K\left(\left(X^{\mathbb{Z}^r}\right)\right)^{\mathrm{grlex}}&\rightarrow&(\mathbb{Z}^r\cup\{\infty\},\leq_{\textrm{grlex}})\\
	&s\neq 0&\mapsto& \min(\supp(s))\\
	&0&\mapsto & \infty.
\end{array}$$
and the same identification:
$$X^{(1,0,\ldots,0)}=x_1\ ,\ \ X^{(0,1,\ldots,0)}=x_2 \ ,\ \ \ldots\ ,\  \  X^{(0,\ldots,0,1)}=x_r.$$

For a polynomial $P(y)=\displaystyle\sum_{j=0}^da_jy^j\in K\left(\left(X^{\mathbb{Z}^r}\right)\right)^{\mathrm{grlex}}[y]$, we denote:
$$ w_{\underline{x}}(P(y)):=\min_{j=0,\ldots,d}\{w_{\underline{x}}(a_j)\}.$$

We will also use the following notations to keep track of the variables used to write the monomials. Given a ring $R$, we denote by $ R\left(\left(x_1^{\mathbb{Z}},\ldots,x_r^\mathbb{Z}\right)\right)^{\textrm{lex}}$ and $ R\left(\left(x_1^{\mathbb{Z}},\ldots,x_r^\mathbb{Z}\right)\right)^{\textrm{grlex}}$ the corresponding rings of generalized series $\displaystyle\sum_{\underline{n}\in \mathbb{Z}^r}c_{\underline{n}} \underline{x}^{\underline{n}}$ with coefficients $c_{\underline{n}}$ in $R$. Accordingly, let us write $ R\left(\left(x_1^{\mathbb{Z}},\ldots,x_r^\mathbb{Z}\right)\right)^{\textrm{lex}}_{\textrm{Mod}}$ and $ R\left(\left(x_1^{\mathbb{Z}},\ldots,x_r^\mathbb{Z}\right)\right)^{\textrm{grlex}}_{\textrm{Mod}}$ the subrings of series whose actual exponents are all bounded by below by some constant for the product order. Note that these subrings are both isomorphic to the ring $\displaystyle\bigcup_{\underline{n}\in\mathbb{Z}^r}\underline{x}^{\underline{n}}R[[\underline{x}]] $.
Let us write also $R\left(\left(x_1^{\mathbb{Z}},\ldots,x_r^\mathbb{Z}\right)\right)^{\textrm{lex}}_{\geq_{\textrm{lex}}\underline{0}}$ and $R\left(\left(x_1^{\mathbb{Z}},\ldots,x_r^\mathbb{Z}\right)\right)^{\textrm{grlex}}_{\geq_{\textrm{grlex}}\underline{0}}$ the subrings of series $s$ with $v_{\underline{x}}(s)\geq_{\textrm{lex}}\underline{0}$, respectively $w_{\underline{x}}(s)\geq_{\textrm{grlex}}\underline{0}$. \\

\begin{lemma}[Monomialization Lemma]\label{lemme:monomialisation}
	Let $f$ be non zero in $K[[\xi_1,\ldots,\xi_r]]$. There exists $\rho_1,\ldots,\rho_{r-1}\in \mathbb{N}$ such that, if we set
	\begin{equation}\label{equ:transfo}
		\left\{\begin{array}{lcl}
			\eta_1&:=&\displaystyle\frac{\xi_1}{{\xi_2}^{\rho_1}}\\
			&\vdots&\\
			\eta_{r-1}&:=&\displaystyle\frac{\xi_{r-1}}{{\xi_{r}}^{\rho_{r-1}}}\\
			\eta_r&:=&\xi_r
		\end{array}\right.
	\end{equation} 
	then $f(\xi_1,\ldots,\xi_r)=\underline{\eta}^{\underline{\alpha}}g(\eta_1,\ldots,\eta_r)$ where $\underline{\alpha}\in\mathbb{N}^r$ and $g$ is an invertible element of $K[[\eta_1,\ldots,\eta_r]]$. Moreover, for all $i=1,\ldots,r-1$, $\rho_i\leq 1+\beta_{i+1}$ where $\underline{\beta}:=v_{\underline{\xi}}(f)$.
\end{lemma}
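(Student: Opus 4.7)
The plan is to take the explicit choice $\rho_i := 1 + \beta_{i+1}$ and verify directly that it realizes the desired factorization. Inverting the system (\ref{equ:transfo}) gives
$$\xi_i = \eta_i\,\eta_{i+1}^{\rho_i}\,\eta_{i+2}^{\rho_i\rho_{i+1}}\cdots \eta_r^{\rho_i\cdots\rho_{r-1}},$$
so each $\xi_i$ is a monomial lying in the maximal ideal of $K[[\underline{\eta}]]$, and substitution defines a continuous ring homomorphism $K[[\underline{\xi}]]\to K[[\underline{\eta}]]$ sending $\underline{\xi}^{\underline{n}}\mapsto \underline{\eta}^{\underline{m}(\underline{n})}$ with
$$m_j(\underline{n}) := n_j + \sum_{i=1}^{j-1}\rho_{j-1}\rho_{j-2}\cdots\rho_i\, n_i.$$
The recursion $m_j = n_j + \rho_{j-1}m_{j-1}$ shows that $\underline{n}\mapsto \underline{m}(\underline{n})$ is injective, so distinct monomials of $f$ cannot collide in the new variables and the image series is unambiguously defined.

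Setting $\underline{\alpha}:=\underline{m}(\underline{\beta})\in\mathbb{N}^r$, the lemma reduces to proving $\underline{m}(\underline{n})\geq \underline{\alpha}$ in the product order for every $\underline{n}\in\supp(f)$. For such an $\underline{n}$, let $k$ be the least index with $n_k\neq \beta_k$; then necessarily $n_k>\beta_k$ and $n_i=\beta_i$ for $i<k$, so $m_j(\underline{n})=m_j(\underline{\beta})$ for $j<k$. For $j\geq k$, using $n_k-\beta_k\geq 1$ and $n_i\geq 0$ for $i>k$, the difference $m_j(\underline{n})-m_j(\underline{\beta})$ is bounded below by
$$T_j := \rho_{j-1}\cdots\rho_k - \sum_{i=k+1}^{j}\beta_i\,\rho_{j-1}\cdots\rho_i,$$
and the tailored choice $\rho_i=1+\beta_{i+1}$ is built precisely to make the telescoping identity $T_{j+1}=\rho_j T_j - \beta_{j+1}$ propagate the base case $T_k=1$, yielding $T_j\equiv 1$ by a short induction on $j$.

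Granting this inequality, one obtains $f(\underline{\xi}) = \underline{\eta}^{\underline{\alpha}} g(\underline{\eta})$ with $g\in K[[\underline{\eta}]]$, and $g(\underline{0})$ is the coefficient of $\underline{\xi}^{\underline{\beta}}$ in $f$, which is nonzero by definition of $\underline{\beta}$, hence $g$ is invertible; the bound $\rho_i\leq 1+\beta_{i+1}$ holds by construction. The only nontrivial point is the combinatorial identity $T_j\equiv 1$: once $\rho_i=1+\beta_{i+1}$ is chosen it is a one-line induction, but any strictly smaller $\rho_k$ would already break the argument at $j=k+2$ (since $T_{k+1}=\rho_k-\beta_{k+1}$ must be at least $1$ for the recursion to keep $T_j\geq 0$), which also shows that this choice is essentially sharp.
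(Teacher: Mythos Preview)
Your proof is correct and takes a genuinely different route from the paper's. The paper proceeds iteratively: it factors $f=\underline{\xi}^{\underline{\beta}}h$ with $v_{\underline{\xi}}(h)=\underline{0}$, then processes one variable at a time, choosing each $\rho_p$ as the \emph{smallest} nonnegative integer that forces the remaining tail into $K[[\eta_1,\ldots,\eta_p]]\left(\left(\xi_{p+1}^{\mathbb{Z}},\ldots,\xi_r^{\mathbb{Z}}\right)\right)^{\mathrm{lex}}_{\geq_{\mathrm{lex}}\underline{0},\,\mathrm{Mod}}$; the bound $\rho_p\leq 1+\beta_{p+1}$ then falls out because $m_{p+1}\geq -\beta_{p+1}$ in the relevant support. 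You instead fix $\rho_i=1+\beta_{i+1}$ at the outset and verify directly, via the telescoping recursion $T_{j+1}=\rho_jT_j-\beta_{j+1}$, that every monomial of $f$ lands above $\underline{\eta}^{\underline{\alpha}}$ in the product order.

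Your argument is shorter and more explicit, and it makes transparent exactly why the specific value $1+\beta_{i+1}$ works: it is the unique choice propagating $T_j\equiv 1$. The paper's iterative construction buys something different: it produces the \emph{minimal} $\rho_i$'s (possibly zero), which matters for Remark~\ref{rem:monom}(ii) and for keeping the transformed ring as small as possible in applications. Your closing remark that the choice is ``essentially sharp'' should be read as sharpness of your uniform argument (using only $\underline{n}\geq_{\mathrm{lex}}\underline{\beta}$ and $\underline{n}\geq\underline{0}$), not of the lemma itself; for a specific $f$ the minimal $\rho_i$'s can be strictly smaller.
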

\begin{demo}
	
	Let us write $f=\underline{\xi}^{\underline{\beta}}\,h$ where $\underline{\beta}=v_{\underline{\xi}}(f)$ and $h\in K\left(\left(\xi_1^{\mathbb{Z}},\ldots,\xi_r^\mathbb{Z}\right)\right)^{\textrm{lex}}_{\geq_{\textrm{lex}}\underline{0},\,\textrm{Mod}}$ with  $v_{\underline{\xi}}(h)=\underline{0}$. Note that $h$ can be written as $h=h_0+h_1$ where $h_0\in K\left(\left(\xi_2^{\mathbb{Z}},\ldots,\xi_r^\mathbb{Z}\right)\right)^{\textrm{lex}}_{\geq_{\textrm{lex}}\underline{0},\,\textrm{Mod}}$ with $v_{\underline{\xi}}(h_0)=\underline{0}$, and $h_1\in \xi_1K[[\xi_1]]\left(\left(\xi_2^{\mathbb{Z}},\ldots,\xi_r^\mathbb{Z}\right)\right)^{\textrm{lex}}_{\textrm{Mod}}$. If $h_1\in K[[\xi_1]]\left(\left(\xi_2^{\mathbb{Z}},\ldots,\xi_r^\mathbb{Z}\right)\right)^{\textrm{lex}}_{\geq_{\textrm{lex}}\underline{0},\,\textrm{Mod}}$, then we set $\rho_1=0$. Otherwise, let $\rho_1$ be the smallest positive integer such that:
	$$ \rho_1\geq \sup\{1\ ;\ (1-m_2)/m_1,\ \underline{m}\in\textrm{supp}\ h_1\}.$$
	Note that, since $m_1\geq 1$ and $m_2\geq -\beta_2$, we have that $\rho_1\leq 1+\beta_2$. We also remark that the supremum is achieved for $0\geq m_2\geq -\beta_2$ and $1+\beta_2 \geq m_1\geq 1$.
	Let $\eta_1:=\xi_1/{\xi_2}^{\rho_1}$. For every monomial in $h_1$, one has $\xi_1^{m_1}\xi_2^{m_2}\ldots \xi_r^{m_r}=\eta_1^{m_1}\xi_2^{m_2+\rho_1m_1}\ldots \xi_r^{m_r}$. Hence, $m_2+\rho_1m_1\geq 1$ by definition of $\rho_1$. So $(m_2+\rho_1m_1,\ldots,m_r)>_{\textrm{lex}}\underline{0}$, meaning that $h_1\in K[[\eta_1]]\left(\left(\xi_2^{\mathbb{Z}},\ldots,\xi_r^\mathbb{Z}\right)\right)^{\textrm{lex}}_{\geq_{\textrm{lex}}\underline{0},\, \textrm{Mod}}$ and that  $v(h_1)>_{\textrm{lex}}\underline{0}$ where here $v$ is the lexicographic valuation with respect to the variables $(\eta_1,\xi_2,\ldots,\xi_r)$. So $h\in K[[\eta_1]]\left(\left(\xi_2^{\mathbb{Z}},\ldots,\xi_r^\mathbb{Z}\right)\right)^{\textrm{lex}}_{\geq_{\textrm{lex}}\underline{0},\, \textrm{Mod}}$ and $v(h)=\underline{0}$. Note that the exponents $m_3,\ldots, m_r$ remain unchanged in the support of $h$. \\
	Suppose now that we have obtained $h\in  K[[\eta_1,\ldots,\eta_p]]\left(\left(\xi_{p+1}^{\mathbb{Z}},\ldots,\xi_r^\mathbb{Z}\right)\right)^{\textrm{lex}}_{\geq_{\textrm{lex}} \underline{0},\,\textrm{Mod}}$ and that $v(h)=\underline{0}$ where $v$ is now the lexicographic valuation with respect to the variables\\
	$(\eta_1,\ldots,\eta_p,\xi_{p+1},\ldots,\xi_r)$. The induction step is similar to the initial one. As before, let us write $h=h_0^{(p+1)}+h_1^{(p+1)}$ where  $h_0^{(p+1)}\in K[[\eta_1,\ldots,\eta_p]]\left(\left(\xi_{p+2}^{\mathbb{Z}},\ldots,\xi_r^\mathbb{Z}\right)\right)^{\textrm{lex}}_{\geq_{\textrm{lex}} \underline{0},\,\textrm{Mod}}$ with $v(h_0^{(p+1)})=\underline{0}$, and $$h_1^{(p+1)}\in \xi_{p+1}K[[\eta_1,\ldots,\eta_p,\xi_{p+1}]]\left(\left(\xi_{p+2}^{\mathbb{Z}},\ldots,\xi_r^\mathbb{Z}\right)\right)^{\textrm{lex}}_{\textrm{Mod}}.$$ If 	
	$$h_1^{(p+1)}\in K[[\eta_1,\ldots,\eta_p,\xi_{p+1}]]\left(\left(\xi_{p+2}^{\mathbb{Z}},\ldots,\xi_r^\mathbb{Z}\right)\right)^{\textrm{lex}}_{\geq_{\textrm{lex}}\underline{0},\,\textrm{Mod}},$$ then we set $\rho_{p+1}=0$.	Otherwise, let $\rho_{p+1}$ be the smallest positive integer such that:
	$$ \rho_{p+1}\geq \sup\left\{1\ ;\ (1-m_{p+2})/m_{p+1},\ \underline{m}\in\textrm{supp}\ h_1^{(p+1)}\right\}.$$
	Note that, since $m_{p+1}\geq 1$ and $m_{p+2}\geq -\beta_{p+2}$ (since these exponents $m_{p+2}$ remained unchanged until this step), we have that $\rho_{p+1}\leq 1+\beta_{p+2}$.	 If we set $\eta_{p+1}:=\xi_{p+1}/\xi_{p+2}^{\rho_{p+1}}$, then $h\in K[[\eta_1,\ldots,\eta_{p+1}]]\left(\left(\xi_{p+2}^{\mathbb{Z}},\ldots,\xi_r^\mathbb{Z}\right)\right)^{\textrm{lex}}_{\geq_{\textrm{lex}}\underline{0},\, \textrm{Mod}}$ and $v(h)=\underline{0}$ (where $v$ is now the lexicographic valuation with respect to the variables $(\eta_1,\ldots,\eta_{p+1},\xi_{p+2},\ldots,\xi_r)$).\\
	By iteration of this process, we obtain that $h \in K[[\eta_1,\ldots,\eta_{r-1}]]\left(\left(\xi_r^\mathbb{Z}\right)\right)^{\textrm{lex}}_{\geq_{\textrm{lex}}\underline{0},\, \textrm{Mod}}$  and $v(h)=\underline{0}$ (where $v$ is now the lexicographic valuation with respect to the variables $(\eta_1,\ldots,\eta_{r-1}, \xi_r)$), which means that $h\in K[[\eta_1,\ldots,\eta_{r-1},\xi_r]]$ with $h$ invertible. Since $\underline{\xi}^{\underline{\beta}}=\underline{\eta}^{\underline{\alpha}}$ for some $\underline{\alpha}\in\N^r$, the lemma follows.
\end{demo}

\begin{remark}\label{rem:monom}
	\begin{enumerate}
		\item[(i)]  Let $\tilde{y}_0:=\displaystyle\frac{\tilde{f}}{\tilde{g}}\in \mathcal{K}_r$.  There exist $(p,\underline{q})\in \mathbb{N}^*\times\mathbb{N}^{r-1}$ and $L$ with $[L:K]<+\infty$ such that $\tilde{y}_0\in L\left(\left(\left(\displaystyle\frac{x_1}{x_2^{q_1}}\right)^{1/p},\ldots, \left(\displaystyle\frac{x_{r-1}}{x_r^{q_{r-1}}}\right)^{1/p} ,x_r^{1/p}\right)\right)$. We note that we can rewrite 
		$\tilde{y}_0$ as a monomial (with integer exponents) times an invertible power series in other variables $\left(\left( \displaystyle\frac{x_1}{x_2^{q_1'}}\right)^{1/p},\ldots, \left(\displaystyle\frac{x_{r-1}}{x_r^{q_{r-1}'}}\right)^{1/p} ,x_r^{1/p}\right)$.
		
		Indeed, let us denote $\underline{\xi}=(\xi_1,\ldots,\xi_r):=\left(\left(\displaystyle\frac{x_1}{x_2^{q_1}}\right)^{1/p},\ldots, \left(\displaystyle\frac{x_{r-1}}{x_r^{q_{r-1}}}\right)^{1/p} ,x_r^{1/p}\right)$. So $\tilde{y}_0=\displaystyle\frac{\tilde{f}}{\tilde{g}}$ for some $\tilde{f},\tilde{g}\in L[[\underline{\xi}]]$. 
		By the preceding lemma, we can monomialize the product $\tilde{f}.\tilde{g}$, so $\tilde{f}$ and $\tilde{g}$ simultaneously, by a suitable transformation (\ref{equ:transfo}). Note that this transformation maps $L\left[\left[\left(\displaystyle\frac{x_1}{x_2^{q_1}}\right)^{1/p},\ldots, \left(\displaystyle\frac{x_{r-1}}{x_r^{q_{r-1}}}\right)^{1/p} ,x_r^{1/p}\right]\right]$ into some $L\left[\left[\left(\displaystyle\frac{x_1}{x_2^{q_1'}}\right)^{1/p},\ldots, \left(\displaystyle\frac{x_{r-1}}{x_r^{q'_{r-1}}}\right)^{1/p} ,x_r^{1/p}\right]\right]$. Indeed, a monomial in $\underline{\xi}$ is transformed into a monomial in $\underline{\eta}$, and one has that:
		\begin{center}
			$ {\eta_1}^{i_1/p}\cdots {\eta_{r-1}}^{i_{r-1}/p} {\eta_r}^{i_r/p}= \left(\displaystyle\frac{x_1/{x_2}^{q_1}}{\left(x_2/{x_3}^{q_2}\right)^{\rho_1}}\right)^{i_1/p}\cdots \left(\displaystyle\frac{x_{r-1}/{x_{r}}^{q_{r-1}}}{{x_r}^{\rho_{r-1}}}\right)^{i_{r-1}/p} {x_r}^{i_r/p} =	\ \ \ \ \ \ \ \ \ 	\ \ \ \ \ \ \ \ \  $\\	
			$	\ \ \ \ \ \ \ \ \ 	\left(\displaystyle\frac{x_1}{{x_2}^{q_1+\rho_1}}\right)^{i_1/p}\cdots \left(\displaystyle\frac{x_{r-1}}{{x_r}^{q_{r-1}+\rho_{r-1}}}\right)^{i_{r-1}/p} x_r^{i_r/p}{\left({x_3}^{q_2\rho_1}\right)}^{i_1/p}\left({x_4}^{q_3\rho_2}\right)^{i_2/p}\cdots \left({x_r}^{q_{r-1}\rho_{r-2}}\right)^{i_{r-2}/p}$
		\end{center}
		and we write ${\left({x_3}^{q_2\rho_1}\right)}^{i_1/p}= \left(\displaystyle\frac{x_3}{{x_4}^{q_3+\rho_3}}\right)^{q_2\rho_1i_1/p}{x_4}^{(q_3+\rho_3)q_2\rho_1i_1/p}$ and so on. Thus we obtain a monomial in the variables $\left(\left(\displaystyle\frac{x_1}{{x_2}^{q_1+\rho_1}}\right)^{1/p},\ldots, \left(\displaystyle\frac{x_{r-1}}{{x_r}^{q_{r-1}+\rho_{r-1}}}\right)^{1/p}, x_r^{1/p}\right)$.
		\item[(ii)] 	Let $f\in  K[[\underline{\xi}]]$, $\rho_1,\ldots,\rho_{r-1}\in \mathbb{N}$, and  $\underline{\eta}$  be as in the Monomialization Lemma \ref{lemme:monomialisation}. Let $\underline{\beta}=v_{\underline{\xi}}(f)$. If we replace  $\rho_1,\ldots,\rho_{r-1}$ by $\rho_1',\ldots,\rho_{r-1}'$ with $\rho_i'\geq \rho_i$ for all $i$, and we proceed to the corresponding change of variables $\underline{\eta}'$ as in (\ref{equ:transfo}), then we still have $f(\xi)=(\underline{\eta}')^{\underline{\alpha}}g'(\underline{\eta}')$ for some invertible $g'\in K[[\underline{\eta}']]$. So Lemma \ref{lemme:monomialisation} holds true if we take $1+\beta_{i+1}$ instead of $\rho_i$ whenever $\rho_i>0$.
		
	\end{enumerate}
\end{remark}

\begin{theo}\label{theo:K_r-alg-closed}
	$\mathcal{K}_r$ is an algebraically closed extension of $K((\underline{x}))$.
\end{theo}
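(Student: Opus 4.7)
The inclusion $K((\underline{x})) \subseteq \mathcal{K}_r$ is immediate from the definition by taking $p = 1$, $\underline{q} = \underline{0}$, $L = K$ in the direct limit. For algebraic closure, given a nonzero polynomial $P(y) \in \mathcal{K}_r[y]$, I would produce a root of $P$ in $\mathcal{K}_r$.

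First I would collect all coefficients of $P$ into a single summand of the direct limit, so that they lie in $L((\xi_1,\ldots,\xi_r)) = \mathrm{Frac}(L[[\underline{\xi}]])$ with $\xi_i = (x_i/x_{i+1}^{q_i})^{1/p}$ for $i<r$ and $\xi_r = x_r^{1/p}$. Clearing denominators and rescaling $y$ by a suitable power of the leading coefficient, I may assume $P \in L[[\underline{\xi}]][y]$ is monic. Viewing $P$ over the iterated Laurent field $L((\xi_r))\cdots((\xi_1))$, the first theorem of the excerpt produces a root $y_0 \in L'((\xi_r^{1/p'}))\cdots((\xi_1^{1/p'}))$ for some finite extension $L'/L$ and some $p' \geq 1$. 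Setting $\eta_i := \xi_i^{1/p'}$, one has $y_0 \in L'((\eta_r))\cdots((\eta_1))$ and $y_0$ integral over the normal local ring $L'[[\underline{\eta}]]$.

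The remaining task is to show $y_0 \in \mathcal{K}_r$, i.e.\ that after a change of variables of the Monomialization Lemma type ($\eta_i' := \eta_i/\eta_{i+1}^{\rho_i}$) the series $y_0$ becomes a ratio of formal power series. I would proceed by induction on $r$, the case $r=1$ being the classical Newton-Puiseux theorem. For the inductive step, a Newton polygon argument in $\eta_1$ (using monicness and $v_{\eta_1}(a_i) \geq 0$) forces $v_{\eta_1}(y_0) \geq 0$, so one may write $y_0 = \sum_{n \geq 0} c_n \eta_1^n$ with $c_n \in L'((\eta_r))\cdots((\eta_2))$. The constant term $c_0$ satisfies $P(c_0)|_{\eta_1 = 0} = 0$, hence is integral over $L'[[\eta_2,\ldots,\eta_r]]$ and by the inductive hypothesis lies in a copy of $\mathcal{K}_{r-1}$ built over $(\eta_2,\ldots,\eta_r)$. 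After replacing $P$ by an irreducible factor vanishing on $y_0$, one may arrange $c_0$ to be a simple root of $P|_{\eta_1 = 0}$; the higher $c_n$ are then recursively determined by $c_n = -P'(c_0)|_{\eta_1=0}^{-1} \cdot R_n(c_0,\ldots,c_{n-1};\underline{a})$, obtained from Taylor-expanding $P(y_0) = 0$ in $\eta_1$.

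Finally, applying the Monomialization Lemma to $P'(c_0)|_{\eta_1=0}$ produces exponents $\rho_2,\ldots,\rho_{r-1}$ making this denominator a monomial times a unit in the variables $\eta_i' = \eta_i/\eta_{i+1}^{\rho_i}$; the pole orders of the $c_n$ in $\underline{\eta}'$ then grow only linearly in $n$. A further rescaling $\eta_1' := \eta_1/{\eta_2'}^{\rho_1}$ with $\rho_1$ large enough absorbs this linear growth, turning $y_0$ into a genuine power series in $(\eta_1',\ldots,\eta_{r-1}',\eta_r)$. Unwinding the substitutions, each $\eta_i'$ has the form $(x_i/x_{i+1}^{q_i''})^{1/pp'}$ for appropriate $q_i''$, so $y_0$ belongs to the summand of $\mathcal{K}_r$ indexed by $(pp', \underline{q}'', L')$. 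The main obstacle is this uniform monomialization: a single finite choice of $\rho_i$'s must tame the pole orders of \emph{all} the $c_n$ at once, which works precisely because these pole orders grow linearly with $n$ in the recursion.
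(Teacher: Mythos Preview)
Your approach is genuinely different from the paper's, and the overall architecture (get a root in $\mathcal{L}_r$, then push it into $\mathcal{K}_r$ by controlling pole growth) is reasonable. But there is a real gap at the step ``after replacing $P$ by an irreducible factor vanishing on $y_0$, one may arrange $c_0$ to be a simple root of $P|_{\eta_1=0}$.'' This is false if ``irreducible'' means over $L'[[\underline{\eta}]]$ (equivalently over $L'((\underline{\eta}))$), and it is vacuous if it means over $L'((\eta_r))\cdots((\eta_2))[[\eta_1]]$, since $y_0$ already lies in that ring and the factor is then $y-y_0$. A concrete counterexample with $r=2$: take
\[
P(y)=y^2-\eta_1^2\eta_2^2-\eta_1^3=(y)^2-\eta_1^2(\eta_2^2+\eta_1).
\]
Here $\eta_2^2+\eta_1$ is not a square in $L'[[\eta_1,\eta_2]]$, so $P$ is irreducible there; yet $P|_{\eta_1=0}=y^2$ has $c_0=0$ as a double root, while the root
\[
y_0=\eta_1(\eta_2^2+\eta_1)^{1/2}=\eta_1\eta_2+\tfrac{1}{2}\eta_1^2\eta_2^{-1}-\tfrac{1}{8}\eta_1^3\eta_2^{-3}+\cdots
\]
does lie in $L'((\eta_2))((\eta_1))$. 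Your recursion $c_n=-P_0'(c_0)^{-1}R_n(\ldots)$ is unavailable because $P_0'(c_0)=0$; one would have to iterate a Newton-polygon substitution in $\eta_1$ first (here $y\mapsto \eta_1 y$ works), and then argue that this iteration terminates and that the accumulated changes of variable are still of Monomialization-Lemma type. That is exactly the content of the Abhyankar--Jung theorem, which is what the paper invokes.

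For comparison, the paper's proof avoids this obstacle entirely: it Hensel-factors $P$ over $L[[\underline{\xi}]]$ according to the factorisation of $P(\underline{0},y)$ over $\overline{K}$, translates so that $a_i(\underline{0})=0$ for $i<d$, applies the Monomialization Lemma to the \emph{discriminant} of $P$, and then appeals to Abhyankar--Jung to conclude that the roots lie in $L'[[\xi_1^{1/e},\ldots,\xi_r^{1/e}]]$. In the example above, the discriminant $4\eta_1^2(\eta_2^2+\eta_1)$ monomialises under $\eta_1'=\eta_1/\eta_2^{2}$, after which $y_0=\eta_1'\eta_2^{3}(1+\eta_1')^{1/2}\in L'[[\eta_1',\eta_2]]$ directly. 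Your linear-pole-growth argument is essentially a by-hand version of the ``simple'' case of Abhyankar--Jung; to make your route complete you would need to supply the reduction to that case, which is the nontrivial part.
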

\begin{proof} This is a consequence of Abhyankar-Jung Theorem \cite{abh:val-cent-local-domain}, see \cite[Theorem 1.3 and Propo 5.1]{parusinski-rond:abhyankar-jung}, and our Monomialization Lemma \ref{lemme:monomialisation}. Let $$P(y)=\sum_{i=0}^da_iy^i\in L\left[\left[\left(\frac{x_1}{x_2^{q_1}}\right)^{1/p},\ldots, \left(\frac{x_{r-1}}{x_r^{q_{r-1}}}\right)^{1/p} ,x_r^{1/p}\right]\right][y]$$
	where $[L:K]<+\infty$, $p\in\N^*$, $q_i\in\N$ for $i=1,..r-1$ and $a_d\neq 0$. We want to show that $P$ has a root in $\mathcal{K}_r$. Up to multiplication by ${a_d}^{d-1}$ and change of variable $z=a_dy$, we may assume that $P$ is monic.  Let us denote $\underline{\xi}=(\xi_1,\ldots,\xi_r):=\left(\left(\displaystyle\frac{x_1}{x_2^{q_1}}\right)^{1/p},\ldots, \left(\displaystyle\frac{x_{r-1}}{x_r^{q_{r-1}}}\right)^{1/p} ,x_r^{1/p}\right)$ and $P(y)=P(\underline{\xi},y)$. 
	Up to replacing $L$ by a finite algebraic extension of it, we may also suppose that 
	$$ P(\underline{0},y)=(y-c_1)^{\alpha_1}\cdots (y-c_m)^{\alpha_m}$$
	with $c_i\in L$. By Hensel's Lemma [CITE Raynaud Propo 5 4) and Lafon Alg locale, chap 12, theo 12.5 p.166], there exist polynomials $P_1(\underline{\xi},y),\ldots,P_m(\underline{\xi},y)$ such that $P_i(0,y)=(y-c_i)^{\alpha_i}$ ($i=1,..,m$) and $P=P_1\cdots P_m$. It is enough to show that $P_1$ has a root in  $\mathcal{K}_r$. By a change of variable $y=z-c_1$, we are lead to the case of a polynomial 
	$$P(\underline{\xi},y)=y^d+\sum_{i=0}^{d-1}a_i(\underline{\xi})y^i$$	with $a_i(0)=0$, $i=0,..,d-1$. By our Monomialization Lemma \ref{lemme:monomialisation} and Remark \ref{rem:monom}(i), we may assume that the discriminant of $P$ is monomialized. Hence, Abhyankar-Jung Theorem applies. Note that this last step may require to replace $L$ by a finite algebraic extension. 
\end{proof}

Let  $\tilde{y}_0\in \mathcal{K}_r$  be a non zero rational polyhedral Puiseux series.  Let us show that the existence of a nonzero polynomial $\tilde{P}(\underline{x},y)$ cancelling $\tilde{y}_0$ is equivalent to the one of a polynomial $P(\underline{u},y)$ cancelling $y_0\in L[[\underline{u}]]$, but with constraints on the support of $P$. 

Indeed, by our Monomialization Lemma \ref{lemme:monomialisation} and Remark \ref{rem:monom}(i), there are $(p,\underline{q})\in \mathbb{N}^*\times\mathbb{N}^{r-1}$ such that, if we set:
\begin{equation}\label{equ:eclt1} \left(u_1,\ldots,u_{r-1},u_r\right):=\left(\left(\frac{x_1}{x_2^{q_1}}\right)^{1/p},\ldots, \left(\frac{x_{r-1}}{x_r^{q_{r-1}}}\right)^{1/p} ,x_r^{1/p}\right),\end{equation}
then we can rewrite $\tilde{y}_0 =\displaystyle\sum_{\underline{n}\geq \tilde{\underline{n}}^0} \tilde{c}_{\underline{n}}\underline{u}^{\underline{n}},\ \tilde{c}_{\tilde{\underline{n}}^0}\neq 0$. Let us denote $c_{\underline{n}}:=\tilde{c}_{\underline{n}+\tilde{\underline{n}}^0}$, and: 
\begin{equation}\label{equ:y0-y0tilde}
	\tilde{y}_0=\underline{u}^{\tilde{\underline{n}}^0} \displaystyle\sum_{\underline{n}\geq \underline{0}} c_{\underline{n}}\underline{u}^{\underline{n}}=\underline{u}^{\tilde{\underline{n}}^0} y_0\ \ \textrm{with}\  c_{\underline{0}}\neq 0.
\end{equation}
Hence, $y_0$ is a formal power series in $\underline{u}$ with coefficient in a finite algebraic extension $L$ of $K$. 
By the change of variable (\ref{equ:eclt1}), we have:
$$ x_k=u_k^pu_{k+1}^{pq_{k}}u_{k+2}^{pq_{k}q_{k+1}}\cdots u_{r}^{pq_{k}q_{k+1}\cdots q_{r-1}},\ \ \ \ \ \ \ \  k=1,\ldots,r$$
The rational polyhedral Puiseux series $\tilde{y}_0$ is a root of a polynomial $$\tilde{P}(\underline{x},y)=\displaystyle\sum_{j=0}^d\displaystyle\sum_{\underline{i}\in\N^r} \tilde{a}_{\underline{i},j}\underline{x}^{\underline{i}}y^j\,\in K\left[\left[\underline{x}\right]\right][y]$$ of degree $d$ in $y$ if and only if the power series $y_0=\displaystyle\sum_{\underline{n}\in \N^r} c_{\underline{n}}\underline{u}^{\underline{n}} \in L[[\underline{u}]]$ is a root of  $$\underline{u}^{\tilde{\underline{m}}^0}\tilde{P}\left(  u_1^pu_{2}^{pq_{1}}\cdots u_{r}^{pq_{1}q_{2}\cdots q_{r-1}}\    ,\ \ldots\ ,\ u_r^{p} ,\ \underline{u}^{\tilde{\underline{n}}^0}y\right),$$
the latter being a polynomial $P(\underline{u},y)$ in $K[[\underline{u}]][y]$ for $\tilde{\underline{m}}^0$ such that 
\begin{equation}\label{equ:m}
	\tilde{m}^0_k=\max\left\{0\, ;\, -\tilde{n}_k^0d\right\},\ k=1,\ldots,r
	.\end{equation}
Note that the transformation is uniquely defined by $p,\underline{q},d$ and $\underline{\tilde{n}}^0$. \\

In the following lemma, we clarify the constraints on the support of the polynomial $P$.

\begin{lemma}\label{lemma:support-equ}
	With the notations of (\ref{equ:eclt1}), we set $\underline{u}=\left(\underline{ t}_0,\underline{ s}_1,\underline{ t}_1,\ldots,\underline{ s}_\sigma,\underline{ t}_\sigma\right)$ where $\underline{ t}_0$ might be empty, such that  $u_i\in \underline{ s}_k$ if and only if $q_i\neq 0$ (and, so $u_i\in \underline{ t}_k$ if and only if $q_i=0$). Moreover, we write $\underline{ s}:=\left(\underline{ s}_1,\ldots,\underline{ s}_\sigma\right)$ and  $\underline{ t}:=\left(\underline{ t}_0,\underline{ t}_1,\ldots,\underline{ t}_\sigma\right)$.
	Hence, a polynomial $\tilde{P}(\underline{x},y) \in K\left[\left[\underline{x}\right]\right][y]$ is changed by the transformation induced by (\ref{equ:eclt1}) and  (\ref{equ:m}) into a polynomial:
	$$ P(\underline{s},\underline{t},y)=\displaystyle\sum_{\underline{l}\geq \underline{0}}\displaystyle\sum_{j=0}^dP_{\underline{l},j}(\underline{s})y^j\,\underline{t}^{\underline{l}}\in K[\underline{s},y][[\underline{t}]]$$
	with 
	for any $i$ such that $u_i\in \underline{s}_k$, 
	\begin{equation}\label{equ:degree}
		\deg_{{u_i}}(P_{\underline{l},j}(\underline{s}))-(\tilde{m}^0_{i}+j\tilde{n}_{i}^0) \leq \displaystyle\frac{\deg_{u_{i+1}} (P_{\underline{l},j}(\underline{s})\, \underline{t}^{\underline{l}})-(\tilde{m}^0_{i+1}+j\tilde{n}_{i+1}^0)  }{q_i},\ \ j=0,..,d.
	\end{equation}
	Conversely, any polynomial $$ P(\underline{s},\underline{t},y)=\displaystyle\sum_{\underline{l}\geq \underline{0}}\displaystyle\sum_{j=0}^dP_{\underline{l},j}(\underline{s})y^j\,\underline{t}^{\underline{l}}\in K[\underline{s},y][[\underline{t}]]$$
	comes from a unique polynomial $\tilde{P}(\underline{x},y) \in K\left[\left[\underline{x}\right]\right][y]$ by the transformation induced by (\ref{equ:eclt1}) and (\ref{equ:m}) if and only if each monomial ${\underline{u}}^{\underline{ \alpha}}y^j$ in the support of $P$ satisfies the following conditions:
	\begin{itemize}
		\item[(i)] $\underline{ \alpha}\geq \tilde{\underline{m}}^0+j\tilde{\underline{n}}^0;$
		\item[(ii)] $\forall i=1,\ldots,r,\ \ \alpha_i-(\tilde{m}^0_{i}+j\tilde{n}_{i}^0)\equiv 0\, (p) ;$
		\item[(iii)] For any $u_i\in \underline{s}_k$, $\ \alpha_i-(\tilde{m}^0_{i}+j\tilde{n}_{i}^0)\leq \displaystyle\frac{\alpha_{i+1}-(\tilde{m}^0_{i+1}+j\tilde{n}_{i+1}^0)}{q_i}.$
	\end{itemize}
	
\end{lemma}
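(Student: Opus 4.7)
The plan is to carry out an explicit, monomial-by-monomial analysis of the transformation, reading off all three conditions (i)--(iii) and the degree estimate (\ref{equ:degree}) from a single exponent formula. From (\ref{equ:eclt1}) one has $x_k = u_k^{p}\,u_{k+1}^{pq_k}\,u_{k+2}^{pq_k q_{k+1}}\cdots u_r^{pq_k q_{k+1}\cdots q_{r-1}}$, so the exponent of $u_l$ in $\underline{x}^{\underline{i}}$ equals $p\sum_{k=1}^{l}i_k\, q_k q_{k+1}\cdots q_{l-1}$ (with the convention that the empty product is $1$). Taking into account the extra factor $\underline{u}^{\tilde{\underline{m}}^0}$ and the substitution $y\mapsto \underline{u}^{\tilde{\underline{n}}^0}y$ prescribed by (\ref{equ:m}) and (\ref{equ:y0-y0tilde}), a monomial $\tilde{a}_{\underline{i},j}\underline{x}^{\underline{i}}y^j$ of $\tilde{P}$ is sent to $\tilde{a}_{\underline{i},j}\underline{u}^{\underline{\alpha}}y^j$ with
\[
\alpha_l\;=\;\tilde{m}^0_l+j\tilde{n}^0_l\;+\;p\sum_{k=1}^{l}i_k\, q_k q_{k+1}\cdots q_{l-1},\qquad l=1,\ldots,r.
\]

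For the forward direction I check (i)--(iii) directly from this formula. Conditions (i) and (ii) are immediate since the trailing sum is a nonnegative multiple of $p$. For (iii), when $u_i\in\underline{s}_k$ so that $q_i\neq 0$, factoring $q_i$ out of the tail of the formula for $\alpha_{i+1}$ yields
\[
\frac{\alpha_{i+1}-(\tilde{m}^0_{i+1}+j\tilde{n}^0_{i+1})}{q_i}\;=\;\alpha_i-(\tilde{m}^0_i+j\tilde{n}^0_i)\;+\;\frac{p\,i_{i+1}}{q_i}\;\geq\;\alpha_i-(\tilde{m}^0_i+j\tilde{n}^0_i).
\]
The degree estimate (\ref{equ:degree}) then follows by applying (iii) to a distinguished monomial: fix $\underline{l}$ and $j$, pick a monomial of $P_{\underline{l},j}(\underline{s})$ realising $\deg_{u_i}P_{\underline{l},j}$, and note that its $u_{i+1}$-exponent is automatically bounded by $\deg_{u_{i+1}}(P_{\underline{l},j}(\underline{s})\,\underline{t}^{\underline{l}})$, regardless of whether $u_{i+1}$ sits in $\underline{s}$ or in $\underline{t}$; combining with $q_i>0$ gives (\ref{equ:degree}).

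For the converse I invert the exponent relation. Given a monomial $\underline{u}^{\underline{\alpha}}y^j$ satisfying (i), (ii), (iii), set $b_l:=\bigl(\alpha_l-(\tilde{m}^0_l+j\tilde{n}^0_l)\bigr)/p$, which is a nonnegative integer by (i) and (ii). The triangular system $b_l=\sum_{k=1}^{l}i_k\, q_k\cdots q_{l-1}$ is solved uniquely by $i_1=b_1$ and $i_l=b_l-q_{l-1}b_{l-1}$ for $l\geq 2$; nonnegativity of each $i_l$ follows from (i) when $q_{l-1}=0$ and from (iii) applied at index $i=l-1$ when $q_{l-1}\neq 0$, i.e.\ $u_{l-1}\in\underline{s}$. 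Since this inversion is a bijection between pairs $(\underline{i},j)\in\mathbb{N}^r\times\{0,\ldots,d\}$ and monomial exponents $(\underline{\alpha},j)$ fulfilling (i)--(iii), each such $P$ is the image of a unique $\tilde{P}\in K[[\underline{x}]][y]$. The only mild subtlety in the argument is the uniform treatment of (\ref{equ:degree}) across the cases $u_{i+1}\in\underline{s}$ and $u_{i+1}\in\underline{t}$, which is precisely why the right-hand side of (\ref{equ:degree}) is phrased in terms of $\deg_{u_{i+1}}(P_{\underline{l},j}(\underline{s})\,\underline{t}^{\underline{l}})$ rather than being split into subcases.
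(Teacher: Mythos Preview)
Your proof is correct and follows essentially the same approach as the paper's. Both arguments reduce to the key exponent relation $\alpha_{i+1}-(\tilde m^0_{i+1}+j\tilde n^0_{i+1})=p\,i_{i+1}+q_i\bigl(\alpha_i-(\tilde m^0_i+j\tilde n^0_i)\bigr)$, from which (i)--(iii), the degree bound (\ref{equ:degree}), and the converse inversion all follow; the only presentational difference is that you write a single closed exponent formula $\alpha_l=\tilde m^0_l+j\tilde n^0_l+p\sum_{k\le l}i_k\,q_k\cdots q_{l-1}$ valid uniformly (with vanishing products absorbing the $q_m=0$ cases), whereas the paper performs the same computation block by block along the decomposition $\underline{u}=(\underline{t}_0,\underline{s}_1,\underline{t}_1,\ldots)$ via formula (\ref{equ:formula}).
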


\begin{demo} Let us collect the variables $x_i$ according to the distinction between $t_j$ and $s_k$ among the variables $u_l$. We set $\underline{x}_k$ for the sub-tuple of variables $x_i$ corresponding to $\underline{t}_k$, and $\underline{\xi}_k$ for $\underline{s}_k$ respectively. 
	Let us consider a general monomial:
	\begin{equation}\label{equ:var-paquets}
		\underline{x}^{\underline{ n}}y^j = \underline{x}_0^{\underline{ n}_0}\,\underline{\xi}_1^{\underline{ m}_1}\,\underline{x}_1^{\underline{ n}_1}\cdots\underline{\xi}_\sigma^{\underline{ m}_\sigma}\,\underline{x}_\sigma^{\underline{ n}_\sigma}y^j. 
	\end{equation}
	where $\underline{n}=(\underline{ n}_0, \underline{m }_1,\underline{ n}_1,\ldots,\underline{ m}_\sigma,\underline{ n}_\sigma)$. For $k=1,\ldots,\sigma$, we denote $\underline{ \xi}_k=(x_{i_k},\ldots,x_{j_k-1})$ and $\underline{ x}_k=(x_{j_k},\ldots,x_{i_{k+1}-1})$, and accordingly $\underline{ m}_k=(n_{i_k},\ldots,n_{j_k-1})$ and $\underline{ n}_k=(n_{j_k},\ldots,n_{i_{k+1}-1})$ with $i_{\sigma+1}:=r+1$. For $k=0$ when $\underline{ t}_0$ is not empty, we denote $\underline{ x}_0=\underline{ t}_0=(x_{j_0},\ldots,x_{i_{1}-1})$ and $\underline{ n}_0=(n_{j_0},\ldots,n_{i_{1}-1})$ with $j_0:=1$.
	
	By the change of variable  (\ref{equ:eclt1}), for each $k=1,\ldots,\sigma$, we obtain that:
	$$ {\underline{\xi}_k}^{\underline{ m}_k}\,{\underline{x}_k}^{\underline{ n}_k}= \left(\left(\frac{x_{i_k}}{x_{i_k+1}^{q_{i_k}}}\right)^{1/p}\right)^{pn_{i_k}}
	\left( \left(\frac{x_{i_k+1}}{x_{i_k+2}^{q_{i_k+1}}}\right)^{1/p}\right)^{p(n_{i_k+1}+q_{i_k}n_{i_k})}\cdots  \hspace{4cm}$$
	$$\left(\left(\frac{x_{j_k-1}}{x_{j_k}^{q_{j_k-1}}}\right)^{1/p}\right)^{p(n_{j_k-1}+q_{j_k-2}n_{j_k-2} +q_{j_k-2}q_{j_k-3}n_{j_k-3}+\cdots+ q_{j_k-2}q_{j_k-3}\cdots q_{i_k}n_{i_k})} $$
	$$  \hspace{2cm}
	\times {\left( {x_{j_k}}^{1/p}\right)}^{p(n_{j_k}+q_{j_k-1}n_{j_k-1}+q_{j_k-1}q_{j_k-2}n_{j_k-2}+\cdots+ q_{j_k-1}q_{j_k-2}\cdots q_{i_k}n_{i_k})}$$
	$$  \hspace{3cm} \times{\left({ x_{j_k+1}}^{1/p}\right)}^{pn_{j_k+1}}\cdots {\left({x_{i_{k+1}-1}}^{1/p}\right)}^{pn_{i_{k+1}-1}}$$
	$$ = {u_{i_k}}^{pn_{i_k}} {u_{i_k+1}}^{p(n_{i_k+1}+q_{i_k}n_{i_k})}\cdots {u_{j_k-1}}^{p(n_{j_k-1}+q_{j_k-2}n_{j_k-2}+q_{j_k-2}q_{j_k-3}n_{j_k-3}+\cdots+ q_{j_k-2}q_{j_k-3}\cdots q_{i_k}n_{i_k})} $$
	$$
	{ u_{j_k}}^{p(n_{j_k}+q_{j_k-1}n_{j_k-1}+q_{j_k-1}q_{j_k-2}n_{j_k-2}+\cdots+ q_{j_k-1}q_{j_k-2}\cdots q_{i_k}n_{i_k})}{ u_{j_k+1}}^{pn_{j_k+1}}\cdots {u_{i_{k+1}-1}}^{pn_{i_{k+1}-1}}$$
	\begin{equation}\label{equ:formula}
		\begin{array}{l}
			= {s_{i_k}}^{pn_{i_k}} {s_{i_k+1}}^{p(n_{i_k+1}+q_{i_k}n_{i_k})}\cdots {s_{j_k-1}}^{p(n_{j_k-1}+q_{j_k-2}n_{j_k-2}+q_{j_k-2}q_{j_k-3}n_{j_k-3}+\cdots+ q_{j_k-2}q_{j_k-3}\cdots q_{i_k}n_{i_k})} \\
			\hspace{1cm}{ t_{j_k}}^{p(n_{j_k}+q_{j_k-1}n_{j_k-1}+q_{j_k-1}q_{j_k-2}n_{j_k-2}+\cdots+ q_{j_k-1}q_{j_k-2}\cdots q_{i_k}n_{i_k})}{ t_{j_k+1}}^{pn_{j_k+1}}\cdots {t_{i_{k+1}-1}}^{pn_{i_{k+1}-1}}.
		\end{array}
	\end{equation}
	
	Moreover, $y^j$ is transformed into 
	\begin{equation}\label{equ:translation}
		\underline{u}^{\tilde{\underline{ m}}^0+j\tilde{\underline{n}}^0}y^j.
	\end{equation}
	For $u_i\in \underline{ s}_k$, we denote by $c_i$ its exponent in Formula (\ref{equ:formula}). If $i<j_k-1$, then $u_{i+1}\in \underline{ s}_k$ and its exponent is $c_{i+1}=p(n_{i+1}+q_{i}n_i+\cdots +q_iq_{i-1}\cdots q_{i_k}n_{i_k}) =pn_{i+1}+q_ic_i.$ The total exponent of $u_i$ in the transform of $\underline{x}^{\underline{ n}}y^j$ is $c_i+\tilde{m}^0_i+j\tilde{n}_i^0$. So, 
	\begin{center}
		$ \deg_{u_{i+1}} (P_{\underline{l},j}(\underline{s})\,y^j \underline{t}^{\underline{l}})-(\tilde{m}^0_{i+1}+j\tilde{n}_{i+1}^0)  
		= \deg_{u_{i+1}} (P_{\underline{l},j}(\underline{s}))-(\tilde{m}^0_{i+1}+j\tilde{n}_{i+1}^0)
		\geq {q_i} \left(\deg_{u_{i}} (P_{\underline{l},j}(\underline{s}))-(\tilde{m}^0_{i}+j\tilde{n}_{i}^0)\right).$
	\end{center} If $i=j_k-1$, then  $u_{i+1}=t_{j_k}\in \underline{ t}_k$. Likewise, its exponent in (\ref{equ:formula}) is $pn_{j_k}+q_{j_k-1}c_{j_k-1}$. We obtain that \begin{center}
		$ \deg_{u_{i+1}} (P_{\underline{l}}(\underline{s})\,y^j \underline{t}^{\underline{l}})-(\tilde{m}^0_{j_k}+j\tilde{n}_{j_k}^0)  =\deg_{t_{j_k}}\underline{t}^{\underline{l}}-(\tilde{m}^0_{j_k}+j\tilde{n}_{j_k}^0)  \geq {q_{j_k-1}} \left(\deg_{{u_{j_k-1}}}P_{\underline{l}}(\underline{s},y)-(\tilde{m}^0_{j_k-1}+j\tilde{n}_{j_k-1}^0)  \right).$
	\end{center}
	
	Conversely, we consider a monomial ${\underline{s}_k}^{\underline{ \lambda}}\, {\underline{t}_k}^{\underline{ \mu}}$. It is of the form (\ref{equ:formula}), that is, it comes from a monomial ${ \underline{\xi}_k}^{\underline{ m}_k}\,{\underline{x}_k}^{\underline{ n}_k}$, if and only if 
	$\deg_{{u_i}} {\underline{s}_k}^{\underline{ \lambda}} \leq \displaystyle\frac{\deg_{u_{i+1}} {\underline{s}_k}^{\underline{ \lambda}}\, {\underline{t}_k}^{\underline{ \mu}}}{q_i}$
	and $\lambda_i\equiv \mu_j\equiv 0\, (p)$, which are equivalent to the conditions (ii) and (iii). Taking into account the transformation (\ref{equ:translation}), this gives the converse part of the lemma.
	
	
\end{demo}

\begin{remark}
	Note that, if $\underline{x}^{\underline{n}}y^j\neq \underline{x}^{\underline{n}'}y^{j'}$, the transformation applied to these monomials gives $\underline{u}^{\underline{\alpha}}y^j\neq \underline{u}^{\underline{\alpha}'}y^{j'}$.
\end{remark}

For the rest of this section, and also for Sections \ref{sect:depth}, \ref{section:Wilc} and \ref{sect:reconstr-algebroid}, we assume that the field $K$ is algebraically closed, hence $K=L=\overline{K}$.

\begin{remark}
	If for all $i$, $q_i=0$, namely if $u_i={x_i}^{1/p}$,  then any  $\tilde{y}_0=\displaystyle\frac{f}{g}$ with $f,g\in K[[\underline{u}]]$ 
	is algebroid. Indeed, let $\theta_p$ denote a primitive $p$th root of unity. We set:
	\begin{align*}
		\tilde{P}(\underline{u},y)&:=&\prod_{i=1,\ldots,r}\prod_{k_i=0,\ldots,p-1}g\left({\theta_p}^{k_1}u_1,\ldots,{\theta_p}^{k_r}u_r\right) \left(y-\tilde{y}_0\left({\theta_p}^{k_1}u_1,\ldots,{\theta_p}^{k_r}u_r\right)\right)\\
		&=&\prod_{i=1,\ldots,r}\prod_{k_i=0,\ldots,p-1}\left[g\left({\theta_p}^{k_1}u_1,\ldots,{\theta_p}^{k_r}u_r\right) y-f\left({\theta_p}^{k_1}u_1,\ldots,{\theta_p}^{k_r}u_r\right)\right]. 
	\end{align*}
	Note that $\tilde{P}(\underline{u},\tilde{y}_0)=0$. Moreover, since $	\tilde{P}(u_1,\ldots,\theta_pu_i,\ldots,u_r,y)=\tilde{P}(\underline{u},y)$ for any $i=1,\ldots,r$, we conclude that $\tilde{P}\in K[[\underline{x}]][y]$. \\
	
	Consequently, from now on, we consider the case where $q_i\neq 0$ for at least one $i\in\{1,\ldots,r\}$.
\end{remark}

Let us denote by $\tau$ the number of variables in $\underline{s}$, and so $r-\tau$ is the number of variables in $\underline{t}$.  We consider $y_0=\displaystyle\sum_{\underline{m}\in\mathbb{N}^\tau,\,\underline{n}\in \mathbb{N}^{r-\tau}} c_{\underline{m} ,\underline{n}}\underline{s}^{\underline{m}}\underline{t}^{\underline{n}} =\displaystyle\sum_{\underline{n}\in \mathbb{N}^{r-\tau}} c_{\underline{n}}(\underline{s})\, \underline{t}^{\underline{n}}$  such that $c_{\underline{0},\underline{0}}\neq 0$ which satisfies an equation
$ P(\underline{s},\underline{t},y)=0$ where $ P$ agrees conditions (i), (ii) and (iii) of Lemma \ref{lemma:support-equ}. 

\begin{lemma}\label{lemma:algebraicity}
	The series $c_{\underline{n}}(\underline{s})\in K[[\underline{s}]]$,  $\underline{n}\in \mathbb{N}^{r-\tau}$, are all algebraic over $K(\underline{s})$, and lie in a finite extension of  $K(\underline{s})$.
\end{lemma}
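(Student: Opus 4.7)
The plan is to apply the iterated Newton-Puiseux theorem (the first Theorem recalled in the Introduction) with $K(\underline{s})$ in place of the base field $K$ and $\underline{t}$ in place of the variables $\underline{x}$. The key observation is that $y_0$ has only non-negative integer exponents in $\underline{t}$, so no fractional powers appear, forcing all coefficients $c_{\underline{n}}(\underline{s})$ to lie in a single finite algebraic extension of $K(\underline{s})$.

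Via the natural embedding $K[\underline{s}][[\underline{t}]] \hookrightarrow K(\underline{s})((t_{r-\tau}))\cdots((t_1))$, one views $P$ as a polynomial of degree at most $d$ in $y$ over the iterated Laurent series field $K(\underline{s})((t_{r-\tau}))\cdots((t_1))$. By the said Newton-Puiseux theorem, the algebraic closure of this field is $\displaystyle\varinjlim_{p\in\N^*}\varinjlim_{L'} L'((t_{r-\tau}^{1/p}))\cdots((t_1^{1/p}))$, where $L'$ ranges over the finite extensions of $K(\underline{s})$. Since $P$ is fixed, its $d$ roots all lie in a single field $\mathcal{F} := L'((t_{r-\tau}^{1/p}))\cdots((t_1^{1/p}))$ for some finite extension $L'/K(\underline{s})$ and some $p\in\N^*$. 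Embedding both $\mathcal{F}$ and $K[[\underline{s},\underline{t}]] \subset K((\underline{s}))((t_{r-\tau}))\cdots((t_1))$ into a common algebraic closure $\Omega$ of the latter, the roots of $P$ in $\Omega$ are precisely the images of its roots from $\mathcal{F}$; since $y_0$ is itself a root of $P$ in $\Omega$, it must coincide with one of them, and in particular $y_0\in\mathcal{F}$.

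To conclude, one compares the two expansions of $y_0$: in $K[[\underline{s},\underline{t}]]$ it reads $\sum_{\underline{n}\in\N^{r-\tau}} c_{\underline{n}}(\underline{s})\,\underline{t}^{\underline{n}}$ with only non-negative integer exponents in $\underline{t}$, while in $\mathcal{F}$ it admits a unique iterated Puiseux expansion in the $t_i^{1/p}$ with coefficients in $L'$. By uniqueness of coefficient extraction in these iterated series structures, the coefficient of each monomial $\underline{t}^{\underline{n}}$ must agree in both expansions, so $c_{\underline{n}}(\underline{s}) \in L'$ for every $\underline{n}\in\N^{r-\tau}$. Since $L'$ is a finite extension of $K(\underline{s})$ depending only on $P$ (and not on $\underline{n}$), the lemma follows. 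The most delicate point is justifying the compatibility of the two expansions through the embedding into $\Omega$, which reduces to the uniqueness of iterated (Laurent or Puiseux) expansions of a single element.
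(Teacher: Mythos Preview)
Your argument is correct and genuinely different from the paper's. You invoke the iterated Newton--Puiseux theorem over the base field $K(\underline{s})$ to land $y_0$ in some $L'((t_{r-\tau}^{1/p}))\cdots((t_1^{1/p}))$ with $[L':K(\underline{s})]<\infty$, and then read off $c_{\underline{n}}(\underline{s})\in L'$ by uniqueness of iterated Laurent coefficients in a common overfield. The embedding step is sound: since $\mathcal{F}$ sits inside the algebraic closure of $K(\underline{s})((t_{r-\tau}))\cdots((t_1))$, and the latter embeds into the algebraic closure of $K((\underline{s}))((t_{r-\tau}))\cdots((t_1))$, one may take $\Omega$ to be the iterated Puiseux closure of $K((\underline{s}))$ (again by the same theorem), in which coefficient extraction is well defined and compatible with both embeddings.

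By contrast, the paper proceeds by a direct Newton--Puiseux--type induction on $\underline{n}\in(\N^{r-\tau},\leq_{\mathrm{grlex}})$: for each $\underline{n}$ it exhibits an explicit nonzero polynomial over $K(\underline{s})[c_{\underline{0}},\ldots,c_{A(\underline{n})}]$ annihilating $c_{\underline{n}}$, and then, past the index $\underline{n}_0:=w_{\underline{t}}(\partial P/\partial y(\underline{s},\underline{t},y_0))$, a Hensel-type linear recursion shows all further $c_{\underline{n}}$ lie in the fixed field $K(\underline{s},c_{\underline{0}},\ldots,c_{\underline{n}_0})$. Your route is shorter and more conceptual; the paper's is more elementary (it avoids appealing to the structural theorem) and, crucially for what follows, constructive: the explicit equations produced there are reused verbatim in Lemma~\ref{lemme:control-deg-supp} to obtain the degree bounds that drive the reconstruction algorithm in Section~\ref{sect:reconstr-algebroid}. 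So your proof establishes the lemma, but would not by itself supply the quantitative control the paper needs downstream.
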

\begin{demo} We consider $y_0 =\displaystyle\sum_{\underline{n}\in \mathbb{N}^{r-\tau}} c_{\underline{n}}(\underline{s})\, \underline{t}^{\underline{n}}$ root of a non-trivial polynomial $$ P(\underline{s},\underline{t},y)=\displaystyle\sum_{\underline{l}\in \mathbb{N}^{r-\tau}} P_{\underline{l}}(\underline{s},y)\,\underline{t}^{\underline{l}}\in K[\underline{s},y][[\underline{t}]]$$
	which satisfies conditions (i), (ii) and (iii).	We proceed by induction on $\mathbb{N}^{r-\tau}$ ordered by $\leq_{\rm grlex}$.  Given some $\underline{ n}\in \mathbb{N}^{r-\tau}$, we set \begin{equation}\label{equ:z_n-y_n}
		y_0=\tilde{z}_{\underline{n}}+c_{\underline{n}}\underline{t}^{\underline{n}}+y_{\underline{n}}
	\end{equation} with
	$\tilde{z}_{\underline{n}}=\sum_{\underline{\beta}<_{\textrm{grlex}} \underline{n} } c_{\underline{\beta}}\underline{t}^{\underline{\beta}}$, 	$y_{\underline{n}}=\sum_{\underline{\beta}>_{\textrm{grlex}} \underline{n} } c_{\underline{\beta}}\underline{t}^{\underline{\beta}}$,
	(and $z_{\underline{0}}:=0$ which corresponds to the initial step of the induction). We assume that the coefficients $c_{\underline{\beta}}$ of $\tilde{z}_{\underline{n}}$ belong to a finite extension $L_{\underline{n}}$ of $K(\underline{s})$. We set 
	\begin{equation}\label{equ:translation-Q}
		Q_{\underline{n}}(\underline{t},y):=P(\underline{s},\underline{t},\tilde{z}_{\underline{n}}+y)\in L_{\underline{n}}[y][[\underline{t}]]
	\end{equation}
	and we denote it by:
	$$ Q_{\underline{n}}(\underline{t},y)=\displaystyle\sum_{\underline{l}\geq \underline{0}}Q_{\underline{n},\underline{l}}(y)\,\underline{t}^{\underline{l}}.$$
	We claim that 
	\begin{equation}\label{equ:wtPQ}
		w_{\underline{t}}(P)=w_{\underline{t}}(Q_{\underline{n}}).
	\end{equation} This is clear if $\underline{n}=\underline{0}$.
	For $\underline{n}>_{\rm grlex}\underline{0}$, let $\underline{l}_0:=w_{\underline{t}}(P)$. We have
	\begin{center}
		$Q_{\underline{n}}(\underline{t},y)=P_{\underline{l}_0}(\underline{s}, \tilde{z}_{\underline{n}}+y)\underline{t}^{\underline{l}_0}+\cdots =\left( \displaystyle\sum_{j=0}^d \frac{1}{j!}\frac{\partial^j P_{\underline{l}_0}}{\partial y^j}(\underline{s},y){\tilde{z}_{\underline{n}}}^j \right)\underline{t}^{\underline{l}_0}+\cdots $
	\end{center}
	Let $d_{\underline{l}_0} :=\deg_y P_{\underline{l}_0}$: the coefficient of $y^{d_{\underline{l}_0}}$ in the previous parenthesis is not zero for $j=0$ but zero for $j\geq 1$. Namely, it is the coefficient of $P_{\underline{l}_0}(\underline{s},y)$, which is of the form  $a(\underline{s})y^{d_{\underline{l}_0}}\underline{t}^{\underline{l}_0}$ and therefore cannot overlap with other terms. 
	
	By Taylor's formula, we have that:
	$$Q_{\underline{n}}(\underline{t},C\underline{t}^{\underline{n}}+y)=\displaystyle\sum_{\underline{l}\geq_{\rm grlex} \underline{l}_0}\displaystyle\sum_{j=0}^d \frac{1}{j!} \frac{\partial^j Q_{\underline{n},\underline{l}} }{\partial y^j}(0)\, \left(C\underline{t}^{\underline{n}}+y\right)^j \,\underline{t}^{\underline{l}}.$$ 
	
	Recall that $y_{\underline{n}}\in  K[[\underline{s}]][[\underline{t}]]$ with  $w_{\underline{t}}(y_{\underline{n}})>_{\textrm{grlex}} \underline{n}$. Then $Q_{\underline{n}}(\underline{t},C\underline{t}^{\underline{n}}+y_{\underline{n}})\neq 0$ as a polynomial in $C$ (otherwise $P$ would have more than $d$ roots). Necessarily, $w_{\underline{t}}\left( Q_{\underline{n}}(\underline{t},C\underline{t}^{\underline{n}}+y_{\underline{n}})\right)$ is of the form $\underline{\omega}=\underline{l}_1+j_1\underline{ n}$. Indeed, let us consider $\underline{\omega}:=\min_{\underline{l},j}\left\{\underline{l}+j\underline{ n}\ |\, \frac{\partial^j Q_{\underline{n},\underline{l}} }{\partial y^j}(0)\neq 0\right\}$, and among the $(\underline{l},j)$'s which achieve this minimum, consider the term with the biggest $j$. This term cannot be cancelled.  The correspondent coefficient of $\underline{t}^{\underline{\omega}}$ in $Q_{\underline{n}}(\underline{t},C\underline{t}^{\underline{n}}+y_{\underline{n}})$ is a nonzero polynomial in $C$ of the form:
	\begin{equation}\label{equ:taylorQ_n}
		\sum_{\underline{l}_k+j_k\underline{ n}=\underline{\omega}} \frac{1}{j_k!} \frac{\partial^{j_k} Q_{\underline{n},\underline{l}_k} }{\partial y^{j_k}}(0)\,  {C }^{j_k}. 
	\end{equation}
	Since $y_0$ is a root of $P$, this polynomial needs to vanish for $C=c_{\underline{n}}$, which proves by the induction hypothesis that $ c_{\underline{n}}$ is itself algebraic over $K(\underline{s})$.
	
	Without loss of generality, we may assume that $y_0$ is a simple root of $P$, hence, $ \displaystyle\frac{\partial P}{\partial y}(\underline{s},\underline{t},y_0)$ $\neq 0$. With the same notations as above, we consider $\underline{n}_0:= w_{\underline{t}}\left(\displaystyle\frac{\partial P}{\partial y}(\underline{s},\underline{t},y_0)\right) 	\in \mathbb{N}^{r-\tau}$. For any $\underline{n}>_{\textrm{grlex}} \underline{n}_0$,  $ \displaystyle\frac{\partial Q_{\underline{n}}}{\partial y}(\underline{t},0)=\displaystyle\frac{\partial P}{\partial y}(\underline{s},\underline{t},\tilde{z}_{\underline{ n}})$ and $$w_{\underline{t}}\left(\displaystyle\frac{\partial Q_{\underline{n}}}{\partial y}(\underline{t},0)-\displaystyle\frac{\partial P}{\partial y}(\underline{s},\underline{t},y_0)\right)=w_{\underline{t}}\left(\displaystyle\frac{\partial P}{\partial y}(\underline{s},\underline{t},\tilde{z}_{\underline{ n}})-\displaystyle\frac{\partial P}{\partial y}(\underline{s},\underline{t},y_0)\right)\geq_{\textrm{grlex}} \underline{n}>_{\textrm{grlex}} \underline{n}_0. $$ 
	So $w_{\underline{t}}\left(\displaystyle\frac{\partial Q_{\underline{n}}}{\partial y}(\underline{t},0)\right)=\underline{n}_0$.

	By Taylor's formula:
	\begin{equation}\label{equ:yn}
		Q_{\underline{n}}(\underline{t},C\underline{t}^{\underline{n}}+y_{\underline{n}})=\displaystyle\sum_{j=0}^d \frac{1}{j!} \frac{\partial^j Q_{\underline{n}} }{\partial {y_{\underline{n}}}^j}(\underline{t},0)\, \left(C\underline{t}^{\underline{n}}+y\right)^j.\end{equation} 
	We have:
	\[ w_{\underline{t}}\left(\frac{\partial Q_{\underline{n}} }{\partial y}(\underline{t},0) \left(C\underline{t}^{\underline{n}}+y_{\underline{n}}\right)\right)= \underline{n}+\underline{n}_0,\]
	and for any  $j\geq 2$:
	\[ w_{\underline{t}}\left(\frac{\partial^j Q_{\underline{n}} }{\partial y^j}(\underline{t},0) \left(C\underline{t}^{\underline{n}}+y_{\underline{n}}\right)^j\right)\geq_{\mathrm{grlex}} 2\underline{n}>\underline{n}+\underline{n}_0. \]
	We deduce by (\ref{equ:yn}) that $w_{\underline{t}}(Q_{\underline{n}}(\underline{t},0))  \geq_{\textrm{grlex}} \underline{n}+\underline{n}_0$ since, otherwise, $Q_{\underline{n}}(\underline{t},C\underline{t}^{\underline{n}}+y_{\underline{n}})$ could not vanish at $C=c_{\underline{n}}$. 
	Let us prove by induction on $\underline{n}\in\mathbb{N}^{r-\tau}$ ordered by $\leq_{\rm grlex}$,  $\underline{n}\geq_{\rm grlex}{\underline{n}_0}$, that the coefficients $c_{\underline{l}}$ of ${\underline{t}}^{\underline{l}}$ in $\tilde{z}_{\underline{n}}$ all belong to $L_{\underline{ n}_0}=K\left(\underline{s},c_{\underline{0}},\ldots,c_{\underline{n}_0}\right)$. The initial case is clear. Assume that the property holds for less than some given $\underline{n}$. Let us denote $\displaystyle\frac{\partial Q_{\underline{n}} }{\partial y}(\underline{t},0)=a_{\underline{n}_0}\underline{t}^{\underline{n}_0}+R(\underline{t})$ with $w_{\underline{t}}(R(\underline{t})) >_{\textrm{grlex}} \underline{n}_0$, $a_{\underline{n}_0}\neq 0$, and $ Q_{\underline{n}}(\underline{t},0)=b_{\underline{n}+\underline{n}_0}\underline{t}^{\underline{n}+\underline{n}_0}+S(\underline{t})$ with $w_{\underline{t}}(S(\underline{t})) >_{\textrm{grlex}} \underline{n}+\underline{n}_0$. By (\ref{equ:translation-Q}) and the induction hypothesis, $a_{\underline{n}_0}$ and  $b_{\underline{n}+\underline{n}_0}$ belong to $L_{\underline{n}_0}$. Looking at the coefficient of $\underline{t}^{\underline{n}+\underline{n}_0}$ in (\ref{equ:yn}) evaluated at $C=c_{\underline{n}}$, we get:
	\begin{equation}\label{equ:hensel-rec}
		a_{\underline{n}_0}c_{\underline{n}} +b_{\underline{n}+\underline{n}_0}=0.
	\end{equation}
	Hence we obtain that  $c_{\underline{n}}\in L_{\underline{n}_0}=K\left(\underline{s},c_{\underline{0}},\ldots,c_{\underline{n}_0}\right)$ for all $\underline{n}>_{\rm grlex}\underline{n}_0$.
\end{demo}

Let us recall that $A(\underline{n})$ denotes the predecessor element of $\underline{n}$ in $(\mathbb{N}^r,\leq_{\textrm{grlex}})$. The following lemma will be used in Section\ref{sect:reconstr-algebroid} in order to apply the results of Section \ref{section:Wilc}.

\begin{lemma}\label{lemme:control-deg-supp}
	Let $d$, $\tilde{m}^0$, $\tilde{n}^0$, $\underline{q}$, $p$ and $P$ be as above (see (\ref{equ:eclt1}) and (\ref{equ:m})). As in the proof of the previous lemma, we set $\underline{l}_0:=w_{\underline{t}}(P)$. We resume the notations of Lemma \ref{lemma:support-equ}. For $k=1,\ldots,\sigma$, with $\underline{s}_k=(u_{i_k},\ldots,u_{j_k-1})$, we denote 
	$$ e_{\underline{s}_k}:=\displaystyle\frac{1}{q_{i_k}q_{i_k+1}\cdots q_{j_k-1}}+\displaystyle\frac{1}{q_{i_k+1}\cdots q_{j_k-1}}+\cdots + \displaystyle\frac{1}{ q_{j_k-1}},$$
	and $\underline{\tilde{n}}^{0,\underline{s}_k}$ (respectively $\underline{\tilde{m}}^{0,\underline{s}_k}$), the multi-index obtained from $\tilde{n}^0$ (respectively $\tilde{m}^0$), by restriction to the components corresponding to the variables in $\underline{s}_k$.  
	Likewise, we set $\underline{\tilde{n}}^{0,\underline{t}_k}$ and $\underline{\tilde{m}}^{0,\underline{t}_k}$ corresponding to the variables in $\underline{t}_k$ for $k=0,\ldots, \sigma$. Let $\underline{n}\in\N^{r-\tau} $, then there exists  $T_{\underline{n}}\in K[\underline{s},( C_{\underline{\beta}})_{\beta\leq_{ \mathrm{grlex}}\underline{n}}]\setminus\{0\}$ such that $T_{\underline{n}}(\underline{s},c_{\underline{0}},\ldots,c_{A(\underline{n})},c_{\underline{n}})=0$, $T_{\underline{n}}(\underline{s},c_{\underline{0}},\ldots,c_{A(\underline{n})},C_{\underline{n}})\not\equiv 0$ with 
	\begin{center}
		$\deg_{C_{\underline{ \beta}}}T_{\underline{n}}\leq d$,\\
		$\deg_{ \underline{ s}}T_{\underline{n}}\leq \left( |\underline{l}_{0}|+d\, |\underline{n}| \right)a+b$,
	\end{center}
	where 
	\begin{center}
		$a:=\displaystyle\sum_{k=1}^\sigma e_{\underline{s}_k} $,\\
		$b:=\varepsilon \left(\displaystyle\sum_{k=1}^\sigma |\underline{\tilde{n}}^{0,\underline{s}_k}|-\displaystyle\sum_{k=1}^\sigma {\tilde{n}}^{0,\underline{t}_k}_{j_k}  e_{\underline{s}_k}\right)+\displaystyle\sum_{k=1}^\sigma |\underline{\tilde{m}}^{0,\underline{s}_k}|-\displaystyle\sum_{k=1}^\sigma \tilde{m}^{0,\underline{t}_k}_{j_k}  e_{\underline{s}_k}$,
	\end{center}
	with $\tilde{n}^{0,\underline{t}_k}_{j_k}$ (respectively $\tilde{m}^{0,\underline{t}_k}_{j_k}$) the first component of $\underline{\tilde{n}}^{0,\underline{t}_k}$ (respectively $\underline{\tilde{m}}^{0,\underline{t}_k}$), and 
	\begin{center}
		$\varepsilon:=\left\{\begin{array}{ll}
			0&\textrm{ if } \displaystyle\sum_{k=1}^\sigma |\underline{\tilde{n}}^{0,\underline{s}_k}|-\displaystyle\sum_{k=1}^\sigma \tilde{n}^{0,\underline{t}_k}_{j_k}  e_{\underline{s}_k}\leq 0,\\
			d&\textrm{ if } \displaystyle\sum_{k=1}^\sigma |\underline{\tilde{n}}^{0,\underline{s}_k}|-\displaystyle\sum_{k=1}^\sigma \tilde{n}^{0,\underline{t}_k}_{j_k}  e_{\underline{s}_k}> 0.
		\end{array} \right.$
	\end{center}
\end{lemma}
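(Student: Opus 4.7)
The plan is to take the polynomial identity (\ref{equ:taylorQ_n}) from the proof of Lemma \ref{lemma:algebraicity}, which asserts that a nonzero polynomial in $C$ with coefficients in $K[\underline{s},c_{\underline{0}},\ldots,c_{A(\underline{n})}]$ vanishes at $C=c_{\underline{n}}$, and to ``algebraize'' it by substituting fresh indeterminates $C_{\underline{\beta}}$ for each coefficient $c_{\underline{\beta}}$ (with $\underline{\beta}<_{\mathrm{grlex}}\underline{n}$) appearing inside $\frac{\partial^{j_k}Q_{\underline{n},\underline{l}_k}}{\partial y^{j_k}}(0)$. The resulting $T_{\underline{n}}\in K[\underline{s},(C_{\underline{\beta}})_{\underline{\beta}\leq_{\mathrm{grlex}}\underline{n}}]$ satisfies $T_{\underline{n}}(\underline{s},c_{\underline{0}},\ldots,c_{\underline{n}})=0$ by construction, and the non-cancellation argument for the maximal-$j_k$ term already present in the proof of Lemma \ref{lemma:algebraicity} shows that $T_{\underline{n}}(\underline{s},c_{\underline{0}},\ldots,c_{A(\underline{n})},C_{\underline{n}})$ is not identically zero as a polynomial in $C_{\underline{n}}$.

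The bounds $\deg_{C_{\underline{\beta}}}T_{\underline{n}}\leq d$ for every $\underline{\beta}$ follow from the Taylor expansion $P(\underline{s},\underline{t},\tilde{z}_{\underline{n}}+y)=\sum_j\frac{1}{j!}(\partial_y^j P)(\underline{s},\underline{t},y)\,\tilde{z}_{\underline{n}}^j$: each monomial produced has the form $(\partial_y^{j_k+m}P_{\underline{l}'})(\underline{s},0)\,C_{\underline{\beta}_1}\cdots C_{\underline{\beta}_m}\,C_{\underline{n}}^{j_k}$ with $j_k+m\leq d$, so no single indeterminate $C_{\underline{\beta}}$ appears in total degree above $d$.

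The main work is the bound on $\deg_{\underline{s}}T_{\underline{n}}$. The $\underline{s}$-dependence enters only through the coefficients $P_{\underline{l}',j}(\underline{s})$ appearing in the partial derivatives, so it suffices to bound $\deg_{\underline{s}}P_{\underline{l}',j}$ for those $(\underline{l}',j)$ that actually contribute. Working cluster by cluster: for $\underline{s}_k=(u_{i_k},\ldots,u_{j_k-1})$, the next variable $u_{j_k}$ lies in $\underline{t}_k$, so $\deg_{u_{j_k}}P_{\underline{l}',j}=0$ while $\deg_{u_{j_k}}\underline{t}^{\underline{l}'}=l'_{j_k}$. Iterating condition (iii) of Lemma \ref{lemma:support-equ} from $i=j_k-1$ downward yields the telescoping bound
\[
\deg_{u_i}P_{\underline{l}',j}\ \leq\ \tilde{m}^0_i+j\,\tilde{n}^0_i+\frac{l'_{j_k}-\tilde{m}^{0,\underline{t}_k}_{j_k}-j\,\tilde{n}^{0,\underline{t}_k}_{j_k}}{q_i q_{i+1}\cdots q_{j_k-1}},\qquad i_k\leq i\leq j_k-1.
\]
Summing over $i$ inside $\underline{s}_k$ produces the coefficient $e_{\underline{s}_k}$ in front of $l'_{j_k}-\tilde{m}^{0,\underline{t}_k}_{j_k}-j\,\tilde{n}^{0,\underline{t}_k}_{j_k}$. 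Summing further over all clusters $k=1,\ldots,\sigma$ and using $\sum_k l'_{j_k}\,e_{\underline{s}_k}\leq|\underline{l}'|\cdot a$ (since the $l'_{j_k}$ are distinct components of $\underline{l}'$ and each $e_{\underline{s}_k}\leq a$), one rearranges and maximizes over $j\in\{0,\ldots,d\}$; this maximization yields exactly the announced dichotomy $\varepsilon\in\{0,d\}$, according to the sign of $\sum_k|\underline{\tilde{n}}^{0,\underline{s}_k}|-\sum_k\tilde{n}^{0,\underline{t}_k}_{j_k}\,e_{\underline{s}_k}$, and produces the bound $\deg_{\underline{s}}P_{\underline{l}',j}\leq |\underline{l}'|\cdot a+b$.

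It remains to bound $|\underline{l}'|$ by $|\underline{l}_0|+d|\underline{n}|$: any contributing $(\underline{l}',j)$ satisfies $\underline{l}'\leq \underline{l}_k$ componentwise with $\underline{l}_k=\underline{\omega}-j_k\underline{n}$, whence $|\underline{l}'|\leq|\underline{\omega}|$; and $|\underline{\omega}|\leq|\underline{l}_0|+d|\underline{n}|$, because $Q_{\underline{n},\underline{l}_0}(y)=P_{\underline{l}_0}(\underline{s},\tilde{z}_{\underline{n}}+y)$ is a nonzero polynomial in $y$ of degree at most $d$, so some derivative $\frac{\partial^{j'}Q_{\underline{n},\underline{l}_0}}{\partial y^{j'}}(0)$ is nonzero for some $j'\leq d$, giving $\underline{\omega}\leq_{\mathrm{grlex}}\underline{l}_0+j'\underline{n}$. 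The main obstacle will be to carry the telescoping bookkeeping cleanly, keeping the distinction between the ``interior'' contributions (from $\underline{s}_k$) and the ``boundary'' ones at $u_{j_k}\in\underline{t}_k$, so that the constants $e_{\underline{s}_k}$, $a$, $b$ and the dichotomy for $\varepsilon$ arise exactly as in the statement without spurious boundary terms.
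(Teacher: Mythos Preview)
Your proposal is correct and follows essentially the same route as the paper: construct $T_{\underline{n}}$ by lifting the polynomial identity (\ref{equ:taylorQ_n}), iterate condition~(iii) of Lemma~\ref{lemma:support-equ} to get the telescoping bound with factor $e_{\underline{s}_k}$, sum over the clusters, and use the dichotomy on the sign of the $\tilde{n}^0$-term to produce~$\varepsilon$. One minor imprecision: the identity $Q_{\underline{n},\underline{l}_0}(y)=P_{\underline{l}_0}(\underline{s},\tilde{z}_{\underline{n}}+y)$ is not literally correct since $\tilde{z}_{\underline{n}}$ depends on $\underline{t}$; what you actually get is $Q_{\underline{n},\underline{l}_0}(y)=P_{\underline{l}_0}(\underline{s},c_{\underline{0}}+y)$ (the $\underline{t}^{\underline{0}}$-part), but this is still a nonzero polynomial in $y$ of degree $\le d$, so your conclusion $\underline{\omega}\le_{\mathrm{grlex}}\underline{l}_0+d\underline{n}$ stands.
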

\begin{proof}
	Resuming the notations and computations of the previous lemma (see (\ref{equ:z_n-y_n}) to (\ref{equ:taylorQ_n})), $c_{\underline{n}}$ is a root of a nonzero polynomial in $C$ of the form:
	$$ \sum_{\underline{l}_k+p_k\underline{ n}=\underline{\omega}} \frac{1}{p_k!} \frac{\partial^{p_k} Q_{\underline{ n},\underline{l}_k} }{\partial y^{p_k}}(0)\,  {C }^{p_k} $$
	where $\underline{\omega}:=w_{\underline{t}}\left( Q_{\underline{n}}(\underline{t},C\underline{t}^{\underline{n}}+y_{\underline{n}})\right)=\underline{l}_1+p_1\underline{ n}\leq_{\mathrm{grlex}} \underline{l}_1+d\,\underline{ n}$. Let us denote by $T_{\underline{n}}$ the polynomial obtained from the preceding expression by substituting $C_{\underline{n}}$ to $C$ and $C_{\underline{\beta}}$ to $c_{\underline{\beta}}$ for $\underline{\beta}<_{\mathrm{grlex}}\underline{n}$. More precisely, if we set 
	\[ \begin{aligned}
		H_{\underline{n}}(\underline{s},\underline{t},(C_{\underline{\beta}})_{\underline{\beta}\leq_{ \mathrm{grlex}}\underline{n}},y)= P\left(\underline{s},\underline{t},\sum_{\underline{\beta}\leq_{ \mathrm{grlex}}\underline{n}} C_{\underline{\beta}} \underline{t}^{\underline{\beta}}+y \right)\\
		=\sum_{\underline{l}\in \mathbb{N}^{r-\tau}} H_{\underline{n},\underline{l}}(\underline{s},(C_{\underline{\beta}})_{\underline{\beta}\leq_{ \mathrm{grlex}}\underline{n}},y)\underline{t}^{\underline{l}}
	\end{aligned} 
	\]
	then $T_{\underline{n}}(\underline{s},(C_{\underline{\beta}})_{\underline{\beta}\leq_{ \mathrm{grlex}}\underline{n}}):=H_{\underline{n},\underline{\omega}}(\underline{s},(C_{\underline{\beta}})_{\underline{\beta}\leq_{ \mathrm{grlex}}\underline{n}},0)$.
	
	Since $w_{ \underline{t}}(Q_{\underline{ n}})=w_{ \underline{t}}(P)$ by (\ref{equ:wtPQ}), we observe  that  $\underline{l}_0=\min_{\leq_{\mathrm{grlex}}} \left\{\underline{l}\ |\ \exists p,\ \displaystyle\frac{\partial^{p} Q_{\underline{ n},\underline{l}} }{\partial y^{p}}(0)\neq 0 \right\}$. Let $p_0 = \min \left\{p\ |\  \displaystyle\frac{\partial^{p} Q_{\underline{ n},\underline{l}_0} }{\partial y^{p}}(0)\neq 0 \right\}$. Then the coefficient of $C^{p_0} \underline{t}^{\underline{l}_0 +p_0\underline{n}}$ in the expansion of $Q_{\underline{ n}}(\underline{t},C \underline{t}^{\underline{n} }+y_{ \underline{n}})$ is not zero. Since we have that:
	$$Q_{\underline{ n}}(\underline{t},C\underline{t}^{\underline{n}}+y_{ \underline{n}})=\displaystyle\sum_{\underline{l}\geq \underline{0}}\displaystyle\sum_{j=0}^d \frac{1}{j!} \frac{\partial^j Q_{\underline{ n},\underline{l}} }{\partial y^j}(0)\, \left(C\underline{t}^{\underline{n}}+y_{ \underline{n}}\right)^j \,\underline{t}^{\underline{l}},$$ 
	the term $\displaystyle\frac{1}{p_0!} \frac{\partial^{p_0} Q_{\underline{ n},\underline{l}_0} }{\partial y^{p_0}}(0)\,  C ^{ p_0} \,\underline{t}^{\underline{l}_0+p_0\underline{n}}$ cannot overlap with other terms since the latter will necessarily be of the form $\displaystyle\frac{1}{(p-p_0)!p_0!} \frac{\partial^{p} Q_{\underline{ n},\underline{l}} }{\partial y^{p}}(0)\,  C ^{ p_0} \,\underline{t}^{\underline{l}+p_0\underline{n}}y_{ \underline{n}}^{p-p_0}$ with $\underline{l}\geq_{\mathrm{grlex}}\underline{l}_0$, $p\geq p_0$ and $w_{\underline{t}}(y_{\underline{n}})>_{\mathrm{grlex}} \underline{n}$. (see (\ref{equ:z_n-y_n})). So, $\underline{ \omega}\leq_{\mathrm{grlex}} \underline{l}_0+p_0\underline{n}\leq_{\mathrm{grlex}} \underline{l}_0+d\underline{n}$.\\

	Let us detail the expression of the connection between $P$ and $Q_{\underline{ n}}$.	We denote  $ P(\underline{s},\underline{t},y)=\displaystyle\sum_{\underline{l}\in \mathbb{N}^{r-\tau}}
	\left(\displaystyle\sum_{\underline{k}\in\N^\tau}\displaystyle\sum_{j=0}^d  a_{\underline{k},\underline{l},j}\underline{s}^{\underline{k}}y^j\right) \underline{t}^{\underline{l}}$, and we get:
	\begin{center}
		$ Q_{\underline{ n}}(\underline{s},\underline{t},y)	=P(\underline{s},\underline{t},\tilde{z}_{\underline{n}}+y )$\\
		$	=\displaystyle\sum_{\underline{l}\in \mathbb{N}^{r-\tau}}
		\left(\displaystyle\sum_{\underline{k}\in\N^\tau}\displaystyle\sum_{j=0}^d  a_{\underline{k},\underline{l},j}\underline{s}^{\underline{k}}\left(\sum_{\underline{\beta}<_{\textrm{grlex}} \underline{n} } c_{\underline{\beta}}\underline{t}^{\underline{\beta}}+y\right)^j\right) \underline{t}^{\underline{l}}$\\
		$	=\displaystyle\sum_{\underline{l}\in \mathbb{N}^{r-\tau}}	\left(\displaystyle\sum_{\underline{k}\in\N^\tau}\displaystyle\sum_{j=0}^d  a_{\underline{k},\underline{l},j}\underline{s}^{\underline{k}}\left(\sum_{|\underline{j}|=j} \frac{j!}{\underline{j}!} \left(\prod_{\underline{\beta}<_{\textrm{grlex}} \underline{n} }{ c_{\underline{\beta}}}^{j_{\underline{\beta}}}\right) y^{j_{\underline{n}}} \underline{t}^{g(\underline{j})-j_{\underline{n}}\underline{n}} \right)\right) \underline{t}^{\underline{l}}$\\
		$	=\displaystyle\sum_{\underline{l}\in \mathbb{N}^{r-\tau}}	\displaystyle\sum_{\underline{k}\in\N^\tau}\displaystyle\sum_{j=0}^d\sum_{|\underline{j}|=j}   a_{\underline{k},\underline{l},j}\underline{s}^{\underline{k}}  \frac{j!}{\underline{j}!} \left(\prod_{\underline{\beta}<_{\textrm{grlex}} \underline{n} }{ c_{\underline{\beta}}}^{j_{\underline{\beta}}}\right) y^{j_{\underline{n}}}  \underline{t}^{\underline{l}+g(\underline{j})-j_{\underline{n}}\underline{n}}$\\
	\end{center}
	where $\underline{j}=(j_{\underline{0}},\ldots,j_{\underline{n}})$ and $g(\underline{j})$ is as in Notation \ref{nota:FS}. Next, we evaluate $y$ at $C \underline{t}^{\underline{n}}+y_{ \underline{n}}$ and we consider the $(\underline{l},\underline{j})$'s such that $\underline{l}+g(\underline{j})=\underline{ \omega}$ for which the coefficient of $\underline{t}^{\underline{ \omega} }$ is the non-trivial polynomial of which $c_{\underline{n}}$ is a root. Then, the multi-indices $\underline{l}$ involved are such that $\underline{l}\leq_{\mathrm{grlex}}\underline{l}_0+d\underline{n}$. Consider such a monomial $\underline{s}^{\underline{ k} }\underline{t}^{\underline{l} }y^j$ written as $\underline{u}^{\underline{ \alpha} }y^j$ as in (\ref{equ:eclt1}). Recall that the elements of the support of $P$ satisfy Condition (iii) of Lemma \ref{lemma:support-equ}: for any $k=1,\ldots,\sigma$, for any $u_i\in \underline{s}_k$, $ \alpha_i-(\tilde{m}^0_{i}+j\tilde{n}_{i}^0)\leq \displaystyle\frac{\alpha_{i+1}-(\tilde{m}^0_{i+1}+j\tilde{n}_{i+1}^0)}{q_i}.$ For $\underline{s}_k=(u_{i_k},\ldots,u_{j_k-1})$ and $\underline{t}_k=(u_{j_k},\ldots,u_{i_{k+1}-1})$, we claim that for any $i=i_k,\ldots,j_k-1$,
	\begin{equation}\label{equ:control-supp}
		\alpha_i\leq \displaystyle\frac{\alpha_{j_k}}{q_iq_{i+1}\cdots q_{j_k-1}}+j \left( \tilde{n}^0_i-\displaystyle\frac{\tilde{n}^0_{j_k}}{q_iq_{i+1}\cdots q_{j_k-1}} \right)+\tilde{m}^0_i-\displaystyle\frac{\tilde{m}^0_{j_k}}{q_iq_{i+1}\cdots q_{j_k-1}}. 
	\end{equation}
	The case $i=j_k-1$ is given by Condition (iii). Suppose that the formula holds until $i+1$, i.e. 
	$$\alpha_{i+1}\leq \displaystyle\frac{\alpha_{j_k}}{q_{i+1}\cdots q_{j_k-1}}+j \left( \tilde{n}^0_{i+1}-\displaystyle\frac{\tilde{n}^0_{j_k}}{q_{i+1}\cdots q_{j_k-1}} \right)+\tilde{m}^0_{i+1}-\displaystyle\frac{\tilde{m}^0_{j_k}}{q_{i+1}\cdots q_{j_k-1}}.$$
	Since, by Condition (iii), we have $ \alpha_i\leq \displaystyle\frac{\alpha_{i+1}}{q_i}+j\left(\tilde{n}_{i}^0-\displaystyle\frac{\tilde{n}_{i+1}^0}{q_i}\right)+\tilde{m}_{i}^0-\displaystyle\frac{\tilde{m}_{i+1}^0}{q_i},$ we obtain the formula for $\alpha_i$ as expected.
	
	Now, we consider the sum for $i=i_k,\ldots,j_k-1$ of these inequalities (\ref{equ:control-supp}):
	$$\displaystyle\sum_{i=i_k}^{j_k-1}\alpha_i\leq \alpha_{j_k}e_{\underline{s}_k}+j\left(|\underline{\tilde{n}}^{0,\underline{s}_k}|-\tilde{n}^{0 }_{j_k}e_{\underline{s}_k}\right)+|\underline{\tilde{m}}^{0,\underline{s}_k}|-\tilde{m}^{0 }_{j_k}e_{\underline{s}_k}.$$
	Note that $\tilde{n}^{0 }_{j_k}=\tilde{n}^{0,\underline{t}_k}_{j_k}$ and  $\tilde{m}^{0 }_{j_k}=\tilde{m}^{0,\underline{t}_k}_{j_k}$. Moreover, $\alpha_{j_k}$ is equal to some $l_{\gamma}$ component of $\underline{l}$, so $\alpha_{j_k}\leq |\underline{l}_0|+d|\underline{n}|$. So,\begin{equation}\label{equ:alpha}
		\displaystyle\sum_{i=i_k}^{j_k-1}\alpha_i\leq \left(|\underline{l}_0|+d|\underline{n}|\right)e_{\underline{s}_k}+j\left(|\underline{\tilde{n}}^{0,\underline{s}_k}|-\tilde{n}^{0,\underline{t}_k}_{j_k}e_{\underline{s}_k}\right)+|\underline{\tilde{m}}^{0,\underline{s}_k}|-\tilde{m}^{0,\underline{t}_k}_{j_k}e_{\underline{s}_k}.
	\end{equation}
	Taking the sum for $k=1,\ldots,\sigma$, we obtain:
	$$|\underline{k}|\leq \left(|\underline{l}_0|+d|\underline{n}|\right)\displaystyle\sum_{i=1}^{\sigma}e_{\underline{s}_k}+j\left(\displaystyle\sum_{i=1}^{\sigma}|\underline{\tilde{n}}^{0,\underline{s}_k}|-\displaystyle\sum_{i=1}^{\sigma}\tilde{n}^{0,\underline{t}_k}_{j_k}e_{\underline{s}_k}\right)+\displaystyle\sum_{i=1}^{\sigma}|\underline{\tilde{m}}^{0,\underline{s}_k}|-\displaystyle\sum_{i=1}^{\sigma}\tilde{m}^{0,\underline{t}_k}_{j_k}e_{\underline{s}_k}.$$
	Since $0\leq j\leq d$, we finally obtain:
	$$|\underline{k}|\leq \left(|\underline{l}_0|+d|\underline{n}|\right)\displaystyle\sum_{i=1}^{\sigma}e_{\underline{s}_k}+\varepsilon\left(\displaystyle\sum_{i=1}^{\sigma}|\underline{\tilde{n}}^{0,\underline{s}_k}|-\displaystyle\sum_{i=1}^{\sigma}\tilde{n}^{0,\underline{t}_k}_{j_k}e_{\underline{s}_k}\right)+\displaystyle\sum_{i=1}^{\sigma}|\underline{\tilde{m}}^{0,\underline{s}_k}|-\displaystyle\sum_{i=1}^{\sigma}\tilde{m}^{0,\underline{t}_k}_{j_k}e_{\underline{s}_k}.$$
\end{proof}

\begin{remark}\label{rem:control-deg}
	From the previous proof, we observe that, for any monomial $\underline{s}^{\underline{ k} }\underline{t}^{\underline{l} }y^j$ in the support of a polynomial $P$ which satisfies the conditions of Lemma \ref{lemma:support-equ}, one has that:
	\begin{equation}\label{equ:control-deg}
		| \underline{ k}|\leq a |\underline{ l}|+b,
	\end{equation}
	where $a$ and $b$ are as in Lemma \ref{lemme:control-deg-supp}. To see this, use $\alpha_{j_k}\leq |\underline{l}|$ in place of $\alpha_{j_k}\leq |\underline{l}_0|+d|\underline{n}|$ in (\ref{equ:alpha}).
\end{remark}

\begin{ex}	For $r=2$, let $p,q\in\N^*$ and $\underline{\tilde{n}}^0=(\tilde{n}^0_1,\tilde{n}^0_2)\in \Z^2$. 
	\begin{enumerate}
		\item Let us consider:  
		\[\tilde{y}_0=\displaystyle\left(\frac{x_1}{x_2^q}\right)^{\tilde{n}^0_1/p}x_2^{\tilde{n}^0_2/p} \displaystyle\sum_{i,j=0}^{p-1}\left(\frac{1}{1-x_2}\frac{x_2^{q}}{x_2^{q}-x_1}\right) \left(\frac{x_1}{x_2^q}\right)^{i/p} x_2^{j/p}\in \mathcal{K}_2.\]
		The series $\tilde{y}_0$ is algebroid, even algebraic, since it is a finite sum and product of algebraic series. Hence, $\left(u_1,u_2\right)=\left( \left(\displaystyle\frac{x_1}{x_2^{q_1}}\right)^{1/p}, {x_2}^{1/p}\right)=(s,t)$. Moreover, it has a full support: \[\left\{ \frac{1}{p}\underline{\tilde{n}}^0+\left(\frac{k}{p},\, \frac{l-qk}{p} \right)\ |\ (k,l)\in\N^2 \right\}.\vspace{0.2cm}\]
		\begin{center}
			\includegraphics[scale=0.65]{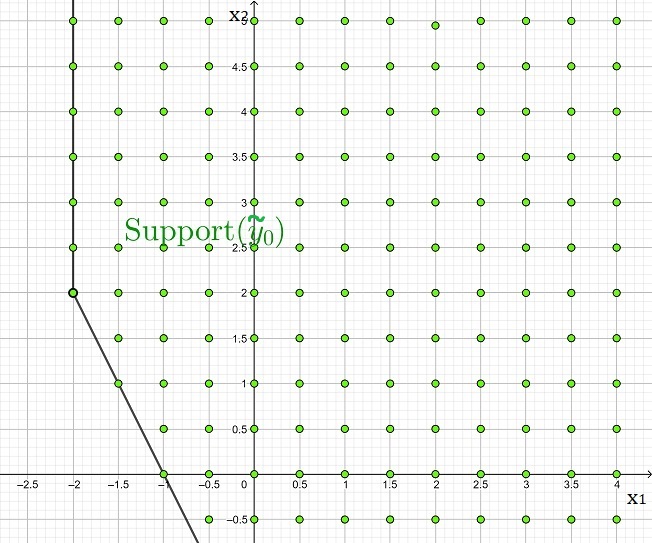} 
		\end{center}
		\item  Let us consider 
		\[\tilde{y}_0=\displaystyle\left(\frac{x_1}{x_2^q}\right)^{\tilde{n}^0_1/p}x_2^{\tilde{n}^0_2/p} \left(\frac{1}{1-{x_2}^{1/p}}\right) \exp\left(\left(\frac{x_1}{x_2^q}\right)^{1/p}\right) \in \mathcal{K}_2.\]
		The series $\tilde{y}_0$ is transcendental over $K[[x_1,x_2]]$. Indeed, with the same notations as above, $\tilde{y}_0=s^{\tilde{n}^0_1/p}t^{\tilde{n}^0_2/p}\displaystyle\frac{1}{1-t}\exp(s)$ is algebroid if and only if $\exp(s)$ is algebraic by Lemma \ref{lemma:algebraicity}. This is clearly not the case. Moreover, $\tilde{y}_0$  has the same support as above. 
	\end{enumerate} 
\end{ex}

\begin{remark}
	In \cite[Question 7.2]{kuhlmann-krapp-serra:generalised-LRR}, the authors ask whether $K((\underline{x}))$ is a Rayner field. The above example with $p=1$ provides us with two series having same support, the first belonging to $K((\underline{x}))$, and the second not. Following the argument after \cite[Question 7.2]{kuhlmann-krapp-serra:generalised-LRR}, this shows that $K((\underline{x}))$ is not a Rayner field.
\end{remark}

\section{A nested depth lemma.}\label{sect:depth}

\begin{lemma}\label{lemme:bezout-coeff}
	Let $d_{\underline{x}},\, d,\, \d_{\underline{x}},\, \d\in \mathbb{N}^*$. 
	Given two  polynomials $P\in K\left[\underline{x},y\right]\setminus\{0 \}$, $\deg_{\underline{x}}P\leq d_{\underline{x}},\ \deg_yP\leq d$, and $Q\in K\left[\underline{x},y\right]\setminus\{0 \},\ \deg_{\underline{x}}Q\leq \d_{\underline{x}},\ \deg_yQ\leq \d$, we denote by  $R\in  K\left[\underline{x}\right] $  their resultant. It satisfies $\deg_{\underline{x}}R\leq d\d_{\underline{x}}+\d d_{\underline{x}}$. Moreover, in the B\'ezout identity:
	$$AP+BQ=R,$$ one can choose the polynomials $S,\, T \in K\left[\underline{x},y\right]$	which satisfy:
	$$\left\{ \begin{array}{ll}
		\deg_{\underline{x}}A\leq d_{\underline{x}}(\d-1)+\d_{\underline{x}}d& \deg_yA\leq \d-1\\
		\deg_{\underline{x}}B\leq d_{\underline{x}}\d+\d_{\underline{x}}(d-1)& \deg_yB\leq d-1
	\end{array}\right.$$
	
\end{lemma}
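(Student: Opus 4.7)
My plan is to read everything off the Sylvester matrix of $P$ and $Q$ viewed as polynomials in $y$ with coefficients in $K[\underline{x}]$. Writing $P(y) = p_d y^d + \cdots + p_0$ with $\deg_{\underline{x}} p_i \leq d_{\underline{x}}$ and $Q(y) = q_{\d} y^{\d} + \cdots + q_0$ with $\deg_{\underline{x}} q_j \leq \d_{\underline{x}}$, the resultant $R = \mathrm{Res}_y(P,Q)$ is the determinant of the $(d+\d)\times (d+\d)$ Sylvester matrix $M$ whose first $\d$ columns carry shifted copies of the coefficient vector $(p_0,\ldots,p_d)$ and whose last $d$ columns carry shifted copies of $(q_0,\ldots,q_{\d})$. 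Each term in the Leibniz expansion of $\det M$ therefore uses exactly $\d$ entries taken from the $P$-columns (each of $\underline{x}$-degree $\leq d_{\underline{x}}$) and exactly $d$ entries from the $Q$-columns (each of $\underline{x}$-degree $\leq \d_{\underline{x}}$). Hence $\deg_{\underline{x}} R \leq \d\, d_{\underline{x}} + d\, \d_{\underline{x}}$, as claimed.

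For the Bézout part, I would look for $A = \sum_{i=0}^{\d-1} a_i y^i$ and $B = \sum_{j=0}^{d-1} b_j y^j$ with $a_i, b_j \in K[\underline{x}]$ and translate $AP + BQ = R$ into a system of $d+\d$ linear equations (one for each power of $y$ from $0$ to $d+\d-1$) in the $d+\d$ unknowns $(a_0,\ldots,a_{\d-1}, b_0,\ldots, b_{d-1})$. The matrix of this linear system is precisely $M$ (up to a harmless reordering of rows), and the right-hand side is $(R, 0, \ldots, 0)^T$. Since $\det M = R$, Cramer's rule gives $R \cdot a_i$ and $R \cdot b_j$ as determinants of matrices obtained from $M$ by replacing one column by $(R,0,\ldots,0)^T$; expanding that column, a factor of $R$ comes out and we are left, up to sign, with the cofactor $C_{0,i}$ of $M$ corresponding to the position of the $a_i$-column (respectively the $b_j$-column).

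It remains to bound the $\underline{x}$-degree of these cofactors by counting, in each case, how many of the remaining $d+\d-1$ columns are of type $P$ and how many are of type $Q$. Removing an $a_i$-column ($0 \leq i \leq \d-1$) leaves $\d - 1$ $P$-columns and $d$ $Q$-columns, so the cofactor has $\underline{x}$-degree at most $(\d-1)\,d_{\underline{x}} + d\,\d_{\underline{x}}$, giving the announced bound on $\deg_{\underline{x}} A$; removing a $b_j$-column leaves $\d$ $P$-columns and $d-1$ $Q$-columns, yielding $\deg_{\underline{x}} B \leq \d\, d_{\underline{x}} + (d-1)\,\d_{\underline{x}}$. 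The $y$-degree bounds $\deg_y A \leq \d - 1$ and $\deg_y B \leq d - 1$ are built into the ansatz.

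The only delicate point is the bookkeeping at the Cramer's rule step: one must be sure the Sylvester system is genuinely invertible over the fraction field $K(\underline{x})$ (which is the case when $R \neq 0$, i.e.\ when $P$ and $Q$ are coprime in $K(\underline{x})[y]$) and that the factor of $R$ pulled out of the cofactor expansion cancels the $R$ coming from Cramer's denominator, so that $A$ and $B$ really lie in $K[\underline{x},y]$ and not merely in $K(\underline{x})[y]$. If $R = 0$, the identity $AP + BQ = R$ is trivial with $A = B = 0$, so there is nothing to prove. No macro outside the paper's preamble is needed and the argument is entirely elementary linear algebra.
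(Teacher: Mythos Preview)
Your proof is correct and follows essentially the same route as the paper: set up the Sylvester matrix $M$, read off the resultant bound from the Leibniz expansion, and obtain $A,B$ as signed cofactors of $M$ with their $\underline{x}$-degrees bounded by counting how many $P$-columns versus $Q$-columns survive in each minor. The only cosmetic difference is that the paper invokes the adjugate identity $M\cdot{}^tM'=R\,\mathrm{Id}$ directly (which works uniformly even when $R=0$), whereas you phrase the same computation via Cramer's rule and handle $R=0$ separately; the outcome and the bounds are identical.
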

\begin{demo} 
	We consider the following linear map: 
	$$\begin{array}{lccl}
		\varphi:&K(\underline{x})[y]_{\d}\times K(\underline{x})[y]_{d}&\rightarrow &K(\underline{x})[y]_{d+\d}\\ 
		&(A,B)&\mapsto& AP+BQ,
	\end{array}	$$
	where $K(\underline{x})[y]_{n}$ denotes the $K(\underline{x})$-vector space of polynomials of degree less than $n$ in $y$.  
	The matrix $M$ of $\varphi$ in the standard basis $\{(y^i,0)\}\cup\{(0,y^j)\}$ and $\{y^k\}$ is the Sylvester matrix of $P$ and $Q$.	
	The polynomial $R\in   K\left[\underline{x}\right] $ is its determinant. So, $\deg_{\underline{x}}R\leq d\d_{\underline{x}}+\d d_{\underline{x}}$. Let $M'$ be the matrix of cofactors of $M$. From the relation ${M}.\, {^tM'}=R\, \mathrm{Id}_{d+\d}$, one deduces the B\'ezout identity $AP+BQ=R$, the coefficients of $A$ and $B$ being minors of $M$ of  maximal order minus 1.
\end{demo}

\begin{lemma}\label{lemme:minor-rank}
	Let $\mathfrak{A}$ be a domain and $\mathfrak{K}$ its field of fractions. Given $n\in\N$, $n\geq 2$, we consider an  $n\times n$ matrix $M=(m_{i,j})$ with coefficients in $\mathfrak{A}$. We suppose that $M$ (as a matrix with coefficients in $\mathfrak{K}$) has rank $n-p$ for some $1\leq p<n$. Then there exists a vector $V\in \mathfrak{A}^n\setminus\{0\}$ whose nonzero coefficients are equal, up to sign $\pm$, to minors of order $n-p$ of $M$ and such that $M.V=0$. 
\end{lemma}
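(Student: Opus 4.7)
The plan is to exhibit the vector $V$ explicitly as coming from Cramer's rule applied to the columns of $M$. Since $\mathrm{rank}_{\mathfrak{K}} M = n-p$, there exists a nonzero minor of order $n-p$: pick rows $R=\{i_1,\ldots,i_{n-p}\}$ and columns $C=\{j_1,\ldots,j_{n-p}\}$ such that $\Delta := \det\bigl(m_{i_a,j_b}\bigr)_{1\leq a,b\leq n-p} \neq 0$. The columns $M_{j_1},\ldots,M_{j_{n-p}}$ of $M$ are then $\mathfrak{K}$-linearly independent in $\mathfrak{K}^n$, because their restrictions to the rows indexed by $R$ already are. Since the rank of $M$ equals $n-p$, these $n-p$ columns form a basis of the column space of $M$.

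Since $p\geq 1$, pick any column index $c\notin C$. By the previous step, there exist unique scalars $\alpha_1,\ldots,\alpha_{n-p}\in\mathfrak{K}$ with $M_c = \sum_{b=1}^{n-p} \alpha_b\, M_{j_b}$. Restricting this identity to the rows indexed by $R$ gives a Cramer system of size $n-p$ whose matrix has determinant $\Delta\neq 0$, so
\[ \alpha_b = \frac{\Delta_b}{\Delta}, \]
where $\Delta_b$ is the determinant of the matrix obtained from $\bigl(m_{i_a,j_{b'}}\bigr)_{a,b'}$ by replacing its $b$-th column by $(m_{i_a,c})_a$. Reordering columns in increasing order shows that $\Delta_b$ is, up to a sign, the minor of order $n-p$ of $M$ extracted from rows $R$ and columns $(C\setminus\{j_b\})\cup\{c\}$.

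Clearing denominators yields the identity $\Delta\, M_c - \sum_{b=1}^{n-p}\Delta_b M_{j_b} = 0$ in $\mathfrak{A}^n$. Define $V\in\mathfrak{A}^n$ by $V_c := \Delta$, $V_{j_b} := -\Delta_b$ for $b=1,\ldots,n-p$, and $V_k := 0$ for every other column index $k$. Then $M.V = 0$, $V$ is nonzero because $V_c = \Delta\neq 0$, and its nonzero entries are $\pm$ minors of order $n-p$ of $M$, as required. I expect no serious obstacle: the only delicate point is that the assumption $\mathrm{rank}\, M = n-p$, \emph{combined} with the choice of a nonzero $(n-p)$-minor on rows $R$, forces the columns in $C$ to span the \emph{entire} column space of $M$, not merely its projection onto the rows indexed by $R$; this upgrade from the Cramer identity on $R$ to an identity among the full column vectors is what makes the argument go through.
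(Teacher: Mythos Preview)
Your proof is correct and follows essentially the same approach as the paper. The paper fixes $R$ and $C$ to be the first $n-p$ rows and columns, takes $c=n-p+1$, defines the same vector $V$ via signed minors, and verifies $M\cdot V=0$ row by row (rows in $R$ give determinants with a repeated row, rows outside $R$ give $(n-p+1)$-minors that vanish by the rank hypothesis); your presentation via the column-space argument and Cramer's rule is a cleaner repackaging of the same computation.
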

\begin{proof}
	Without loss of generality, we can suppose that the minor of order $n-p$, say $\Delta$, given by the first $n-p$ rows and columns is not zero. Denote $V:=(\Delta_1,\ldots, \Delta_n)$.  For $k>n-p+1$, set $\Delta_k:=0$. For $k=n-p+1$, set $\Delta_k:=(-1)^{n-p+1}\Delta\neq 0$. For $ k< n-p+1$, we set $\Delta_k$ equal to $(-1)^k$ times the minor of $M$ given by the first $n-p$ rows, and all but the $k$'th first $n-p+1$ columns. Denote  $M.V:=(c_1,\ldots,c_n)$. We claim that $M.V=0$. Indeed, $ c_1= \displaystyle\sum_{j=1}^{n-p+1} m_{1,j}\Delta_j$ which is the determinant of the $(n-p+1)\times(n-p+1)$-matrix $(\delta_{i,j})$ with $\delta_{i,j}=m_{i,j}$ for $1\leq i\leq n-p$ and $1\leq j\leq n-p+1$, and   $\delta_{n-p+1,j}=m_{1,j}$ for $1\leq j\leq n-p+1$. This determinant vanishes since it has two identical rows. Similarly, we have that $c_2=\cdots=c_{n-p}=0$. 
	
	Now, $c_{n-p+1}=\displaystyle\sum_{j=1}^{n-p+1} m_{n-p+1,j}\Delta_j$, which is equal to a minor of order $n-p+1$ of $M$. It vanishes since $M$ has rank $n-p$. Similarly, $c_{n-p+2}=\ldots=c_n=0$.
\end{proof}

\begin{lemma}\label{lemme:sylvester-rank}
	Let $\mathfrak{A}$ be a domain and $\mathfrak{K}$ its field of fractions.
	Let $P_1,P_2\in \mathfrak{A}[y]\setminus\{0\}$ of positive degrees $d_1\geq d_2$ respectively. 
	The Sylvester matrix of $P_1$ and $P_2$ has rank at least $d_1$. 
	
	Moreover, 
	it  has rank $d_1$ if and only if   $aP_1=BP_2$  for some $a\in \mathfrak{A}$ and $B\in \mathfrak{A}[y]\setminus\{0\}$. 
	
	In this case, one can take $a={q_{d_2}}^{d_1-d_2 + 1}$ (where $q_{d_2}$ is the coefficient of $y^{d_2}$ in $P_2$) and the coefficients of such a polynomial $B$ can be computed as homogeneous polynomial formulas in the coefficients of $P_1$ and $P_2$ of degree $d_1-d_2+1$, each monomial consisting of $d_1-d_2$ coefficients of $P_2$ times  1 coefficient of $P_1$.
\end{lemma}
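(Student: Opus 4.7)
The plan is to analyze the Sylvester matrix as the matrix of the $\mathfrak{K}$-linear map
\[
\varphi \colon \mathfrak{K}[y]_{d_2} \times \mathfrak{K}[y]_{d_1} \longrightarrow \mathfrak{K}[y]_{d_1+d_2}, \qquad (A,B) \longmapsto AP_1 + BP_2,
\]
where $\mathfrak{K}[y]_{n}$ denotes the $\mathfrak{K}$-vector space of polynomials of degree less than $n$ in $y$. For the lower bound on the rank, I will observe that the shifts $P_2, yP_2, \ldots, y^{d_1-1}P_2$ are $d_1$ nonzero polynomials of pairwise distinct degrees $d_2, d_2+1, \ldots, d_1+d_2-1$, hence linearly independent over $\mathfrak{K}$, and all lie in $\mathrm{Im}\,\varphi$.

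For the characterization of rank equal to $d_1$, since the image of $\varphi$ always contains the $d_1$-dimensional subspace $V := \mathrm{Span}_{\mathfrak{K}}(P_2,yP_2,\ldots,y^{d_1-1}P_2)$, the rank equals $d_1$ precisely when $\mathrm{Im}\,\varphi = V$, which happens if and only if $P_1 \in V$, i.e. if and only if $P_2$ divides $P_1$ in $\mathfrak{K}[y]$ (note that if $P_1 = Q P_2$ with $\deg Q = d_1 - d_2 < d_1$, then all shifts $y^i P_1$ with $0 \leq i < d_2$ automatically lie in $V$ as well). Clearing denominators in the quotient $Q = P_1/P_2 \in \mathfrak{K}[y]$ then yields an identity $aP_1 = BP_2$ with $a \in \mathfrak{A}\setminus\{0\}$ and $B \in \mathfrak{A}[y]\setminus\{0\}$; conversely, any such identity immediately forces $P_2 \mid P_1$ in $\mathfrak{K}[y]$ since $\mathfrak{A}$ is a domain.

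To obtain the precise form $a = {q_{d_2}}^{d_1-d_2+1}$ together with the bi-homogeneity of $B$, I will run the pseudo-division algorithm. Setting $R_0 = P_1$, $B_0 = 0$, I iteratively define
\[
R_k = q_{d_2}\,R_{k-1} - \mathrm{lc}(R_{k-1})\, y^{\deg R_{k-1} - d_2}\,P_2, \qquad B_k = q_{d_2}\,B_{k-1} + \mathrm{lc}(R_{k-1})\, y^{\deg R_{k-1} - d_2},
\]
and an easy induction on $k = 1,\ldots, d_1-d_2+1$ will give $q_{d_2}^{k}\,P_1 = B_k P_2 + R_k$ with $B_k, R_k \in \mathfrak{A}[y]$ and $\deg R_k < d_1 - k + 1$. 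When the rank equals $d_1$, the final remainder $R_{d_1-d_2+1}$ is an element of $\mathfrak{A}[y]$ which is a $\mathfrak{K}[y]$-multiple of $P_2$ of degree strictly less than $d_2$, hence zero, giving the desired identity with $B := B_{d_1-d_2+1}$.

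The only slightly technical point will be to track the bi-homogeneity of the $B_k$ in parallel with the main induction: each coefficient of $B_k$ must be shown to be a sum of monomials of bi-degree $(k-1,1)$ in the coefficients of $(P_2, P_1)$, the base case $B_1 = p_{d_1}\,y^{d_1-d_2}$ being clear and the induction step using that $\mathrm{lc}(R_{k-1})$ itself has bi-degree $(k-1,1)$ (since $R_{k-1}$ does). At $k = d_1 - d_2 + 1$ this produces the claimed bi-degree $(d_1-d_2,\,1)$ for $B$, so that each monomial in a coefficient of $B$ consists of exactly $d_1 - d_2$ coefficients of $P_2$ times one coefficient of $P_1$.
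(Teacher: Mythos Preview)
Your proof is correct. For the rank lower bound and the rank-$d_1$ characterization you argue essentially as the paper does (independence of the shifts $y^lP_2$ via their distinct degrees, and reduction of rank $=d_1$ to $P_2\mid P_1$ in $\mathfrak{K}[y]$). The real difference is in producing the explicit $a=q_{d_2}^{\,d_1-d_2+1}$ and the bi-homogeneous $B$: you run pseudo-division and track bi-degrees through the recursion, whereas the paper isolates the submatrix $N_{P_1,P_2}$ of the Sylvester matrix formed by the column of $P_1$ together with the columns of $y^lP_2$ for $0\le l\le d_1-d_2$, and solves $BP_2=aP_1$ by Cramer's rule---the relevant $(d_1-d_2+1)\times(d_1-d_2+1)$ block is upper triangular with diagonal $q_{d_2}$, which gives $a$, and each coefficient of $B$ is then a minor containing exactly one column of $P_1$-entries and $d_1-d_2$ columns of $P_2$-entries, which gives the bi-homogeneity. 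Your approach is more algorithmic and makes the bi-degree bookkeeping fully explicit; the paper's stays within the determinant/minor framework it uses elsewhere (cf.\ Lemma~\ref{lemme:minor-rank}). One minor caveat: your recursion as written invokes $\mathrm{lc}(R_{k-1})$ and $y^{\deg R_{k-1}-d_2}$, which is ill-defined if $\deg R_{k-1}$ drops strictly below $d_1-k+1$ (or $R_{k-1}=0$) before step $d_1-d_2+1$; the standard fix---replace these by the coefficient of $y^{d_1-k+1}$ in $R_{k-1}$ (possibly $0$) and the fixed shift $y^{d_1-k+1-d_2}$---repairs this without affecting either the identity $q_{d_2}^{k}P_1=B_kP_2+R_k$ or the bi-homogeneity induction.
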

\begin{proof}
	As in the proof of Lemma \ref{lemme:bezout-coeff}, we denote by $M_{P_1,P_2}$  the Sylvester matrix of $P_1$ and $P_2$. By definition, its $d_1$ columns corresponding to the coefficients of $y^lP_2$, $l=0,\ldots,d_1-1$, being upper triangular are linearly independent (and the same holds for the $d_2$ columns corresponding to the coefficients of $y^kP_1$).  Hence,  $M_{P_1,P_2}$ has rank at least $\max\{d_1,d_2\}=d_1$. 
	
	Moreover, an equality $aP_1=BP_2$ translates exactly into a linear relation between the column corresponding to $P_1$ and the  columns corresponding to  $y^lP_2$ for $l=0,\ldots,d_1-d_2$. In this case, the linear relation repeats mutatis mutandi between   the column corresponding to $y^k P_1$ and the  columns corresponding to  $y^lP_2$ for $l=k,\ldots,d_1-d_2+k$, corresponding to an equality $ay^kP_1=y^kBP_2$.
	
	Let us consider the submatrix $N_{P_1,P_2}$ of $M_{P_1,P_2}$ consisting of the  column corresponding to $P_1$ and the columns corresponding  to  $y^lP_2$ for $l=0,\ldots,d_1-d_2$. It has rank $d_1-d_2+1$. By the previous lemma, there exists a nonzero vector in the kernel of  $N_{P_1,P_2}$, given by minors of order  $d_1-d_2+1$. More precisely, we are in the case of a Cramer system encoding an equality $BP_2 = aP_1$, with in particular  $a={q_{d_2}}^{d_1-d_2+1}$ corresponding to the determinant of the matrix of the linear map $B\mapsto BP_2$. By Cramer's rules, the coefficients of $B$ are  computed as determinants which indeed give homogeneous polynomial formulas with monomials consisting of $d_1-d_2$ coefficients of $P_2$ and 1 coefficient of $P_1$.  
\end{proof}

\begin{lemma}\label{lemme:ordreQ-alg}
	Let $d_{\underline{x}},\, d,\, \d_{\underline{x}},\, \d\in \mathbb{N}^*$ and $P,\, Q\in K\left[\underline{x},y\right]\setminus\{0 \}$, $\deg_{\underline{x}}P\leq d_{\underline{x}},\ \deg_yP\leq d,\, \deg_{\underline{x}}Q\leq \d_{\underline{x}},\ \deg_yQ\leq \d$. For any series   $c_0
	\in K\left[\left[\underline{x}\right]\right]$ 
	such that  $P(\underline{x},c_0)=0$ and $Q(\underline{x},c_0)\neq 0$, one has that $$\mathrm{ord}_{\underline{x}}Q(\underline{x},c_0)\leq \d_{\underline{x}}d+ d_{\underline{x}}\d.$$
\end{lemma}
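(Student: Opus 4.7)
The plan is to exploit the resultant of $P$ and $Q$ with respect to $y$, as packaged in Lemma \ref{lemme:bezout-coeff}. Set $R(\underline{x}) := \mathrm{Res}_y(P, Q) \in K[\underline{x}]$. The key move is to substitute $y = c_0$ into the associated B\'ezout identity $AP + BQ = R$: since $P(\underline{x}, c_0) = 0$, this collapses to $B(\underline{x}, c_0)\, Q(\underline{x}, c_0) = R(\underline{x})$. Provided $R \neq 0$, I would then use that $K[[\underline{x}]]$ is a domain together with additivity of the $(\underline{x})$-adic order to conclude
\[
\mathrm{ord}_{\underline{x}} Q(\underline{x}, c_0) \;\leq\; \mathrm{ord}_{\underline{x}} R \;\leq\; \deg_{\underline{x}} R \;\leq\; \d_{\underline{x}}\, d + d_{\underline{x}}\, \d,
\]
where the last inequality is the degree bound on the resultant supplied by Lemma \ref{lemme:bezout-coeff}.

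The one point requiring care is the hypothesis $R \neq 0$. If instead $R = 0$, then $P$ and $Q$ share a non-trivial common factor $h$ in the UFD $K[\underline{x}][y]$, and I would reduce to the coprime case by writing $P = h P_1$, $Q = h Q_1$ with $\gcd(P_1, Q_1) = 1$: the equality $h(\underline{x}, c_0)\, P_1(\underline{x}, c_0) = 0$ in the domain $K[[\underline{x}]]$ forces $h(\underline{x}, c_0) = 0$ or $P_1(\underline{x}, c_0) = 0$, and the first possibility is ruled out by $Q(\underline{x}, c_0) = h(\underline{x}, c_0)\, Q_1(\underline{x}, c_0) \neq 0$; hence $P_1(\underline{x}, c_0) = 0$. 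Additivity of total degree (and subadditivity of partial degrees) under multiplication in $K[\underline{x},y]$ guarantees that the bounds $\deg_{\underline{x}} P_1 \leq d_{\underline{x}}$, $\deg_y P_1 \leq d$, $\deg_{\underline{x}} Q_1 \leq \d_{\underline{x}}$, $\deg_y Q_1 \leq \d$ are preserved, while $\mathrm{ord}_{\underline{x}} Q(\underline{x}, c_0) \geq \mathrm{ord}_{\underline{x}} Q_1(\underline{x}, c_0)$. This places us back in the coprime setting, where $\mathrm{Res}_y(P_1, Q_1) \neq 0$ and the first paragraph applies.

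I do not expect any serious obstacle here; this is the standard resultant trick applied in a formal power series ring. The only subtlety is the reduction to the coprime case, where one must verify that neither the degree bounds nor the vanishing/non-vanishing hypotheses at $y = c_0$ are degraded, and both are immediate.
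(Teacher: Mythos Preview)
Your reduction to the coprime case has the inequality backwards. From $Q = h\,Q_1$ with $h(\underline{x},c_0)\neq 0$ you correctly get
\[
\mathrm{ord}_{\underline{x}} Q(\underline{x},c_0) \;=\; \mathrm{ord}_{\underline{x}} h(\underline{x},c_0) + \mathrm{ord}_{\underline{x}} Q_1(\underline{x},c_0) \;\geq\; \mathrm{ord}_{\underline{x}} Q_1(\underline{x},c_0),
\]
but this is a \emph{lower} bound on $\mathrm{ord}_{\underline{x}} Q(\underline{x},c_0)$, whereas you need an \emph{upper} bound. Applying the first paragraph to $(P_1,Q_1)$ only controls $\mathrm{ord}_{\underline{x}} Q_1(\underline{x},c_0)$ from above, which says nothing about $\mathrm{ord}_{\underline{x}} Q(\underline{x},c_0)$.

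The fix is to reduce only $P$, not $Q$. Since $K[[\underline{x}]]$ is a domain and $P(\underline{x},c_0)=0$, some irreducible factor $P_0$ of $P$ in $K[\underline{x},y]$ satisfies $P_0(\underline{x},c_0)=0$; and since $Q(\underline{x},c_0)\neq 0$, the irreducible $P_0$ cannot divide $Q$, so $P_0$ and $Q$ are coprime and $\mathrm{Res}_y(P_0,Q)\neq 0$. Now the B\'ezout identity for $(P_0,Q)$ evaluated at $y=c_0$ gives the bound directly for the original $Q$. This is exactly what the paper does (it phrases the choice of $P_0$ as the generator of the height-one prime ideal $\{R : R(\underline{x},c_0)=0\}$, but the content is the same).
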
 
\begin{demo}
	Let $ c_0 $ be a series as in the statement of Lemma \ref{lemme:ordreQ-alg}. We consider the prime ideal $\mathfrak{I}_0:=\left\{R(\underline{x},y)\in K\left[\underline{x},y\right]\ |\ R(\underline{x},c_0)=0\right\}$.
	Since $\mathfrak{I}_0\neq (0)$, $$\dim \left(K[\underline{x},y]/\mathfrak{I}_0\right)=\mathrm{trdeg}_K\mathrm{Frac}\left(K[\underline{x},y]/\mathfrak{I}_0\right)\leq r.$$ But, in $\mathrm{Frac}\left(K[\underline{x},y]/\mathfrak{I}_0\right)$, the elements $\overline{x_1},\ldots,\overline{x_r}$ are algebraically independant (if not, we would have $T(\overline{x_1},\ldots,\overline{x_r})=\overline{0}$ for some non trivial $T\in K[\underline{X}]$, i.e. $T(x_1,\ldots,x_r)\in \mathfrak{I}_0$, a contradiction). Thus, $\mathfrak{I}_0$ is a height one prime ideal of the factorial ring $K\left[\underline{x},y\right]$. It is generated by an irreducible polynomial $P_0(\underline{x},y)\in K\left[\underline{x},y\right]$. We set $d_{x,0}:=\deg_{\underline{x}} P_0$ and $d_{y,0}:=\deg_y P_0$. Note also that, by factoriality of $K\left[\underline{x},y\right]$, $P_0$ is also irreducible as an element of $K\left(\underline{x}\right)[y]$.\\
	Let $P$ be as in  the statement of Lemma \ref{lemme:ordreQ-alg}. One has that $P=SP_0$ for some $S\in K\left[\underline{x},y\right]$. Hence $d_{x,0}\leq d_{\underline{x}}$ and $d_{y,0}\leq d$. Let $Q\in K\left[\underline{x},y\right]$ be such that $Q(\underline{x},c_0)\neq 0$ with $\deg_{\underline{x}} Q\leq \d_{\underline{x}}$, $\deg_yQ\leq \d$. So $P_0$ and $Q$ are coprime in $K\left(\underline{x}\right)[y]$. Their resultant $R(\underline{x})$ is nonzero. One has the following B\'ezout relation in $K\left[\underline{x}\right][y]$:
	$$A(\underline{x},y)P_0(\underline{x},y)+B(\underline{x},y)Q(\underline{x},y)=R(\underline{x}).$$
	We evaluate at $y=c_0$:
	$$0+B(\underline{x},c_0)Q(\underline{x},c_0)=R(\underline{x}).$$
	But, by Lemma \ref{lemme:bezout-coeff},  $ \deg_{\underline{x}} R \leq d_{y,0}\d_{\underline{x}}+ \d d_{x,0}\leq  d\d_{\underline{x}}+ \d d_{\underline{x}} $. Hence, one has that: $$\mathrm{ord}_{\underline{x}} Q(\underline{x},c_0)\leq \mathrm{ord}_{\underline{x}}R \leq  \deg_{\underline{x}} R\leq   d\d_{\underline{x}}+ \d d_{\underline{x}}.$$
\end{demo}

\begin{theo}\label{propo:nested}
	Let  $i,\,d_{\underline{x}},\, d,\, \d_{\underline{x}},\, \d \in \mathbb{N}$, $d\geq 2$, $\d\geq 1$. There exists $\omega(i,d_{\underline{x}}, d, \d_{\underline{x}}, \d )\in\N$ minimal such that:
	
	{ for any $j=0,\ldots,i$, given   $c_j=\displaystyle\sum_{\underline{n}\in\N^r} c_{j,\underline{n}}\underline{x}^{\underline{n}}\in K\left[\left[\underline{x}\right]\right]$  power series   
		satisfying some equations ${P}_j(\underline{x},c_0,\ldots,c_j)=0$ 
		where 	$P_{j}\in K\left[\underline{x},z_0,z_1,\ldots,z_j \right]\setminus\{0 \}$, $\deg_{\underline{x}}P_{j}\leq d_{\underline{x} },$ 	 
		$\deg_{z_k}{P}_{j}\leq d$ for $k=0,\ldots,j$, and $P_j (\underline{x},c_0,\ldots,c_{j-1},z_j)\not\equiv 0$,  and given  $Q_i\in K\left[\underline{x},z_0,z_1,\ldots,z_i \right]\setminus\{0 \}$, $\deg_{\underline{x}}Q_i\leq \d_{\underline{x}},\ 
		\deg_{z_j}Q_i\leq \d$ for $j=0,\ldots,i$ a polynomial
		such that $Q_i(\underline{x},c_0,c_1,\ldots,c_i)\neq 0$,  one has that $$\ord_{\underline{x}}Q_i(\underline{x},c_0,c_1,\ldots,c_i)\ \leq\   \omega(i,d_{\underline{x}}, d, \d_{\underline{x}}, \d ).$$}
	
	Moreover, for $\d\geq 3$:	   
	\begin{equation}\label{equ:estim0}
		\begin{array}{c} \omega(i,d_{\underline{x}}, d, \d_{\underline{x}}, \d )\leq (2.3^{d^{i-1}+\cdots+d^2+d+1} -2^i3^{d^{i-1}+\cdots+d^2+d-(i-1)}) d^{d^{i-1}+\cdots+d^2+d+1} d_{\underline{x}}\d^{d^i}+\\ 2^i.3^{d^{i-1}+\cdots+d^2+d-(i-1)}  d^{d^{i-1}+\cdots+d^2+d+2}  \d_{\underline{x}}  \d^{d^i-1} . \end{array}
	\end{equation} 
	So, for $d\geq 3$:
	\begin{equation}\label{equ:estim1}
		\omega(i,d_{\underline{x}}, d, d_{\underline{x}}, d )\leq 2.3^{d^{i-1}+\cdots+d^2+d+1}    
		d_{\underline{x}}  d^{d^{i}+\cdots+d^2+d+1} .  
	\end{equation}  
	Finally, for any $\varepsilon>0$, there is $\d_\varepsilon$ such that, for $\d\geq \d_\varepsilon$:
	\begin{equation}\label{equ:estim2}
		\begin{array}{c}	\omega(i,d_{\underline{x}}, d, \d_{\underline{x}}, \d )	\leq \hspace{8cm}\\
			\left(	2.(2+\varepsilon)^{d^{i-1}+\cdots+d^2+d+1} - 
			(1+\varepsilon)^i.(2+\varepsilon)^{d^{i-1}+\cdots+d^2+d-(i-1)}
			\right)d^{d^{i-1}+\cdots+d^2+d+1} d_{\underline{x}}\d^{d^i}	+\\
			(1+\varepsilon)^i.(2+\varepsilon)^{d^{i-1}+\cdots+d^2+d-(i-1)}  d^{d^{i-1}+\cdots+d^2+d+2}  \d_{\underline{x}}  \d^{d^i-1} ,    \end{array}
	\end{equation}	 and for $d\geq \d_\varepsilon$: 
	\begin{equation}\label{equ:estim3}
		\omega(i,d_{\underline{x}}, d, d_{\underline{x}}, d )	\leq 
		2.(2+\varepsilon)^{d^{i-1}+\cdots+d^2+d+1} 
		d^{d^i+d^{i-1}+\cdots+d^2+d+1} d_{\underline{x}}.	
	\end{equation}
\end{theo}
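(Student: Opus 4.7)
The plan is to proceed by induction on $i\geq 0$. The base case $i=0$ is given directly by Lemma~\ref{lemme:ordreQ-alg}, which yields $\omega(0,d_{\underline{x}},d,\d_{\underline{x}},\d)\leq d\d_{\underline{x}}+\d d_{\underline{x}}$, a value that fits the base of the recursive estimates stated in the theorem.

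For the inductive step, the main tool I would use is the resultant $R_i:=\mathrm{Res}_{z_i}(P_i,Q_i)\in K[\underline{x},z_0,\ldots,z_{i-1}]$ together with the Bézout identity $AP_i+BQ_i=R_i$ supplied by Lemma~\ref{lemme:bezout-coeff}. Before invoking this identity I would first reduce to the case $\gcd(P_i,Q_i)=1$ in the UFD $K[\underline{x},z_0,\ldots,z_i]$. Indeed, if $F=\gcd(P_i,Q_i)$ is nontrivial and $P_i=FG$, then $F\mid Q_i$ combined with $Q_i(\underline{c})\neq 0$ forces $F(\underline{c})\neq 0$, hence $G(\underline{c})=0$; and since $F(\underline{x},c_0,\ldots,c_{i-1})$ is nonzero while $P_i(\underline{x},c_0,\ldots,c_{i-1},z_i)\not\equiv 0$, also $G(\underline{x},c_0,\ldots,c_{i-1},z_i)\not\equiv 0$, so $G$ can replace $P_i$ without increasing the degree bounds. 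After finitely many such reductions, $P_i$ and $Q_i$ are coprime, $R_i\neq 0$, and (by the multivariate extension of Lemma~\ref{lemme:bezout-coeff}) $\deg_{\underline{x}} R_i\leq d\d_{\underline{x}}+\d d_{\underline{x}}$ and $\deg_{z_k} R_i\leq 2d\d$ for $k<i$. Evaluating the Bézout identity at $\underline{c}=(c_0,\ldots,c_i)$ gives $B(\underline{c})Q_i(\underline{c})=R_i(\underline{c}')$ with $\underline{c}'=(c_0,\ldots,c_{i-1})$, so whenever $R_i(\underline{c}')\neq 0$ one has $\ord_{\underline{x}} Q_i(\underline{c})\leq \ord_{\underline{x}} R_i(\underline{c}')\leq \omega(i-1,d_{\underline{x}},d,d\d_{\underline{x}}+\d d_{\underline{x}},2d\d)$ by the inductive hypothesis applied to $R_i$ in the role of a new test polynomial.

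The hard part will be the complementary case $R_i(\underline{c}')=0$, when the Bézout identity only yields $B(\underline{c})=0$ rather than a useful upper bound. In this case $B$ is itself a new polynomial vanishing at $\underline{c}$ with $\deg_{z_i} B\leq d-1$ strictly less than $d$. Whenever $B(\underline{x},c_0,\ldots,c_{i-1},z_i)\not\equiv 0$, I would replace $P_i$ by $B$ (with the updated degree bounds given by Lemma~\ref{lemme:bezout-coeff}) and re-enter the argument; since $\deg_{z_i}$ drops by at least one per round, this inner loop halts after at most $d$ iterations. The delicate sub-situation where the iterated $B$ loses all $z_i$-dependence upon substitution of $c_0,\ldots,c_{i-1}$ is treated through the rank structure of the Sylvester matrix: Lemmas~\ref{lemme:minor-rank} and~\ref{lemme:sylvester-rank} allow one to extract, from minors of controlled degree, an alternative nontrivial relation that replaces $P_i$ and keeps the iteration productive. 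Finally, the explicit bounds \eqref{equ:estim0}--\eqref{equ:estim3} come from careful bookkeeping of the degree inflation: unrolling the $i$ induction levels, with up to $d$ subcase-(b) iterations per level---each multiplying the effective $\d$-type degree by a factor of order $\d$---accumulates into the exponent $d^i$ on $\d$, while the combinatorial prefactors $2\cdot 3^{d^{i-1}+\cdots+d+1}$ and $2^i\cdot 3^{\ldots}$ fall out of telescoping the two-term linear recursions that govern the degree inflation. The asymptotic refinements for $\d\geq\d_\varepsilon$ or $d\geq\d_\varepsilon$ then follow from the elementary inequality $(2+\varepsilon)^k\leq 3^k$ holding for $k$ beyond a threshold.
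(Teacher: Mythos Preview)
Your proposal is correct and follows essentially the same route as the paper: induction on $i$ with base case Lemma~\ref{lemme:ordreQ-alg}, resultant plus B\'ezout in $z_i$ for the inductive step, and an inner loop replacing $P_i$ by the B\'ezout cofactor $B$ (whose $z_i$-degree drops) when $R_i(\underline{x},c_0,\ldots,c_{i-1})=0$, with Lemmas~\ref{lemme:minor-rank}--\ref{lemme:sylvester-rank} handling the degenerate sub-case where $B$ loses $z_i$-dependence after partial substitution---this is exactly the content of the paper's auxiliary Lemma~\ref{lemma:bezout-ind}. Two minor remarks: your preliminary gcd reduction is harmless but superfluous, since the paper's Lemma~\ref{lemma:bezout-ind} already covers the possibility that the resultant vanishes identically; and your final sentence on the $\varepsilon$-refinement is slightly misphrased---the point is not that $(2+\varepsilon)^k\leq 3^k$, but rather that for $\d$ large enough one has the sharper estimate $\phi(d,\d)\leq (1+\varepsilon)(d-1)\d^{d-1}$, which replaces the recursion factor $3$ by $2+\varepsilon$ throughout the telescoping (the paper makes this explicit via the auxiliary function $\phi(n,m)=\sum_{k=1}^{n-1}(n-k)m^{n-k}+1$ and its bounds \eqref{equ:phi}, \eqref{equ:phi-eps}).
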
 
\begin{demo}
	We proceed by induction on $i\in\N$, the case $i=0$ being Lemma \ref{lemme:ordreQ-alg} where we set $d^{i-1}+\cdots+d^2+d+1:=0 $, $d^{i-1}+\cdots+d^2+d+2:=d^{i-1}+\cdots+d^2+d+1+1=1$ and  $d^{i-1}+\cdots+d^2+d-(i-1):=0$ and where we get: 
	$$\mathrm{ord}_{\underline{x}}Q_0(\underline{x},c_0)\leq \d_{\underline{x}}d+ d_{\underline{x}}\d.$$ 
	Suppose that the property holds until some rank $i-1\geq 0$, and consider polynomials $P_i$ and $Q_i$ as in the statement of the theorem. 	Let $R_1$ be the resultant of $P_{i}$ and $Q_i$ with respect to $z_i$, and the following B\'ezout identity according to Lemma \ref{lemme:bezout-coeff} (where $\underline{x}$ there stands for $\underline{x}$ or $z_j$, $j=0,..,i-1$, here):
	$$A_1P_i+B_1Q_i=R_1.$$
	There are two cases. If $R_1(\underline{x},c_0,\ldots,c_{i-1})\neq 0$,  since $R_1\in K\left[\underline{x},z_0,\ldots,z_{i-1}\right]$ with $\deg_{\underline{x}}{R_1}\leq d_{\underline{x}}\d +\d_{\underline{x}}d,\ \deg_{z_j}{R_1}\leq 2d \d$ for $j=1,\ldots,i-1$, we deduce  from the induction hypothesis that $\mathrm{ord}_{\underline{x}} R_1(\underline{x},c_0,\ldots,c_{i-1})\leq \omega(i-1,d_{\underline{x}},d,d_{\underline{x}}\d +\d_{\underline{x}}d, 2d \d )$.  
	So, by the B\'ezout identity: $$\mathrm{ord}_{\underline{x}} Q_i(\underline{x},c_0,\ldots,c_{i})\leq \mathrm{ord}_{\underline{x}}R_1(\underline{x},c_0,\ldots,c_{i-1}) \leq \omega (i-1,d_{\underline{x}},d,d_{\underline{x}}\d +\d_{\underline{x}}d, 2d \d ).$$
	If   $R_1(\underline{x},c_0,\ldots,c_{i-1})=0$, then $B_1(\underline{x},c_0,\ldots,c_{i-1},c_i)=0$. There are several sub-cases.
	
	\begin{lemma}\label{lemma:bezout-ind}
		If   $R_1(\underline{x},c_0,\ldots,c_{i-1})=0$, then there exist $A,B\in K\left[\underline{x},z_0,\ldots,z_{i}\right] $ such that $B(\underline{x},c_0,\ldots,c_{i-1},c_i)=0$, $B(\underline{x},c_0,\ldots,c_{i-1},z_i)\not\equiv 0$ and
		$$ A(\underline{x},c_0,\ldots,c_{i-1},z_i)P_i(\underline{x},c_0,\ldots,c_{i-1},z_i)+B(\underline{x},c_0,\ldots,c_{i-1},z_i) Q_i(\underline{x},c_0,\ldots,c_{i-1},z_i)=0$$
		with  $\deg_{\underline{x}}B\leq d_{\underline{x}}\d +\d_{\underline{x}}(d-1),\ \deg_{z_j}B\leq (2d-1) \d$ for $j=1,\ldots,i-1$, and  $\deg_{z_i} B\leq d-1$.
	\end{lemma}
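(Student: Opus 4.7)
The plan is to exploit the vanishing of $R_1(\underline{x},c_0,\ldots,c_{i-1})$ as a rank deficiency of the Sylvester matrix of $P_i$ and $Q_i$ with respect to $z_i$, and to extract $A$ and $B$ as suitable minors.

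First, I would set $\bar{P}(z_i) := P_i(\underline{x},c_0,\ldots,c_{i-1},z_i)$ and $\bar{Q}(z_i) := Q_i(\underline{x},c_0,\ldots,c_{i-1},z_i)$ as elements of $K[[\underline{x}]][z_i]$. Both are nonzero: the former by the standing hypothesis of Theorem \ref{propo:nested}, the latter because $Q_i(\underline{x},c_0,\ldots,c_i)\neq 0$. Let $M$ be the Sylvester matrix of $P_i,Q_i$ with respect to $z_i$ using the formal degrees $d,\d$; its entries lie in $K[\underline{x},z_0,\ldots,z_{i-1}]$ and $\det M = R_1$. Specializing $z_j\mapsto c_j$ for $j<i$ produces a matrix $\bar{M}$ over $K[[\underline{x}]]$ with $\det\bar{M}=R_1(\underline{x},c_0,\ldots,c_{i-1})=0$, so $\bar{M}$ has rank $r\leq d+\d-1$ over the fraction field.

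Next, I would apply Lemma \ref{lemme:minor-rank} to $\bar{M}$ to obtain a nonzero kernel vector $\bar{V}\in K[[\underline{x}]]^{d+\d}$ whose nonzero components are, up to sign, minors of $\bar{M}$ of order $r$. Reading $\bar{V}$ as the coefficient vector of two polynomials gives $\bar{A},\bar{B}\in K[[\underline{x}]][z_i]$, not both zero, with $\deg_{z_i}\bar{A}\leq \d-1$, $\deg_{z_i}\bar{B}\leq d-1$ and $\bar{A}\bar{P}+\bar{B}\bar{Q}=0$. Since $K[[\underline{x}]][z_i]$ is a domain and $\bar{P}\not\equiv 0$, a vanishing $\bar{B}$ would force $\bar{A}\equiv 0$, contradicting $\bar{V}\neq 0$; hence $\bar{B}\not\equiv 0$. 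Evaluating the relation at $z_i=c_i$ and using $\bar{P}(c_i)=0$ together with $\bar{Q}(c_i)\neq 0$ yields $\bar{B}(\underline{x},c_i)=0$. Since each component of $\bar{V}$ is the specialization of the corresponding minor of $M$, lifting componentwise produces $A,B\in K[\underline{x},z_0,\ldots,z_{i-1}][z_i]$ satisfying the required Bézout-type identity after specialization, with $B(\underline{x},c_0,\ldots,c_{i-1},z_i)=\bar{B}\not\equiv 0$.

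For the degree bounds, a minor of $M$ of order $r\leq d+\d-1$ involves $a$ rows from the $P_i$-block and $b$ rows from the $Q_i$-block with $a+b=r$, $a\leq \d$ and $b\leq d$. Using the non-triviality of $\bar{B}$ and the freedom to reorder rows in the argument of Lemma \ref{lemme:minor-rank}, I would arrange the asymmetric choice $a\leq \d$, $b\leq d-1$. Each $P_i$-entry contributes at most $d_{\underline{x}}$ to $\deg_{\underline{x}}$ and $d$ to each $\deg_{z_j}$; each $Q_i$-entry contributes at most $\d_{\underline{x}}$ and $\d$ respectively. Summing across a product in the expansion of the minor gives $\deg_{\underline{x}} B\leq d_{\underline{x}}\d+\d_{\underline{x}}(d-1)$, $\deg_{z_j}B\leq \d d+(d-1)\d=(2d-1)\d$ for $j<i$, while $\deg_{z_i}B\leq d-1$ is built in.

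The main obstacle is justifying the asymmetric row selection $a\leq \d$, $b\leq d-1$. If it were impossible, every maximal independent set of rows would contain all $d$ rows of the $Q_i$-block; by Lemma \ref{lemme:sylvester-rank} this forces $a\bar{P}=\bar{A}\bar{Q}$ for some $a\in K[[\underline{x}]]$ and a polynomial $\bar{A}$ in $z_i$, from which one reads off $(A,B)$ directly from the quotient, with tighter degree bounds than announced. Combining the generic case with this degenerate case into a single uniform construction is the technical crux of the proof.
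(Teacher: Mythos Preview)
Your approach is essentially the paper's: build the Sylvester matrix, find a kernel vector whose entries are minors of controlled size, and lift. Two remarks are worth making.

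First, throughout your degree discussion you should write ``columns'' rather than ``rows'': in the Sylvester matrix the $\d$ columns carry coefficients of $P_i$ and the $d$ columns carry coefficients of $Q_i$, and each term of a minor picks one entry per column, so the count $(a,b)$ refers to how many $P$-columns versus $Q$-columns enter the minor.

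Second, the ``main obstacle'' you flag is not an obstacle. Because $\bar P\not\equiv 0$, the $\d$ columns $z_i^{k}\bar P$ ($k=0,\ldots,\d-1$) are automatically independent, so they can always be taken as part of a column basis and completed by $r-\d$ of the $Q$-columns; since $r\leq d+\d-1$ this gives $b=r-\d\leq d-1$ directly. With this column ordering, Lemma~\ref{lemme:minor-rank} produces a kernel vector whose $Q$-block components (the coefficients of $\bar B$) are all minors with exactly $\d$ $P$-columns and $r-\d\leq d-1$ $Q$-columns, yielding the announced bounds $\deg_{\underline{x}}B\leq d_{\underline{x}}\d+\d_{\underline{x}}(d-1)$ and $\deg_{z_j}B\leq d\d+(d-1)\d=(2d-1)\d$. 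The paper reaches the same place via a short case split (first checking whether the cofactor $B_1$ from the original B\'ezout identity already works, then distinguishing corank $1$ versus $\geq 2$ on the Sylvester matrix of the \emph{actual} degrees $\tilde d,\tilde\d$); your unified argument with formal degrees is a legitimate variant, and the fallback to Lemma~\ref{lemme:sylvester-rank} is unnecessary.
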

	\begin{proof}
		If $B_1(\underline{x},c_0,\ldots,c_{i-1},z_i)\not\equiv 0$, we take $A=A_1$ and $B=B_1$, noticing by Lemma  \ref{lemme:bezout-coeff} that $\deg_{\underline{x}}B_1\leq d_{\underline{x}}\d +\d_{\underline{x}}(d-1),\ \deg_{z_j}B_1\leq (2d-1) \d$ for $j=1,\ldots,i-1$, and  $\deg_{z_i} B_1\leq d-1$.
		
		If $B_1(\underline{x},c_0,\ldots,c_{i-1},z_i)\equiv 0$, necessarily $A_1(\underline{x},c_0,\ldots,c_{i-1},z_i)\equiv 0$. 
		
		Let us denote $\tilde{P}_i:=P_i(\underline{x},c_0,\ldots,c_{i-1},z_i)$ and $\tilde{Q}_i:=Q_i(\underline{x},c_0,\ldots,c_{i-1},z_i)$, hence $\tilde{P}_i,\tilde{Q}_i\in K[\underline{x},c_0,\ldots,c_{i-1}][z_i]$,  with degrees $\tilde{d}$ and $\tilde{\d}$ in $z_i$ respectively. Note that $\tilde{d}\geq 1$ and $\tilde{\d}\geq 1$ (if not, $R_1(\underline{x},c_0,\ldots,c_{i-1})\neq 0$). Let $M_{\tilde{P}_i,\tilde{Q}_i}$ be the Sylvester matrix of $\tilde{P}_i$ and $\tilde{Q}_i$, and $\tilde{d}+\tilde{\d}-p$ its rank. Hence, $p\geq 1$. Suppose that $p=1$. Let us denote by $M'_{\tilde{P}_i,\tilde{Q}_i}$ the matrix of cofactors of $M_{\tilde{P}_i,\tilde{Q}_i}$, and by  $^tM'_{\tilde{P}_i,\tilde{Q}_i}$ its transpose. At least one of the columns of $^tM'_{\tilde{P}_i,\tilde{Q}_i}$ is not zero. Since we have that $M_{\tilde{P}_i,\tilde{Q}_i}.^tM'_{\tilde{P}_i,\tilde{Q}_i}=0$, this column determines a non-trivial relation
		$$\tilde{A}\tilde{P}_i+\tilde{B}\tilde{Q}_i=0$$
		where the coefficients of $\tilde{A},\tilde{B}$ are given by the coefficients of this column. Moreover, $\tilde{B}(\underline{x},c_0,\ldots,c_{i-1},c_i)=0$ since $\tilde{P}_i(\underline{x},c_0,\ldots,c_{i-1},c_i)=0$ and  $\tilde{Q}_i(\underline{x},c_0,\ldots,c_{i-1},c_i)\neq 0$, and $\tilde{B}(\underline{x},c_0,\ldots,c_{i-1},z_i)\not\equiv 0$ (if not, we would have $\tilde{A}(\underline{x},c_0,\ldots,c_{i-1},z_i)\equiv 0$ since \\  $\tilde{P}_i(\underline{x},c_0,\ldots,c_{i-1},z_i)\not\equiv 0$). The coefficients of $\tilde{B}$ are homogeneous polynomial formulas in $\tilde{\d}$ coefficients of $\tilde{P}_i$ and $\tilde{d}-1$ coefficients of $\tilde{Q}_i$. Lifting these formulas to $K[\underline{x},z_0,\ldots,z_{i-1},z_i]$ by replacing the $c_j$'s by the $z_j$'s, we obtain $A$ and $B$ with  $\deg_{\underline{x}}B\leq d_{\underline{x}}\tilde{\d} +\d_{\underline{x}}(\tilde{d}-1),\ \deg_{z_j}B\leq d \tilde{\d} +\d (\tilde{d}-1)$ for $j=1,\ldots,i-1$, and  $\deg_{z_i} B\leq \tilde{d}-1$. We conclude since $\tilde{\d}\leq \d$ and  $ \tilde{d}\leq d$. 
		
		Suppose that  $p\geq 2$. The $\tilde{\d}$ columns corresponding to the coefficients of the ${z_i}^k \tilde{P}_i$'s, $k=0,..,\tilde{\d}-1$, are linearly independent (since they form an upper triangular system). We complete them with $\tilde{d}-p$ columns corresponding to the coefficients of the ${z_i}^k \tilde{Q}_i$ to a maximal linearly independent family. There is a non-zero minor, say $\Delta$, of maximal order $\tilde{\d}+\tilde{d}-p$ of this family. Proceeding as in Lemma \ref{lemme:minor-rank}, there is a non-zero vector $V$ in the kernel of $M_{\tilde{P}_i,\tilde{Q}_i}$ whose coefficients are minors of order   $\tilde{\d}+\tilde{d}-p$. More precisely, except for  $\Delta$, the other minors are  obtained by replacing a column of $\Delta$ by the corresponding part of another column of $M_{\tilde{P}_i,\tilde{Q}_i}$. Hence, they consist of either  $\tilde{d}-p+1$ columns with coefficients of $\tilde{Q}_i$ and $\tilde{\d}-1$ columns with coefficients of $\tilde{P}_i$, or  $\tilde{d}-p$ columns with coefficients of $\tilde{Q}_i$ and $\tilde{\d}$ columns with coefficients of $\tilde{P}_i$. We translate the relation $M_{\tilde{P}_i,\tilde{Q}_i}.V=0$ to a non-trivial relation 
		$$\tilde{A}\tilde{P}_i+\tilde{B}\tilde{Q}_i=0$$
		where the coefficients of $\tilde{A},\tilde{B}$ are given by the coefficients de $V$. Moreover,\\  $\tilde{B}(\underline{x},c_0,\ldots,c_{i-1},c_i)=0$ since  $\tilde{P}_i(\underline{x},c_0,\ldots,c_{i-1},c_i)=0$ and  $\tilde{Q}_i(\underline{x},c_0,\ldots,c_{i-1},c_i)\neq 0$, and $\tilde{B}(\underline{x},c_0,\ldots,c_{i-1},z_i)\not\equiv 0$ (if not, we would have $\tilde{A}(\underline{x},c_0,\ldots,c_{i-1},z_i)\equiv 0$ since \\  $\tilde{P}_i(\underline{x},c_0,\ldots,c_{i-1},z_i)\not\equiv 0$). The coefficients of $\tilde{B}$ are homogeneous polynomial formulas in at most $\tilde{\d}$ coefficients of $\tilde{P}_i$ and $\tilde{d}-p+1$ coefficients of $\tilde{Q}_i$. Lifting these formulas to $K[\underline{x},z_0,\ldots,z_{i-1},z_i]$ by replacing the $c_j$'s by the $z_j$'s, since $p\geq 2$, we obtain $A$ and $B$ with  $\deg_{\underline{x}}B\leq d_{\underline{x}}\tilde{\d} +\d_{\underline{x}}(\tilde{d}-1),\ \deg_{z_j}B\leq d \tilde{\d} +\d (\tilde{d}-1)$ for $j=1,\ldots,i-1$, and  $\deg_{z_i} B\leq \tilde{d}-1$. We conclude since $\tilde{\d}\leq \d$ and  $ \tilde{d}\leq d$. 
	\end{proof}

	
	We denote by $B_1$ the polynomial  $B$ of the previous lemma. In any case, we are in position to replace $P$ by $B_1$, with  $\deg_{\underline{x}}B_1\leq d_{\underline{x}}\d +\d_{\underline{x}}(d-1),\ \deg_{z_j}B_1\leq (2d-1) \d$ for $j=1,\ldots,i-1$, and  $\deg_{z_i} B_1\leq d-1$. We obtain another B\'ezout identity:
	$$A_2B_1+B_2Q_i=R_2$$
	with $R_2$ the resultant of $B_1$ and $Q_i$ with respect to $z_i$, $$\deg_{\underline{x}}{R_2}\leq  (d_{\underline{x}}\d +\d_{\underline{x}}(d -1) )\d + \d_{\underline{x}}(d -1) =  d_{\underline{x}}{\d}^2+\d_{\underline{x}} ((d-1)\d +(d -2)+1),$$ likewise,  for $j=1,\ldots,i-1$, $$\deg_{z_j}{R_2}\leq d \d^2+\d  ((d -1)\d +(d-2)+1). $$
	Moreover, $$\begin{array}{lcl}
		\deg_{\underline{x}}{B_2}&\leq&  (\deg_{\underline{x}}{B_1})\d +\d_{\underline{x}}(\deg_{z_i}{B_1} -1)\\
		&\leq&  (d_{\underline{x}}\d +\d_{\underline{x}}(d-1))\d +\d_{\underline{x}}(d-1 -1)=d_{\underline{x}}\d^2+\d_{\underline{x}}(\d(d-1)+d-2),
	\end{array}$$ 
	and likewise,  for $j=1,\ldots,i-1$, 
	$$\begin{array}{lcl}
		\deg_{z_j}{B_2}&\leq &(\deg_{z_j}B_1)\d + (\deg_{z_i}B_1-1)\d\\
		&\leq& (2d-1)\d^2+(d-2)\d =d\d^2+\d (\d(d-1)+d-2),
	\end{array} $$
	and 
	$$ \deg_{z_i}B_2\leq \deg_{z_i} B_1-1\leq d-2.$$
	
	If $R_2(\underline{x},c_0,\ldots,c_{i-1})\neq 0$, we proceed as before Lemma \ref{lemma:bezout-ind}, and  we obtain:
	\begin{center}
		$ \mathrm{ord}_{\underline{x}} Q_i(\underline{x},c_0,\ldots,c_i)\leq  \mathrm{ord}_{\underline{x}} R_2(\underline{x},c_0,\ldots,c_{i-1})\leq \omega\left(i-1,d_x,\,d,\,d_{\underline{x}}{\d}^2+\d_{\underline{x}} ((d-1)\d +(d -2)+1),\, d \d^2+\d  ((d -1)\d +(d-2)+1)\right).
		$
	\end{center}
	Note that this new bound for  $\mathrm{ord}_{\underline{x}} Q_i(\underline{x},c_0,\ldots,c_{i-1},c_i)$ has  increased with respect to the previous one, since $d\leq (d-1)(\d+1)=(d-1)\d +(d -2)+1$ for any $d\geq 2$, $\d\geq 1$. At worst, one can have repeatedly the second case with successive  B\'ezout identities:
	$$A_kB_{k-1}+B_kQ_i=R_k$$
	with $R_k(\underline{x},c_0,\ldots,c_{i-1})=0$ where for $j=0,\ldots,i-1$,  
	$$\left\{\begin{array}{lcl}
		\deg_{\underline{x}}{R_k}&\leq& d_{\underline{x}}\d^k+\d_{\underline{x}}\left(\d^{k-1}(d-1)+{\d}^{k-2}(d-2)+\cdots+{\d}(d-(k-1))+(d-k)+1\right)\\
		\deg_{z_j}{R_k}&\leq&
		d{\d}^k+\d\left({\d}^{k-1}(d-1)+{\d}^{k-2}(d-2)+\cdots+{\d}(d-(k-1))+(d-k)+1\right),
	\end{array}\right.$$ and with  
	$$\left\{\begin{array}{lcl}
		\deg_{\underline{x}}{B_k}&\leq&  d_{\underline{x}}{\d }^k+\d_{\underline{x}}\left({\d}^{k-1}(d-1)+{\d}^{k-2}(d-2)+\cdots+{\d}(d-k+1)+(d-k)\right)\\
		\deg_{z_j}{B_k}&\leq&
		d{\d}^k+\d\left({\d}^{k-1}(d-1)+{\d}^{k-2}(d-2)+\cdots+{\d}(d-k+1)+(d-k)\right)\\
		\deg_{z_i}B_k&\leq& d-k.
	\end{array}\right.$$
	The greatest bound is obtained for $k=d-1$, for which $B_{d-1}$ has $\deg_{z_i}B_{d-1}= 1$. In this case, $B_{d-1}$ has $c_i$ as unique root and $Q_i(\underline{x},c_0,\ldots,c_{i-1},c_i)\neq 0$, so $R_d(\underline{x},c_0,\ldots,c_{i-1})\neq 0$. We set for $n,m\in\N^*$:
	$$\begin{array}{lcl}
		\phi(n,m)&:=&(n-1){m}^{n-1}+(n-2){m}^{n-2}+\cdots+{m} +1\\
		&=&\left((n-1){m}^{n-2}+(n-2){m}^{n-3}+\cdots+2{m} +1\right)m+1\\
		&=& \displaystyle\frac{(n-1)m^{n+1}-nm^n+m^2-m+1}{(m-1)^2} \textrm{ for }m\neq 1
	\end{array}
	$$
	We have for $j=0,\ldots,i-1$:
	$$\left\{\begin{array}{lcl}
		\deg_{\underline{x}}R_{d}&\leq& d_{\underline{x}}\d^{d}+\d_{\underline{x}}\phi(d ,\d )  \\
		\deg_{z_j}R_{d}&\leq& d\d^{d}+\d\phi(d,\d),
	\end{array}\right.$$
	By the induction hypothesis, $ \mathrm{ord}_{\underline{x}} R_{d}(\underline{x},c_0,\ldots,c_{i-1})$ is bounded by \\ $\omega\left(i-1,d_{\underline{x}},d, d_{\underline{x}}\d^{d}+\d_{\underline{x}}\phi(d ,\d ), d\d^{d}+\d\phi(d,\d) \right)$.
	We get the corresponding expected bound:
	$$ \mathrm{ord}_{\underline{x}} Q_i(\underline{x},c_0,\ldots,c_{i-1},c_i)\leq \omega \left(i-1,d_{\underline{x}},d, d_{\underline{x}}\d^{d}+\d_{\underline{x}}\phi(d ,\d ), d\d^{d}+\d\phi(d,\d) \right),$$
	which proves the existence of $\omega(i,d_{\underline{x}},d, \d_{\underline{x}},\d)$ with  
	\begin{equation}\label{equ:omega_i}
		\omega \left(i,d_{\underline{x}},d,\d_{\underline{x}},\d \right)\leq \omega\left(i-1,d_{\underline{x}},d, d_{\underline{x}}\d^{d}+\d_{\underline{x}}\phi(d ,\d ), d\d^{d}+\d\phi(d,\d) \right).
	\end{equation}
	To bound $\omega(i,d_{\underline{x}},d, \d_{\underline{x}},\d)$, we need to find estimates for $\phi$. \\
	First step: for $n,m\geq 2$,
	$$\phi(n,m)\leq (n-1)m^n.$$
	Indeed, $\phi(n,m)=\displaystyle\frac{(n-1)m^{n+1}-nm^n+m^2-m+1}{(m-1)^2}$. For $n\geq 2$, $-nm^n+m^2-m+1\leq 0$, so $\phi(n,m)\leq \displaystyle\frac{(n-1)m^{n+1}}{(m-1)^2}$ and $\displaystyle\frac{(n-1)m^{n+1}}{(m-1)^2}\leq (n-1) m^n\Leftrightarrow \displaystyle\frac{ m}{(m-1)^2}\leq 1 \Leftrightarrow m^2- 3m+1\geq 0$ with $\Delta=5$ et $m=(3+\sqrt{5})/2< 3$. This holds for $m\geq 3$.	For $m=2$, we compute:
	$$\phi(n,2)=(n-1)2^{n+1}-n2^n+3\leq (n-1)2^n\Leftrightarrow 3\leq 2^n$$
	This holds for $n\geq 2$. On the other hand, this does not hold for $m=1$ and $n\geq 3$.\\
	Second step: for $n\geq 3$, $m\geq 2$,
	\begin{equation}\label{equ:phi}
		\phi(n,m)\leq (2n-3)m^{n-1}
	\end{equation}
	Indeed, from the first step:  
	\[ \begin{array}{lcl}
		\phi(n,m):=(n-1){m}^{n-1}+(n-2){m}^{n-2}+\cdots+{m} +1&=&(n-1){m}^{n-1}+\phi(n-1,m)\\
		&\leq& (n-1)m^{n-1}+(n-2)m^{n-1}\\
		&\leq & (2n-3)m^{n-1}
	\end{array}	\]

	Let $\varepsilon>0$. For $n\geq 2$, since $-nm^n+m^2-m+1\leq 0 $, the inequality 
	\begin{equation}\label{equ:phi-eps}
		\phi(n,m)\leq (1+\varepsilon)(n-1)m^{n-1}
	\end{equation}  is implied by $$\displaystyle\frac{(n-1)m^{n+1}}{(m-1)^2}\leq (1+\varepsilon)(n-1)m^{n-1}\Leftrightarrow \displaystyle\frac{m^{2}}{(m-1)^2}\leq 1+\varepsilon. $$
	This holds for $m$ large enough, say for $m\geq m_\varepsilon$, since $ \displaystyle\frac{m^{2}}{(m-1)^2}$ decreases to 1.
	
	Now, let us prove the estimates for $\omega(i,\ldots)$ by induction on $i$. 	For $i=0$, $\omega(0,\ldots)\leq d\d_{\underline{x}}+ \d d_{\underline{x}}$ by Lemma \ref{lemme:ordreQ-alg}. Suppose that the estimates (\ref{equ:estim0}), (\ref{equ:estim1}), (\ref{equ:estim2}) and  (\ref{equ:estim3}) hold until some $i\geq 0$. By (\ref{equ:omega_i}):
	\begin{center}
		$\omega \left(i+1,d_{\underline{x}},d,\d_{\underline{x}},\d \right)\leq \omega \left(i,d_{\underline{x}},d, d_{\underline{x}}\d^{d}+\d_{\underline{x}}\phi(d ,\d ), d\d^{d}+\d\phi(d,\d) \right)$\\
		$\leq \omega \left(i,d_{\underline{x}},d, d_{\underline{x}}\d^{d}+\d_{\underline{x}}
		(2d-3)\d^{d-1}, d\d^{d}+\d(2d-3)\d^{d-1} \right)$\\
		$\leq \omega\left(i,d_{\underline{x}},d, d_{\underline{x}}\d^{d}+\d_{\underline{x}}
		2d\d^{d-1}, d\d^{d}+\d2d\d^{d-1} \right)$\\
		$\leq \omega \left(i,d_{\underline{x}},d, d_{\underline{x}}\d^{d}+\d_{\underline{x}}
		2d\d^{d-1}, 3d\d^{d} \right)$\\
		$\leq (2.3^{d^{i-1}+\cdots+d^2+d+1} -2^i3^{d^{i-1}+\cdots+d^2+d-(i-1)}) d^{d^{i-1}+\cdots+d^2+d+1} d_{x}(3d\d^{d})^{d^i}+ 2^i.3^{d^{i-1}+\cdots+d^2+d-(i-1)}  d^{d^{i-1}+\cdots+d^2+d+2}  (d_{\underline{x}}\d^{d}+\d_{\underline{x}}
		2d\d^{d-1})  (3d\d^{d})^{d^i-1} $\\
		$\leq (2.3^{d^i+d^{i-1}+\cdots+d^2+d+1} -2^i3^{d^i+d^{i-1}+\cdots+d^2+d-(i-1)}) d^{d^i+d^{i-1}+\cdots+d^2+d+1} d_{x}\d^{d^{i+1}}+ 
		2^i.3^{d^i+d^{i-1}+\cdots+d^2+d-(i-1)-1}  d^{d^i+d^{i-1}+\cdots+d^2+d+1}  
		d_{\underline{x}} \d^{d^{i+1}}+
		2^{i+1}.3^{d^i+d^{i-1}+\cdots+d^2+d-i}  d^{d^i+d^{i-1}+\cdots+d^2+d+2}  \d_{\underline{x}}   \d^{d^{i+1}-1}$\\
		$\leq (2.3^{d^i+d^{i-1}+\cdots+d^2+d+1} -2^i3^{d^i+d^{i-1}+\cdots+d^2+d-(i-1)}) d^{d^i+d^{i-1}+\cdots+d^2+d+1} d_{x}\d^{d^{i+1}}+ 
		\displaystyle\frac{1}{3}2^i3^{d^i+d^{i-1}+\cdots+d^2+d-(i-1)} d^{d^i+d^{i-1}+\cdots+d^2+d+1}  
		d_{\underline{x}} \d^{d^{i+1}}+
		2^{i+1}.3^{d^i+d^{i-1}+\cdots+d^2+d-i}  d^{d^i+d^{i-1}+\cdots+d^2+d+2}  \d_{\underline{x}}   \d^{d^{i+1}-1}$\\
		$\leq (2.3^{d^i+d^{i-1}+\cdots+d^2+d+1} -\displaystyle\frac{2}{3}2^i3^{d^i+d^{i-1}+\cdots+d^2+d-(i-1)}) d^{d^i+d^{i-1}+\cdots+d^2+d+1} d_{x}\d^{d^{i+1}}+
		2^{i+1}.3^{d^i+d^{i-1}+\cdots+d^2+d-i}  d^{d^i+d^{i-1}+\cdots+d^2+d+2}  \d_{\underline{x}}  \d^{d^{i+1}-1}$\\
		$\leq (2.3^{d^i+d^{i-1}+\cdots+d^2+d+1} -2^{i+1}3^{d^i+d^{i-1}+\cdots+d^2+d-i}) d^{d^i+d^{i-1}+\cdots+d^2+d+1} d_{x}\d^{d^{i+1}}+
		2^{i+1}.3^{d^i+d^{i-1}+\cdots+d^2+d-i}  d^{d^i+d^{i-1}+\cdots+d^2+d+2}  \d_{\underline{x}}  \d^{d^{i+1}-1}.$
	\end{center}
	This proves   (\ref{equ:estim0}), and also (\ref{equ:estim1}) by letting $\d\leq d$ and  $\d_{\underline{x}}\leq d_{\underline{x}}$.
	
	Similarly, given $\varepsilon>0$, we use (\ref{equ:omega_i}) and (\ref{equ:phi-eps}) with $\d\geq \d_\varepsilon$ and, since $d-1<d$, we get:
	\begin{center}
		$\omega\left(i+1,d_{\underline{x}},d,\d_{\underline{x}},\d \right)\leq \omega\left(i,d_{\underline{x}},d, d_{\underline{x}}\d^{d}+\d_{\underline{x}}
		(1+\varepsilon)d\d^{d-1}, (2+\varepsilon)d\d^{d} \right)$\\
		$\leq (2.(2+\varepsilon)^{d^{i-1}+\cdots+d^2+d+1} -(1+\varepsilon)^i(2+\varepsilon)^{d^{i-1}+\cdots+d^2+d-(i-1)}) d^{d^{i-1}+\cdots+d^2+d+1} d_{x}((2+\varepsilon)d\d^{d})^{d^i}+ (1+\varepsilon)^i.(2+\varepsilon)^{d^{i-1}+\cdots+d^2+d-(i-1)}  d^{d^{i-1}+\cdots+d^2+d+2}  (d_{\underline{x}}\d^{d}+\d_{\underline{x}}	(1+\varepsilon)d\d^{d-1})  ((2+\varepsilon)d\d^{d})^{d^i-1} $\\
		$\leq (2.(2+\varepsilon)^{d^i+d^{i-1}+\cdots+d^2+d+1} -(1+\varepsilon)^i(2+\varepsilon)^{d^i+d^{i-1}+\cdots+d^2+d-(i-1)}) d^{d^i+d^{i-1}+\cdots+d^2+d+1} d_{x}\d^{d^{i+1}}+$\\ 
		$(1+\varepsilon)^i(2+\varepsilon)^{d^i+d^{i-1}+\cdots+d^2+d-(i-1)-1}  d^{d^i+d^{i-1}+\cdots+d^2+d+1}  
		d_{\underline{x}} \d^{d^{i+1}}+$\\
		$(1+\varepsilon)^{i+1}(2+\varepsilon)^{d^i+d^{i-1}+\cdots+d^2+d-i}  d^{d^i+d^{i-1}+\cdots+d^2+d+2}  \d_{\underline{x}}   \d^{d^{i+1}-1}$\\
		$\leq (2.(2+\varepsilon)^{d^i+d^{i-1}+\cdots+d^2+d+1} -(1+\varepsilon)^i(2+\varepsilon)^{d^i+d^{i-1}+\cdots+d^2+d-(i-1)}) d^{d^i+d^{i-1}+\cdots+d^2+d+1} d_{x}\d^{d^{i+1}}+ 
		\displaystyle\frac{1}{(2+\varepsilon)}(1+\varepsilon)^i(2+\varepsilon)^{d^i+d^{i-1}+\cdots+d^2+d-(i-1)} d^{d^i+d^{i-1}+\cdots+d^2+d+1}  d_{\underline{x}} \d^{d^{i+1}}+$\\
		$(1+\varepsilon)^{i+1}(2+\varepsilon)^{d^i+d^{i-1}+\cdots+d^2+d-i}  d^{d^i+d^{i-1}+\cdots+d^2+d+2}  \d_{\underline{x}}   \d^{d^{i+1}-1}$\\
		$\leq (2.(2+\varepsilon)^{d^i+d^{i-1}+\cdots+d^2+d+1} -\displaystyle\frac{(1+\varepsilon)}{(2+\varepsilon)}(1+\varepsilon)^i(2+\varepsilon)^{d^i+d^{i-1}+\cdots+d^2+d-(i-1)}) d^{d^i+d^{i-1}+\cdots+d^2+d+1} d_{x}\d^{d^{i+1}}+
		(1+\varepsilon)^{i+1}.(2+\varepsilon)^{d^i+d^{i-1}+\cdots+d^2+d-i}  d^{d^i+d^{i-1}+\cdots+d^2+d+2}  \d_{\underline{x}}  \d^{d^{i+1}-1}$\\
		$\leq (2.(2+\varepsilon)^{d^i+d^{i-1}+\cdots+d^2+d+1} -(1+\varepsilon)^{i+1}(2+\varepsilon)^{d^i+d^{i-1}+\cdots+d^2+d-i}) d^{d^i+d^{i-1}+\cdots+d^2+d+1} d_{x}\d^{d^{i+1}}+$\\
		$(1+\varepsilon)^{i+1}(2+\varepsilon)^{d^i+d^{i-1}+\cdots+d^2+d-i}  d^{d^i+d^{i-1}+\cdots+d^2+d+2}  \d_{\underline{x}}  \d^{d^{i+1}-1}.$
	\end{center}
	This proves   (\ref{equ:estim2}), and also (\ref{equ:estim3}) by letting $\d\leq d$ and  $\d_{\underline{x}}\leq d_{\underline{x}}$.
\end{demo}

\section{Total reconstruction of vanishing polynomials for several algebraic series.}\label{section:Wilc}

In the present section, we provide several improvements of \cite{hickel-matu:puiseux-alg-multivar}.

\subsection{Total reconstruction in the algebraic case.}\label{sect:reconstr-alg}

\begin{defi}\label{defi:alg-relative}
	\begin{itemize}
		\item 	Let $\mathcal{F}'$ and $\mathcal{G}'$ be two strictly increasing finite sequences of pairs $(\underline{k},j)\in\left(\mathbb{N}^\tau\times\mathbb{N}\right)_{\textrm{alex}*}$ ordered anti-lexicographically: 
		$$(\underline{k}_1,j_1) \leq_{\textrm{alex}*} (\underline{k}_2,j_2)\Leftrightarrow  j_1 < j_2\textrm{ or } (j_1 = j_2\ \textrm{and}\ \underline{k}_1 \leq_{\textrm{grlex}} \underline{k}_2).$$
		We suppose additionally that  $ (\underline{k}_1,j_1) \geq_{\textrm{alex}*} \left(\underline{0},1\right)>_{\textrm{alex}*}(\underline{k}_2,j_2)$ for any $ (\underline{k}_1,j_1)\in \mathcal{F}'$ and $(\underline{k}_2,j_2)\in \mathcal{G}'$ (thus the elements of $\mathcal{G}'$ are  ordered pairs of the form $(\underline{k}_2,0)$, and those of  $\mathcal{F}'$ are of the form  $(\underline{k}_1,j_1),\ j_1\geq 1$). \\
		We denote $d_{y'}':=\max\{j,\ (\underline{k},j)\in\mathcal{F}'\}$ and  $d_{\underline{ s}}':=\max\{|\underline{k}|,\ (\underline{k},j)\in\mathcal{F}'\cup\mathcal{G}'\}$.		
		\item 	 We say that a series $y_0'=\displaystyle\sum_{\underline{m}\in \mathbb{N}^\tau} c_{\underline{m}}\underline{s}^{\underline{m}}\in K[[\underline{s}]]$ is \textbf{algebraic relatively to $(\mathcal{F}',\mathcal{G}')$} if there exists a polynomial $P(\underline{s},y')=\displaystyle\sum_{(\underline{k},j)\in\mathcal{F}'\cup\mathcal{G}'} a_{\underline{k},j}\underline{s}^{\underline{k}}{y'}^j\in K[\underline{s},y']\setminus\{0\}$ such that $P(\underline{s},y_0')=0$. 
		\item Let $d_{y'}', d_{\underline{ s}}'\in \N$, $d_{y'}'\geq 1$. We say that a series $y_0' \in K[[\underline{s}]]$ is \textbf{algebraic of degrees bounded by $d_{y'}'$ and $d_{\underline{ s}}'$} if it is algebraic  relatively to $(\mathcal{F}',\mathcal{G}')$ where $\mathcal{F}'$ and $\mathcal{G}'$ are the complete sequences of indices $(\underline{k},j)\in\left(\mathbb{N}^\tau\times\mathbb{N}\right)_{\textrm{alex}*}$ with $j\leq d_{y'}'$ and  $|\underline{k}|\leq d_{\underline{ s}}'$. 
	\end{itemize}
\end{defi}

Let us consider a series $Y_0'=\displaystyle\sum_{\underline{m}\in \mathbb{N}^\tau} C_{\underline{m}}\underline{s}^{\underline{m}}\in K[(C_{\underline{m}})_{\underline{m}\in\mathbb{N}^\tau}][[\underline{s}]]$ where $\underline{s}$ and the $C_{\underline{m}}$'s are variables. We denote the multinomial expansion of the $j$th power ${Y_0'}^j$  of $Y_0'$ by:
$${Y_0'}^j=\displaystyle\sum_{\underline{m}\in\mathbb{N}^\tau} C_{\underline{m}}^{(j)}\underline{s}^{\underline{m}}.$$
where $C_{\underline{m}}^{(j)}\in  K[(C_{\underline{m}})_{\underline{m}\in\mathbb{N}^\tau}]$.  
For instance, one has that   $C_{\underline{0}}^{(j)}={C_{\underline{0}}}^j$. For $j=0$, we set ${Y_0'}^0:=1$. More generally, for any $\underline{m}$ and any $j\leq |\underline{m}|$, $C_{\underline{m}}^{(j)}$ is a homogeneous polynomial of degree $j$ in the $C_{\underline{k}}$'s for $\underline{k}\in\mathbb{N}^\tau$, $\underline{k}\leq \underline{m}$, with coefficients in $\mathbb{N}^*$. 

Now suppose we are given a series $y_0'=\displaystyle\sum_{\underline{m}\in\mathbb{N}^\tau} c_{\underline{m}}\underline{s}^{\underline{m}}\in K[[\underline{s}]]\setminus\{0\}$. For any $j\in\mathbb{N}$, we denote the multinomial expansion of ${y_0'}^j$ by:
$${y_0'}^j=\displaystyle\sum_{\underline{m}\in\mathbb{N}^\tau} c_{\underline{m}}^{(j)}\underline{s}^{\underline{m}}.$$
So, $c_{\underline{m}}^{(j)}=C_{\underline{m}}^{(j)}(c_{\underline{0}},\ldots,c_{\underline{m}})$.

\begin{defi}\label{defi:mat_Wilc}
	Let $y_0'=\displaystyle\sum_{\underline{m}\in\mathbb{N}^\tau} c_{\underline{m}}\underline{s}^{\underline{m}}\in K[[\underline{s}]]\setminus\{0\}$.
	\begin{enumerate}
		\item 
		Given a pair $(\underline{k},j)\in \mathbb{N}^\tau\times\mathbb{N}$, we call \textbf{Wilczynski vector} $V_{\underline{k},j}$ (associated to $y_0'$) the infinite vector with components $\gamma_{\underline{m}}^{\underline{k},j}$ with $\underline{m}\in\mathbb{N}^\tau$ ordered with $\leq_{\textrm{grlex}}$:\\
		- if $j\geq 1$:
		$$V_{\underline{k},j}:=\left(\gamma_{\underline{m}}^{\underline{k},j}\right)_{\underline{m}\in\mathbb{N}^\tau} \textrm{ with } \gamma_{\underline{m}}^{\underline{k},j}=\left\{\begin{array}{ll}
			=c_{\underline{m}-\underline{k}}^{(j)}& \textrm{ if }\underline{m}\geq\underline{k}\\
			=0 & \textrm{ otherwise }
		\end{array}\right.$$
		- otherwise: 1  in the $\underline{k}$th position and 0 for the other coefficients,
		$$V_{\underline{k},0}:=(0,\ldots,1,0,0,\ldots,0,\ldots).$$
		So $\gamma_{\underline{m}}^{\underline{k},j}$ is the coefficient of $s^{\underline{m}}$ in the expansion of $s^{\underline{k}}{y_0'}^j$.
		\item Let $\mathcal{F}'$ and $\mathcal{G}'$ be two sequences as in Definition \ref{defi:alg-relative}. We associate to $\mathcal{F}'$, $\mathcal{G}'$ and $y_0'$ the \textbf{ (infinite)  Wilczynski matrix } whose columns are the corresponding vectors $V_{\underline{k},j}$:
		$$M_{\mathcal{F}',\mathcal{G}'}:=(V_{\underline{k},j})_{(\underline{k},j)\in\mathcal{F}'\cup \mathcal{G}'}\, ,$$
		$\mathcal{F}'\cup\mathcal{G}'$ being ordered by $ \leq_{\textrm{alex}*}$ as in  Definition \ref{defi:alg-relative}.\\ 
		We also define the \textbf{reduced Wilczynski matrix}, $M_{\mathcal{F}',\mathcal{G}'}^{red}$: it is the matrix obtained from $M_{\mathcal{F}',\mathcal{G}'}$ by removing the columns indexed in $\mathcal{G}'$,  and also removing the corresponding rows (suppress the $\underline{k}$th row for any $(\underline{k},0)\in\mathcal{G}'$). This amounts exactly to remove the rows containing the coefficient 1 for some Wilczynski vector indexed in $\mathcal{G}'$. For $(\underline{i},j)\in\mathcal{F}'$, we also denote by $V_{\underline{i},j}^{red}$ the corresponding vectors obtained from $V_{\underline{i},j}$ by suppressing the $\underline{k}$th row for any $(\underline{k},0)\in\mathcal{G}'$ and we call them \textbf{reduced Wilczynski vectors}.
	\end{enumerate}
\end{defi}
The following result is \cite[Lemma 3.2]{hickel-matu:puiseux-alg-multivar}:

\begin{lemma}[generalized Wilczynski]\label{lemme:wilcz}
	The series $y_0'$ is algebraic relatively to $(\mathcal{F}',\mathcal{G}')$ if and only if all the minors of order $|\mathcal{F}'\cup\mathcal{G}'|$ of the  Wilczynski matrix $M_{\mathcal{F}',\mathcal{G}'}$ vanish, or also if and only if all the minors of order  $|\mathcal{F}'|$ of the reduced Wilczynski matrix $M_{\mathcal{F}',\mathcal{G}'}^{red}$ vanish.
\end{lemma}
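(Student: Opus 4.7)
The statement is essentially a linear algebra reformulation of algebraicity, so my plan is to translate the polynomial identity into a column relation and then invoke the standard rank criterion.

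First, I would make explicit the correspondence between polynomial relations and column relations. Writing $P(\underline{s}, y') = \sum_{(\underline{k}, j) \in \mathcal{F}' \cup \mathcal{G}'} a_{\underline{k}, j}\, \underline{s}^{\underline{k}} (y')^j$ and expanding in powers of $\underline{s}$, one has
$$P(\underline{s}, y_0') = \sum_{\underline{m} \in \mathbb{N}^\tau} \left( \sum_{(\underline{k}, j) \in \mathcal{F}' \cup \mathcal{G}'} a_{\underline{k}, j}\, \gamma_{\underline{m}}^{\underline{k}, j} \right) \underline{s}^{\underline{m}},$$
by the very definition of $\gamma_{\underline{m}}^{\underline{k}, j}$ as the coefficient of $\underline{s}^{\underline{m}}$ in $\underline{s}^{\underline{k}} (y_0')^j$. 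Hence $P(\underline{s}, y_0') = 0$ if and only if $\sum_{(\underline{k}, j)} a_{\underline{k}, j}\, V_{\underline{k}, j} = 0$ in $K^{\mathbb{N}^\tau}$. Existence of a nonzero such $P$ is therefore equivalent to $K$-linear dependence of the finitely many columns $V_{\underline{k}, j}$, $(\underline{k}, j) \in \mathcal{F}' \cup \mathcal{G}'$.

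Second, I would apply the standard rank characterization to the matrix $M_{\mathcal{F}', \mathcal{G}'}$, which has $N := |\mathcal{F}' \cup \mathcal{G}'|$ columns and infinitely many rows: its $N$ columns are linearly dependent if and only if its rank is at most $N-1$, if and only if every $N \times N$ minor (one for each choice of $N$ rows) vanishes. This yields the first equivalence.

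For the reduced statement, I would exploit the special shape of the $\mathcal{G}'$-columns. By definition, $V_{\underline{k}, 0}$ is the standard basis vector with a $1$ in row $\underline{k}$ and zeros elsewhere. If one restricts $M_{\mathcal{F}', \mathcal{G}'}$ to the rows indexed by $\{\underline{k} \ |\ (\underline{k}, 0) \in \mathcal{G}'\}$, the $\mathcal{G}'$-columns form an identity block. Consequently, in any dependence $\sum a_{\underline{k}, j}\, V_{\underline{k}, j} = 0$, reading off these distinguished rows expresses each $\mathcal{G}'$-coefficient $a_{\underline{k}, 0}$ uniquely as a $K$-linear combination of the $a_{\underline{k}', j}$ with $(\underline{k}', j) \in \mathcal{F}'$; reading off the remaining rows gives precisely $\sum_{(\underline{k}, j) \in \mathcal{F}'} a_{\underline{k}, j}\, V_{\underline{k}, j}^{red} = 0$. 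Conversely, any reduced dependence lifts back to a global one by the same formulas. Thus $K$-linear dependence of the $V_{\underline{k}, j}$'s is equivalent to that of the reduced family $V_{\underline{k}, j}^{red}$, $(\underline{k}, j) \in \mathcal{F}'$, and the argument of the previous paragraph, applied to $M_{\mathcal{F}', \mathcal{G}'}^{red}$, gives the equivalence with vanishing of all $|\mathcal{F}'| \times |\mathcal{F}'|$ minors.

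No serious obstacle is anticipated: the result is really just a dictionary between polynomial relations among the $\underline{s}^{\underline{k}} (y_0')^j$ and linear relations among Wilczynski vectors, combined with the standard minor characterization of rank. The only point worth stating cleanly is that the minor criterion applies despite the infinitude of rows, which is fine because only finitely many columns are involved.
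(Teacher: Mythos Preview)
Your proposal is correct and follows essentially the same approach as the paper: the paper cites this lemma from \cite{hickel-matu:puiseux-alg-multivar} without proof, but the discussion immediately following it and the proof of the analogous multi-series Lemma~\ref{lemme:wilcz-multi} proceed exactly as you do, namely by identifying $M_{\mathcal{F}',\mathcal{G}'}\cdot(a_{\underline{k},j})$ with the coefficient vector of $P(\underline{s},y_0')$, reducing to linear dependence and the minor criterion, and passing to the reduced matrix via the identity block formed by the $V_{\underline{k},0}$'s together with the back-substitution formula~(\ref{equ:terme-cst0}). The only small point you leave implicit is that a nontrivial global dependence must have some nonzero $\mathcal{F}'$-coefficient (since the $V_{\underline{k},0}$ are standard basis vectors, hence independent), ensuring the reduced relation is also nontrivial.
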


Let us give an outline of the reconstruction process of \cite{hickel-matu:puiseux-alg-multivar}. Let $\mathcal{F}'$ and $\mathcal{G}'$ be two sequences as in Definition \ref{defi:alg-relative} and $y_0'=\displaystyle\sum_{\underline{m}\in\mathbb{N}^\tau} c_{\underline{m}}\underline{s}^{\underline{m}}\in K[[\underline{s}]]\setminus\{0\}$ be algebraic relatively to $(\mathcal{F}',\mathcal{G}')$. Our purpose is to describe the $K$-vector space whose non-zero elements are the polynomials  $P(\underline{s},y')=\displaystyle\sum_{(\underline{k},j)\in\mathcal{F}'\cup\mathcal{G}'} a_{\underline{k},j}\underline{s}^{\underline{k}}{y'}^j\in K[\underline{s},y']\setminus\{0\}$ such that $P(\underline{s},y_0')=0$.  The components of the infinite vector computed as $M_{\mathcal{F}',\mathcal{G}'}\cdot (a_{\underline{k},j})_{(\underline{k},j)\in\mathcal{F}'\cup\mathcal{G}'}$ are exactly the coefficients of the expansion of $P(\underline{s},y_0')$ in $K[[\underline{s}]]$. 
Let us now remark that, in the infinite vector $M_{\mathcal{F}',\mathcal{G}'}\cdot (a_{\underline{k},j})_{(\underline{k},j)\in\mathcal{F}'\cup\mathcal{G}'}$, if we remove the components indexed by  $\underline{k}$ for $(\underline{k},0)\in\mathcal{G}'$, then we get exactly the infinite vector $M_{\mathcal{F}',\mathcal{G}'}^{red}\cdot (a_{\underline{k},j})_{(\underline{k},j)\in\mathcal{F}'}$. The vanishing of the latter means precisely that the rank of  $M_{\mathcal{F}',\mathcal{G}'}^{red}$ is less than $|\mathcal{F}|$. 
Conversely, if the columns of  $M_{\mathcal{F}',\mathcal{G}'}^{red}$ are dependent for certain $\mathcal{F}'$ and $\mathcal{G}'$, we denote by $(a_{\underline{k},j})_{(\underline{k},j)\in\mathcal{F}'}$ a corresponding sequence of coefficients of a nontrivial vanishing linear combination of the column vectors. Then it suffices to note that the remaining  coefficients $a_{\underline{k},0}$ for  $(\underline{k},0)\in\mathcal{G}'$ are  uniquely determined  as follows: 
\begin{equation}\label{equ:terme-cst0}
	a_{\underline{k},0}=-\displaystyle\sum_{(\underline{i},j)\in\mathcal{F}',\, \underline{i}\leq \underline{k}} a_{\underline{i},j}c_{\underline{k}-\underline{i}}^{(j)}\,.
\end{equation} 

We consider a maximal  family $\mathcal{F}''\subsetneq\mathcal{F}'$ such that the corresponding reduced Wilczynski vectors are $K$-linearly independent. Proceeding as in Lemma 3.7 in  \cite{hickel-matu:puiseux-alg-multivar}, $\mathcal{F}''$ is such a family if and only if, in the reduced Wilczynski matrix $M_{\mathcal{F}',\mathcal{G}'}^{red}$, there is a nonzero minor $\det(A)$ where $A$ has columns indexed in $\mathcal{F}''$ and lowest row with index $\underline{m}$ such that $|\underline{m}|\leq 2d_{\underline{s}}'d_{y'}'$ and $\mathcal{F}''$ is maximal with this property. Moreover, among such $A$'s,  we take one that has its lowest row having an index minimal for $\leq_{\mathrm{grlex}}$, and we denote the latter index by $\hat{\underline{ p}}$.

For any $(\underline{k}_0,j_0)\in\mathcal{F}'\setminus\mathcal{F}''$, the   family of reduced Wilczynski vectors $(V_{\underline{k},j}^{red})$ with $(\underline{k},j)\in \mathcal{F}''\cup\{(\underline{k}_0,j_0)\}$ is $K$-linearly dependent. There is a unique relation:
\begin{equation}\label{equ:wilcz}
	V_{\underline{k}_0,j_0}^{red} =\sum_{(\underline{k},j)\in \mathcal{F}''}\lambda_{\underline{k},j}^{\underline{k}_0,j_0} V_{\underline{k},j}^{red}\ \  \textrm{ with }\ \  \lambda_{\underline{k},j}^{\underline{k}_0,j_0}\in K.
\end{equation}

We consider the restriction of  $M_{\mathcal{F}',\mathcal{G}'}^{red}$ to the rows of $A$. For these rows, by Cramer's rule, we reconstruct the linear combination (\ref{equ:wilcz}).
The coefficients $\lambda_{\underline{k},j}^{\underline{k}_0,j_0}$ of such a linear combination are quotients of homogeneous polynomials with integer coefficients  in terms of the entries of these restricted matrix, hence quotients of polynomials in the corresponding $c_{\underline{m}}$'s, $|\underline{m}|\leq 2d_{\underline{s}}'d_{y'}'$. \\

Let $P(\underline{s},y')=\displaystyle\sum_{(\underline{k},j)\in\mathcal{F}'\cup\mathcal{G}'} a_{\underline{k},j}\underline{s}^{\underline{k}}{y'}^j\in K[\underline{s},y']\setminus\{0\}$. One has $P(\underline{s},y_0')=0$ if and only if (\ref {equ:terme-cst0}) holds as well as:
\begin{center}
	$\displaystyle	\sum_{(\underline{k},j)\in \mathcal{F}''}a_{\underline{k},j} V_{\underline{k},j}^{red}+\sum_{(\underline{k}_0,j_0)\in \mathcal{F}'\setminus\mathcal{F}''}a_{\underline{k}_0,j_0} V_{\underline{k}_0,j_0}^{red}=\underline{0} 
	$\\
	$\Leftrightarrow  	\displaystyle\sum_{(\underline{k},j)\in \mathcal{F}''}a_{\underline{k},j} V_{\underline{k},j}^{red}+\sum_{(\underline{k}_0,j_0)\in \mathcal{F}'\setminus\mathcal{F}''}a_{\underline{k}_0,j_0} \left(\sum_{(\underline{k},j)\in \mathcal{F}''}\lambda_{\underline{k},j}^{\underline{k}_0,j_0} V_{\underline{k},j}^{red}\right)=\underline{0} $\\
	$\Leftrightarrow  	\displaystyle\sum_{(\underline{k},j)\in \mathcal{F}''}\left( a_{\underline{k},j} +\sum_{(\underline{k}_0,j_0)\in \mathcal{F}'\setminus\mathcal{F}''}a_{\underline{k}_0,j_0}  \lambda_{\underline{k},j}^{\underline{k}_0,j_0}  \right)V_{\underline{k},j}^{red}=\underline{0} $\\
	$\Leftrightarrow  \forall 	(\underline{k},j)\in \mathcal{F}'',\ \  a_{\underline{k},j} =-\displaystyle\sum_{(\underline{k}_0,j_0)\in \mathcal{F}'\setminus\mathcal{F}''}a_{\underline{k}_0,j_0}  \lambda_{\underline{k},j}^{\underline{k}_0,j_0},$\\
\end{center}

\begin{lemma}\label{lemma:total-reconstr}
	Let $\mathcal{F}',\mathcal{G}',d_{\underline{s}}',d_{y'}', y_0',\mathcal{F}''$ be as above. Then, the $K$-vector space of polynomials   $P(\underline{s},y')=\displaystyle\sum_{(\underline{k},j)\in\mathcal{F}'\cup\mathcal{G}'} a_{\underline{k},j}\underline{s}^{\underline{k}}{y'}^j\in K[\underline{s},y']$ such that $P(\underline{s},y_0')=0$ is the set of polynomials such that 
	\begin{equation}\label{equ:coeff-param}
		\forall 	(\underline{k},j)\in \mathcal{F}'',\ \  a_{\underline{k},j} =-\displaystyle\sum_{(\underline{k}_0,j_0)\in \mathcal{F}'\setminus\mathcal{F}''}a_{\underline{k}_0,j_0}  \lambda_{\underline{k},j}^{\underline{k}_0,j_0},
	\end{equation} 	
	and 
	\begin{equation}\label{equ:terme-cst1}
		\forall (\underline{k},0)\in\mathcal{G}',\ \ a_{\underline{k},0}=-\displaystyle\sum_{(\underline{i},j)\in\mathcal{F}',\, \underline{i}\leq \underline{k}} a_{\underline{i},j}c_{\underline{k}-\underline{i}}^{(j)}\, ,
	\end{equation} 
	where the $\lambda_{\underline{k},j}^{\underline{k}_0,j_0}$'s are computed as in (\ref{equ:wilcz}) as quotients of polynomials with integer coefficients in the $c_{\underline{m}}$'s for $|\underline{m}|\leq 2d_{\underline{s}}'d_{y'}'$.
\end{lemma}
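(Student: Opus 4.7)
\noindent\textit{Plan of proof.} The plan is essentially to formalize the discussion that precedes the statement. First I would expand
\[ P(\underline{s},y_0')=\sum_{\underline{m}\in\N^\tau}\left(\sum_{(\underline{k},j)\in\mathcal{F}'\cup\mathcal{G}'}a_{\underline{k},j}\,\gamma_{\underline{m}}^{\underline{k},j}\right)\underline{s}^{\underline{m}}, \]
so that $P(\underline{s},y_0')=0$ is equivalent to $M_{\mathcal{F}',\mathcal{G}'}\cdot (a_{\underline{k},j})_{(\underline{k},j)\in\mathcal{F}'\cup\mathcal{G}'}=\underline{0}$, and then split the rows of this infinite system into two blocks.

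Then I would deal with the rows indexed by $\underline{k}$ such that $(\underline{k},0)\in\mathcal{G}'$: each such row meets a column $V_{\underline{l},0}$ only at position $\underline{l}$, so it isolates $a_{\underline{k},0}$ against the contributions $c_{\underline{k}-\underline{i}}^{(j)}$ coming from the columns $V_{\underline{i},j}$ with $j\ge 1$, and requiring these rows to vanish is exactly (\ref{equ:terme-cst1}). Removing them, together with the columns indexed by $\mathcal{G}'$ (which act trivially on the remaining rows), leaves $M_{\mathcal{F}',\mathcal{G}'}^{red}\cdot (a_{\underline{k},j})_{(\underline{k},j)\in\mathcal{F}'}=\underline{0}$, that is, a vanishing linear relation among the reduced Wilczynski vectors $V_{\underline{k},j}^{red}$.

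At that point I would invoke the construction of $\mathcal{F}''$: since it is a maximal $K$-linearly independent subfamily of the $V_{\underline{k},j}^{red}$, for each $(\underline{k}_0,j_0)\in\mathcal{F}'\setminus\mathcal{F}''$ the unique decomposition (\ref{equ:wilcz}) is available. Substituting it into the previous vanishing relation and collecting the coefficient of each $V_{\underline{k},j}^{red}$ with $(\underline{k},j)\in\mathcal{F}''$ forces, by linear independence, exactly the relations (\ref{equ:coeff-param}); reversing the substitutions yields the converse. Combining with the first block, one gets the claimed equivalence between the vanishing of $P(\underline{s},y_0')$ and the conjunction of (\ref{equ:coeff-param}) and (\ref{equ:terme-cst1}).

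The only remaining point, more a matter of bookkeeping than a genuine obstacle, is the claim on the shape of the $\lambda_{\underline{k},j}^{\underline{k}_0,j_0}$. Here I would apply Cramer's rule to the invertible submatrix $A$ of size $|\mathcal{F}''|$, using as right-hand side the restriction of $V_{\underline{k}_0,j_0}^{red}$ to the rows of $A$. By the very choice of $A$, its lowest row has grlex-index $\hat{\underline{p}}$ with $|\hat{\underline{p}}|\le 2d_{\underline{s}}'d_{y'}'$; hence every entry of $A$ and of the right-hand side is one of the $c_{\underline{m}}^{(j)}$ with $|\underline{m}|\le 2d_{\underline{s}}'d_{y'}'$, which is a polynomial with integer coefficients in the $c_{\underline{n}}$ with $|\underline{n}|\le 2d_{\underline{s}}'d_{y'}'$, giving the announced form.
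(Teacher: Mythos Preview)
Your proposal is correct and follows essentially the same approach as the paper: the paper's argument is precisely the discussion preceding the lemma, which expands $P(\underline{s},y_0')$ via the Wilczynski matrix, separates the $\mathcal{G}'$-rows to obtain (\ref{equ:terme-cst1}), reduces to the kernel equation for $M_{\mathcal{F}',\mathcal{G}'}^{red}$, substitutes the decompositions (\ref{equ:wilcz}) and uses linear independence of the $V_{\underline{k},j}^{red}$ over $\mathcal{F}''$ to obtain (\ref{equ:coeff-param}), and finally invokes Cramer's rule on the submatrix $A$ together with the depth bound $|\hat{\underline{p}}|\le 2d_{\underline{s}}'d_{y'}'$ (from \cite[Lemma~3.7]{hickel-matu:puiseux-alg-multivar}) to get the shape of the $\lambda_{\underline{k},j}^{\underline{k}_0,j_0}$.
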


\begin{remark}
	Note that the set of polynomials   $P(\underline{s},y')\in K[\underline{s},y']$ with support in $\mathcal{F}'\cup\mathcal{G}'$ such that $P(\underline{s},y_0')=0$ is a $K$-vector space of dimension $|\mathcal{F}'|-|\mathcal{F}''|\geq 1$.
\end{remark}

\subsection{Total algebraic reconstruction in the non-homogeneous case.}
Let $\mathcal{F}',\mathcal{G}', d_{y'}',d_{\underline{ s}}'$ be as in Definition \ref{defi:alg-relative}.

\subsubsection{First case.}\label{sect:non-homog-1stcase}
Let  $y_0'=\displaystyle\sum_{\underline{m}\in\mathbb{N}^\tau} c_{\underline{m}}\underline{s}^{\underline{m}}\in K[[\underline{s}]]$ be \emph{algebraic relatively to $(\mathcal{F}',\mathcal{G}')$}.  
Let  $i,\,d_{\underline{s}},\,d' \in \mathbb{N}$, $d'\geq 3$, $d_{\underline{ s}}'\leq d_{\underline{ s}}$ and $d_{y'}'\leq d'$.  For any $j=0,\ldots,i$, we consider power series   $y_j'=\displaystyle\sum_{\underline{m}\in\N^\tau} c_{j,\underline{m}}\underline{s}^{\underline{m}}\in K\left[\left[\underline{s}\right]\right]$  
which satisfy some equations ${P}_j(\underline{s},y'_0,\ldots,y'_j)=0$ 
where 	$P_{j}\in K\left[\underline{s},z_0,z_1,\ldots,z_j \right]\setminus\{0 \}$, $P_{j}(\underline{s},y_0',\ldots,y_{j-1}',z_j)\not\equiv 0$, $\deg_{\underline{s}}P_{j}\leq d_{\underline{s} },$ 	
$\deg_{z_k}{P}_{j}\leq d'$ for $k=0,\ldots,j$.    In particular, $c_{\underline{m}}=c_{0,\underline{m}} $ for any $\underline{m}$. 
Let $z'=R(\underline{s},y'_0,\ldots,y'_i)\in K[[\underline{s}]]\setminus\{0\}$, where $R\in K\left[\underline{s},z_0,z_1,\ldots,z_i \right]\setminus\{0 \}$ with $\deg_{\underline{s}}R\leq d_{\underline{s} },$ 	
$\deg_{z_k}R\leq d'$ for $k=0,\ldots,i$.

We want to determine when  there is a polynomial  $P(\underline{s},y')=\displaystyle\sum_{(\underline{k},j)\in\mathcal{F}'\cup\mathcal{G}'} a_{\underline{k},j}\underline{s}^{\underline{k}}{y'}^j\in K[\underline{s},y']\setminus\{0\}$ such that $P(\underline{s},y_0')=z'$ and, subsequently, to reconstruct all such possible $P$'s. 

Let $V$ be the infinite vector with components the coefficients of $z'$, and $V^{red}$ the corresponding reduced vector as in Definition \ref{defi:mat_Wilc}. For $\mathcal{F}''$ as in the previous section, we have $P(\underline{s},y_0')=z'$ if and only if: 
\begin{equation}\label{equ:wilcz-2nd-membre}
	\displaystyle\sum_{(\underline{k},j)\in \mathcal{F}''}\left( a_{\underline{k},j} +\sum_{(\underline{k}_0,j_0)\in \mathcal{F}'\setminus\mathcal{F}''}a_{\underline{k}_0,j_0}  \lambda_{\underline{k},j}^{\underline{k}_0,j_0}  \right)V_{\underline{k},j}^{red}= V^{red}.
\end{equation}

We want to examine when the vectors $(V_{\underline{k},j}^{red})_{(\underline{k},j)\in \mathcal{F}''}$ and $V^{red}$ are linearly dependent. Let  $N^{red}$ be the infinite matrix with columns  $(V_{\underline{k},j}^{red})_{(\underline{k},j)\in \mathcal{F}''}$ and $V^{red}$.
\begin{lemma}\label{lemma:depth-non-homog}
	The vectors  $(V_{\underline{k},j}^{red})_{(\underline{k},j)\in \mathcal{F}''}$ and $V^{red}$ are linearly dependent if and only if all the minors of maximal order of $N^{red}$ up to the row $\underline{p}$ with: 	$$|\underline{p}| \leq 2.3^{(d')^{i-1}+\cdots+(d')^2+d'+1}    
	d_{\underline{s}} (d')^{(d')^{i}+\cdots+(d')^2+d'+1} $$  vanish. 
\end{lemma}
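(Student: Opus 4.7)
The $(\Rightarrow)$ direction is immediate, since linear dependence of the columns of $N^{red}$ forces every maximal-order minor to vanish. For $(\Leftarrow)$, the plan is to convert the hypothesis into an order bound on a certain polynomial evaluation and then invoke Theorem \ref{propo:nested}. Since the $|\mathcal{F}''|$ reduced Wilczynski vectors $V_{\underline{k},j}^{red}$, $(\underline{k},j)\in\mathcal{F}''$, are linearly independent, I first choose a fixed nonsingular $|\mathcal{F}''|\times|\mathcal{F}''|$ submatrix $A$ of the corresponding column block of $N^{red}$. By Cramer's rule, the linear dependence of the augmented family $(V_{\underline{k},j}^{red})_{(\underline{k},j)\in\mathcal{F}''}\cup\{V^{red}\}$ is then equivalent to the vanishing, for every row $\underline{q}$ of $N^{red}$, of the $(|\mathcal{F}''|+1)$-minor $D_{\underline{q}}$ formed by the rows of $A$ plus the row $\underline{q}$.

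The key observation is that expanding $D_{\underline{q}}$ along its bottom row produces
\[ D_{\underline{q}} \;=\; [\underline{s}^{\underline{q}}]\,\tilde{Q}(\underline{s},y_0',\ldots,y_i'), \]
where
\[ \tilde{Q}(\underline{s},z_0,\ldots,z_i) \;:=\; \sum_{(\underline{k},j)\in\mathcal{F}''}\alpha_{\underline{k},j}\,\underline{s}^{\underline{k}}z_0^{j} \,+\, \alpha_V\,R(\underline{s},z_0,\ldots,z_i), \]
the coefficients $\alpha_{\underline{k},j},\alpha_V\in K$ being fixed complementary minors of $A$ with $\alpha_V=\pm\det(A)\neq 0$. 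Hence $\tilde{Q}\in K[\underline{s},z_0,\ldots,z_i]$ satisfies $\deg_{\underline{s}}\tilde{Q}\le d_{\underline{s}}$ and $\deg_{z_k}\tilde{Q}\le d'$ for every $k=0,\ldots,i$. Provided $\tilde{Q}\not\equiv 0$ as a polynomial and $\hat{Q}:=\tilde{Q}(\underline{s},y_0',\ldots,y_i')\not\equiv 0$ as a series, Theorem \ref{propo:nested} applied to the sequence $y_0',\ldots,y_i'$ with their equations $P_j$, combined with estimate (\ref{equ:estim1}), bounds $\ord_{\underline{s}}\hat{Q}$ by precisely the constant appearing in the statement. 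So, if $\hat{Q}\neq 0$, its order $\underline{q}_0$ satisfies $|\underline{q}_0|\le$ this bound, and then $D_{\underline{q}_0}=[\underline{s}^{\underline{q}_0}]\hat{Q}\neq 0$, contradicting the hypothesis.

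The main obstacle is the possibility that the leading term $\underline{q}_0$ of $\hat{Q}$ lands at a $\mathcal{G}'$-indexed row, i.e.\ at exactly one of the finitely many rows removed when passing from $M_{\mathcal{F}',\mathcal{G}'}$ to $N^{red}$; in that case $D_{\underline{q}_0}$ is not even defined in $N^{red}$ and no direct contradiction arises. I propose to handle this by iteratively peeling off the leading $\mathcal{G}'$-indexed monomial: replacing $\tilde{Q}$ by $\tilde{Q}-[\underline{s}^{\underline{q}_0}]\hat{Q}\cdot\underline{s}^{\underline{q}_0}$ preserves the degree bounds (since $|\underline{q}_0|\le d_{\underline{s}}'\le d_{\underline{s}}$) and strictly increases the order, while the new polynomial remains in $K[\underline{s},z_0,\ldots,z_i]$ with the same degree bounds so that Theorem \ref{propo:nested} may be applied again. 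Since $\mathcal{G}'$ indexes only finitely many multi-indices (all of norm $\leq d_{\underline{s}}'$), this process terminates in finitely many steps: either the resulting series becomes identically zero, in which case $\hat{Q}$ was supported entirely on $\mathcal{G}'$-indices and therefore $D_{\underline{q}}=0$ for every non-$\mathcal{G}'$-indexed row (the desired linear dependence), or at some step the new leading term sits at a non-$\mathcal{G}'$-indexed row $\underline{q}_k$ whose norm is still bounded by the stated constant, in which case $D_{\underline{q}_k}=[\underline{s}^{\underline{q}_k}]\hat{Q}\neq 0$ again contradicts the hypothesis.
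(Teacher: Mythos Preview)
Your proof is correct and rests on the same core mechanism as the paper's: construct a polynomial in $K[\underline{s},z_0,\ldots,z_i]$ with $\deg_{\underline{s}}\le d_{\underline{s}}$ and $\deg_{z_k}\le d'$ whose evaluation at $(y_0',\ldots,y_i')$ records, at non-$\mathcal{G}'$-indexed rows, exactly the relevant maximal minors, and then invoke Theorem~\ref{propo:nested} with the estimate~(\ref{equ:estim1}).

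The organization differs slightly. The paper first locates the minimal $\underline{p}$ at which the rank of the truncated matrix jumps from $|\mathcal{F}''|$ to $|\mathcal{F}''|+1$, then builds a single polynomial $Q$ by taking the kernel element of $N^{red}_{\tilde{\underline{p}}}$ and adjoining \emph{all} the $\mathcal{G}'$-correction terms at once via formula~(\ref{equ:terme-cst-bis}); this forces $w_{\underline{s}}(Q(\underline{s},y_0',\ldots,y_i'))=\underline{p}$ exactly, and a single application of Theorem~\ref{propo:nested} finishes. You instead fix the cofactor combination $\tilde{Q}$ up front and peel off $\mathcal{G}'$-indexed leading monomials one at a time, reapplying Theorem~\ref{propo:nested} after each subtraction; termination is guaranteed because $\mathcal{G}'$ is finite and each such index can become leading at most once. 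The two constructions produce (up to scaling) the same linear combination on the $\mathcal{F}''$-part, and after enough peelings your $\tilde{Q}_k$ agrees with the paper's $Q$ at all indices $\le_{\mathrm{grlex}}\underline{q}_k$. Your route avoids the rank-jump bookkeeping at the cost of iterating the depth bound; the paper's route is a one-shot argument but needs the observation that $(\underline{p},0)\notin\mathcal{G}'$ to conclude that the $\underline{p}$-coefficient survives.
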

\begin{proof}
	The vectors  $(V_{\underline{k},j}^{red})_{(\underline{k},j)\in \mathcal{F}''}$ and $V^{red}$ are linearly dependent if and only if all the minors of $N^{red}$ of maximal order vanish: see \cite[ Lemma 1]{hickel-matu:puiseux-alg}.
	
	Conversely, we suppose that the vectors are linearly independent. So, there is a  minor of $N$ of maximal order which is nonzero. Let $\underline{p}$ be the smallest multi-index for $\leq_{\mathrm{grlex}}$ such that there is such a nonzero minor of $N^{red}$ of maximal order with lowest row of index $\underline{p}$. Hence, there is a subminor of it based on the columns indexed in $\mathcal{F}''$ which is nonzero, say $\det(B)$. The lowest row of $B$ is at most  $\underline{p}$. So, by minimality of $\hat{\underline{p}}$ (see before  (\ref{equ:wilcz}) in the previous section), $\underline{p}\geq_{\mathrm{grlex}}\hat{\underline{p}}$. If $\underline{p}=\hat{\underline{p}}$, then $|\underline{ p}|\leq  2d_{\underline{s}}'d'$ and we are done. If $\underline{p}>_{\mathrm{grlex}}\hat{\underline{p}}$, let us denote by  $\underline{\tilde{p}}$ the predecessor of $\underline{p}$ for $\leq_{\textrm{grlex}}$. Then $\tilde{\underline{p}}\geq_{\mathrm{grlex}}\hat{\underline{p}}$.
	For any multi-index $\underline{ m}\in\N^r$, denote by $N_{\underline{m}}^{red}, V_{\underline{k},j,\underline{m}}^{red},V_{\underline{m}}^{red}$  the truncations up to the row $\underline{m}$ of $N^{red},V_{\underline{k},j}^{red},V^{red}$ respectively. 	By definition of $\underline{p}$, the rank of the matrix $N^{red}_{\underline{p}}$  is $|\mathcal{F}''|+1$, whereas the rank of $N^{red}_{\underline{\tilde{p}}}$  is  $|\mathcal{F}''|$. There exists a nonzero vector $((a_{\underline{i},j})_{(\underline{i},j)\in\mathcal{F}''},-a)$ of elements of $K$ such that 
	\begin{equation}\label{equ:profond-non-homog}
		N_{\underline{\tilde{p}}}^{red}\,\cdot\, \left( \begin{array}{c}
			(a_{\underline{i},j})_{(\underline{i},j)\in\mathcal{F}''}\\
			-a
		\end{array}   \right)= 0,
	\end{equation}
	where $a$ can be chosen to be 1 since the vectors $\left(V_{\underline{k},j,\underline{\tilde{p}}}^{red}\right)_{(\underline{k},j)\in \mathcal{F}''}$ are independent. The components of the resulting vector $N_{\underline{\tilde{p}}}^{red}\,\cdot\, \left( \begin{array}{c}
		(a_{\underline{i},j})_{(\underline{i},j)\in\mathcal{F}''}\\
		-1
	\end{array}   \right)$  are exactly the coefficients $e_{\underline{k}}$, $(\underline{k},0)\notin \mathcal{G}'$ and $\underline{k}\leq_{\mathrm{grlex}} \underline{\tilde{p}}$, of the expansion of $\displaystyle\sum_{(\underline{i},j)\in\mathcal{F}''}a_{\underline{i},j}\,\underline{s}^{\underline{i}}\,(y_0')^j-z'$.
	By computing the coefficients $a_{\underline{k},0}$ for $(\underline{k},0)\in\mathcal{G}'$ as:
	\begin{equation}\label{equ:terme-cst-bis}
		a_{\underline{k},0}=-\displaystyle\sum_{(\underline{i},j)\in\mathcal{F}'', \underline{k}>\underline{i}} a_{\underline{i},j}c_{\underline{k}-\underline{i}}^{(j)}+f_{\underline{k}},
	\end{equation}
	where $f_{\underline{k}}$ denotes the coefficient of ${\underline{s}}^{\underline{k}}$ in $z'$,
	we obtain the vanishing of the first terms of $Q(\underline{s},y_0',\ldots,y'_i):=\displaystyle\sum_{(\underline{i},j)\in\mathcal{F}''\cup\mathcal{G}'} a_{\underline{i},j} \underline{s}^{\underline{i}}(y_0')^j-z'$ up to  $\tilde{\underline{p}}$. So, $w_{\underline{s}}(Q(\underline{s},y_0',\ldots,y'_i))\geq_{\mathrm{grlex}}{\underline{p}}$ and, therefore,  $\ord(Q(\underline{s},y_0',\ldots,y'_i))\geq |{\underline{p}}|$. 
	
	On the contrary, 
	we have:
	\begin{equation}\label{equ:annul}
		N_{\underline{p}}^{red}\,\cdot\, \left( \begin{array}{c}
			(a_{\underline{i},j})_{(\underline{i},j)\in\mathcal{F}''}\\
			-1
		\end{array}   \right)\neq 0. 
	\end{equation}
	From (\ref{equ:profond-non-homog}) and (\ref{equ:annul}), we deduce that the coefficient $e_{\underline{p}}$  of $\underline{s}^{\underline{p}}$ in the expansion of\\ $\displaystyle\sum_{(\underline{i},j)\in\mathcal{F}''}a_{\underline{i},j}\,\underline{x}^{\underline{i}}\,(y_0')^j-z'$ is nonzero. 
	Observe that this term of the latter series does not overlap with the terms of $\displaystyle\sum_{(\underline{i},0)\in\mathcal{G}'}a_{\underline{i},0}\,\underline{s}^{\underline{i}}$ since $(\underline{p},0)\notin\mathcal{G}'$. Therefore,  $w_{\underline{s}}(Q(\underline{s},y_0',\ldots,y'_i))={\underline{p}}$. In particular, $Q(\underline{s},y_0',\ldots,y'_i)\neq 0$, so the bound (\ref {equ:estim1}) in Theorem \ref{propo:nested} applies:
	$$|\underline{p}| \leq 2.3^{(d')^{i-1}+\cdots+(d')^2+d'+1}    
	d_{\underline{s}} (d')^{(d')^{i}+\cdots+(d')^2+d'+1} .$$ 
\end{proof}

Let us return to (\ref{equ:wilcz-2nd-membre}). Let  $A$ be the square matrix defined after (\ref{equ:wilcz}).  For any $(\underline{k},j)\in\mathcal{F}''$, we denote by $A_{\underline{k},j}$ the matrix deduced from $A$ by substituting the corresponding part of $V^{red}$ instead of the column indexed by $(\underline{k},j)$. Equality  (\ref{equ:wilcz-2nd-membre}) holds if and only if the vectors    $(V_{\underline{k},j}^{red})_{(\underline{k},j)\in \mathcal{F}''}$ and $V^{red}$ are linearly dependent, and by  Cramer's rule, one has:
\begin{equation}\label{equ:reconstr_non-homog}
	\forall (\underline{k},j)\in \mathcal{F}'',\ \  a_{\underline{k},j} +\sum_{(\underline{k}_0,j_0)\in \mathcal{F}'\setminus\mathcal{F}''}a_{\underline{k}_0,j_0}  \lambda_{\underline{k},j}^{\underline{k}_0,j_0}=\frac{\det (A_{\underline{k},j}) }{\det( A)}.
\end{equation}
Recall that one determines that  $(V_{\underline{k},j}^{red})_{(\underline{k},j)\in \mathcal{F}''}$ and $V^{red}$ are  linearly dependent by examining the dependence of the finite truncation of these vectors according to Lemma \ref{lemma:depth-non-homog}. Finally, the remaining  coefficients $a_{\underline{k},0}$ for  $(\underline{k},0)\in\mathcal{G}'$ are each uniquely determined  as follows: 
\begin{equation}\label{equ:terme-cst}
	a_{\underline{k},0}=-\displaystyle\sum_{(\underline{i},j)\in\mathcal{F}',\, \underline{i}\leq \underline{k}} a_{\underline{i},j}c_{\underline{k}-\underline{i}}^{(j)}+f_{\underline{ k}}\, ,
\end{equation} 
where $f_{\underline{k}}$ denotes the coefficient of ${\underline{s}}^{\underline{k}}$ in $z'$. 

As a conclusion, we obtain the affine space of $P(\underline{s},y')\in K[\underline{s},y']\setminus\{0\}$ such that $P(\underline{s},y_0')=z'$ as a parametric family of its coefficients with free parameters the  $a_{\underline{k}_0,j_0} $'s for $(\underline{k}_0,j_0)\in \mathcal{F}'\setminus\mathcal{F}''$.

\subsubsection{Second case.}\label{sect:secondcase}
Let  $\d_{\underline{ s}}'\in \N$ and $y_0'=\displaystyle\sum_{\underline{m}\in\mathbb{N}^\tau} c_{\underline{m}}\underline{s}^{\underline{m}}\in K\left[\left[\underline{s}\right]\right]$ be algebraic of degrees  $d'_{y'}$ and $\d_{\underline{ s}}'$, \emph{but not algebraic relatively to $(\mathcal{F}',\mathcal{G}')$}. 
Let  $i,\,d_{\underline{s}},\,d' \in \mathbb{N}$, $d'\geq 3$, $d_{\underline{ s}}'\leq d_{\underline{ s}}$ and $d_{y'}'\leq d'$.  For any $j=0,\ldots,i$, we consider power series   $y_j'=\displaystyle\sum_{\underline{m}\in\N^\tau} c_{j,\underline{m}}\underline{s}^{\underline{m}}\in K\left[\left[\underline{s}\right]\right]$  
which satisfy some equations ${P}_j(\underline{s},y'_0,\ldots,y'_j)=0$ 
where 	$P_{j}\in K\left[\underline{s},z_0,z_1,\ldots,z_j \right]\setminus\{0 \}$, $P_{j}(\underline{s},y_0',\ldots,y_{j-1}',z_j)\not\equiv 0$, $\deg_{\underline{s}}P_{j}\leq d_{\underline{s} },$ 	
$\deg_{z_k}{P}_{j}\leq d'$ for $k=0,\ldots,j$.    In particular, $c_{\underline{m}}=c_{0,\underline{m}} $ for any $\underline{m}$. 
Let $z'=R(\underline{s},y'_0,\ldots,y'_i)\in K[[\underline{s}]]\setminus\{0\}$, where $R\in K\left[\underline{s},z_0,z_1,\ldots,z_j \right]\setminus\{0 \}$ with $\deg_{\underline{s}}R\leq d_{\underline{s} },$ 	
$\deg_{z_k}R\leq d'$ for $k=0,\ldots,j$.

As in the previous section, our purpose is to determine when  there is a polynomial  $P(\underline{s},y')=\displaystyle\sum_{(\underline{k},j)\in\mathcal{F}'\cup\mathcal{G}'} a_{\underline{k},j}\underline{s}^{\underline{k}}{y'}^j\in K[\underline{s},y']\setminus\{0\}$ such that $P(\underline{s},y_0')=z'$.  Note that such a polynomial is necessarily unique, since $y_0'$ is not algebraic relatively to $(\mathcal{F}',\mathcal{G}')$. 

We consider the corresponding reduced Wilczynski matrix $M_{\mathcal{F}',\mathcal{G}'}^{red}$. Proceeding as in  Lemma 3.7 in  \cite{hickel-matu:puiseux-alg-multivar} and using Lemma \ref{lemme:ordreQ-alg}, there is a nonzero minor $\det(B)$ of maximal order where the lowest row of $B$ is indexed by $\underline{m}$ such that $ |\underline{m}|\leq \left(\d_{\underline{s}}'+ d'_{\underline{s}}\right)d'_{y'}$. 

We resume the notations of the previous section. There is a polynomial $P$ such that $P(\underline{s},y_0')=z'$ if and only if the vectors $(V_{\underline{k},j}^{red})_{(\underline{k},j)\in \mathcal{F}'}$ and $V^{red}$ are $K$-linearly dependent, since the vectors $(V_{\underline{k},j}^{red})_{(\underline{k},j)\in \mathcal{F}'}$ are independent. One determines that  $(V_{\underline{k},j}^{red})_{(\underline{k},j)\in \mathcal{F}'}$ and $V^{red}$ are  linearly dependent by examining the dependence of the finite truncation of these vectors according to the following lemma.

\begin{lemma}\label{lemma:depth-non-homog_bis}
	The vectors  $(V_{\underline{k},j}^{red})_{(\underline{k},j)\in \mathcal{F}'}$ and $V^{red}$ are linearly dependent if and only if, in the corresponding matrix denoted by $N^{red}$, all the minors of maximal order up to the row $\underline{p}$ with 	$|\underline{p}| \leq 2.3^{(d')^{i-1}+\cdots+(d')^2+d'+1}    
	d_{\underline{s}} (d')^{(d')^{i}+\cdots+(d')^2+d'+1} $  vanish. 	 
\end{lemma}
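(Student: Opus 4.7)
The plan is to transpose the proof of Lemma \ref{lemma:depth-non-homog} almost verbatim, the only change being that the role played there by the subfamily $\mathcal{F}''$ is now played by the full family $\mathcal{F}'$. The justification for this substitution is that, since $y_0'$ is assumed \emph{not} algebraic relatively to $(\mathcal{F}',\mathcal{G}')$, Lemma \ref{lemme:wilcz} guarantees that the vectors $(V_{\underline{k},j}^{red})_{(\underline{k},j)\in \mathcal{F}'}$ are themselves $K$-linearly independent. One implication is trivial: if the enlarged family $(V_{\underline{k},j}^{red})_{(\underline{k},j)\in \mathcal{F}'}$ together with $V^{red}$ is dependent, then every minor of maximal order of $N^{red}$ vanishes, so in particular those with lowest row up to $\underline{p}$.

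For the converse, assume the vectors are independent and let $\underline{p}$ be the smallest index (for $\leq_{\mathrm{grlex}}$) such that $N^{red}$ admits a nonzero minor of maximal order with lowest row indexed by $\underline{p}$; let $\underline{\tilde p}$ denote its $\leq_{\mathrm{grlex}}$-predecessor. By independence of $(V_{\underline{k},j}^{red})_{(\underline{k},j)\in \mathcal{F}'}$ and minimality of $\underline{p}$, the truncation $N^{red}_{\underline{\tilde p}}$ has rank $|\mathcal{F}'|$ while $N^{red}_{\underline{p}}$ has rank $|\mathcal{F}'|+1$. This rank jump yields coefficients $(a_{\underline{i},j})_{(\underline{i},j)\in \mathcal{F}'}\in K$ with
$$N_{\underline{\tilde p}}^{red} \cdot \left(\begin{array}{c} (a_{\underline{i},j})_{(\underline{i},j)\in \mathcal{F}'} \\ -1 \end{array}\right) = 0, \qquad N_{\underline{p}}^{red} \cdot \left(\begin{array}{c} (a_{\underline{i},j})_{(\underline{i},j)\in \mathcal{F}'} \\ -1 \end{array}\right) \neq 0.$$
Completing this datum by defining $a_{\underline{k},0}$, $(\underline{k},0)\in\mathcal{G}'$, through formula (\ref{equ:terme-cst}) (so as to cancel the rows suppressed in passing from the full Wilczynski matrix to its reduced version), the expression
$$Q(\underline{s}, y_0', \ldots, y_i') := \sum_{(\underline{i},j)\in\mathcal{F}'\cup\mathcal{G}'} a_{\underline{i},j}\, \underline{s}^{\underline{i}} (y_0')^j - z'$$
becomes a nonzero element of $K[[\underline{s}]]$ with $w_{\underline{s}}(Q) = \underline{p}$, exactly as in the proof of Lemma \ref{lemma:depth-non-homog}.

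The series $Q$ is obtained by evaluating at $y_0',\ldots,y_i'$ an underlying polynomial in $K[\underline{s},z_0,\ldots,z_i]$ of degree at most $d_{\underline{s}}$ in $\underline{s}$ (since $d_{\underline{s}}'\leq d_{\underline{s}}$ and $\deg_{\underline{s}} R\leq d_{\underline{s}}$) and at most $d'$ in each $z_k$ (since $d_{y'}'\leq d'$ and $\deg_{z_k} R\leq d'$). Applying Theorem \ref{propo:nested} with this polynomial in the role of $Q_i$ and the given $P_j$'s as annihilating polynomials of the $y_j'$'s, the estimate (\ref{equ:estim1}) yields
$$|\underline{p}| \leq \ord_{\underline{s}} Q \leq 2 \cdot 3^{(d')^{i-1} + \cdots + (d')^2 + d' + 1}\, d_{\underline{s}}\, (d')^{(d')^{i} + \cdots + (d')^2 + d' + 1},$$
as announced.

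The only subtle point, which is already present in the proof of Lemma \ref{lemma:depth-non-homog}, is to verify that the underlying polynomial of $Q$ is nonzero and fits into the framework of the nested depth theorem; the first follows from the nondegeneracy at row $\underline{p}$ (guaranteeing $Q(\underline{s},y_0',\ldots,y_i')\neq 0$ and hence nonvanishing as a polynomial), and the second from the degree inequalities just recorded. No further novelty beyond Lemma \ref{lemma:depth-non-homog} is required.
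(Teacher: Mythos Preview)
Your proof is correct and follows exactly the approach the paper intends: its own proof of Lemma~\ref{lemma:depth-non-homog_bis} reads in full ``The proof is analogous to that of Lemma~\ref{lemma:depth-non-homog}, also using Theorem~\ref{propo:nested}'', and you have carried out precisely that transposition, replacing $\mathcal{F}''$ by $\mathcal{F}'$ thanks to the non-algebraicity hypothesis. One small imprecision: your claim that $N^{red}_{\underline{\tilde p}}$ has rank \emph{exactly} $|\mathcal{F}'|$ requires the truncated $\mathcal{F}'$-columns to already be independent at depth $\underline{\tilde p}$, which does not follow from independence of the infinite vectors alone; in the paper's proof of Lemma~\ref{lemma:depth-non-homog} this is handled by the $\hat{\underline p}$-device (the analogue here being the minor $\det(B)$ at depth $\leq (\delta'_{\underline s}+d'_{\underline s})d'_{y'}$ from the paragraph preceding the lemma), and the case $\underline p=\hat{\underline p}$ is treated separately---but this is a routine fix and does not affect the validity of your argument.
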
 
\begin{proof}
	The proof is analogous to that of Lemma \ref{lemma:depth-non-homog}, also using  Theorem \ref{propo:nested}.
\end{proof}

We proceed as in the previous section. For any $(\underline{k},j)\in\mathcal{F}'$, we denote by $B_{\underline{k},j}$ the matrix deduced from $B$ by substituting the corresponding part of $V^{red}$ instead of the column indexed by $(\underline{k},j)$. If the condition of the previous lemma holds, by  Cramer's rule, one has:
\begin{equation}\label{equ:reconstr_non-homog_bis}
	\forall (\underline{k},j)\in \mathcal{F}',\ \  a_{\underline{k},j} =\frac{\det (B_{\underline{k},j}) }{\det( B)}.
\end{equation}

Then it suffices to note that the remaining  coefficients $a_{\underline{k},0}$ for  $(\underline{k},0)\in\mathcal{G}'$ are each uniquely determined  as follows: 
\begin{equation}\label{equ:terme-cst_bis}
	a_{\underline{k},0}=-\displaystyle\sum_{(\underline{i},j)\in\mathcal{F}',\, \underline{i}\leq \underline{k}} a_{\underline{i},j}c_{\underline{k}-\underline{i}}^{(j)}+f_{\underline{ k}}\, ,
\end{equation} 
where $f_{\underline{k}}$ denotes the coefficient of ${\underline{s}}^{\underline{k}}$ in $z'$.

\subsection{Total algebraic reconstruction with several algebraic series.}\label{sect:alg-reconstr-multi}
Let  $i,\,d_{\underline{s}},\,d' \in \mathbb{N}$, $d'\geq 3$. 
For any $j=0,\ldots,i$, we consider power series   $y_j'=\displaystyle\sum_{\underline{m}\in\N^\tau} c_{j,\underline{m}}\underline{s}^{\underline{m}}\in K\left[\left[\underline{s}\right]\right]$ 
which satisfy some equations ${P}_j(\underline{s},y'_0,\ldots,y'_j)=0$ 
where 	$P_{j}\in K\left[\underline{s},z_0,z_1,\ldots,z_j \right]\setminus\{0 \}$, $P_{j}(\underline{s},y_0',\ldots,y_{j-1}',z_j)\not\equiv 0$, $\deg_{\underline{s}}P_{j}\leq d_{\underline{s} },$ 	
$\deg_{z_k}{P}_{j}\leq d'$ for $k=0,\ldots,j$. 

Let $\mathcal{K}'$ and $\mathcal{L}'$,  $\mathcal{K}'\neq \emptyset$, be two strictly increasing finite sequences of pairs $(\underline{k},\underline{l})\in\left(\mathbb{N}^\tau\times\mathbb{N}^{i+1}\right)$ ordered anti-lexicographically: 
$$(\underline{k}_1,\underline{l}_1) \leq_{\textrm{alex}*} (\underline{k}_2,\underline{l}_2)\Leftrightarrow  \underline{l}_1 <_{\textrm{grlex}} \underline{l}_2\textrm{ or } (\underline{l}_1 = \underline{l}_2\ \textrm{and}\ \underline{k}_1 \leq_{\textrm{grlex}} \underline{k}_2).$$
We suppose additionally that  $\mathcal{K}'\geq_{\textrm{alex}*} \left(\underline{0},(0,\ldots,0,1)\right)>_{\textrm{alex}*}\mathcal{L}'$ (thus the elements of $\mathcal{L}'$ are  ordered tuples of the form $(\underline{k},\underline{0})$, and those of  $\mathcal{K}'$ are of the form  $(\underline{k},\underline{l}),\ |\underline{l}|\geq 1$). \\
We set $d_{y'_j}':=\max\{l_j,\ (\underline{k},\underline{l})\in\mathcal{K}'\}$ for $j=0,\ldots,i$, and  $d_{\underline{ s}}':=\max\{|\underline{k}|,\ (\underline{k},\underline{l})\in\mathcal{K}'\cup\mathcal{L}'\}$. We assume that 	$d_{y'_j}'\leq d'$ for  $j=0,\ldots,i$, and $d_{\underline{ s}}'\leq d_{\underline{s}}$.

Let us set $\underline{z}=(z_0,\ldots,z_i)$ and $\underline{y}'=(y_0',\ldots,y_i')$. We assume that $\underline{y}'\neq \underline{0}$.   We want to determine when  there is a polynomial  $P(\underline{s},\underline{z})=\displaystyle\sum_{(\underline{k},\underline{l})\in\mathcal{K}'\cup\mathcal{L}'} a_{\underline{k},\underline{l}}\underline{s}^{\underline{k}}{\underline{z}}^{\underline{l}}\in K[\underline{s},\underline{z}]\setminus\{0\}$ such that $P(\underline{s},\underline{y}')=0$ and, subsequently, to reconstruct all such possible $P$'s. It is a generalization of Section \ref{sect:reconstr-alg}.\\

For any $j=0,\ldots,i$, for any $l_j\in\mathbb{N}$, we denote the multinomial expansion of ${y_j'}^{l_j}$ by:
$${y_j'}^{l_j}=\displaystyle\sum_{\underline{n}_j\in\mathbb{N}^\tau} c_{j,\underline{n}_j}^{(l_j)}\underline{s}^{\underline{n}_j}.$$
So the coefficient of $\underline{s}^{\underline{m}}$ in $\underline{y}'^{\underline{l}}={y_0'}^{l_0}\cdots {y_i'}^{l_i}$ is equal to: $$c_{\underline{m}}^{(\underline{l})}:=\displaystyle\sum_{\underline{n}_0\in \N^\tau,\ldots,\underline{n}_i\in\N^\tau,\ \underline{n}_0+\cdots+\underline{n}_i=\underline{m}}   c_{0,\underline{n}_0}^{(l_0)}\cdots c_{i,\underline{n}_i}^{(l_i)}.$$

\begin{defi}\label{defi:mat_Wilc-multi}
	\begin{enumerate}
		\item 
		Given an ordered pair $(\underline{k},\underline{l})\in\mathbb{N}^\tau\times\mathbb{N}^{i+1}$, we call \textbf{Wilczynski vector} $V_{\underline{k},\underline{l}}$ the infinite vector with components $\gamma_{\underline{m}}^{\underline{k},\underline{l}}$ with $\underline{m}\in\mathbb{N}^\tau$ ordered with $\leq_{\textrm{grlex}}$:\\
		- if $\underline{l}\geq_{\mathrm{grlex}} (0,\ldots,0,1)$:
		$$V_{\underline{k},\underline{l}}:= \left(\gamma_{\underline{m}}^{\underline{k},\underline{l}}\right)_{\underline{m}\in\mathbb{N}^\tau} \textrm{ with } \gamma_{\underline{m}}^{\underline{k},\underline{l}}=\left\{\begin{array}{ll}
			=c_{\underline{m}-\underline{k}}^{(\underline{l})}& \textrm{ if }\underline{m}\geq\underline{k}\\
			=0 & \textrm{ otherwise }
		\end{array}\right.$$
		- otherwise: 1  in the $\underline{k}$th position and 0 for the other coefficients,
		$$V_{\underline{k},\underline{0}}:=(0,\ldots,1,0,0,\ldots,0,\ldots).$$
		So $\gamma_{\underline{m}}^{\underline{k},\underline{l}}$ is the coefficient of $s^{\underline{m}}$ in the expansion of $s^{\underline{k}}{\underline{y}'}^{\underline{l}}$.
		\item Let $\mathcal{K}'$ and $\mathcal{L}'$ be two sequences as above. We associate to $\mathcal{K}'$ and $\mathcal{L}'$ the \textbf{ (infinite)  Wilczynski matrix } whose columns are the corresponding vectors $V_{\underline{k},\underline{l}}$:
		$$M_{\mathcal{K}',\mathcal{L}'}:=(V_{\underline{k},\underline{l}})_{(\underline{k},\underline{l})\in\mathcal{K}'\cup \mathcal{L}'}\, ,$$
		$\mathcal{K}'\cup\mathcal{L}'$ being ordered by $ \leq_{\textrm{alex}*}$ as above.\\ 
		We also define the \textbf{reduced Wilczynski matrix}, $M_{\mathcal{K}',\mathcal{L}'}^{red}$: it is the matrix obtained from $M_{\mathcal{K}',\mathcal{L}'}$ by removing the columns indexed in $\mathcal{L}'$,  and also removing the corresponding rows (suppress the $\underline{k}$th row for any $(\underline{k},\underline{0})\in\mathcal{L}'$). This amounts exactly to remove the rows containing the coefficient 1 for some Wilczynski vector indexed in $\mathcal{L}'$. For $(\underline{i},\underline{l})\in\mathcal{K}'$, we also denote by $V_{\underline{i},\underline{l}}^{red}$ the corresponding vectors obtained from $V_{\underline{i},\underline{l}}$ by suppressing the $\underline{k}$th row for any $(\underline{k},\underline{0})\in\mathcal{L}'$ and we call them \textbf{reduced Wilczynski vectors}.
	\end{enumerate}
\end{defi}

\begin{lemma}[generalized Wilczynski]\label{lemme:wilcz-multi}
	There exists a nonzero polynomial with support included in $\mathcal{K}'\cup \mathcal{L}'$ which vanishes at $\underline{y}'$ if and only if all the minors of order $|\mathcal{K}'\cup\mathcal{L}'|$ of the  Wilczynski matrix $M_{\mathcal{K}',\mathcal{L}'}$ vanish, or also if and only if all the minors of order  $|\mathcal{K}'|$ of the reduced Wilczynski matrix $M_{\mathcal{K}',\mathcal{L}'}^{red}$ vanish.
\end{lemma}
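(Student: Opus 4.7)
The plan is to mirror the proof of Lemma \ref{lemme:wilcz} (the single-series generalized Wilczynski lemma), replacing the exponent $j$ of a single series $y_0'$ by a multi-exponent $\underline{l}\in\N^{i+1}$ for the tuple $\underline{y}'$. The crucial observation is that the Wilczynski vectors are designed precisely so that matrix-vector multiplication encodes the expansion of a polynomial evaluated at $\underline{y}'$. Given any polynomial $P(\underline{s},\underline{z})=\sum_{(\underline{k},\underline{l})\in\mathcal{K}'\cup\mathcal{L}'}a_{\underline{k},\underline{l}}\underline{s}^{\underline{k}}\underline{z}^{\underline{l}}$, by definition of $c_{\underline{m}}^{(\underline{l})}$, the coefficient of $\underline{s}^{\underline{m}}$ in the expansion $P(\underline{s},\underline{y}')\in K[[\underline{s}]]$ equals $\sum_{(\underline{k},\underline{l})}a_{\underline{k},\underline{l}}\gamma_{\underline{m}}^{\underline{k},\underline{l}}$, which is exactly the $\underline{m}$-th entry of $M_{\mathcal{K}',\mathcal{L}'}\cdot(a_{\underline{k},\underline{l}})_{(\underline{k},\underline{l})\in\mathcal{K}'\cup\mathcal{L}'}$. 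Hence $P(\underline{s},\underline{y}')=0$ if and only if the coefficient vector lies in the (right) kernel of the infinite matrix $M_{\mathcal{K}',\mathcal{L}'}$.

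From this equivalence, a nonzero such $P$ exists if and only if the columns of $M_{\mathcal{K}',\mathcal{L}'}$ are $K$-linearly dependent. Since these columns are vectors in $K^{\N^{\tau}}$ and the column set is finite, the standard criterion for linear dependence of finitely many vectors over a field applies: they are dependent if and only if every minor of maximal order (namely $|\mathcal{K}'\cup\mathcal{L}'|$, extracted by choosing any finite set of row-indices of that cardinality) vanishes. This gives the first equivalence.

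For the second equivalence, I would exploit the very special form of the columns indexed by $\mathcal{L}'$: each $V_{\underline{k},\underline{0}}$ with $(\underline{k},\underline{0})\in\mathcal{L}'$ is the standard basis vector $e_{\underline{k}}$. Splitting a dependence relation $\sum a_{\underline{k},\underline{l}}V_{\underline{k},\underline{l}}=0$ according to rows indexed by $\{\underline{k}:(\underline{k},\underline{0})\in\mathcal{L}'\}$ versus the remaining rows, one sees that the $\mathcal{L}'$-rows uniquely determine the constants $a_{\underline{k},\underline{0}}$ (they read off as
\[
a_{\underline{k},\underline{0}}=-\sum_{(\underline{i},\underline{l})\in\mathcal{K}',\,\underline{i}\leq\underline{k}}a_{\underline{i},\underline{l}}\,c_{\underline{k}-\underline{i}}^{(\underline{l})},
\]
in complete parallel with (\ref{equ:terme-cst0})), while the other rows amount exactly to the equation $\sum_{(\underline{k},\underline{l})\in\mathcal{K}'}a_{\underline{k},\underline{l}}V_{\underline{k},\underline{l}}^{red}=0$ in the reduced matrix. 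Thus the columns of $M_{\mathcal{K}',\mathcal{L}'}$ are dependent if and only if the $|\mathcal{K}'|$ columns of $M_{\mathcal{K}',\mathcal{L}'}^{red}$ are dependent, i.e.\ all its minors of order $|\mathcal{K}'|$ vanish.

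There is no real obstacle here: the argument is combinatorial/linear-algebraic and the substantial content of the notion (the correctness of the multinomial expansion $c_{\underline{m}}^{(\underline{l})}=\sum_{\underline{n}_0+\cdots+\underline{n}_i=\underline{m}}c_{0,\underline{n}_0}^{(l_0)}\cdots c_{i,\underline{n}_i}^{(l_i)}$) is built into the definition of $M_{\mathcal{K}',\mathcal{L}'}$. The only mild care needed is to make sure the ordering of $\mathcal{K}'\cup\mathcal{L}'$ by $\leq_{\textrm{alex}*}$ and of rows by $\leq_{\textrm{grlex}}$ is respected so that the matrix-vector interpretation of $P(\underline{s},\underline{y}')$ is unambiguous; this is bookkeeping rather than a genuine difficulty.
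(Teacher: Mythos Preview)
Your proposal is correct and follows essentially the same approach as the paper: both identify $P(\underline{s},\underline{y}')=0$ with the coefficient vector lying in the kernel of $M_{\mathcal{K}',\mathcal{L}'}$, reduce to the minor-vanishing criterion for linear dependence, and then pass to the reduced matrix by noting that the $\mathcal{L}'$-columns are standard basis vectors so that the $a_{\underline{k},\underline{0}}$ are determined by the formula you wrote (the paper's~(\ref{equ:2nd-membre-multi})). The paper additionally remarks explicitly that $(a_{\underline{k},\underline{l}})_{(\underline{k},\underline{l})\in\mathcal{K}'}$ must be nonzero because the $V_{\underline{k},\underline{0}}$ are independent, but this is implicit in your splitting argument.
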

\begin{demo}
	By construction of the Wilczynski matrix $M_{\mathcal{K}',\mathcal{L}'}$, the existence of such a polynomial is equivalent to the fact that the corresponding Wilczynski vectors are $K$-linearly dependent. This is in turn equivalent to the vanishing of all the minors of maximal order of $M_{\mathcal{K}',\mathcal{L}'}$. 
	
	Suppose that we are given a nonzero vector $(a_{\underline{k},\underline{l}})_{(\underline{k},\underline{l})\in\mathcal{K}'\cup\mathcal{L}'}$ such that 
	$$ M_{\mathcal{K}',\mathcal{L}'}\cdot(a_{\underline{k},\underline{l}})_{(\underline{k},\underline{l})\in\mathcal{K}'\cup\mathcal{L}'}=0.$$
	Observe that, necessarily, the vector $(a_{\underline{k},\underline{l}})_{(\underline{k},\underline{l})\in\mathcal{K}'}$ is also nonzero (since the vectors $V_{\underline{k},\underline{0}}$ for $(\underline{k},\underline{0})\in \mathcal{L}'$ are independent). Let us remark that:
	$$ M_{\mathcal{K}',\mathcal{L}'}^{red}\cdot(a_{\underline{k},\underline{l}})_{(\underline{k},\underline{l})\in\mathcal{K}'}=0$$
	since the latter vector is deduced from the former one by deleting the rows corresponding to  $(\underline{k},\underline{0})\in \mathcal{L}'$. So, the columns of $ M_{\mathcal{K}',\mathcal{L}'}^{red}$ are linked, which is equivalent to the vanishing of its minors of maximal order. Conversely, suppose that there exists a nonzero $(a_{\underline{k},\underline{l}})_{(\underline{k},\underline{l})\in\mathcal{K}'}$ such that 	$$ M_{\mathcal{K}',\mathcal{L}'}^{red}\cdot(a_{\underline{k},\underline{l}})_{(\underline{k},\underline{l})\in\mathcal{K}'}=0.$$
	Then, we can complete the list of coefficients $(a_{\underline{k},\underline{l}})_{(\underline{k},\underline{l})\in\mathcal{K}'\cup\mathcal{L}'}$ by setting:
	\begin{equation}\label{equ:2nd-membre-multi}
		a_{\underline{k},\underline{0}}=- \sum_{(\underline{i},\underline{l})\in\mathcal{K}',\, \underline{i}\leq \underline{k}} a_{\underline{i},\underline{l}}\,c_{\underline{k}-\underline{i}}^{(\underline{l})}.
	\end{equation}
	
\end{demo}

\begin{lemma}\label{lemme:wilcz-multi-profondeur}
	There exists a nonzero polynomial with support included in $\mathcal{K}'\cup \mathcal{L}'$ which vanishes at $\underline{y}'$ if and only if  all the minors of the reduced Wilczynski matrix $M_{\mathcal{K}',\mathcal{L}'}^{red}$ of order  $|\mathcal{K}'|$ and with lowest row indexed by $\underline{m}$ with:
	$$		|\underline{m}|\leq 2.3^{(d')^{i-1}+\cdots+(d')^2+d'+1}    
	d_{\underline{s}}  (d')^{(d')^{i}+\cdots+(d')^2+d'+1},$$
	vanish.
\end{lemma}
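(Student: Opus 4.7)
\begin{demo}
\textbf{Proof proposal.} By Lemma \ref{lemme:wilcz-multi}, the existence of a nonzero vanishing polynomial with support in $\mathcal{K}'\cup\mathcal{L}'$ is equivalent to the vanishing of \emph{all} minors of order $|\mathcal{K}'|$ of $M_{\mathcal{K}',\mathcal{L}'}^{red}$. One direction of our statement is therefore trivial: if every such minor vanishes, then in particular those with lowest row indexed by $\underline{m}$ with $|\underline{m}|$ bounded as stated also vanish. The content of the lemma is the converse: a finite-depth check suffices.

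The plan is to argue by contrapositive. Suppose no nonzero vanishing polynomial exists, so by Lemma \ref{lemme:wilcz-multi} there is at least one nonzero minor of order $|\mathcal{K}'|$ of $M_{\mathcal{K}',\mathcal{L}'}^{red}$. Let $\underline{p}\in\N^\tau$ be the smallest index (for $\leq_{\mathrm{grlex}}$) such that some nonzero minor of $M_{\mathcal{K}',\mathcal{L}'}^{red}$ of maximal order has lowest row $\underline{p}$; our goal is to prove $|\underline{p}|\leq 2\cdot 3^{(d')^{i-1}+\cdots+d'+1}d_{\underline{s}}\,(d')^{(d')^i+\cdots+d'+1}$. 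Let $\tilde{\underline{p}}$ denote the predecessor of $\underline{p}$ for $\leq_{\mathrm{grlex}}$. By minimality of $\underline{p}$, the truncation of $M_{\mathcal{K}',\mathcal{L}'}^{red}$ up to row $\tilde{\underline{p}}$ has rank strictly less than $|\mathcal{K}'|$; hence there exists a nonzero family $(a_{\underline{k},\underline{l}})_{(\underline{k},\underline{l})\in\mathcal{K}'}\in K^{|\mathcal{K}'|}$ killing this truncation. Completing it with the coefficients $a_{\underline{k},\underline{0}}$ prescribed by formula (\ref{equ:2nd-membre-multi}) yields a polynomial
\[
Q(\underline{s},\underline{z}):=\sum_{(\underline{k},\underline{l})\in\mathcal{K}'\cup\mathcal{L}'}a_{\underline{k},\underline{l}}\,\underline{s}^{\underline{k}}\,\underline{z}^{\underline{l}}\in K[\underline{s},\underline{z}]\setminus\{0\},
\]
the coefficients $e_{\underline{k}}$ of $\underline{s}^{\underline{k}}$ in $Q(\underline{s},\underline{y}')$ vanishing for every $\underline{k}\leq_{\mathrm{grlex}}\tilde{\underline{p}}$.

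Next I would check that $Q(\underline{s},\underline{y}')\neq 0$ and that, more precisely, $w_{\underline{s}}(Q(\underline{s},\underline{y}'))=\underline{p}$. Indeed, by the defining property of $\underline{p}$, there is a choice of $|\mathcal{K}'|$ rows of $M_{\mathcal{K}',\mathcal{L}'}^{red}$ including the row $\underline{p}$ giving a nonzero determinant; combined with the linear relation above, this forces the coefficient $e_{\underline{p}}$ to be nonzero. Since $\underline{p}$ is an index of a row that was \emph{not} removed in passing from $M_{\mathcal{K}',\mathcal{L}'}$ to $M_{\mathcal{K}',\mathcal{L}'}^{red}$, one has $(\underline{p},\underline{0})\notin\mathcal{L}'$, so this coefficient cannot be absorbed or cancelled by the terms $a_{\underline{k},\underline{0}}\underline{s}^{\underline{k}}$ coming from $\mathcal{L}'$. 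Therefore $\ord_{\underline{s}}\bigl(Q(\underline{s},\underline{y}')\bigr)=|\underline{p}|$.

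Finally I would invoke Theorem \ref{propo:nested} applied to the data $(P_0,\ldots,P_i,Q)$: the polynomial $Q$ satisfies $\deg_{\underline{s}}Q\leq d_{\underline{s}}'\leq d_{\underline{s}}$ and $\deg_{z_j}Q\leq d'$ for $j=0,\ldots,i$, and by construction $Q(\underline{s},\underline{y}')\neq 0$, so estimate (\ref{equ:estim1}) yields
\[
|\underline{p}|=\ord_{\underline{s}}\bigl(Q(\underline{s},\underline{y}')\bigr)\leq\omega(i,d_{\underline{s}},d',d_{\underline{s}},d')\leq 2\cdot 3^{(d')^{i-1}+\cdots+d'+1}d_{\underline{s}}\,(d')^{(d')^i+\cdots+d'+1},
\]
as required. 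The only delicate point is the one flagged above: one must make sure that the $\mathcal{L}'$-part of $Q$, forced by (\ref{equ:2nd-membre-multi}), does not cancel the leading coefficient $e_{\underline{p}}$, which is ensured precisely because $\underline{p}$ indexes a surviving row of $M_{\mathcal{K}',\mathcal{L}'}^{red}$. The rest is bookkeeping identical to the proofs of Lemmas \ref{lemma:depth-non-homog} and \ref{lemma:depth-non-homog_bis}.
\end{demo}
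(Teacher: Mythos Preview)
Your proof is correct and follows essentially the same route as the paper's own argument: the paper simply writes ``Reasoning as in the proof of Lemma~\ref{lemma:depth-non-homog}'' and then invokes Theorem~\ref{propo:nested}, whereas you have spelled out that reasoning explicitly (predecessor $\tilde{\underline{p}}$, rank drop, construction of $Q$, non-cancellation at row $\underline{p}$ because $(\underline{p},\underline{0})\notin\mathcal{L}'$). The only cosmetic difference is that the paper concludes with the inequality $\ord_{\underline{s}}(Q(\underline{s},\underline{y}'))\geq |\underline{m}|$ rather than the equality you establish; either suffices.
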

\begin{demo}
	The direct part follows from the previous lemma. Suppose that there is no nonzero polynomial with support included in $\mathcal{K}'\cup \mathcal{L}'$ which vanishes at $\underline{y}'$. So there is a nonzero minor of the reduced Wilczynski matrix $M_{\mathcal{K}',\mathcal{L}'}^{red}$ of order  $|\mathcal{K}'|$ and with lowest row indexed by $\underline{m}$ that we assume to be minimal for $\leq_{\textrm{grlex}}$. Reasoning as in the proof of Lemma \ref{lemma:depth-non-homog}, we obtain a nonzero polynomial $Q(\underline{s},z_0,\ldots,z_i)$ with $\textrm{Supp}(Q)\subseteq \mathcal{K}'\cup \mathcal{L}'$, such that $Q(\underline{s},\underline{y}')\neq 0$, and with $\ord_{\underline{s}}\left(Q(\underline{s},\underline{y}')\right)\geq |\underline{m}|$. Since 	$d_{y'_j}'\leq d'$ for  $j=0,\ldots,i$, and $d_{\underline{ s}}'\leq d_{\underline{s}}$, by Theorem \ref{propo:nested}, we obtain that:
	$$\ord_{\underline{s}}\left(Q(\underline{s},\underline{y}')\right)\leq  2.3^{(d')^{i-1}+\cdots+(d')^2+d'+1}    
	d_{\underline{s}}  (d')^{(d')^{i}+\cdots+(d')^2+d'+1},$$
	which gives the expected result.
\end{demo}

Let us suppose that there is a nonzero  polynomial $P$  with support included in $\mathcal{K}'\cup \mathcal{L}'$ which vanishes at $\underline{y}'$. Our purpose is to determine the space of all such polynomials.
For this, we consider a maximal  family $\mathcal{K}''\subsetneq\mathcal{K}'$ such that the corresponding reduced Wilczynski vectors are $K$-linearly independent. This is equivalent to the fact that, for the matrix consisting of the  $(V_{\underline{k},\underline{l}}^{red})$ with $(\underline{k},\underline{l})\in \mathcal{K}''$,  there is a nonzero minor $\det(A)$  of maximal order and with lowest row indexed by $ \underline{m} $ with 
$$		|\underline{m}|\leq 2.3^{(d')^{i-1}+\cdots+(d')^2+d'+1}    
d_{\underline{s}}  (d')^{(d')^{i}+\cdots+(d')^2+d'+1}.$$
For any $(\underline{k}_0,\underline{l}_0)\in\mathcal{K}'\setminus\mathcal{K}''$, the corresponding  family of reduced Wilczynski vectors $(V_{\underline{k},\underline{l}}^{red})$ with $(\underline{k},\underline{l})\in \mathcal{F}''\cup\{(\underline{k}_0,\underline{l}_0)\}$ is $K$-linearly dependent. There is a unique relation:
\begin{equation}\label{equ:wilcz-multi}
	V_{\underline{k}_0,\underline{l}_0}^{red} =\sum_{(\underline{k},\underline{l})\in \mathcal{K}''}\lambda_{\underline{k},\underline{l}}^{\underline{k}_0,\underline{l}_0} V_{\underline{k},\underline{l}}^{red}\ \  \textrm{ with }\ \  \lambda_{\underline{k},\underline{l}}^{\underline{k}_0,\underline{l}_0}\in K.
\end{equation}
which can be computed by Cramer's rule based on $\det(A)$.
The coefficients $\lambda_{\underline{k},\underline{l}}^{\underline{k}_0,\underline{l}_0}$ of such a linear combination are quotients of homogeneous polynomials with integer coefficients  in terms of the entries of these restricted matrices, hence quotients of polynomials in the corresponding $c_{\underline{m}}$'s, $|\underline{m}|\leq 2.3^{(d')^{i-1}+\cdots+(d')^2+d'+1}    
d_{\underline{s}}  (d')^{(d')^{i}+\cdots+(d')^2+d'+1}$. \\

Let $\underline{z}=(z_0,\ldots,z_i)$, and  $P(\underline{s},\underline{z})=\displaystyle\sum_{(\underline{k},\underline{l})\in\mathcal{K}'\cup\mathcal{L}'} a_{\underline{k},\underline{l}}\underline{s}^{\underline{k}}{\underline{z}}^{\underline{l}}\in K[\underline{s},\underline{z}]\setminus\{0\}$. One has $P(\underline{s},\underline{y}')=0$ if and only if (\ref {equ:2nd-membre-multi}) holds as well as:
\begin{center}
	$\displaystyle	\sum_{(\underline{k},\underline{l})\in \mathcal{K}''}a_{\underline{k},\underline{l}} V_{\underline{k},\underline{l}}^{red}+\sum_{(\underline{k}_0,\underline{l}_0)\in \mathcal{K}'\setminus\mathcal{K}''}a_{\underline{k}_0,\underline{l}_0} V_{\underline{k}_0,\underline{l}_0}^{red}=\underline{0} 
	$\\
	$\Leftrightarrow  	\displaystyle\sum_{(\underline{k},\underline{l})\in \mathcal{K}''}a_{\underline{k},\underline{l}} V_{\underline{k},\underline{l}}^{red}+\sum_{(\underline{k}_0,\underline{l}_0)\in \mathcal{K}'\setminus\mathcal{K}''}a_{\underline{k}_0,\underline{l}_0} \left(\sum_{(\underline{k},\underline{l})\in \mathcal{K}''}\lambda_{\underline{k},\underline{l}}^{\underline{k}_0,\underline{l}_0} V_{\underline{k},\underline{l}}^{red}\right)=\underline{0} $\\
	$\Leftrightarrow  	\displaystyle\sum_{(\underline{k},\underline{l})\in \mathcal{K}''}\left( a_{\underline{k},\underline{l}} +\sum_{(\underline{k}_0,\underline{l}_0)\in \mathcal{K}'\setminus\mathcal{K}''}a_{\underline{k}_0,\underline{l}_0}  \lambda_{\underline{k},\underline{l}}^{\underline{k}_0,\underline{l}_0}  \right)V_{\underline{k},\underline{l}}^{red}=\underline{0} $\\
	$\Leftrightarrow  \forall 	(\underline{k},\underline{l})\in \mathcal{K}'',\ \  a_{\underline{k},\underline{l}} =-\displaystyle\sum_{(\underline{k}_0,\underline{l}_0)\in \mathcal{K}'\setminus\mathcal{K}''}a_{\underline{k}_0,\underline{l}_0}  \lambda_{\underline{k},\underline{l}}^{\underline{k}_0,\underline{l}_0}.$\\
\end{center}

\begin{lemma}\label{lemma:total-reconstr-multi}
	Let $\mathcal{K}',\mathcal{L}',d_{\underline{s}},d', \underline{y}',\mathcal{K}''$ be as above. Then, the set of polynomials   $P(\underline{s},\underline{z})=\displaystyle\sum_{(\underline{k},\underline{l})\in\mathcal{K}'\cup\mathcal{L}'} a_{\underline{k},\underline{l}}\underline{s}^{\underline{k}}{\underline{z}}^{\underline{l}}\in K[\underline{s},\underline{z}]$ such that $P(\underline{s},\underline{y}')=0$ is the set of polynomials such that 
	\begin{equation}\label{equ:coeff-param_1}
		\forall 	(\underline{k},\underline{l})\in \mathcal{K}'',\ \  a_{\underline{k},\underline{l}} =-\displaystyle\sum_{(\underline{k}_0,\underline{l}_0)\in \mathcal{K}'\setminus\mathcal{K}''}a_{\underline{k}_0,\underline{l}_0}  \lambda_{\underline{k},\underline{l}}^{\underline{k}_0,\underline{l}_0},
	\end{equation} 	
	and 
	\begin{equation}\label{equ:terme-cst1_1}
		\forall (\underline{k},\underline{0})\in\mathcal{L}',\ \ a_{\underline{k},\underline{0}}=-\displaystyle\sum_{(\underline{i},\underline{l})\in\mathcal{K}',\, \underline{i}\leq \underline{k}} a_{\underline{i},\underline{l}}c_{\underline{k}-\underline{i}}^{(j)}\, ,
	\end{equation} 
	where the $\lambda_{\underline{k},\underline{l}}^{\underline{k}_0,\underline{l}_0}$'s are computed as in (\ref{equ:wilcz-multi}) as quotients of polynomials with integer coefficients in the $c_{\underline{m}}$'s for $|\underline{m}|\leq  2.3^{(d')^{i-1}+\cdots+(d')^2+d'+1}    
	d_{\underline{s}}  (d')^{(d')^{i}+\cdots+(d')^2+d'+1}$.
\end{lemma}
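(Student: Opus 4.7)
The plan is to deduce the lemma by assembling the linear-algebra reformulation done just before its statement into a clean equivalence. For any polynomial $P(\underline{s},\underline{z})=\sum_{(\underline{k},\underline{l})\in\mathcal{K}'\cup\mathcal{L}'} a_{\underline{k},\underline{l}}\,\underline{s}^{\underline{k}}\,\underline{z}^{\underline{l}}$, observe that the coefficient of $\underline{s}^{\underline{m}}$ in the expansion of $P(\underline{s},\underline{y}')$ equals the $\underline{m}$-th entry of the infinite vector $M_{\mathcal{K}',\mathcal{L}'}\cdot\bigl(a_{\underline{k},\underline{l}}\bigr)_{(\underline{k},\underline{l})\in\mathcal{K}'\cup\mathcal{L}'}$. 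Hence $P(\underline{s},\underline{y}')=0$ if and only if this infinite linear system holds, and the proof reduces to describing its solution set.

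First I would split the system according to row index. For any $(\underline{k},\underline{0})\in\mathcal{L}'$, the $\underline{k}$-th row of the Wilczynski matrix contains a $1$ in the column indexed by $(\underline{k},\underline{0})$ and values $c_{\underline{k}-\underline{i}}^{(\underline{l})}$ in columns indexed by $(\underline{i},\underline{l})\in\mathcal{K}'$ with $\underline{i}\leq\underline{k}$, and zeros elsewhere (because of the anti-lexicographic ordering). Therefore the rows indexed by $\mathcal{L}'$ express each $a_{\underline{k},\underline{0}}$ uniquely in terms of the remaining coefficients via formula \eqref{equ:terme-cst1_1}. Removing both these rows \emph{and} the columns indexed by $\mathcal{L}'$ leaves exactly the reduced system
\[ M_{\mathcal{K}',\mathcal{L}'}^{red}\cdot\bigl(a_{\underline{k},\underline{l}}\bigr)_{(\underline{k},\underline{l})\in\mathcal{K}'}=\underline{0}.\]

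Next I would use the maximal family $\mathcal{K}''\subsetneq\mathcal{K}'$. By maximality, for each $(\underline{k}_0,\underline{l}_0)\in\mathcal{K}'\setminus\mathcal{K}''$ the reduced vector $V_{\underline{k}_0,\underline{l}_0}^{red}$ lies in the span of $(V_{\underline{k},\underline{l}}^{red})_{(\underline{k},\underline{l})\in\mathcal{K}''}$ with uniquely determined coefficients $\lambda_{\underline{k},\underline{l}}^{\underline{k}_0,\underline{l}_0}$, as written in \eqref{equ:wilcz-multi}. Substituting these expansions into the reduced system and using the $K$-linear independence of $(V_{\underline{k},\underline{l}}^{red})_{(\underline{k},\underline{l})\in\mathcal{K}''}$, the reduced system becomes equivalent to \eqref{equ:coeff-param_1}, with the coefficients $a_{\underline{k}_0,\underline{l}_0}$ for $(\underline{k}_0,\underline{l}_0)\in\mathcal{K}'\setminus\mathcal{K}''$ playing the role of free parameters.

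It remains to bound the data needed to compute the $\lambda_{\underline{k},\underline{l}}^{\underline{k}_0,\underline{l}_0}$. Here I would invoke Lemma \ref{lemme:wilcz-multi-profondeur}, which ensures that $\mathcal{K}''$ can be detected via a nonzero minor $\det(A)$ of the reduced matrix whose lowest row has index $\underline{m}$ with $|\underline{m}|\leq 2\cdot 3^{(d')^{i-1}+\cdots+(d')^2+d'+1}\, d_{\underline{s}}\,(d')^{(d')^{i}+\cdots+(d')^2+d'+1}$. For each $(\underline{k}_0,\underline{l}_0)\in\mathcal{K}'\setminus\mathcal{K}''$, the relation \eqref{equ:wilcz-multi} restricted to the rows of $A$ is then a Cramer system, so the $\lambda_{\underline{k},\underline{l}}^{\underline{k}_0,\underline{l}_0}$ are quotients of $|\mathcal{K}''|\times|\mathcal{K}''|$ minors of the extended matrix $(A\,|\,V_{\underline{k}_0,\underline{l}_0}^{red}|_{\text{rows of }A})$. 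The entries involved are coefficients $c_{j,\underline{n}}^{(l_j)}$ and hence polynomial expressions in the $c_{j,\underline{n}}$ for $|\underline{n}|$ within the stated bound, giving the required polynomial description. The main (and only) obstacle in this plan is tracking the linear-algebra bookkeeping of the split between $\mathcal{K}''$, $\mathcal{K}'\setminus\mathcal{K}''$, and $\mathcal{L}'$; all depth estimates are already supplied by Lemmas \ref{lemme:wilcz-multi}, \ref{lemme:wilcz-multi-profondeur}, and Theorem \ref{propo:nested}.
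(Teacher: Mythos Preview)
Your proposal is correct and follows essentially the same route as the paper: the argument is precisely the linear-algebra reduction carried out in the paragraphs immediately preceding the lemma, namely splitting the Wilczynski system into the rows indexed by $\mathcal{L}'$ (giving \eqref{equ:terme-cst1_1}) and the reduced system, then parametrizing the kernel of $M_{\mathcal{K}',\mathcal{L}'}^{red}$ via the maximal independent family $\mathcal{K}''$ and the Cramer relations \eqref{equ:wilcz-multi}, with the depth bound supplied by Lemma~\ref{lemme:wilcz-multi-profondeur}. The only cosmetic difference is that the paper presents the equivalence chain before stating the lemma rather than inside a proof environment.
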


\begin{remark}
	Note that the set of polynomials   $P(\underline{s},\underline{z})\in K[\underline{s},\underline{z}]$ with support in $\mathcal{K}'\cup\mathcal{L}'$ such that $P(\underline{s},\underline{y}')=0$ is a $K$-vector space of dimension $|\mathcal{K}'|-|\mathcal{K}''|\geq 1$.
\end{remark}

\section{Reconstruction of an equation for an algebroid series.}\label{sect:reconstr-algebroid}

\subsection{The reconstruction algorithm}\label{sect:reconstr-algebroid-algo}


We resume the notations of Section \ref{section:preliminaries}, in particular Lemma \ref{lemma:support-equ} and after. In particular, recall that $\tau$ is the number of variables in $\underline{s}$, and so $r-\tau$ is the number of variables in $\underline{t}$.

\begin{defi}\label{defi:algebroid-relative}
	Let $\mathcal{F}$ and $\mathcal{G}$ be two strictly increasing  sequences of triples $(\underline{k},\underline{l},j)\in\mathbb{N}^\tau\times \mathbb{N}^{r-\tau}\times\mathbb{N}$ ordered as follows: 
	
	$$(\underline{k}_1,\underline{l}_1,j_1) \leq_{*\textrm{alex}*} (\underline{k}_2,\underline{l}_2,j_2):\Leftrightarrow  j_1 < j_2\textrm{ or } (j_1 = j_2\ \textrm{and}\ (\underline{k}_1,\underline{l}_1) \leq_{\textrm{alex}{*}} (\underline{k}_2,\underline{l}_2))$$
	with
	$$(\underline{k}_1,\underline{l}_1) \leq_{\textrm{alex}{*}} (\underline{k}_2,\underline{l}_2):\Leftrightarrow  \underline{l}_1 <_{\mathrm{grlex}} \underline{l}_2\textrm{ or } (\underline{l}_1 = \underline{l}_2\ \textrm{and}\ \underline{k}_1 \leq_{\textrm{grlex}} \underline{k}_2).$$
	We suppose additionally that $(\underline{k}_1,\underline{l}_1,j_1)\geq_{*\textrm{alex}*}\left(\underline{0},\underline{0},1\right)>_{*\textrm{alex}*}(\underline{k}_2,\underline{l}_2,j_2)$ for any  $(\underline{k}_1,\underline{l}_1,j_1)\in\mathcal{F} $ and $(\underline{k}_2,\underline{l}_2,j_2)\in\mathcal{G}$ (thus the elements of $\mathcal{G}$ are  ordered triples of the form $(\underline{k}_2,\underline{l}_2,0)$, and those of  $\mathcal{F}$ are of the form  $(\underline{k}_1,\underline{l}_1,j_1),\ j_1\geq 1$). Moreover, we assume that there is $d\in\N$, $d\geq 1$, such that $j\leq d$ for any $(\underline{k},\underline{l},j)\in \mathcal{F}\cup\mathcal{G}$, and we set $d:= \max\{j\ |\, \exists (\underline{k},\underline{l},j)\in \mathcal{F}\cup\mathcal{G}\}$.  We say that a series $y_0=\displaystyle\sum_{(\underline{m},\underline{n})\in\mathbb{N}^\tau\times\mathbb{N}^{r-\tau}} c_{\underline{m},\underline{n}}\underline{s}^{\underline{m}}\underline{t}^{\underline{n}}\in K[[\underline{s},{\underline{t}}]]$, $c_{\underline{0},\underline{0}}\neq 0$, is \textbf{algebroid relatively to $(\mathcal{F},\mathcal{G})$} if there exists a polynomial $P(\underline{s},\underline{t},y)=\displaystyle\sum_{(\underline{k},\underline{l},j)\in\mathcal{F}\cup\mathcal{G}} a_{\underline{k},\underline{l},j}\underline{s}^{\underline{k}}\underline{t}^{\underline{l}}y^j\in K[[\underline{s},\,\underline{t}]][y]\setminus\{0\}$ such that $P(\underline{s},\underline{t},y_0)=0$.
\end{defi}

For any $\mathcal{F},\mathcal{G}$ satisfying Conditions (i), (ii), (iii) of Lemma \ref{lemma:support-equ}, let us denote by \\ $\left(K[\underline{s}][[\underline{t}]][y]\right)_{\mathcal{F},\mathcal{G}}$ the subset of polynomials in $ K[\underline{s}][[\underline{t}]][y]\setminus\{0\}$ with support in  $\mathcal{F}\cup\mathcal{G}$.

The purpose of the following discussion is to make more explicit the conditions in Lemma \ref{lemma:algebraicity} for the vanishing of a polynomial $P\in \left(K[\underline{s}][[\underline{t}]][y]\right)_{\mathcal{F},\mathcal{G}}$ for some $\mathcal{F},\mathcal{G}$ corresponding to (i), (ii), (iii) in Lemma \ref{lemma:support-equ}, at a formal power series $y_0\in K[[\underline{s}]][[\underline{t}]]$.  As we have seen in Section \ref{section:preliminaries}, one can always assume that $y_0=\displaystyle\sum_{\underline{m}\in\mathbb{N}^\tau,\,\underline{n}\in \mathbb{N}^{r-\tau}} c_{\underline{m} ,\underline{n}}\underline{s}^{\underline{m}}\underline{t}^{\underline{n}} =\displaystyle\sum_{\underline{n}\in \mathbb{N}^{r-\tau}} c_{\underline{n}}(\underline{s})\, \underline{t}^{\underline{n}}$ is such that $c_{\underline{0},\underline{0}}\neq 0$.\\ 

Let us consider a series $$Y_0=\displaystyle\sum_{\underline{n}\in \mathbb{N}^{r-\tau}}\, \left(\displaystyle\sum_{\underline{m}\in \mathbb{N}^\tau} C_{\underline{m},\underline{n}}\underline{s}^{\underline{m}}\right)\underline{t}^{\underline{n}} = \displaystyle\sum_{\underline{n}\in \mathbb{N}^{r-\tau}}C_{\underline{n}}(\underline{s})\,\underline{t}^{\underline{n}} \in K[(C_{\underline{m} , \underline{n}})_{\underline{m}\in\mathbb{N}^\tau,\,\underline{n}\in \mathbb{N}^{r-\tau}}][[\underline{s}]][[\underline{t}]]$$ where $\underline{s}$, $\underline{t}$ and the $C_{\underline{m},\underline{n}}$'s are variables. We denote the multinomial expansion of the $j$th power ${Y_0}^j$  of $Y_0$ by:
$${Y_0}^j=\displaystyle\sum_{\underline{n}\in \mathbb{N}^{r-\tau}}\, \left(\displaystyle\sum_{\underline{m}\in \mathbb{N}^\tau} C_{\underline{m},\underline{n}}^{(j)}\underline{s}^{\underline{m}}\right)\underline{t}^{\underline{n}} = \displaystyle\sum_{\underline{n}\in \mathbb{N}^{r-\tau}}C_{\underline{n}}^{(j)}(\underline{s})\,\underline{t}^{\underline{n}}$$
where $C_{\underline{m},\underline{n}}^{(j)}\in  K\left[(C_{\underline{k},\underline{l}})_{\underline{k}\leq \underline{m},\,\underline{l}\leq \underline{n}}\right]$ and $$C_{\underline{n}}^{(j)}(\underline{s})\in  K\left[\left(C_{\underline{l}}(\underline{s})\right)_{\underline{l}\leq \underline{n}}\right]\subseteq K\left[(C_{\underline{k},\underline{l}})_{\underline{k}\leq \underline{m},\,\underline{l}\leq_{\mathrm{grlex}}\underline{n}}\right][[\underline{s}]]. $$  
We also set ${Y_0}^0:=1$.

Now, suppose we are given a series $y_0=\displaystyle\sum_{\underline{m}\in\mathbb{N}^\tau,\,\underline{n}\in \mathbb{N}^{r-\tau}} c_{\underline{m}, \underline{n}}\underline{s}^{\underline{m}}\underline{t}^{\underline{n}}\in K[[\underline{s},\underline{t}]]$ with  $c_{\underline{0},\underline{0}}\neq 0$. For any $j\in\mathbb{N}$, we denote the multinomial expansion of ${y_0}^j$ by:
\begin{equation}\label{equ:multinomial-expansion}
	{y_0}^j=\displaystyle\sum_{\underline{m}\in\mathbb{N}^\tau,\,\underline{n}\in \mathbb{N}^{r-\tau}} c_{\underline{m},\underline{n}}^{(j)}\underline{s}^{\underline{m}}\underline{t}^{\underline{n}}= \displaystyle\sum_{\underline{n}\in \mathbb{N}^{r-\tau}}c_{\underline{n}}^{(j)}(\underline{s})\,\underline{t}^{\underline{n}}. 
\end{equation}
So, $c_{\underline{m},\underline{n}}^{(j)}=C_{\underline{m},\underline{n}}^{(j)}\left(c_{\underline{0},\underline{0}},\ldots,c_{\underline{m},\underline{n}}\right)$ and $c_{\underline{n}}^{(j)}(\underline{s})=C_{\underline{n}}^{(j)}\left(c_{\underline{0}}(\underline{s}),\ldots,c_{\underline{n}}(\underline{s})\right)$. We also set ${y_0}^0:=1$.

\begin{lemma}\label{lemma:recurrence}
	For a polynomial $P\in\left(K[\underline{s}][[\underline{t}]][y]\right)_{\mathcal{F},\mathcal{G}}\setminus\{0\}$, we denote $$P(\underline{s},\underline{t},y)=\displaystyle\sum_{(\underline{k},\underline{l},j)\in\mathcal{F}\cup\mathcal{G}} a_{\underline{k},\underline{l},j}\underline{s}^{\underline{k}}\underline{t}^{\underline{l}}y^j =\displaystyle\sum_{\underline{l}\in \mathbb{N}^{r-\tau},\,  j=0,..,d} a_{\underline{l},j}(\underline{s})\underline{t}^{\underline{l}}y^j.$$ 
	A series $y_0\in K[[\underline{s}]][[\underline{t}]]$, $y_0=\displaystyle\sum_{\underline{m}\in\mathbb{N}^\tau,\,\underline{n}\in \mathbb{N}^{r-\tau}} c_{\underline{m} ,\underline{n}}\underline{s}^{\underline{m}}\underline{t}^{\underline{n}} =\displaystyle\sum_{\underline{n}\in \mathbb{N}^{r-\tau}} c_{\underline{n}}(\underline{s})\, \underline{t}^{\underline{n}}$, is a root of $P$ if and only if the following polynomial relations hold when evaluated at the series $c_{\underline{0}}(\underline{s}),\ldots, c_{\underline{n}}(\underline{s})$:
	\begin{equation}\label{equ:recurrence}
		\forall \underline{l}\in \mathbb{N}^{r-\tau},\ \displaystyle\sum_{j=0,..,d}  a_{\underline{l},j}(\underline{s}) {C_{\underline{0}}}^{j}(\underline{s})=- \displaystyle\sum_{\underline{i}<\underline{l},\, j=0,..,d} a_{\underline{i},j}(\underline{s}) C_{\underline{l}-\underline{i}}^{(j)}(\underline{s})\,.
	\end{equation} 
\end{lemma}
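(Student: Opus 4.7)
The plan is that this is essentially a direct computation: we substitute $y_0$ into $P$, expand via the multinomial formula (\ref{equ:multinomial-expansion}), and equate coefficients of each monomial $\underline{t}^{\underline{l}}$. The key point is that because $P$ has only finitely many powers of $y$ (namely $j=0,\ldots,d$), this substitution is well-defined, and because $c_{\underline{0},\underline{0}} \neq 0$ is irrelevant to the formal substitution itself, we can proceed purely formally in $K[[\underline{s}]][[\underline{t}]]$.

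The first step is to compute, for an arbitrary $y_0 = \sum_{\underline{n}} c_{\underline{n}}(\underline{s})\,\underline{t}^{\underline{n}}$:
\[
P(\underline{s},\underline{t},y_0) \;=\; \sum_{\underline{l}\in\mathbb{N}^{r-\tau}}\sum_{j=0}^d a_{\underline{l},j}(\underline{s})\,\underline{t}^{\underline{l}}\,y_0^{\,j}
\;=\; \sum_{\underline{l}\in\mathbb{N}^{r-\tau}}\sum_{j=0}^d a_{\underline{l},j}(\underline{s})\,\underline{t}^{\underline{l}}\,\sum_{\underline{n}\in\mathbb{N}^{r-\tau}} c_{\underline{n}}^{(j)}(\underline{s})\,\underline{t}^{\underline{n}}.
\]
Reindexing via $\underline{m} = \underline{l}+\underline{n}$ and grouping by the power of $\underline{t}$, the coefficient of $\underline{t}^{\underline{m}}$ in $P(\underline{s},\underline{t},y_0)$ equals
\[
\sum_{j=0}^d\;\sum_{\underline{i}\leq \underline{m}} a_{\underline{i},j}(\underline{s})\,c_{\underline{m}-\underline{i}}^{(j)}(\underline{s}),
\]
which is a well-defined element of $K[[\underline{s}]]$.

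The second step is to observe that $P(\underline{s},\underline{t},y_0)=0$ in $K[[\underline{s}]][[\underline{t}]]$ if and only if each such coefficient vanishes, for every $\underline{m}\in\mathbb{N}^{r-\tau}$. Isolating the term $\underline{i} = \underline{m}$ (which corresponds to $\underline{n}=\underline{0}$ in the expansion of $y_0^{\,j}$) and noting that $c_{\underline{0}}^{(j)}(\underline{s}) = (c_{\underline{0}}(\underline{s}))^{j}$ since the constant term (in $\underline{t}$) of $y_0^{\,j}$ is the $j$th power of the constant term of $y_0$, we rewrite the vanishing of the coefficient of $\underline{t}^{\underline{m}}$ as
\[
\sum_{j=0}^d a_{\underline{m},j}(\underline{s})\,\bigl(c_{\underline{0}}(\underline{s})\bigr)^{j} \;=\; -\sum_{\underline{i}<\underline{m}}\sum_{j=0}^d a_{\underline{i},j}(\underline{s})\,c_{\underline{m}-\underline{i}}^{(j)}(\underline{s}).
\]
Relabeling $\underline{m}$ as $\underline{l}$ yields exactly the family of relations (\ref{equ:recurrence}) when the formal expressions $C_{\underline{0}}^{\,j}(\underline{s})$ and $C_{\underline{l}-\underline{i}}^{(j)}(\underline{s})$ (which by definition are polynomials in the $C_{\underline{k}}(\underline{s})$'s obtained from the multinomial expansion of $Y_0^{\,j}$) are evaluated at $C_{\underline{k}}(\underline{s}) = c_{\underline{k}}(\underline{s})$ for all $\underline{k}\leq\underline{l}$.

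There is no real obstacle here; the only point that requires care is the notational one of distinguishing the polynomial expressions $C_{\underline{n}}^{(j)}$ (living in $K[(C_{\underline{k}})_{\underline{k}\leq\underline{n}}]$) from their evaluations $c_{\underline{n}}^{(j)}(\underline{s})$ at the actual coefficients of $y_0$. Since each $C_{\underline{n}}^{(j)}$ involves only finitely many $C_{\underline{k}}$'s with $\underline{k}\leq\underline{n}$, and the sum over $\underline{i}<\underline{l}$ is finite, the recurrence (\ref{equ:recurrence}) is a genuine polynomial identity in $K[\underline{s},(C_{\underline{k}}(\underline{s}))_{\underline{k}\leq\underline{l}}]$ that must hold once we substitute $C_{\underline{k}}(\underline{s}) = c_{\underline{k}}(\underline{s})$.
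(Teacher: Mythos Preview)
Your proof is correct and follows essentially the same approach as the paper: substitute $y_0$ into $P$, use the multinomial expansion (\ref{equ:multinomial-expansion}) to collect the coefficient of each $\underline{t}^{\underline{l}}$, and isolate the $\underline{i}=\underline{l}$ term using $C_{\underline{0}}^{(j)}=C_{\underline{0}}^{\,j}$. The paper's proof is slightly terser but the argument is identical.
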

\begin{demo}
	Let us compute:
	\begin{center}
		$P(\underline{s},\underline{t},y_0)=\displaystyle\sum_{{\underline{i}}\in \mathbb{N}^{r-\tau},\,  j=0,..,d} a_{{\underline{i}},j}(\underline{s})\underline{t}^{{\underline{i}}}{y_0}^j$\\
		$=\displaystyle\sum_{{\underline{i}}\in \mathbb{N}^{r-\tau},\,  j=0,..,d} a_{{\underline{i}},j}(\underline{s})\underline{t}^{{\underline{i}}}\left(\displaystyle\sum_{\underline{n}\in \mathbb{N}^{r-\tau}}c_{\underline{n}}^{(j)}(\underline{s})\,\underline{t}^{\underline{n}}\right)$\\
		$=\displaystyle\sum_{\underline{l}\in \mathbb{N}^{r-\tau}}\left(\displaystyle\sum_{\underline{i}\leq\underline{l},\,  j=0,..,d } a_{\underline{i},j}(\underline{s})c_{\underline{l}-\underline{i}}^{(j)}(\underline{s})\right)\underline{t}^{\underline{l}}.$
	\end{center}
	So, $y_0$ is a root of $P$ if and only if, in the latter formula, the coefficient of $\underline{t}^{\underline{l}}$ for each $\underline{l}$ vanishes, which is equivalent to the vanishing of (\ref{equ:recurrence}) (noticing that $ C_{\underline{0}}^{(j)}= {C_{\underline{0}}}^{j}$ for all $j$).
\end{demo}

Let $\mathcal{F},\mathcal{G}$ be as in Definition \ref{defi:algebroid-relative} and  satisfying Conditions (i), (ii), (iii) of Lemma \ref{lemma:support-equ}. Let $y_0=\displaystyle\sum_{(\underline{m},\underline{n})\in\mathbb{N}^\tau\times\mathbb{N}^{r-\tau}} c_{\underline{m},\underline{n}}\underline{s}^{\underline{m}}\underline{t}^{\underline{n}}=\displaystyle\sum_{\underline{n}\in \mathbb{N}^{r-\tau}}c_{\underline{n}}(\underline{s})\,\underline{t}^{\underline{n}}\in K[[\underline{s},{\underline{t}}]]$, $c_{\underline{0},\underline{0}}\neq 0$, be a series  {algebroid relatively to $(\mathcal{F},\mathcal{G})$}. Let $P\in\left(K[\underline{s}][[\underline{t}]][y]\right)_{\mathcal{F},\mathcal{G}}\setminus\{0\}$ be a polynomial such that $P(\underline{s},\underline{t},y_0)=0$. 
We notice that  $w_{\underline{t}}(P)$ is the index of the first non-trivial relation (\ref{equ:recurrence}), for $\mathbb{N}^{r-\tau}$ ordered with $\leq_{\mathrm{grlex}}$. Let $\hat{\underline{l}}_0\in \N^{r-\tau}$ be such that $w_{\underline{t}}(P)\leq_{\textrm{grlex}} \hat{\underline{l}}_0$. If  $w_{\underline{t}}(P)$ is known, then one can take  $\hat{\underline{l}}_0=w_{\underline{t}}(P)$.

\subsubsection{First step}\label{sect:first-step}\hspace{0.3cm} \\  

For any $\underline{l}\in\N^{r-\tau}$, we denote by ${\mathcal{F}}_{\underline{l}}'$ and ${\mathcal{G}}_{\underline{l}}'$ the corresponding sets of tuples $(\underline{k},j)\in \N^{\tau}\times \N$ where $(\underline{k},\underline{l},j)\in \mathcal{F}$ and $(\underline{k},\underline{l},0)\in \mathcal{G}$ respectively. We denote  $d'_{\underline{s},\underline{l}}:=\max\{|\underline{k}|\ |\, (\underline{k},j)\in {\mathcal{F}}_{\underline{l}}'\cup{\mathcal{G}}_{\underline{l}}' \}$ (which is well-defined thanks to Condition (iii) of  Lemma \ref{lemma:support-equ}). By (\ref{equ:control-deg}) in Remark \ref{rem:control-deg}, we have that:
$$ d'_{\underline{s},\underline{l}}\leq a|\underline{l}|+b,  $$
where $a$ and $b$ are as in Lemma \ref{lemme:control-deg-supp}.

Let $\underline{l}\leq_{\textrm{grlex}} \hat{\underline{l}}_0$ (or directly $\underline{l}=w_{\underline{t}}(P)$ if known). As we are interested in the first non trivial relation in (\ref{equ:recurrence}), we consider its following instance:
\begin{equation}\label{equ:recurrence-init}
	\displaystyle\sum_{j=0,..,d}  a_{\underline{l},j}(\underline{s}) {C_{\underline{0}}}^{j}=\displaystyle\sum_{(\underline{k},j)\in\mathcal{F}'_{\underline{l}}\cup\mathcal{G}'_{\underline{l}}} a_{\underline{k},\underline{l},j}\underline{s}^{\underline{k}}{C_0}^j=0\,.
\end{equation} 
By Lemma \ref{lemma:recurrence}, there is $\underline{l}\leq_{\textrm{grlex}}  \hat{\underline{l}}_0$ such that $c_{\underline{0}}$ satisfies the latter relation, i.e. $c_{\underline{0}}$ is algebraic relatively to $(\mathcal{F}'_{\underline{l}},\mathcal{G}'_{\underline{l}})$. In particular, $c_{\underline{0}}$ is algebraic relatively to $\left(\displaystyle\bigcup_{\underline{l}\leq_{\textrm{grlex}} \hat{\underline{l}}_0}\mathcal{F}'_{\underline{l}},\displaystyle\bigcup_{\underline{l}\leq_{\textrm{grlex}} \hat{\underline{l}}_0}\mathcal{G}'_{\underline{l}}\right)$. We denote  $d'_{\underline{s}}:=\displaystyle\max_{\underline{l}\leq_{\textrm{grlex}} \hat{\underline{l}}_0}\left(d'_{\underline{s},\underline{l}}\right)$. Let us now describe the reconstruction method for this first step:
\begin{enumerate}
	\item 	We determine the multi-indices  $\underline{l}\leq_{\textrm{grlex}}  \hat{\underline{l}}_0$ such that  $\mathcal{F}'_{\underline{l}}\cup \mathcal{G}'_{\underline{l}}\neq \emptyset$.
	\item For each $\underline{l}\leq_{\textrm{grlex}} \hat{\underline{l}}_0$ as above,
	we determine whether $c_{\underline{0}}$ is algebraic relatively to $(\mathcal{F}'_{\underline{l}},\mathcal{G}'_{\underline{l}})$ by computing the first minors of maximal order of the corresponding Wilczynski matrix $M_{\mathcal{F}'_{\underline{l}},\mathcal{G}'_{\underline{l}}}^{red}$. Proceeding as in  \cite[Lemma 3.7]{hickel-matu:puiseux-alg-multivar} or Lemma \ref{lemma:depth-non-homog}, it suffices to compute them up to the row indexed by the biggest $\underline{ m}\in\N^{\tau}$ such that $|\underline{ m}|\leq 2 d\,d'_{\underline{s}}$. 
	\item Let $\underline{l}\leq_{\textrm{grlex}} \hat{\underline{l}}_0$ such that $c_{\underline{0}}$ is algebraic relatively to $(\mathcal{F}'_{\underline{l}},\mathcal{G}'_{\underline{l}})$. We  reconstruct the $K$-vector space of polynomials  corresponding to Equation (\ref{equ:recurrence-init}) according to the method in Section \ref{sect:reconstr-alg}, in particular Lemma \ref{lemma:total-reconstr}, applied to $(\mathcal{F}'_{\underline{l}},\mathcal{G}'_{\underline{l}})$ and $c_{\underline{0}}$. We denote by ${E}_{\underline{l}}$ this space.
	\item For each ${\underline{l}}'<_{\mathrm{grlex}} {\underline{l}}$, we set $a_{\underline{k}, {\underline{l}}',j}:=0$ for $(\underline{k}, {\underline{l}}',j)\in \mathcal{F}\cup \mathcal{G}$.
\end{enumerate} 	

\subsubsection{Second step} \label{sect:second-step}\hspace{1cm} \\  

With the notations of the previous section, let $\underline{l}$ be such that $E_{\underline{l}}\neq \{0\}$. Let us consider  the  instances of (\ref{equ:recurrence}) corresponding to the ${\underline{l}}'$ such that: \begin{equation}\label{equ:ineq}
	{\underline{l}}<_{\mathrm{grlex}} {\underline{l}}'<_{\mathrm{grlex}} {\underline{l}}+(0,\ldots,0,1),
\end{equation}
For such  ${\underline{l}}'$, we claim that the set of indices $\underline{ i}$ such that   $\underline{ i}<{\underline{l}}'$ and ${\underline{i}}\geq_{\mathrm{grlex}} {\underline{l}}$  is empty. Indeed, by (\ref{equ:ineq}), note that $|\underline{ l}'|=|\underline{ l}|$. For such $\underline{ i}$, one necessarily has $|\underline{ i}|<|\underline{ l}'|=|\underline{ l}|$, but also  $|\underline{ i}|\geq |\underline{ l}|$: a contradiction.

According to (4) at the end of First Step above and to the previous claim, the  right hand sides of such  instances are equal to 0. Hence, they also are of the same form as (\ref{equ:recurrence-init}):
\begin{equation}\label{equ:recurrence-init-bis}
	\displaystyle\sum_{j=0,..,d}  a_{\underline{l}',j}(\underline{s}) {C_{\underline{0}}}^{j}=\displaystyle\sum_{(\underline{k},j)\in\mathcal{F}'_{\underline{l}'}\cup\mathcal{G}'_{\underline{l}'}} a_{\underline{k},\underline{l}',j}\underline{s}^{\underline{k}}{C_{\underline{0}}}^j=0\,.
\end{equation} 
We perform the same method of reconstruction as in the First Step \ref{sect:first-step} to determine $E_{\underline{l}' }$ the $K$-vector space of polynomials  corresponding to this equation. Note that $E_{\underline{l}' }$ might be equal to $\{0\}$.

At this step, for each $\underline{l}\leq_{\textrm{grlex}} \hat{\underline{l}}_0$ such that $E_{\underline{l}}\neq\{0\}$ from the First Step, we have built the vector spaces $E_{\underline{l}' }$ (possibly $\{0\}$) of  all the coefficients $ a_{\underline{k},\underline{l}',j}$ for  $(\underline{k}, {\underline{l}}',j)\in \mathcal{F}\cup \mathcal{G}$ satisfying the instances of (\ref{equ:recurrence}) for  
${\underline{l}}'<_{\mathrm{grlex}} {\underline{l}}+(0,\ldots,0,1)$. 

\subsubsection{Third step} \label{sect:second-step1}\hspace{1cm} \\  

Let $\underline{l}\leq_{\textrm{grlex}} \hat{\underline{l}}_0$ such that $E_{\underline{l}}\neq\{0\}$ as in the First Step \ref{sect:first-step}. We consider the  instance of (\ref{equ:recurrence}) corresponding to ${\underline{l}}+(0,\ldots,0,1)$. Note that for  $\underline{i}< \underline{l}+(0,\ldots,0,1) $, we have that $\underline{i}\leq_{\textrm{grlex}} \underline{l}$. Applying (4) from the end of the First Step, we obtain:
\begin{equation}\label{equ:recurrence_step2}
	\displaystyle\sum_{j=0,..,d}  a_{{\underline{l}}+(0,\ldots,0,1),j}(\underline{s}) {C_{\underline{0}}}^{j}=- \displaystyle\sum_{j=0,..,d} a_{\underline{l},j}(\underline{s}) C_{(0,\ldots,0,1)}^{(j)}\,.
\end{equation} 
Noticing that
$C_{(0,\ldots,0,1)}^{(j)}=j\,{C_{\underline{0}}}^{j-1}C_{(0,\ldots,0,1)}$, we get:
\begin{equation}\label{equ:recurrence_step2bis}
	\displaystyle\sum_{(\underline{k},j)\in\mathcal{F}'_{\underline{l}+(0,\ldots,0,1)}\cup\mathcal{G}'_{\underline{l}+(0,\ldots,0,1)}} a_{\underline{k},\underline{l}+(0,\ldots,0,1),j}\underline{s}^{\underline{k}}{C_{\underline{0}}}^j=- 
	\left(\displaystyle\sum_{(\underline{k},j)\in\mathcal{F}'_{\underline{l}}\cup\mathcal{G}'_{\underline{l}}} a_{\underline{k},\underline{l},j}\underline{s}^{\underline{k}}j\,{C_{\underline{0}}}^{j-1}\right) C_{(0,\ldots,0,1)}\,.
\end{equation} 

There is $\underline{l}\leq_{\textrm{grlex}}  \hat{\underline{l}}_0$ such that $c_{\underline{0}}$ and $c_{(0,\ldots,0,1)}$ satisfy the latter relation, and $c_{\underline{0}}$ satisfies the relations (\ref{equ:recurrence-init}) and (\ref{equ:recurrence-init-bis}). 

If $c_{(0,\ldots,0,1)}=0$, then there are two cases. Either $\mathcal{F}'_{\underline{l}+(0,\ldots,0,1)}\cup\mathcal{G}'_{\underline{l}+(0,\ldots,0,1)}=\emptyset$ i.e. there is no coefficient $a_{\underline{k},\underline{l}+(0,\ldots,0,1),j}$ to reconstruct. Or else, we obtain an equation like (\ref{equ:recurrence-init}) and we derive $E_{\underline{l}+(0,\ldots,0,1)}$ as in the first and second step.

If $c_{(0,\ldots,0,1)}\neq 0$, let us denote $\theta_{\underline{s},(0,\ldots,0,1)}:= \left(| \hat{\underline{l}}_0|+d\right)a +b $ where $a$ and $b$ are as in Lemma \ref{lemme:control-deg-supp}.	 
By this lemma,  
there are non-trivial polynomial relations $P_0(\underline{s},z_0)=0$ and $P_1(\underline{s},z_0,z_1)=0$ satisfied by $c_{\underline{0}}$ and $c_{(0,\ldots,0,1)}$ with $\deg_{\underline{s}}P_j\leq \theta_{\underline{s},(0,\ldots,0,1)}$,  $\deg_{z_0}P_j\leq d$ and $\deg_{z_1}P_1\leq d$. There are several cases. 

\noindent$\bullet$ Suppose that $\mathcal{F}'_{\underline{l}+(0,\ldots,0,1)}\cup\mathcal{G}'_{\underline{l}+(0,\ldots,0,1)}=\emptyset$. Equation (\ref{equ:recurrence_step2bis}) reduces to: 
\begin{equation}\label{equ:double-root}
	\displaystyle\sum_{(\underline{k},j)\in\mathcal{F}'_{\underline{l}}\cup\mathcal{G}'_{\underline{l}}} a_{\underline{k},\underline{l},j}\underline{s}^{\underline{k}}j\,{c_{\underline{0}}}^{j-1}=
	\displaystyle\sum_{(\underline{k},j)\in\mathcal{F}'_{\underline{l}}} a_{\underline{k},\underline{l},j}\underline{s}^{\underline{k}}j\,{c_{\underline{0}}}^{j-1}=0,
\end{equation}
which means that $c_{\underline{0}}$ is at least a double root of (\ref{equ:recurrence-init}). We resume the notations of Section \ref{sect:reconstr-alg}. Let us denote by $\mathcal{F}''_{\underline{l}}$ the family corresponding to $\mathcal{F}''$ for (\ref{equ:recurrence-init}), and $\lambda_{\underline{l},\,\underline{k},j}^{\underline{k}_0,j_0}$ the coefficients corresponding to  $\lambda_{\,\underline{k},j}^{\underline{k}_0,j_0}$. Formula (\ref{equ:coeff-param}) of Lemma \ref{lemma:total-reconstr} becomes:
\begin{equation*}
	\forall 	(\underline{k},j)\in \mathcal{F}''_{\underline{l}},\ \  a_{\underline{k},{\underline{l}},j} =-\displaystyle\sum_{(\underline{k}_0,j_0)\in \mathcal{F}'_{\underline{l}}\setminus\mathcal{F}''_{\underline{l}}}a_{\underline{k}_0,{\underline{l}},j_0}\,  \lambda_{\underline{l},\,\underline{k},j}^{\underline{k}_0,j_0}\ .
\end{equation*} 	
Substituting this formula in (\ref{equ:double-root}) gives:
\begin{equation*}
	\displaystyle\sum_{(\underline{k}_0,j_0)\in\mathcal{F}'_{\underline{l}}\setminus\mathcal{F}''_{\underline{l}}}  a_{\underline{k}_0,{\underline{l}},j_0} \underline{s}^{\underline{k}_0}j_0\,{c_{\underline{0}}}^{j_0-1}\ + \  \displaystyle\sum_{(\underline{k},j)\in\mathcal{F}''_{\underline{l}}} 
	\left(  -\displaystyle\sum_{(\underline{k}_0,j_0)\in \mathcal{F}'_{\underline{l}}\setminus\mathcal{F}''_{\underline{l}}}a_{\underline{k}_0,{\underline{l}},j_0}\,  \lambda_{\underline{l},\,\underline{k},j}^{\underline{k}_0,j_0} \right)
	\underline{s}^{\underline{k}}j\,{c_{\underline{0}}}^{j-1} =0 \ ,
\end{equation*}
which is:
\begin{equation}\label{equ:substit}
	\displaystyle\sum_{(\underline{k}_0,j_0)\in\mathcal{F}'_{\underline{l}}\setminus\mathcal{F}''_{\underline{l}}}  a_{\underline{k}_0,{\underline{l}},j_0}\left(  \underline{s}^{\underline{k}_0}j_0\,{c_{\underline{0}}}^{j_0-1}\ - \  \displaystyle\sum_{(\underline{k},j)\in\mathcal{F}''_{\underline{l}}} 
	\,  \lambda_{\underline{l},\,\underline{k},j}^{\underline{k}_0,j_0}
	\underline{s}^{\underline{k}}j\,{c_{\underline{0}}}^{j-1} \right) =0 \ .
\end{equation}

Either, the latter relation is trivial, i.e. for all $(\underline{k}_0,j_0)\in\mathcal{F}'_{\underline{l}}\setminus\mathcal{F}''_{\underline{l}}$, the contents of the parenthesis are all 0. In this case, the space $E_{\underline{l}}$ of possible equations for $c_{\underline{0}}$ remains unchanged. Or, the dimension of $E_{\underline{l}}$ drops. Since the contents of these parenthesis are polynomials in $\underline{ s}$ and $c_{\underline{0}}$, by Lemma \ref{lemme:ordreQ-alg}, the  $\underline{ s}$-adic order of the non-vanishing ones is  at most $2d'_{\underline{s}}d$. The vanishing of (\ref{equ:substit}) follows from the vanishing of the terms of  $\underline{ s}$-adic order up to $2d'_{\underline{s}}d$. This gives  linear relations (with at least one that is nontrivial) between the $a_{\underline{k}_0,{\underline{l}},j_0}$'s for $(\underline{k}_0,j_0)\in\mathcal{F}'_{\underline{l}}\setminus\mathcal{F}''_{\underline{l}}$. Accordingly, we derive a new space of possible equations for $c_{\underline{0}}$, that we still denote by $E_{\underline{l}}$ for simplicity. In the particular case where $E_{\underline{l}}=\{0\}$, we exclude $\underline{l}$ from the list of admissible multi-indices.\\

\noindent$\star$ Suppose now that $\mathcal{F}'_{\underline{l}+(0,\ldots,0,1)}\cup\mathcal{G}'_{\underline{l}+(0,\ldots,0,1)}\neq\emptyset$. We determine whether $c_{\underline{0}}$ is algebraic relatively to $(\mathcal{F}'_{\underline{l}+(0,\ldots,0,1)},\mathcal{G}'_{\underline{l}+(0,\ldots,0,1)})$. For this, we examine the vanishing of the minors of maximal order of $M_{\mathcal{F}'_{\underline{l}+(0,\ldots,0,1)},\mathcal{G}'_{\underline{l}+(0,\ldots,0,1)}}^{red}$ up to the lowest row of order $2d'_{\underline{s},\underline{l}+(0,\ldots,0,1)}d$. There are two subcases.

\noindent $\star \bullet$ If $c_{\underline{0}}$ is algebraic relatively to $(\mathcal{F}'_{\underline{l}+(0,\ldots,0,1)},\mathcal{G}'_{\underline{l}+(0,\ldots,0,1)})$, according to Equation (\ref{equ:recurrence_step2bis}), we set $z'=- 
\left(\displaystyle\sum_{(\underline{k},j)\in\mathcal{F}'_{\underline{l}} } a_{\underline{k},\underline{l},j}\underline{s}^{\underline{k}}j\,{c_{\underline{0}}}^{j-1}\right) c_{(0,\ldots,0,1)}$. We have to  determine whether there exists a relation $P(\underline{ s}, c_{\underline{0}})=z'$ with  $P$ having support in $\mathcal{F}'_{\underline{l}+(0,\ldots,0,1)}\cup\mathcal{G}'_{\underline{l}+(0,\ldots,0,1)}$. We consider as in Section \ref{sect:non-homog-1stcase}, a subfamily  $\mathcal{F}''_{\underline{l}+(0,\ldots,0,1)}$ of $\mathcal{F}'_{\underline{l}+(0,\ldots,0,1)}$, the vectors $(V_{\underline{l}+(0,\ldots,0,1),\,\underline{k},j}^{red})_{(\underline{k},j)\in \mathcal{F}''_{\underline{l}+(0,\ldots,0,1)}}$ and $V^{red}_{\underline{l}+(0,\ldots,0,1)}$ for $z'$, and the corresponding matrix  $N^{red}_{\underline{l}+(0,\ldots,0,1)}$.  According to Lemma \ref{lemma:depth-non-homog}, the existence of such a polynomial $P$ is equivalent to the vanishing of the minors of $N^{red}_{\underline{l}+(0,\ldots,0,1)}$ of maximal order up to the row $\underline{p}$ with 	$|\underline{p}| \leq 2.3.   
{\theta_{\underline{s},(0,\ldots,0,1)} }{d}^{d+1} $.  Let us consider one of these minors, say $\det(D)$. For $(\underline{k},j)\in\mathcal{F}'_{\underline{l}}$, we denote by $W_{\underline{k},j}^{red}$ the infinite vector corresponding to $\underline{s}^{\underline{k}}j\,{c_{\underline{0}}}^{j-1}  c_{(0,\ldots,0,1)}$. Hence, we have:
\[V^{red}_{\underline{l}+(0,\ldots,0,1)}= -\displaystyle\sum_{(\underline{k},j)\in\mathcal{F}'_{\underline{l}} } a_{\underline{k},\underline{l},j} W_{\underline{k},j}^{red}.\] 
For each $(\underline{k},j)\in\mathcal{F}'_{\underline{l}}$, we set $D_{\underline{k},j}$ the matrix obtained from $D$ by substituting to its last column, i.e. the part of $V^{red}_{\underline{l}+(0,\ldots,0,1)} $, the corresponding part of the $W_{\underline{k},j}^{red}$. By multilinearity of the determinant, one obtains:
$$ \det(D)=-\sum_{(\underline{k},j)\in\mathcal{F}'_{\underline{l}}} \det(D_{\underline{k},j})a_{\underline{k},\underline{l},j}.$$
So, the vanishing of $ \det(D)$ is equivalent to the vanishing of a linear form in the $a_{\underline{k},\underline{l},j}$'s for $(\underline{k},j)\in\mathcal{F}'_{\underline{l}}$. Considering the linear relations for all these $D$'s, we derive from  $E_{\underline{l}}$ a new space of possible equations for $c_{\underline{0}}$, that we still denote by $E_{\underline{l}}$ for simplicity. In the particular case where $E_{\underline{l}}=\{0\}$, we exclude $\underline{l}$ from the list of admissible multi-indices.

If $E_{\underline{l}}\neq\{0\}$, for each $\underline{a}_{\underline{l}}:=(a_{{\underline{k},\underline{l},j}})_{(\underline{k},j)\in\mathcal{F}'_{\underline{l}}\cup \mathcal{G}'_{\underline{l} }}$ list of coefficients of a polynomial in $ E_{\underline{l}}$, we perform the method in Section \ref{sect:non-homog-1stcase} and we reconstruct the space $\Phi_{\underline{l}+(0,\ldots,0,1)}(\underline{a}_{\underline{l}})$ of coefficients $(a_{\underline{k},\underline{l}+(0,\ldots,0,1),j})_{ (\underline{k},j)\in\mathcal{F}'_{\underline{l}+(0,\ldots,0,1)}\cup\mathcal{G}'_{\underline{l}+(0,\ldots,0,1)}}$ for a relation (\ref{equ:recurrence_step2bis}). By (\ref{equ:reconstr_non-homog}) and (\ref{equ:terme-cst}), it is an affine space $\phi_{\underline{l}+(0,\ldots,0,1)}(\underline{a}_{\underline{l}}) + F_{\underline{l}+(0,\ldots,0,1)} $ where $\phi_{\underline{l}+(0,\ldots,0,1)}(\underline{a}_{\underline{l}})$ is a point and $F_{\underline{l}+(0,\ldots,0,1)}$ a vector space. Note that $\phi_{\underline{l}+(0,\ldots,0,1)}(\underline{a}_{\underline{l}})$ depends linearly on $\underline{a}_{\underline{l}}$ and 
that its computation is done  by computing a finite number of minors of matrices given by the $W_{\underline{k}',j'}^{red}$'s, $(\underline{k}',j')\in\mathcal{F}'_{\underline{l}}$ , and the $V_{\underline{k}'',j''}^{red}$'s, $(\underline{k}'',j'')\in \mathcal{F}''_{\underline{l}+(0,\ldots,0,1)}$.
Also, we have that $ F_{\underline{l}+(0,\ldots,0,1)} $ is independent of $\underline{a}_{\underline{l}}$. Finally, we observe that, for a given $\underline{l}$, the set of admissible \\ $\left((a_{{\underline{k},\underline{l},j}})_{(\underline{k},j)\in\mathcal{F}'_{\underline{l}}\cup \mathcal{G}'_{\underline{l} }}\ ,\ (a_{\underline{k},\underline{l}+(0,\ldots,0,1),j})_{ (\underline{k},j)\in\mathcal{F}'_{\underline{l}+(0,\ldots,0,1)}\cup\mathcal{G}'_{\underline{l}+(0,\ldots,0,1)}}\right)$'s is a nonzero $K$-vector space. \\

\noindent $\star\star$ If $c_{\underline{0}}$ is not algebraic relatively to $(\mathcal{F}'_{\underline{l}+(0,\ldots,0,1)},\mathcal{G}'_{\underline{l}+(0,\ldots,0,1)})$, we have to  determine whether there exists a relation $P(\underline{ s}, c_{\underline{0}})=z'$ with  $P$ having support in $\mathcal{F}'_{\underline{l}+(0,\ldots,0,1)}\cup\mathcal{G}'_{\underline{l}+(0,\ldots,0,1)}$. Note that in this case, such a polynomial $P$ is necessarily unique for a given $z'$. We proceed as above with $\mathcal{F}'_{\underline{l}+(0,\ldots,0,1)}$ instead of $\mathcal{F}''_{\underline{l}+(0,\ldots,0,1)}$ and as in 
Section \ref{sect:secondcase}, in particular Lemma \ref{lemma:depth-non-homog_bis} with  $2.3.   
{\theta_{\underline{s},(0,\ldots,0,1)} }{d}^{d+1} $ as bound for the depth of the minors involved. 
This determines from  $E_{\underline{l}}$ a new space of possible equations for $c_{\underline{0}}$, that we still denote by $E_{\underline{l}}$ for simplicity. In the particular case where $E_{\underline{l}}=\{0\}$, we exclude $\underline{l}$ from the list of admissible multi-indices. Also, if $E_{\underline{l}}\neq\{0\}$, for each $\underline{a}_{\underline{l}}\in E_{\underline{l}}\neq\{0\}$, we reconstruct the list of coefficients $\phi_{\underline{l}+(0,\ldots,0,1)}(\underline{a}_{\underline{l}}):= (a_{\underline{k},\underline{l}+(0,\ldots,0,1),j})_{ (\underline{k},j)\in\mathcal{F}'_{\underline{l}+(0,\ldots,0,1)}\cup\mathcal{G}'_{\underline{l}+(0,\ldots,0,1)}}$ for a relation (\ref{equ:recurrence_step2bis}). By (\ref{equ:reconstr_non-homog_bis}) and (\ref{equ:terme-cst_bis}), $\phi_{\underline{l}+(0,\ldots,0,1)}(\underline{a}_{\underline{l}})$ depends linearly on $\underline{a}_{\underline{l}}$ and 
its computation is done  by computing a finite number of minors of matrices given by the $W_{\underline{k}',j'}^{red}$'s, $(\underline{k}',j')\in\mathcal{F}'_{\underline{l}}$ , and the $V_{\underline{k}'',j''}^{red}$'s, $(\underline{k}'',j'')\in \mathcal{F}'_{\underline{l}+(0,\ldots,0,1)}$.  Again, we observe that, for a given $\underline{l}$, the set of admissible  $\left((a_{{\underline{k},\underline{l},j}})_{(\underline{k},j)\in\mathcal{F}'_{\underline{l}}\cup \mathcal{G}'_{\underline{l} }}\ ,\ (a_{\underline{k},\underline{l}+(0,\ldots,0,1),j})_{ (\underline{k},j)\in\mathcal{F}'_{\underline{l}+(0,\ldots,0,1)}\cup\mathcal{G}'_{\underline{l}+(0,\ldots,0,1)}}\right)$'s is a nonzero $K$-vector space.\\

To sum up Sections \ref{sect:first-step} to \ref{sect:second-step1}, we have reconstructed a finite number of multi-indices $\underline{l}$ (i.e. possible initial steps $\underline{l}_0:=w_{\underline{t}}(P)$) and, for each of these $\underline{l}$'s, the nonzero $K$-vector space $E_{\underline{l},{\underline{l}}+(0,\ldots,0,1)}$ of coefficients $(a_{{\underline{k},\underline{l}',j}})_{(\underline{k},\underline{l}',j)\in\mathcal{F} \cup \mathcal{G}\,,\, \underline{l}\leq_{\mathrm{grlex}}\underline{l}'\leq_{\mathrm{grlex}} \underline{l}+(0,\ldots,0,1) }$  for the initial part of a possible vanishing polynomial for $y_0$.


\subsubsection{Induction step.}\label{sect:induction-step}\hspace{1cm} \\  

For each $\underline{l}\leq_{\textrm{grlex}}\hat{\underline{l}}_0$ possible initial step as above, we assume that up to some $\tilde{\underline{l}}\geq_{\mathrm{grlex}} \underline{l}+(0,\ldots,0,1)$ we have reconstructed the nonzero $K$-vector space, say $E_{\underline{l},\tilde{\underline{l}}}$, of coefficients $(a_{{\underline{k},\underline{l}',j}})_{(\underline{k},\underline{l}',j)\in\mathcal{F} \cup \mathcal{G}\,,\, \underline{l}'\leq_{\mathrm{grlex}} \tilde{\underline{l}}}$  for the initial part of a possible vanishing polynomial for $y_0$.  Recall that, for $\underline{\lambda}\in\N^r$, $S(\underline{\lambda})$ (respectively $A(\underline{\lambda})$ for $\underline{\lambda}\neq 0$) denotes the successor (respectively the predecessor) for $\leq_{\mathrm{grlex}}$ of $\underline{\lambda}$ in $\N^r$. Equation (\ref{equ:recurrence}) gives:
$$  \displaystyle\sum_{j=0,..,d}  a_{S(\tilde{\underline{l}}),j}(\underline{s}) {C_{\underline{0}}}^{j}=- \displaystyle\sum_{\underline{i}<S(\tilde{\underline{l}}),\, j=0,..,d} a_{\underline{i},j}(\underline{s}) C_{S(\tilde{\underline{l}})-\underline{i}}^{(j)}\,,$$
which we write as:
\begin{equation}\label{equ:recurrence_indstep}
	\displaystyle\sum_{(\underline{k},j)\in\mathcal{F}'_{S(\tilde{\underline{l}})}\cup\mathcal{G}'_{S(\tilde{\underline{l}})}} a_{\underline{k},S(\tilde{\underline{l}}),j}\underline{s}^{\underline{k}}{C_{\underline{0}}}^j=- \displaystyle\sum_{\underline{i}<S(\tilde{\underline{l}}) } 
	\left(\displaystyle\sum_{(\underline{k},j)\in\mathcal{F}'_{\underline{i}}\cup\mathcal{G}'_{\underline{i}}} a_{\underline{k},\underline{i},j}\underline{s}^{\underline{k}}\,C_{S(\tilde{\underline{l}})-\underline{i}}^{(j)}\right)\,.
\end{equation} 
Let us denote $\theta_{\underline{s},S(\underline{\tilde{l}})}:= \left(|\hat{\underline{l}}_0|+d\, |S(\underline{\tilde{l}})|\right)a +b $ where $a$ and $b$ are as in Lemma \ref{lemme:control-deg-supp}.	By this lemma, there exist polynomials $\left(P_{\underline{\lambda}}(\underline{s},z_{\underline{0}},\ldots,z_{\underline{ \lambda}})\right)_{\underline{\lambda}= \underline{0},\ldots,S(\underline{\tilde{l}})}$ such that $P_{\underline{\lambda}}(\underline{s},c_{\underline{0}},\ldots,c_{\underline{ \lambda}})=0$, $P_{\underline{\lambda}}(\underline{s},c_{\underline{0}},\ldots,c_{A(\underline{ \lambda})},z_{\underline{ \lambda}})\not\equiv0$, $\deg_{\underline{s}}P_{\underline{\lambda}}\leq \theta_{\underline{s},S(\underline{\tilde{l}})}$, $\deg_{z_{\underline{\mu}}}P_{\underline{\lambda}}\leq d$ for $\underline{\mu}\leq_{\mathrm{grlex}}\underline{\lambda}$. Let us denote 
\begin{equation}\label{eq:indice-multiind}
	i_{S(\underline{\tilde{l}})}:=\left(\begin{array}{c}
		|S(\underline{\tilde{l}})|+r-\tau\\ |S(\underline{\tilde{l}})|
	\end{array}\right)-1.
\end{equation}

Note that $i_{S(\underline{\tilde{l}})}+1$ is at most the number of multi-indices $\underline{\lambda}$ such that  $\underline{\lambda}\leq_{\mathrm{grlex}}S(\underline{\tilde{l}})$.

\noindent$\bullet$ Suppose that $\mathcal{F}'_{S(\tilde{\underline{l}})}\cup\mathcal{G}'_{S(\tilde{\underline{l}})}=\emptyset$. Equation (\ref{equ:recurrence_indstep}) evaluated at $c_{\underline{0}},\ldots,c_{S(\underline{\tilde{l}})}$ reduces to:
\begin{equation}\label{equ:recurrence_indstep-case1}
	\displaystyle\sum_{\underline{i}<S(\tilde{\underline{l}}) } 
	\left(\displaystyle\sum_{(\underline{k},j)\in\mathcal{F}'_{\underline{i}}\cup\mathcal{G}'_{\underline{i}}} a_{\underline{k},\underline{i},j}\underline{s}^{\underline{k}}\,c_{S(\tilde{\underline{l}})-\underline{i}}^{(j)}\right)=0\,.
\end{equation} 
Let us expand $c_{\underline{n}}^{(j)} $ in (\ref{equ:multinomial-expansion}):
$$ 
{y_0}^j= \displaystyle\sum_{\underline{n}\in \mathbb{N}^{r-\tau}}c_{\underline{n}}^{(j)}\,\underline{t}^{\underline{n}}= \left(\displaystyle\sum_{\underline{\gamma}\in \mathbb{N}^{r-\tau}}c_{\underline{\gamma}}\,\underline{t}^{\underline{\gamma}}\right)^j,$$
so,
$$c_{\underline{n}}^{(j)}=\displaystyle\sum_{\underline{j}\ /\ |\underline{j}|=j\atop
	\ \ \ \ g(\underline{j})=\underline{n}}
\displaystyle\frac{j!}{\underline{j}!}\underline{c}^{\underline{j}}$$
where $\underline{j}:=(j_{\underline{0}},\ldots,j_{\underline{n}})$ and $\underline{c}^{\underline{j}}:= c_{\underline{0}}^{j_{\underline{0}}}\cdots c_{\underline{n}}^{j_{\underline{n}}}$ (and where $g$ is as in Notation \ref{nota:FS}).

Let us expand the left hand side of (\ref{equ:recurrence_indstep-case1}):
$$\displaystyle\sum_{\underline{i}<S(\tilde{\underline{l}}) } 
\left(\displaystyle\sum_{(\underline{k},j)\in\mathcal{F}'_{\underline{i}}\cup\mathcal{G}'_{\underline{i}}} a_{\underline{k},\underline{i},j}\underline{s}^{\underline{k}}\,c_{S(\tilde{\underline{l}})-\underline{i}}^{(j)}\right)= \displaystyle\sum_{\underline{i}<S(\tilde{\underline{l}}) } 
\left(\displaystyle\sum_{(\underline{k},j)\in\mathcal{F}'_{\underline{i}}\cup\mathcal{G}'_{\underline{i}}} a_{\underline{k},\underline{i},j}\underline{s}^{\underline{k}}\, \displaystyle\sum_{\underline{j}\ /\ |\underline{j}|=j\atop
	\ \ \ \ g(\underline{j})=S(\tilde{\underline{l}})-\underline{i}}
\displaystyle\frac{j!}{\underline{j}!}\underline{c}^{\underline{j}}\right)$$
(where  $\underline{j}:=(j_{\underline{0}},\ldots,j_{S(\tilde{\underline{l}})})$ and $\underline{c}^{\underline{j}}:= c_{\underline{0}}^{j_{\underline{0}}}\cdots c_{S(\tilde{\underline{l}})}^{j_{S(\tilde{\underline{l}})}}$). 

We set $\mathcal{K}'_{S(\tilde{\underline{l}})}$ the set of $(\underline{k},\underline{j})$ where $\underline{k}\in \N^\tau$ and $\underline{j}:=(j_{\underline{0}},\ldots,j_{S(\tilde{\underline{l}})})$, $\underline{j}\neq \underline{0}$,  such that $j:=|\underline{j}|\in\{0,\ldots,d\}$ and there exists $\underline{i}\in\N^{r-\tau}$ with $\underline{i}<S(\tilde{\underline{l}})$,  $(\underline{k},j)\in\mathcal{F}'_{\underline{i}}\cup\mathcal{G}'_{\underline{i}}$, $g(\underline{j})=S(\tilde{\underline{l}})-\underline{i}$. 
Equation (\ref{equ:recurrence_indstep-case1}) becomes:
$$  \displaystyle\sum_{(\underline{k},\underline{j})\in \mathcal{K}'_{S(\tilde{\underline{l}})}\cup \mathcal{L}'_{S(\tilde{\underline{l}})} } 
\displaystyle\frac{j!}{\underline{j}!}a_{\underline{k},S(\tilde{\underline{l}})-g(\underline{j}),j} 
\,\underline{s}^{\underline{k}}\underline{c}^{\underline{j}}=0
.$$

Thanks to Remark \ref{rem:control-deg}, for any $(\underline{k},\underline{j})\in \mathcal{K}'_{S(\tilde{\underline{l}})}\cup \mathcal{L}'_{S(\tilde{\underline{l}})} $, we have that $|\underline{k}|\leq a\,|S(\underline{\tilde{l}})|+b\leq \theta_{\underline{s},S(\underline{\tilde{l}})}$. We are in position to apply the method of reconstruction of Section \ref{sect:alg-reconstr-multi} of all the polynomials such that 
$$\displaystyle\sum_{(\underline{k},\underline{j})\in \mathcal{K}'_{S(\tilde{\underline{l}})}\cup \mathcal{L}'_{S(\tilde{\underline{l}})} } 
b_{\underline{k},\underline{j}}\,\underline{s}^{\underline{k}}\underline{c}^{\underline{j}}=0.$$
This requires computations of minors of the corresponding Wilczynski matrix up to a finite depth bounded by 
$$2.3^{d^{i_{S(\tilde{\underline{l}})}-1}+\cdots+d^2+d+1}    
\theta_{\underline{s},S(\underline{\tilde{l}})}\,  d^{d^{i_{S(\tilde{\underline{l}})}}+\cdots+d^2+d+1}$$
(see Lemma \ref{lemme:wilcz-multi-profondeur}). By Lemma \ref{lemma:total-reconstr-multi}, the formulas (\ref{equ:coeff-param_1}) and (\ref{equ:terme-cst1_1}) give us with a vector space $B_{S(\underline{\tilde{l}})}$ (possibly zero) of coefficients $b_{\underline{k},\underline{j}}$, hence a corresponding vector space $A_{S(\underline{\tilde{l}})}$ of coefficients $a_{\underline{k},S(\tilde{\underline{l}})-g(\underline{j}),j}=\displaystyle\frac{\underline{j}!}{j!}b_{\underline{k},\underline{j}}$. 
We take the intersection of $A_{S(\underline{\tilde{l}})}$ with $E_{\underline{l},\tilde{\underline{l}}}$ and we obtain another vector space of admissible coefficients that we still denote by  $E_{\underline{l},\tilde{\underline{l}}}$ for simplicity.  In the particular case where the projection of  $E_{\underline{l},\tilde{\underline{l}}}$ on $E_{\underline{l}}$ is $\{0\}$, we exclude $\underline{l}$ from the list of admissible multi-indices.
\\

\noindent$\star$ Suppose that $\mathcal{F}'_{S(\tilde{\underline{l}})}\cup\mathcal{G}'_{S(\tilde{\underline{l}})}\neq\emptyset$.  We determine whether $c_{\underline{0}}$ is algebraic relatively to $(\mathcal{F}'_{S(\tilde{\underline{l}})},\mathcal{G}'_{S(\tilde{\underline{l}})})$. For this, we examine the vanishing of the minors of maximal order of $M_{\mathcal{F}'_{S(\tilde{\underline{l}})},\mathcal{G}'_{S(\tilde{\underline{l}})}}^{red}$ up to the lowest row of order $2d'_{\underline{s},S(\tilde{\underline{l}})}d$ (see Section \ref{sect:first-step} for the notation). There are two subcases. \vspace{0.3cm}

\noindent $\star \bullet$ If $c_{\underline{0}}$ is algebraic relatively to $(\mathcal{F}'_{S(\tilde{\underline{l}})},\mathcal{G}'_{S(\tilde{\underline{l}})})$, according to Equation (\ref{equ:recurrence_indstep}), we set $z':=- 
\displaystyle\sum_{\underline{i}<S(\tilde{\underline{l}}) } 
\left(\displaystyle\sum_{(\underline{k},j)\in\mathcal{F}'_{\underline{i}}\cup\mathcal{G}'_{\underline{i}}} a_{\underline{k},\underline{i},j}\underline{s}^{\underline{k}}\,c_{S(\tilde{\underline{l}})-\underline{i}}^{(j)}\right)$. We have to  determine whether there exists a relation $P(\underline{ s}, c_{\underline{0}})$ $=z'$ with  $P$ having support in $\mathcal{F}'_{S(\tilde{\underline{l}})}\cup\mathcal{G}'_{S(\tilde{\underline{l}})}$. We consider as in Section \ref{sect:non-homog-1stcase}, a subfamily  $\mathcal{F}''_{S(\tilde{\underline{l}})}$ of $\mathcal{F}'_{S(\tilde{\underline{l}})}$, the vectors $(V_{S(\tilde{\underline{l}}),\,\underline{k},j}^{red})_{(\underline{k},j)\in \mathcal{F}''_{S(\tilde{\underline{l}})}}$ and $V^{red}_{S(\tilde{\underline{l}})}$ for $z'$, and the corresponding matrix  $N^{red}_{S(\tilde{\underline{l}})}$. 

According to Lemma \ref{lemma:depth-non-homog}, the existence of such a polynomial $P$ is equivalent to the vanishing of the minors of $N^{red}_{S(\tilde{\underline{l}})}$ of maximal order up to the row $\underline{p}$ with \begin{Large}$$ 
	|\underline{p}| \leq 2.3^{d^{i_{S(\tilde{\underline{l}})}-1}+\cdots+d^2+d+1}    
	\theta_{\underline{s},S(\tilde{\underline{l}})}\,. d^{d^{i_{S(\tilde{\underline{l}})}}+\cdots+d^2+d+1}  
	$$\end{Large}	
Let us consider one of these minors, say $\det(D)$. For $\underline{i}<S(\tilde{\underline{l}})$, for $(\underline{k},j)\in\mathcal{F}'_{\underline{i}}\cup\mathcal{G}'_{\underline{i}}$, we denote by $W_{\underline{k},\underline{i},j}^{red}$ the infinite vector corresponding to $\underline{s}^{\underline{k}}\,c_{S(\tilde{\underline{l}})-\underline{i}}^{(j)}$. 
We set $D_{\underline{k},\underline{i},j}$ the matrix obtained from $D$ by substituting to its last column, i.e. the part of $V^{red}_{S(\tilde{\underline{l}})} $, the corresponding parts of the $W_{\underline{k},\underline{i},j}^{red}$'s. Since $V^{red}_{S(\tilde{\underline{l}})}= \displaystyle\sum_{\underline{i}<S(\tilde{\underline{l}}) }\left( \sum_{(\underline{k},j)\in \mathcal{F}'_{\underline{i}}\cup\mathcal{G}'_{\underline{i}} } a_{\underline{k},\underline{i},j}.W_{\underline{k},\underline{i},j}^{red}\right) $, one has:
$$ \det(D)=- \displaystyle\sum_{\underline{i}<S(\tilde{\underline{l}}) }\left( \sum_{(\underline{k},j)\in \mathcal{F}'_{\underline{i}}\cup\mathcal{G}'_{\underline{i}} } \det(D_{\underline{k},\underline{i},j})\,a_{\underline{k},\underline{i},j}\right).$$
So, the vanishing of $ \det(D)$ is equivalent to the vanishing of a linear form in the $a_{\underline{k},\underline{i},j}$'s for $\underline{i}<S(\tilde{\underline{l}}) $ and  $(\underline{k},j)\in\mathcal{F}'_{\underline{i}}\cup\mathcal{G}'_{\underline{i}}$. Considering these linear relations, we derive from  $E_{\underline{l},\tilde{\underline{l}}}$ a new space of possible coefficients $(a_{{\underline{k},\underline{l}',j}})_{(\underline{k},\underline{l}',j)\in\mathcal{F} \cup \mathcal{G}\,,\, \underline{l}'\leq_{\mathrm{grlex}} \tilde{\underline{l}}}$, that we still denote by   $E_{\underline{l},\tilde{\underline{l}}}$ for simplicity. In the particular case where the projection of  $E_{\underline{l},\tilde{\underline{l}}}$ on $E_{\underline{l}}$ is $\{0\}$, we exclude $\underline{l}$ from the list of admissible multi-indices.

If this projection is not $\{0\}$, so in particular $E_{\underline{l}}\neq\{0\}$, for each $\underline{a}_{\underline{\tilde{l}}}:=(a_{{\underline{k},\underline{l}',j}})_{(\underline{k},\underline{l}',j)\in\mathcal{F} \cup \mathcal{G},\, \underline{l}'\leq_{\mathrm{grlex}} \underline{\tilde{l}} }$ list of coefficients of a polynomial in $E_{\underline{l},\tilde{\underline{l}}}$, we perform the method in Section \ref{sect:non-homog-1stcase} and we reconstruct the space $\Phi_{S(\tilde{\underline{l}})}(\underline{a}_{\underline{\tilde{l}}})$ of coefficients $(a_{\underline{k},S(\tilde{\underline{l}}),j})_{ (\underline{k},j)\in\mathcal{F}'_{S(\tilde{\underline{l}})}\cup\mathcal{G}'_{S(\tilde{\underline{l}})}}$ for a relation (\ref{equ:recurrence_indstep}). By (\ref{equ:reconstr_non-homog}) and (\ref{equ:terme-cst}), it is an affine space $\phi_{S(\tilde{\underline{l}})}(\underline{a}_{\underline{\tilde{l}}}) + F_{S(\tilde{\underline{l}})} $ where $\phi_{S(\tilde{\underline{l}})}(\underline{a}_{\underline{\tilde{l}}})$ is a point and $F_{S(\tilde{\underline{l}})}$ a vector space. Note that $\phi_{S(\underline{\tilde{l}})}(\underline{a}_{\underline{\tilde{l}}})$ depends linearly on $\underline{a}_{\underline{\tilde{l}}}$ and 
that its computation is done  by computing a finite number of minors of matrices given by the $W_{\underline{k}',\underline{i},j'}^{red}$'s, $\underline{i}<S(\tilde{\underline{l}})$,  $(\underline{k}',j')\in\mathcal{F}'_{\underline{i}}\cup\mathcal{G}'_{\underline{i}}$, and the $V_{\underline{k}'',j''}^{red}$'s, $(\underline{k}'',j'')\in \mathcal{F}''_{S(\tilde{\underline{l}})}$.
Also, we have that $ F_{S(\tilde{\underline{l}})} $ is independent of $\underline{a}_{\underline{\tilde{l}}}$. Finally, we observe that the set of admissible  $\left((a_{{\underline{k},\underline{l}',j}})_{(\underline{k},\underline{l}',j)\in\mathcal{F} \cup \mathcal{G},\, \underline{l}'\leq_{ \mathrm{grlex}} \tilde{\underline{l}} }\ ,\ (a_{\underline{k},S(\tilde{\underline{l}}),j})_{ (\underline{k},j)\in\mathcal{F}'_{S(\tilde{\underline{l}})} \cup\mathcal{G}'_{S(\tilde{\underline{l}})}}\right)$'s, for a given $\underline{l}$, is a nonzero $K$-vector space which we denote by $E_{\underline{l},S(\tilde{\underline{l}})}$. \vspace{0.3cm} \\

\noindent $\star \star$ If $c_{\underline{0}}$ is not algebraic relatively to $(\mathcal{F}'_{S(\tilde{\underline{l}})},\mathcal{G}'_{S(\tilde{\underline{l}})})$, according to Equation (\ref{equ:recurrence_indstep}), we set $z'=- 
\displaystyle\sum_{\underline{i}<S(\tilde{\underline{l}}) } 
\left(\displaystyle\sum_{(\underline{k},j)\in\mathcal{F}'_{\underline{i}}\cup\mathcal{G}'_{\underline{i}}} a_{\underline{k},\underline{i},j}\underline{s}^{\underline{k}}\,c_{S(\tilde{\underline{l}})-\underline{i}}^{(j)}\right)$. We want to  determine if there exists a relation $P(\underline{ s}, c_{\underline{0}})=z'$ with  $P$ having support in $\mathcal{F}'_{S(\tilde{\underline{l}})}\cup\mathcal{G}'_{S(\tilde{\underline{l}})}$. As in Section \ref{sect:secondcase}, we consider the vectors $(V_{S(\tilde{\underline{l}}),\,\underline{k},j}^{red})_{(\underline{k},j)\in \mathcal{F}'_{S(\tilde{\underline{l}})}}$, $V^{red}_{S(\tilde{\underline{l}})}$ for $z'$, and the corresponding matrix  $N^{red}_{S(\tilde{\underline{l}})}$. 

According to Lemma \ref{lemma:depth-non-homog_bis}, the existence of such a polynomial $P$ is equivalent to the vanishing of the minors of $N^{red}_{S(\tilde{\underline{l}})}$ of maximal order up to the row $\underline{p}$ with \begin{Large}$$ 
	|\underline{p}| \leq 2.3^{d^{i_{S(\tilde{\underline{l}})}-1}+\cdots+d^2+d+1}    
	\theta_{\underline{s},S(\tilde{\underline{l}})}\,. d^{d^{i_{S(\tilde{\underline{l}})}}+\cdots+d^2+d+1}  
	$$\end{Large}
where $i_{S(\tilde{\underline{l}})}$ is defined by (\ref{eq:indice-multiind}).	

As previously, for any of such minors, say  $ \det(D)$, the vanishing of $ \det(D)$ is equivalent to the vanishing of a linear form in the $a_{\underline{k},\underline{i},j}$'s for $\underline{i}<S(\tilde{\underline{l}}) $ and  $(\underline{k},j)\in\mathcal{F}'_{\underline{i}}\cup\mathcal{G}'_{\underline{i}}$. Considering these linear relations, we derive from  $E_{\underline{l},\tilde{\underline{l}}}$ a new space of possible coefficients $(a_{{\underline{k},\underline{l}',j}})_{(\underline{k},\underline{l}',j)\in\mathcal{F} \cup \mathcal{G}\,,\, \underline{l}'\leq_{\mathrm{grlex}} \tilde{\underline{l}}}$, that we still denote by   $E_{\underline{l},\tilde{\underline{l}}}$ for simplicity. In the particular case where the projection of  $E_{\underline{l},\tilde{\underline{l}}}$ on $E_{\underline{l}}$ is $\{0\}$, we exclude $\underline{l}$ from the list of admissible multi-indices.

If this projection is not $\{0\}$, so in particular $E_{\underline{l}}\neq\{0\}$, for each $\underline{a}_{\underline{\tilde{l}}}:=(a_{{\underline{k},\underline{l}',j}})_{(\underline{k},\underline{l}',j)\in\mathcal{F} \cup \mathcal{G},\, \underline{l}'\leq_{\mathrm{grlex}} \underline{\tilde{l}} }$ list of coefficients of a polynomial in $E_{\underline{l},\tilde{\underline{l}}}$, we perform the method in Section \ref{sect:secondcase} and we reconstruct the \emph{unique} list of coefficients $(a_{\underline{k},S(\tilde{\underline{l}}),j})_{ (\underline{k},j)\in\mathcal{F}'_{S(\tilde{\underline{l}})}\cup\mathcal{G}'_{S(\tilde{\underline{l}})}}$ for a relation (\ref{equ:recurrence_indstep}). Note that this list depends linearly on  $(a_{{\underline{k},\underline{l}',j}})_{(\underline{k},\underline{l}',j)\in\mathcal{F} \cup \mathcal{G},\, \underline{l}'\leq_{ \mathrm{grlex}} \tilde{\underline{l}} }$ by relations (\ref{equ:reconstr_non-homog_bis}) and (\ref{equ:terme-cst_bis}).  Finally,  we denote by $E_{\underline{l},S(\tilde{\underline{l}})}$ the $K$-vector space of   $\left((a_{{\underline{k},\underline{l}',j}})_{(\underline{k},\underline{l}',j)\in\mathcal{F} \cup \mathcal{G},\, \underline{l}'\leq_{ \mathrm{grlex}} \tilde{\underline{l}} }\ ,\ (a_{\underline{k},S(\tilde{\underline{l}}),j})_{ (\underline{k},j)\in\mathcal{F}'_{S(\tilde{\underline{l}})} \cup\mathcal{G}'_{S(\tilde{\underline{l}})}}\right)$ admissible. 
\vspace{1cm}

As a conclusion, we obtain:
\begin{theo}\label{theo:reconstr-var-u}
	Let $\tilde{\underline{n}}^0\in\N^r$, $p\in\N^*$, $\underline{q}\in\N^{r-1}\setminus\{\underline{0}\}$, $d\in\N^*$ be given. Let $\mathcal{F},\mathcal{G}$ be as in Definition \ref{defi:algebroid-relative} and  satisfying Conditions (i), (ii), (iii) of Lemma \ref{lemma:support-equ}. Let $y_0=\displaystyle\sum_{(\underline{m},\underline{n})\in\mathbb{N}^\tau\times\mathbb{N}^{r-\tau}} c_{\underline{m},\underline{n}}\underline{s}^{\underline{m}}\underline{t}^{\underline{n}}=\displaystyle\sum_{\underline{n}\in \mathbb{N}^{r-\tau}}c_{\underline{n}}(\underline{s})\,\underline{t}^{\underline{n}}\in K[[\underline{s},{\underline{t}}]]$, $c_{\underline{0},\underline{0}}\neq 0$, be a series  {algebroid relatively to $(\mathcal{F},\mathcal{G})$}. Let $\hat{\underline{l}}_0\in \N^{r-\tau}$ be given. Assume that there exists a polynomial  $P\in\left(K[\underline{s}][[\underline{t}]][y]\right)_{\mathcal{F},\mathcal{G}}\setminus\{0\}$ such that $P(\underline{s},\underline{t},y_0)=0$ and $w_{\underline{t}}(P)\leq_{\textrm{grlex}} \hat{\underline{l}}_0$.
	
	For any $\underline{l}\leq_{\mathrm{grlex}} \hat{\underline{l}}_0$, for any  $\tilde{\underline{l}}\geq_{\mathrm{grlex}}\underline{l}$,  Sections \ref{sect:first-step} to \ref{sect:induction-step} provide the vector space $E_{\underline{l},\tilde{\underline{l}}}$ of all the polynomials $Q_{\underline{l},\tilde{\underline{l}}}\in\left(K[\underline{s}][[\underline{t}]][y]\right)_{\mathcal{F},\mathcal{G}}$ such that:
	\[w_{\underline{t}}(Q_{\underline{l},\tilde{\underline{l}}})={\underline{l}}\hspace{1cm} \textrm{ and }\hspace{1cm} w_{\underline{t}}(Q_{\underline{l},\tilde{\underline{l}}}(\underline{s},\underline{t},y_0) )>_{\textrm{grlex}}\tilde{\underline{l}}. \]
	
\end{theo}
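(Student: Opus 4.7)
The plan is to proceed by induction on $\tilde{\underline{l}}\in\N^{r-\tau}$, $\tilde{\underline{l}}\geq_{\mathrm{grlex}}\underline{l}$, ordered by $\leq_{\mathrm{grlex}}$, verifying at each step that the space produced by Sections \ref{sect:first-step}--\ref{sect:induction-step} agrees with the target space $E_{\underline{l},\tilde{\underline{l}}}$. First, I would translate both defining conditions into the language of Lemma \ref{lemma:recurrence}. Writing $Q_{\underline{l},\tilde{\underline{l}}}=\sum_{(\underline{k},\underline{l}',j)\in \mathcal{F}\cup\mathcal{G}} a_{\underline{k},\underline{l}',j}\underline{s}^{\underline{k}}\underline{t}^{\underline{l}'}y^j$, the condition $w_{\underline{t}}(Q_{\underline{l},\tilde{\underline{l}}})=\underline{l}$ means exactly that $a_{\underline{k},\underline{l}',j}=0$ for $\underline{l}'<_{\mathrm{grlex}}\underline{l}$ (this is (4) of Section \ref{sect:first-step}) and that the coefficient vector indexed by $\underline{l}'=\underline{l}$ is nonzero, while $w_{\underline{t}}(Q_{\underline{l},\tilde{\underline{l}}}(\underline{s},\underline{t},y_0))>_{\mathrm{grlex}}\tilde{\underline{l}}$ is equivalent, by Lemma \ref{lemma:recurrence}, to the fact that relation (\ref{equ:recurrence}) holds for every $\underline{l}'\leq_{\mathrm{grlex}}\tilde{\underline{l}}$, evaluated at the series $c_{\underline{n}}(\underline{s})$, $\underline{n}\leq_{\mathrm{grlex}}\tilde{\underline{l}}$.

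For the base case $\tilde{\underline{l}}=\underline{l}$, relation (\ref{equ:recurrence}) reduces to the homogeneous polynomial identity (\ref{equ:recurrence-init}) at $c_{\underline{0}}(\underline{s})$. By Condition (iii) of Lemma \ref{lemma:support-equ} and Remark \ref{rem:control-deg}, the support $\mathcal{F}'_{\underline{l}}\cup\mathcal{G}'_{\underline{l}}$ is bounded in $\underline{s}$-degree, so Section \ref{sect:reconstr-alg} (Lemma \ref{lemma:total-reconstr}, whose correctness rests on Lemma \ref{lemme:wilcz} and the finite-depth criterion of \cite[Lemma 3.7]{hickel-matu:puiseux-alg-multivar}) produces exactly the vector space of admissible $(a_{\underline{k},\underline{l},j})$'s. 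The Second and Third Steps (Sections \ref{sect:second-step}, \ref{sect:second-step1}) likewise translate relation (\ref{equ:recurrence}) at $\underline{l}'$ with $\underline{l}<_{\mathrm{grlex}}\underline{l}'\leq_{\mathrm{grlex}}\underline{l}+(0,\ldots,0,1)$: using Lemma \ref{lemma:algebraicity} in the form $c_{(0,\ldots,0,1)}=C^{(1)}_{(0,\ldots,0,1)}(c_{\underline{0}},c_{(0,\ldots,0,1)})$ is Hensel-type (\ref{equ:hensel-rec}), together with the degree estimate of Lemma \ref{lemme:control-deg-supp}, reduces either to a homogeneous problem (same as the base case) or to a non-homogeneous one, handled by Section \ref{sect:non-homog-1stcase} or \ref{sect:secondcase}; in both, the finite-depth truncations (Lemmas \ref{lemma:depth-non-homog}, \ref{lemma:depth-non-homog_bis}) are justified by the nested depth bound (\ref{equ:estim1}) of Theorem \ref{propo:nested} applied with the degree data coming from Lemma \ref{lemme:control-deg-supp}.

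For the induction step $\tilde{\underline{l}}\mapsto S(\tilde{\underline{l}})$, assume $E_{\underline{l},\tilde{\underline{l}}}$ has been correctly constructed. The next condition to impose is relation (\ref{equ:recurrence_indstep}), where now the right-hand side is a linear combination of already-fixed quantities $a_{\underline{k},\underline{i},j}\underline{s}^{\underline{k}}c^{(j)}_{S(\tilde{\underline{l}})-\underline{i}}(\underline{s})$. There are two regimes: either $\mathcal{F}'_{S(\tilde{\underline{l}})}\cup\mathcal{G}'_{S(\tilde{\underline{l}})}$ is empty, in which case the relation reduces to a polynomial identity on the previously reconstructed coefficients evaluated at the tuple $(c_{\underline{0}},\ldots,c_{S(\tilde{\underline{l}})})$, to which I would apply Section \ref{sect:alg-reconstr-multi} (Lemma \ref{lemma:total-reconstr-multi}) with depth bound from Lemma \ref{lemme:wilcz-multi-profondeur}; or the support is nonempty and I re-use Sections \ref{sect:non-homog-1stcase} or \ref{sect:secondcase} to decide solvability and compute the coefficients $a_{\underline{k},S(\tilde{\underline{l}}),j}$ via Cramer-type formulas. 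In both regimes, every condition imposed is \emph{linear} in the unknowns $a_{\underline{k},\underline{l}',j}$, so the set $E_{\underline{l},S(\tilde{\underline{l}})}$ is indeed a $K$-vector space, and it equals the set of tuples extending elements of $E_{\underline{l},\tilde{\underline{l}}}$ so that relation (\ref{equ:recurrence}) at index $S(\tilde{\underline{l}})$ holds.

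The main obstacle is certifying that the finite truncation depths employed at each induction stage are sufficient to detect all linear dependencies among the infinite Wilczynski vectors. This is precisely what Theorem \ref{propo:nested} ensures: thanks to Lemma \ref{lemme:control-deg-supp}, at step $S(\tilde{\underline{l}})$ the coefficients $c_{\underline{0}},\ldots,c_{S(\tilde{\underline{l}})}$ satisfy a nested system of polynomial equations of degrees controlled by $\theta_{\underline{s},S(\tilde{\underline{l}})}$ and $d$, so any nonzero polynomial expression in them has $\underline{s}$-adic order bounded by the estimate (\ref{equ:estim1}); one then reads off the explicit depth thresholds used in Sections \ref{sect:induction-step}, proving that checking the vanishing of the stated finite family of minors is both necessary and sufficient. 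Putting these pieces together closes the induction and yields the theorem.
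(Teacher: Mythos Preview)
Your proposal is correct and follows essentially the same approach as the paper: the theorem is stated in the paper as a direct conclusion of the algorithm developed in Sections \ref{sect:first-step}--\ref{sect:induction-step}, and your outline recapitulates precisely this inductive construction on $\tilde{\underline{l}}$, invoking Lemma \ref{lemma:recurrence} to translate the valuation conditions, the Wilczynski-matrix reconstruction of Section \ref{section:Wilc} at each stage, and Theorem \ref{propo:nested} (via Lemma \ref{lemme:control-deg-supp}) to justify the finite-depth truncations.
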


\subsection{Proof of Theorem \ref{theo:reconstr}}\label{sect:proof-theo-reconstr}

Theorem \ref{theo:reconstr} will be a corollary of the following result:

\begin{theo}\label{theo:reconstr-optimise}
	Let $d\in\N^*$ and $\tilde{\nu}_0\in\N$. Let $\tilde{y}_0\in \mathcal{K}_r$, more precisely $\tilde{y}_0=\displaystyle\frac{\tilde{f}}{\tilde{g}}$ for some formal power series $\tilde{f},\tilde{g}\in K\left[\left[\left(\frac{x_1}{x_2^{q_1}}\right)^{1/p},\ldots, \left(\frac{x_{r-1}}{x_r^{q_{r-1}}}\right)^{1/p} ,x_r^{1/p}\right]\right]$. We assume that $\tilde{y}_0$ is algebroid of degree bounded by $d$, and that there is a vanishing polynomial $\tilde{P}$  of degree bounded by $d$ and of $(\underline{x})$-adic order bounded by $\tilde{\nu}_0$. Let $q_i'\geq q_i$, $i=1,\ldots,r-1$, be such that the transform $fg$ of $\tilde{f}\tilde{g}$ under the change of variables $u_i:=\left(\frac{x_i}{x_{i+1}^{q_i'}}\right)^{1/p}$, $i=1,\ldots,r-1$, $u_r={x_r}^{1/p}$, is monomialized with respect to the $u_i$'s:
	\[(fg)(\underline{u}):=(\tilde{f}\tilde{g})\left(  u_1^pu_{2}^{pq'_{1}}\cdots u_{r}^{pq'_{1}q'_{2}\cdots q'_{r-1}}\    ,\ \ldots\ ,\ u_{r-1}^pu_{r}^{p q'_{r-1}} ,\ u_r^{p} ,\ y\right)\]
	We resume the notations of  (\ref{equ:var-paquets}), (\ref{equ:formula}), (\ref{equ:translation}), in particular, $x_i\in \underline{\xi}_k$ if and only if $q_i'>0$, and otherwise $x_i \in \underline{x}_k$ for some $k$:
	\begin{equation}
		\underline{x}^{\underline{ n}}y^j = \underline{x}_0^{\underline{ n}_0}\,\underline{\xi}_1^{\underline{ m}_1}\,\underline{x}_1^{\underline{ n}_1}\cdots\underline{\xi}_\sigma^{\underline{ m}_\sigma}\,\underline{x}_\sigma^{\underline{ n}_\sigma}y^j. 
	\end{equation}
	where $\underline{n}=(\underline{ n}_0, \underline{m }_1,\underline{ n}_1,\ldots,\underline{ m}_\sigma,\underline{ n}_\sigma)$. For $k=1,\ldots,\sigma$, we denote $\underline{ \xi}_k=(x_{i_k},\ldots,x_{j_k-1})$ and $\underline{ x}_k=(x_{j_k},\ldots,x_{i_{k+1}-1})$, and accordingly $\underline{ m}_k=(n_{i_k},\ldots,n_{j_k-1})$ and $\underline{ n}_k=(n_{j_k},\ldots,n_{i_{k+1}-1})$ with $i_{\sigma+1}:=r+1$. For $k=0$ when $\underline{ x}_0$ is not empty, we denote $\underline{ x}_0=(x_{j_0},\ldots,x_{i_{1}-1})$ and $\underline{ n}_0=(n_{j_0},\ldots,n_{i_{1}-1})$ with $j_0:=1$. When $\underline{ x}_0$ is  empty, we set $\underline{ n}_0=0$.

	We set:
	\[\begin{array}{lccl}
		\tilde{L}_k:&\Z^{i_{k+1}-i_k}&\rightarrow &\Z\\
		&(\underline{m}_k,\underline{n}_k)=(n_{i_k},\ldots,n_{i_{k+1}-1})&\mapsto & \tilde{L}_k(\underline{m}_k,\underline{0})+ |\underline{n}_k|
	\end{array}\]
	where: \[
	\tilde{L}_k(\underline{m}_k,\underline{0}):=q'_{j_k-1}q'_{j_k-2}\cdots q'_{i_k}n_{i_k}+\cdots+q'_{j_k-1}q'_{j_k-2}n_{j_k-2} + q'_{j_k-1}n_{j_k-1}.\]
	Moreover, let \[\tilde{L}(\underline{n}):=|\underline{n}_0|+\sum_{k=1,\ldots,\sigma}\tilde{L}_k(\underline{m}_k,\underline{n}_k).\]

	The algorithm described in Section \ref{sect:reconstr-algebroid-algo} provides for any $\nu\in\N$ all the polynomials $\tilde{Q}_\nu(\underline{x},y)\in K[[\underline{x}]][y]$ with $\deg_y\tilde{Q}_\nu\leq d$ and of $(\underline{x})$-adic order bounded by $\tilde{\nu}_0$ such that, for any $\frac{1}{p}\underline{n}=\frac{1}{p}(n_1,\ldots,n_r)\in \mathrm{Supp}\,\tilde{Q}_\nu(\underline{x},\tilde{y}_0) $, one has:
	$$ \tilde{L}(\underline{n})\geq \nu.$$
\end{theo}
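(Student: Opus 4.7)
The plan is to reduce Theorem \ref{theo:reconstr-optimise} to a direct application of Theorem \ref{theo:reconstr-var-u} through the change of variables $u_i := (x_i/x_{i+1}^{q_i'})^{1/p}$ for $i<r$, $u_r := x_r^{1/p}$, which by hypothesis monomializes $\tilde{f}\tilde{g}$. First I would write $\tilde{y}_0 = \underline{u}^{\tilde{\underline{n}}^0} y_0$ with $y_0 \in L[[\underline{u}]]$, $y_0(\underline{0}) \neq 0$, as in (\ref{equ:y0-y0tilde}). By Lemma \ref{lemma:support-equ}, a polynomial $\tilde{Q}(\underline{x}, y) \in K[[\underline{x}]][y]$ of $y$-degree at most $d$ corresponds bijectively, via multiplication by $\underline{u}^{\tilde{\underline{m}}^0}$ (with $\tilde{\underline{m}}^0$ given by (\ref{equ:m})) and the substitution $y \mapsto \underline{u}^{\tilde{\underline{n}}^0} y$, to a polynomial $Q(\underline{s},\underline{t},y) \in (K[\underline{s}][[\underline{t}]][y])_{\mathcal{F}, \mathcal{G}}$, where $\mathcal{F}, \mathcal{G}$ are the families of triples satisfying conditions (i), (ii), (iii) of that lemma. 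By construction, $Q(\underline{u}, y_0) = \underline{u}^{\tilde{\underline{m}}^0}\, \tilde{Q}(\underline{x}, \tilde{y}_0)$, and the hypothesis that the $(\underline{x})$-adic order of $\tilde{Q}$ is at most $\tilde{\nu}_0$ translates into an effective upper bound $\hat{\underline{l}}_0 \in \N^{r-\tau}$ for $w_{\underline{t}}(Q)$.

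The crucial step is to identify $\tilde{L}$ with the total $\underline{t}$-weight in $\underline{u}$-exponents. For a monomial $\underline{x}^{\underline{n}/p}$ expressed in $\underline{u}$ as $\underline{u}^{\underline{\alpha}}$, the identity $x_k^{1/p} = u_k\, u_{k+1}^{q_k'} \cdots u_r^{q_k'\cdots q_{r-1}'}$ yields $\alpha_j = n_j + q'_{j-1} n_{j-1} + \cdots + q'_{j-1}\cdots q'_1 n_1$. When $u_j \in \underline{t}_k$ with $j > j_k$, the preceding same-block variables satisfy $q'_{j-1} = \cdots = q'_{j_k} = 0$, so $\alpha_j = n_j$; when $j = j_k$ with $k \geq 1$, the sum collapses past $q'_{i_k-1} = 0$ to $n_{j_k} + \tilde{L}_k(\underline{m}_k, \underline{0})$. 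Summing block by block,
\[ \sum_{j:\, u_j \in \underline{t}} \alpha_j \;=\; |\underline{n}_0| + \sum_{k=1}^{\sigma} \tilde{L}_k(\underline{m}_k, \underline{n}_k) \;=\; \tilde{L}(\underline{n}). \]
Setting $\tilde{M}^0 := \sum_{j:\, u_j \in \underline{t}} \tilde{m}^0_j$ and applying this to $Q(\underline{u}, y_0) = \underline{u}^{\tilde{\underline{m}}^0}\, \tilde{Q}(\underline{x}, \tilde{y}_0)$, the support condition \textquotedblleft $\tilde{L}(\underline{n}) \geq \nu$ for every $\frac{1}{p}\underline{n} \in \mathrm{Supp}\,\tilde{Q}(\underline{x},\tilde{y}_0)$\textquotedblright\ is equivalent to \textquotedblleft $|\underline{l}| \geq \tilde{M}^0 + \nu$ for every $\underline{l}$ in the $\underline{t}$-support of $Q(\underline{u}, y_0)$\textquotedblright.

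Since $\leq_{\mathrm{grlex}}$ prioritizes total degree, this is secured by the single condition $w_{\underline{t}}(Q(\underline{u}, y_0)) >_{\mathrm{grlex}} \tilde{\underline{l}}_\nu$ where $\tilde{\underline{l}}_\nu := (0, \ldots, 0, \tilde{M}^0 + \nu - 1)$ is the lex-largest multi-index of total degree $\tilde{M}^0 + \nu - 1$. I then iterate over the finitely many initial values $\underline{l} \leq_{\mathrm{grlex}} \hat{\underline{l}}_0$ and, for each, apply Theorem \ref{theo:reconstr-var-u} with targets $\underline{l}$ and $\tilde{\underline{l}}_\nu$ to reconstruct the vector spaces $E_{\underline{l}, \tilde{\underline{l}}_\nu}$ of admissible $Q$'s. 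Transporting the resulting polynomials back through the inverse direction of Lemma \ref{lemma:support-equ} produces the announced parametric description of all $\tilde{Q}_\nu(\underline{x}, y)$. The only nontrivial point beyond careful bookkeeping is the telescoping identity identifying $\tilde{L}(\underline{n})$ with the total $\underline{t}$-weight of the $\underline{u}$-transform; once it is in hand, the theorem follows from Theorem \ref{theo:reconstr-var-u}.
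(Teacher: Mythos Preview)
Your approach coincides with the paper's: pass to the $\underline{u}$-variables that monomialise $\tilde f\tilde g$, establish the identity $\sum_{j:\,u_j\in\underline{t}}\alpha_j=\tilde L(\underline n)$ for $\underline x^{\underline n/p}=\underline u^{\underline\alpha}$, convert the support condition on $\tilde Q_\nu(\underline x,\tilde y_0)$ into a lower bound on $\ord_{\underline t}\bigl(Q(\underline u,y_0)\bigr)$, and then invoke Theorem~\ref{theo:reconstr-var-u} over the finitely many admissible initial indices $\underline l\le_{\mathrm{grlex}}\hat{\underline l}_0$. Your block-by-block telescoping for the key identity is correct, as is the relation $Q(\underline u,y_0)=\underline u^{\tilde{\underline m}^0}\,\tilde Q(\underline x,\tilde y_0)$.

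There is one slip in the final bookkeeping. With the paper's convention (first coordinate dominant in $\le_{\mathrm{lex}}$), the tuple $(0,\ldots,0,c-1)$ is the lex-\emph{smallest}, not the lex-largest, element of $\N^{r-\tau}$ of total degree $c-1$. Consequently
\[
w_{\underline t}\bigl(Q(\underline u,y_0)\bigr)>_{\mathrm{grlex}}(0,\ldots,0,c-1)
\]
is strictly weaker than $\ord_{\underline t}\bigl(Q(\underline u,y_0)\bigr)\ge c$: e.g.\ $w=(1,0,\ldots,0,c-2)$ satisfies the former but has $|w|=c-1$. With your $\tilde{\underline l}_\nu$ the algorithm would return a proper superset of the desired polynomials. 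The fix is immediate: take $\tilde{\underline l}_\nu=(c-1,0,\ldots,0)$, the genuine grlex-maximum of degree $c-1$, so that $w>_{\mathrm{grlex}}\tilde{\underline l}_\nu\iff|w|\ge c$; the paper equivalently writes the condition as $w_{\underline t}\ge_{\mathrm{grlex}}(0,\ldots,0,c)$. With this correction your argument goes through.
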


Recall that, by the Monomialization Lemma \ref{lemme:monomialisation} and by Remark \ref{rem:monom}, if $\underline{\beta}=(\beta_1,\ldots,\beta_r)$ is the lexicographic valuation of $\tilde{f}\tilde{g}$ with respect to the variables $\zeta_i:=\left(\frac{x_i}{x_{i+1}^{q_i}}\right)^{1/p}$ for $i=1,\ldots,r-1$, $\zeta_r:=x_r^{1/p}$, then the assumptions of Theorem \ref{theo:reconstr-optimise} are satisfied with  $q_i':=q_i+\beta_{i+1}+1$. Therefore, Theorem \ref{theo:reconstr} follows.\\

Let us now deduce Theorem \ref{theo:reconstr-optimise} from Theorem \ref{theo:reconstr-var-u}. Suppose that $\ord_{\underline{x}}\tilde{P} \leq \tilde{\nu}_0$. Let  $\mathcal{F},\mathcal{G}$ be as in Definition \ref{defi:algebroid-relative} and such that $\mathcal{F}\cup\mathcal{G}$ is the total family of multi-indices $(\underline{\alpha},j)$ satisfying Conditions (i), (ii), (iii) of Lemma \ref{lemma:support-equ} with $q_i'$ instead of $q_i$. By the transformations described in (\ref{equ:eclt1}), (\ref{equ:y0-y0tilde}) and  (\ref{equ:m}) associated to the  change of variables  $u_i:=\left(\frac{x_i}{x_{i+1}^{q_i'}}\right)^{1/p}$, $i=1,\ldots,r-1$, $u_r={x_r}^{1/p}$, we obtain a polynomial 
\[P(\underline{u},y):=\underline{u}^{\tilde{\underline{m}}^0}\tilde{P}\left(  u_1^pu_{2}^{pq'_{1}}\cdots u_{r}^{pq'_{1}q'_{2}\cdots q'_{r-1}}\    ,\ \ldots\ ,\ u_r^{p} ,\ \underline{u}^{\tilde{\underline{n}}^0}y\right)\in \left(K[[\underline{u}]][y]\right)_{\mathcal{F},\mathcal{G}}.\]
Recall that  we denote by $\underline{x}_k$, $\underline{\xi}_k$    the sub-tuple of variables $x_i$ corresponding to $\underline{t}_k$, $\underline{s}_k$ respectively.	For $k=0$ when $\underline{ t}_0$ is not empty, we denote $\underline{ x}_0=(x_{j_0},\ldots,x_{i_{1}-1})$, $\underline{ t}_0=(u_{j_0},\ldots,u_{i_{1}-1})=({x_{j_0}}^{1/p},\ldots,{x_{i_{1}-1}}^{1/p})$ and $\underline{ n}_0=(n_{j_0},\ldots,n_{i_{1}-1})$ with $j_0:=1$.

According to  (\ref{equ:var-paquets}), (\ref{equ:formula}), (\ref{equ:translation}), a monomial $\underline{x}^{\underline{ n}}$  is transformed into a monomial  $\underline{u}^{\underline{\alpha}}=\underline{s}^{\underline{\beta}}\underline{t}^{\underline{\gamma}}$ such that, for $k=1,\ldots,\sigma$, we have:
\[
\begin{array}{l}
	{\underline{\xi}_k}^{\underline{ m}_k}\,{\underline{x}_k}^{\underline{ n}_k}= {s_{i_k}}^{pn_{i_k}} {s_{i_k+1}}^{p(n_{i_k+1}+q'_{i_k}n_{i_k})}\cdots {s_{j_k-1}}^{p(n_{j_k-1}+q'_{j_k-2}n_{j_k-2}+q'_{j_k-2}q'_{j_k-3}n_{j_k-3}+\cdots+ q'_{j_k-2}q'_{j_k-3}\cdots q'_{i_k}n_{i_k})} \\
	\hspace{1cm}{ t_{j_k}}^{p(n_{j_k}+q'_{j_k-1}n_{j_k-1}+q'_{j_k-1}q'_{j_k-2}n_{j_k-2}+\cdots+ q'_{j_k-1}q'_{j_k-2}\cdots q'_{i_k}n_{i_k})}{ t_{j_k+1}}^{pn_{j_k+1}}\cdots {t_{i_{k+1}-1}}^{pn_{i_{k+1}-1}}.
\end{array}\]
Hence, a monomial $\underline{x}^{\underline{ n}}y^j$ of $\tilde{P}(\underline{x},y)$ gives a monomial  $\underline{u}^{\underline{\alpha}}\underline{u}^{\tilde{\underline{ m}}^0+j\tilde{\underline{n}}^0}y^j=\underline{s}^{\underline{\beta}}\underline{t}^{\underline{\gamma}}\underline{u}^{\tilde{\underline{ m}}^0+j\tilde{\underline{n}}^0}y^j$ of $P(\underline{u},y)$.
Since $\supp (\tilde{P})$ contains a monomial $\underline{x}^{\underline{ n}}y^j$ such that 
\[|\underline{n}|= |\underline{n}_0|+\sum_{k=1}^{\sigma}\left(|\underline{m}_{k}|+|\underline{n}_{k}|\right)\leq \tilde{\nu}_0,\]
we have that:
\begin{equation}\label{equ:ord}
	\ord_{\underline{t}}P\leq p|\underline{n}_0|+ \sum_{k=1}^{\sigma} \left(pq'_{j_k-1}q'_{j_k-2}\cdots q'_{i_k}|\underline{m}_{k}|+p|\underline{n}_{k}|\right)\ +\ \left|\left(\tilde{\underline{ m}}^0+j\tilde{\underline{n}}^0\right)_{|\underline{t}}\right|\\
	\leq p.\kappa.\tilde{\nu}_0 + d.\rho 
\end{equation}
where $\underline{n}_{|\underline{t}}$ denotes the components of $\underline{n}$ corresponding to the exponents of the variables $\underline{t}$ in $\underline{u}^{\underline{n}}$,  $ \kappa:=\displaystyle\max_{k=1,..,\sigma}(q'_{j_k-1}q'_{j_k-2}\cdots q'_{i_k})$ and  $\rho:=\displaystyle\sum_{k=0}^\sigma \left( |\tilde{n}^0_{j_k}|+\cdots+|\tilde{n}^0_{i_{k+1}-1}|\right)$. We set \begin{equation}\label{equ:l0chap}
	\hat{\underline{l}}_0:= (p.\kappa.\tilde{\nu}_0 + d.\rho,0,\ldots,0)\in \N^{r-\tau},
\end{equation}
so that $w_{\underline{t}}(P)\leq_{\mathrm{grlex}}\hat{\underline{l}}_0$.

Given $\tilde{Q}_\nu(\underline{x},y)$ as in Theorem \ref{theo:reconstr-optimise}, let us denote by $Q_\nu(\underline{u},y)$ its transform via  (\ref{equ:eclt1}), (\ref{equ:y0-y0tilde}), (\ref{equ:m}) as recalled between $\tilde{P}$ and $P$ above. One gets $\tilde{Q}_\nu(\underline{x},\tilde{y}_0)=\underline{u}^{\underline{\tilde{m}}^0}Q_{\nu}(\underline{u},y_0)$. According to  (\ref{equ:var-paquets}), (\ref{equ:formula}), (\ref{equ:translation}), a monomial $\underline{x}^{\underline{ n}/p}$ of $\tilde{Q}_\nu(\underline{x},\tilde{y}_0)$ is transformed into a monomial $\underline{u}^{\underline{\alpha}}=\underline{s}^{\underline{\beta}}\underline{t}^{\underline{\gamma}}$ such that, for $k=1,\ldots,\sigma$, we have:
\[
\begin{array}{l}
	{\underline{\xi}_k}^{\underline{ m}_k/p}\,{\underline{x}_k}^{\underline{ n}_k/p}= {s_{i_k}}^{n_{i_k}} {s_{i_k+1}}^{n_{i_k+1}+q'_{i_k}n_{i_k}}\cdots {s_{j_k-1}}^{n_{j_k-1}+q'_{j_k-2}n_{j_k-2}+q'_{j_k-2}q'_{j_k-3}n_{j_k-3}+\cdots+ q'_{j_k-2}q'_{j_k-3}\cdots q'_{i_k}n_{i_k}} \\
	\hspace{1cm}{ t_{j_k}}^{n_{j_k}+q'_{j_k-1}n_{j_k-1}+q'_{j_k-1}q'_{j_k-2}n_{j_k-2}+\cdots+ q'_{j_k-1}q'_{j_k-2}\cdots q'_{i_k}n_{i_k}}{ t_{j_k+1}}^{n_{j_k+1}}\cdots {t_{i_{k+1}-1}}^{n_{i_{k+1}-1}}.
\end{array}\]
So the monomials of $Q_\nu(\underline{u},y_0)$ are of the form $\underline{u}^{\underline{\alpha}-\underline{\tilde{m}}^0}$. As in the computation of (\ref{equ:ord}), $\ord_{\underline{x}}\tilde{Q}_\nu(\underline{x},y)\leq \tilde{\nu}_0$ implies that $\ord_{\underline{t}}Q_\nu(\underline{u},y)\leq p.\kappa.\tilde{\nu}_0 + d.\rho$, so $w_{\underline{t}}(Q_\nu(\underline{u},y))\leq_{\mathrm{grlex}} \hat{l}_0$. 

Moreover, since $\tilde{Q}_\nu(\underline{x},\tilde{y}_0)=\underline{u}^{\underline{\tilde{m}}^0}Q_{\nu}(\underline{u},y_0)$,  the condition such that for any\\ $\frac{1}{p}\underline{n}=\frac{1}{p}(n_1,\ldots,n_r)\in \mathrm{Supp}\,\tilde{Q}_\nu(\underline{x},\tilde{y}_0) $, $\tilde{L}(\underline{n})\geq \nu$, is equivalent to $\ord_{\underline{t}}(Q_\nu(\underline{u},y_0))+\left|{\tilde{\underline{m}}^0}_{\,|\underline{t}}\right|\geq \nu$. This is in turn equivalent to  $w_{\underline{t}}(Q_\nu(\underline{u},y_0))\geq \left(0,\ldots,0,\nu-\left|{\tilde{\underline{m}}^0}_{\,|\underline{t}}\right|\right)$. We set\\ $\tilde{\underline{l}}_\nu:= \left(0,\ldots,0,\nu-\left|{\tilde{\underline{m}}^0}_{\,|\underline{t}}\right|\right)$, and $\underline{l}:=w_{\underline{t}}(Q_\nu(\underline{u},y))$.

A polynomial $\tilde{Q}_\nu(\underline{x},y)$ satisfying the conditions of Theorem \ref{theo:reconstr-optimise} comes from a polynomial $Q_\nu(\underline{u},y)$ as above satisfying 
\[w_{\underline{t}}(Q_\nu(\underline{u},y))\leq_{\mathrm{grlex}}\hat{l}_0\hspace{0.3cm} \textrm{ and }\hspace{0.3cm}  w_{\underline{t}}(Q_\nu(\underline{u},y_0))\geq \tilde{\underline{l}}_\nu. 
\]
The construction of such polynomials $Q_\nu(\underline{u},y)=Q_{\underline{l},\tilde{\underline{l}}_\nu}(\underline{u},y)$ is given by Theorem \ref{theo:reconstr-var-u}.

This achieves the proofs of Theorems \ref{theo:reconstr-optimise} and \ref{theo:reconstr}.

\subsection{Plan of the algorithm and example}

For the convenience of the reader, we now give several flowcharts in order to describe the algorithm. The first one provides the plan of the algorithm. The others consist of the details of the corresponding steps. \newpage

\includegraphics[scale=1]{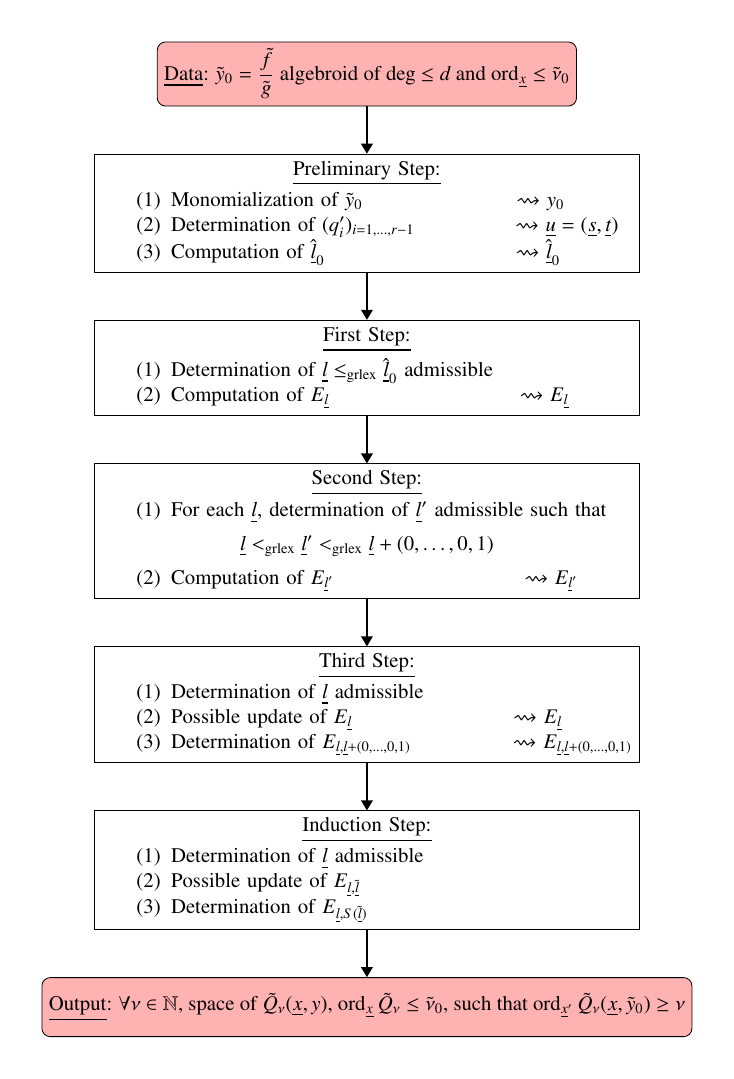}

\includegraphics[scale=1]{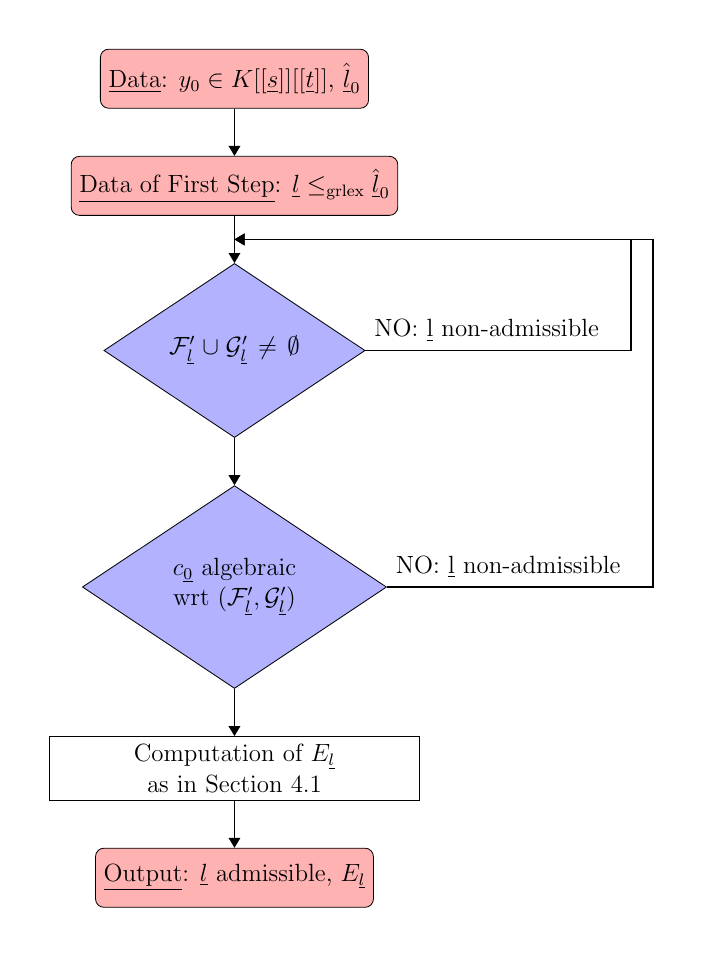}

\includegraphics[scale=0.9]{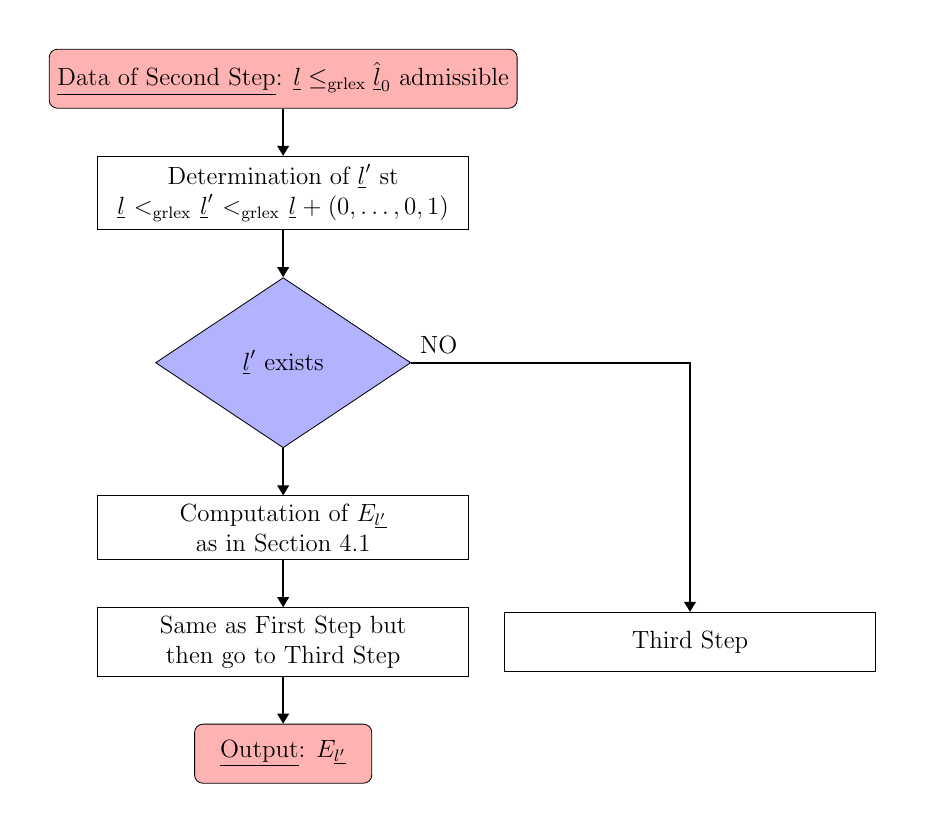}

\includegraphics[scale=0.6]{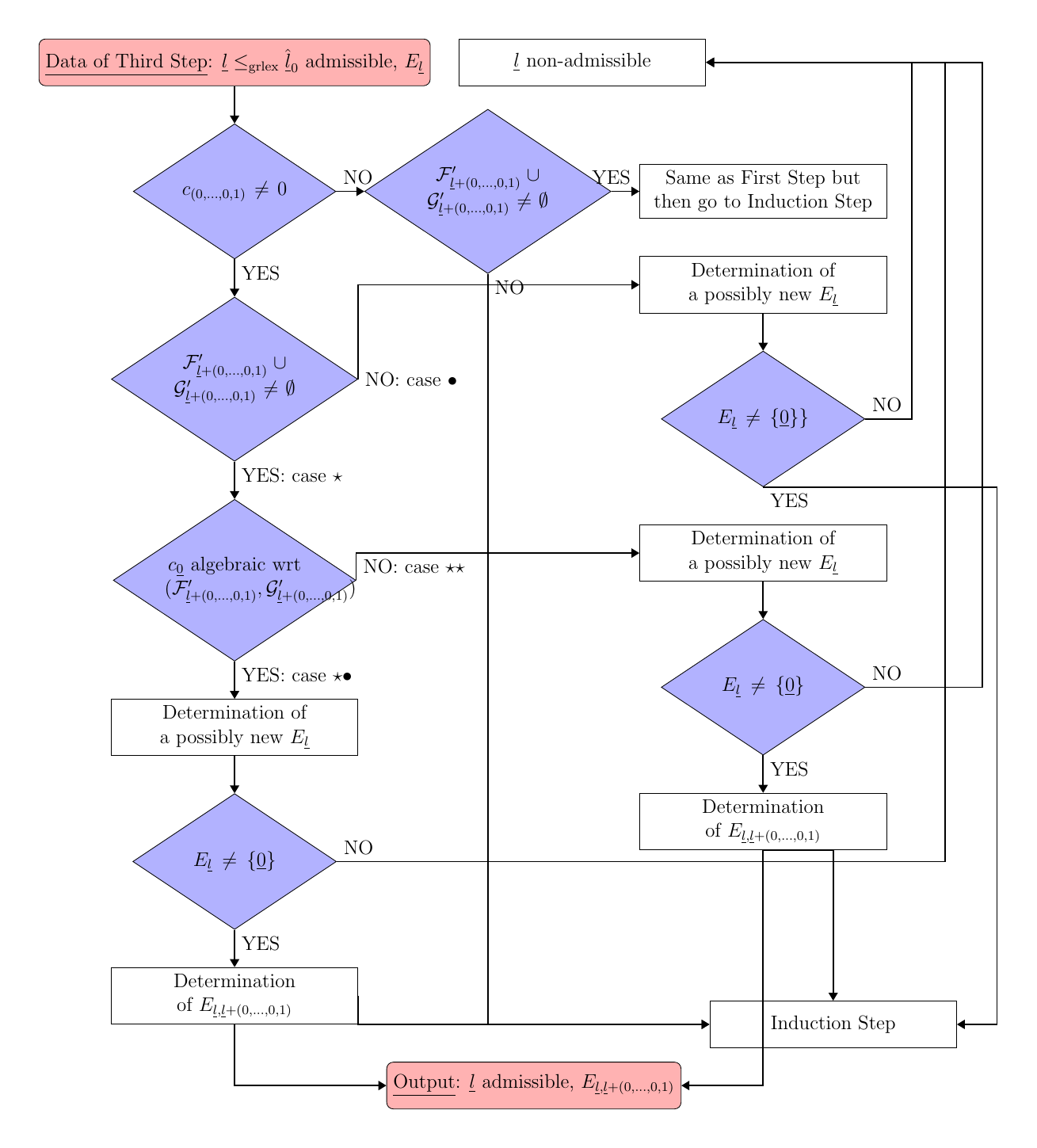}

\includegraphics[scale=0.6]{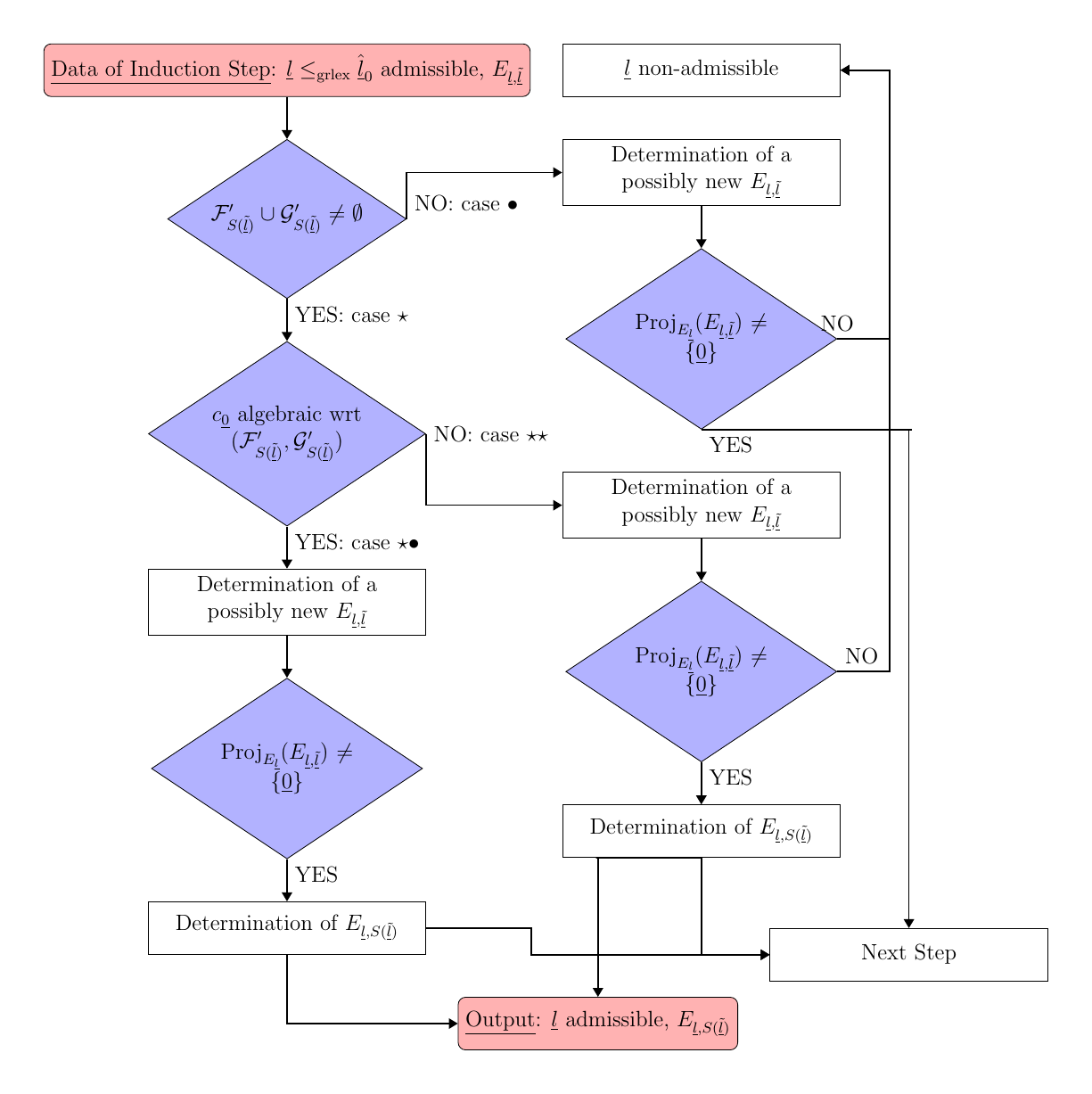}

\begin{ex}\label{ex:eclt} 
	The purpose of the present example is to illustrate the various points of our Theorem \ref{theo:reconstr-optimise}. For $r=d=p=2$ and $q_1=\tilde{\nu}_0=1$, let us consider 
	$\tilde{y}_0=\displaystyle\frac{\tilde{f}}{\tilde{g}}\in \mathcal{K}_2$ with $ \tilde{f},\tilde{g}\in K\left[\left[\left(\frac{x_1}{x_2}\right)^{1/2},x_2^{1/2}\right]\right]$ a root of the following equation:
	\begin{equation}\label{eq:exemple}
		\tilde{P}(x_1,x_2,y)\,:=\,\sin(x_1+x_2)y^2+e^{x_1}x_1x_2y-{x_2}^2\cos(x_1x_2)\,=\, 0. 
	\end{equation}
	For instance, 
	\begin{align*}
		{\tilde{y}_0} & := & {\frac {  - {{\rm e}^{x_1}}x_1{x_2}+ \sqrt{  {{\rm e}^{2x_1}}{x_1}^2{x_2}^2+4\,{x_2}^2\cos \left( x_1x_2 \right) \sin \left( x_1+x_2 \right) }  }{2\,\sin \left( x_1+x_2 \right) }}\\
		&=&  {\frac {  - {{\rm e}^{{{\frac{x_1}{x_2}}}{x_2}}}{\frac{x_1}{x_2}}{x_2}+ {x_2}^{1/2}\sqrt{ {{\rm e}^{2{\frac{x_1}{x_2}}{x_2}}}  {\left(\frac{x_1}{x_2}\right)}^{2}  {x_2}+4\,\cos \left( {\frac{x_1}{x_2}}{x_2}^2 \right) \sin \left( {\frac{x_1}{x_2}}{x_2}+{x_2} \right)/{x_2} }  }{2\,\sin \left( {\frac{x_1}{x_2}}{x_2}+{x_2} \right) / {x_2}}} 	 
	\end{align*}
	
	and therefore:
	\begin{align*}
		\tilde{f}&:=&  \left[ 2+\frac{x_1}{x_2}-{\frac {1}{4}}\left(\frac{x_1}{x_2}\right)^{2}+{\frac {1}{8}}\left(\frac{x_1}{x_2}\right)^{3}-{\frac {5}{64}\left(\frac{x_1}{x_2}\right)^{4}}+{\frac {7}{128}\left(\frac{x_1}{x_2}\right)^{5}} \right] {x_2}^{1/2} -\frac{x_1}{x_2}x_2 \\
		&&+\left[ {\frac {1}{4}\left(\frac{x_1}{x_2}\right)^{2}}-{\frac {1}{8}\left(\frac{x_1}{x_2}\right)^{3}}+{\frac {3}{32}\left(\frac{x_1}{x_2}\right)^{4}}-{\frac {5}{64}\left(\frac{x_1}{x_2}\right)^{5}} \right]{x_2}^{3/2} -{\left(\frac{x_1}{x_2}\right)}^2{x_2}^{2} \\
		&&+\left[ -{\frac{1}{6}}-{\frac {5}{12}\frac{x_1}{x_2}}-{\frac {5}{16}\left(\frac{x_1}{x_2}\right)^{2}}+{\frac {43}{96}\left(\frac{x_1}{x_2}\right)^{3}}-{\frac {199}{768}\left(\frac{x_1}{x_2}\right)^{4}}+{\frac {107}{512}\left(\frac{x_1}{x_2}\right)^{5}} \right] {x_2}^{5/2}\\	 
		&&-{\frac {1}{2}\left({\frac{x_1}{x_2}}\right)}^{2}{x_2}^{3}+\cdots 
		\\
		\tilde{g}&:=&  \left[2+2\,{{\frac{x_1}{x_2}}}\right]-\left[ {\frac {1}{3}}+{{\frac{x_1}{x_2}}}+{\left({\frac{x_1}{x_2}}\right)}^{2}+{\frac {1}{3}}{\left({\frac{x_1}{x_2}}\right)}^{3}\right]{x_2}^{2}\\
		&&+  \left[{\frac {1}{60}}+{\frac {1}{12}}{\frac{x_1}{x_2}}+{\frac {1}{6}}{\left({\frac{x_1}{x_2}}\right)}^{2}+{\frac {1}{6}}{\left({\frac{x_1}{x_2}}\right)}^{3}+{\frac {1}{12}}{\left({\frac{x_1}{x_2}}\right)}^{4}   +{\frac {1}{60}}{\left({\frac{x_1}{x_2}}\right)}^{5}\right]{x_2}^{4} \\
		&&-\frac {1}{2520}\left[\sum_{k=0}^7 \frac{7!}{k!(7-k)!} \,{\left({\frac{x_1}{x_2}}\right)}^{k}
		\right]{x_2}^{6}+\cdots 
	\end{align*}
	
	
	In this case,  note that the transform $fg$ of $\tilde{f}\tilde{g}$ under the change of variables $u_1:=\left(\frac{x_1}{x_{2}}\right)^{1/2}$, $u_2={x_2}^{1/2}$, is monomialized with respect to $(u_1,u_2)$, so that $q_1'=q_1=1$ and $(u_1,u_2)=(s,t)$. Hence, $r-\tau=\tau=1$.  Therefore, one can expand $\tilde{y}_0$ as a monomialized power series in $(s,t)$: 	$\tilde{y}_0=ty_0$ with 
	\begin{align*}
		{y}_0	&=& 1-{\frac {1}{2}{s}^{2}}+{\frac {3}{8}{s}^{4}}-{\frac {5}{16}{s}^{6}}+{\frac {35}{128}{s}^{8}}-{\frac {63}{256}{s}^{10}}+\cdots\\
		&&+ \left( -{\frac {1}{2}{s}^{2}}+{\frac {1}{2}{s}^{4}}-{\frac {1}{2}{s}^{6}}+{\frac {1}{2}{s}^{8}}-{\frac {1}{2}{s}^{10}}+\cdots\right)t\\
		&&+ \left({\frac {1}{8}}{s}^{4}-{\frac {3}{16}{s}^{6}}+{\frac {15}{64}{s}^{8}}-{\frac {35}{128}{s}^{10}}+\cdots \right)t^2\\
		&&+\left(-{\frac {1}{2}{s}^{4}}+{\frac {1}{2}{s}^{6}}-{\frac {1}{2}{s}^{8}}+{\frac {1}{2}{s}^{10}}+\cdots\right)t^3\\
		&&+\left( {\frac{1}{12}}+{\frac {1}{8}{s}^{2}}+{\frac {1}{32}{s}^{4}}+{\frac {47}{192} {s}^{6}}-{\frac {195}{512}{s}^{8}}+{\frac {499}{1024}{s}^{10}}+\cdots \right)t^4\\
		&& \left( -{\frac {1}{12}{s}^{2}}-{\frac {1}{12}{s}^{4}}-{\frac {1}{4}{s}^{6}}+{\frac {1}{4}{s}^{8}}-{\frac {1}{4}{s}^{10}}+\cdots \right)t^5  +\cdots\\
		&=& \displaystyle\sum_{n\in \mathbb{N} }c_{n}(s)\,\underline{t}^{{n}}\hspace{4cm} \textrm{ with } c_{0,0}=1\neq 0
	\end{align*}
	As described after (\ref{equ:ord}),  now we are in position to apply the algorithm as stated in Theorem \ref{theo:reconstr-var-u} with $\tilde{\underline{n}}^0=(0,1)$ and  $\tilde{\underline{n}}^0=(0,0)$ and \[\hat{\underline{l}}_0:= p.\kappa.\tilde{\nu}_0 + d.\rho=2\times 1\times 1+2\times1=4.\] The corresponding support of the vanishing polynomial $P$ belongs to some $\mathcal{F}\cup \mathcal{G}$ as in Definition \ref{defi:algebroid-relative} and  satisfying Conditions (i), (ii), (iii) of Lemma \ref{lemma:support-equ}, namely for any $(k,l,j)\in \mathcal{F}\cup \mathcal{G}$: 
	\begin{itemize}
		\item[(i)] ($k,l)\geq (0,j)$;
		\item[(ii)] $k$ and $l-j$ are even;
		\item[(iii)] $k\leq l-j$.
	\end{itemize}
	For the first step of the algorithm (Section \ref{sect:first-step}), the list of plausible indices to begin with are all the non-negative  integers $l\leq \hat{\underline{l}}_0=4$. We resume the notations of Section \ref{sect:first-step}  (see also the method in Section \ref{sect:reconstr-alg}). For simplicity, let us write $c_0$ for $c_0(s)$. \vspace{0.1cm}\\
	\emph{Step 1.}
	
	If $l=0$ then $j=0$ and thefore $l=k=0$, so $\mathcal{F}'_0=\emptyset$ and $\mathcal{G}'_0=\{(0,0,0)\}$. Equation (\ref{equ:recurrence-init}) translates as $a_{0,0,0}=0$, which contradicts the assumption that such an equation should be non-trivial. Hence, we exclude $l=0$ from the list of admissible indices.
	
	If $l=1$ then $j=0$ or 1. But $l-j$ has to be even, so $j=1$ and $l-j=0=k$. Thus, $\mathcal{F}'_1=\{(0,1,1)\}$ and $\mathcal{G}'_1=\emptyset$.  Equation (\ref{equ:recurrence-init}) translates as 
	\[a_{0,1,1}.s.C_0=0\Leftrightarrow a_{0,1,1}=0,\] which contradicts the assumption that such an equation should be non-trivial. Hence, we exclude $l=1$ from the list of admissible indices.
	
	If $l=2$ then $j\in\{0,1,2\}$. But  $l-j$ has to be even, so $j=0$ or 2. Since $k$ is even, in the former case, $k=0$ or 2, and in the latter case $k=0$. Thus, $\mathcal{F}'_2=\{(0,2,2)\}$ and $\mathcal{G}'_2=\{(0,2,0), (2,2,0)\}$.  Equation (\ref{equ:recurrence-init}) translates as 
	\[a_{0,2,2}.{C_0}^2+a_{0,2,0}+a_{2,2,0}.s^2=0.\]
	However, since ${c_0}^2=1-{s}^{2}+{s}^{4}-{s}^{6}+{s}^{8}-{s}^{10}+\cdots$ is not a polynomial of degree at most 2, the only possibility is $a_{0,2,2}=a_{0,2,0}=a_{2,2,0}=0$ which contradicts the assumption that such an equation should be non-trivial. Hence, we exclude $l=2$ from the list of admissible indices.
	
	If $l=3$ then $j\in\{0,1,2\}$ (recall that $\deg_y P=2\leq d=2$). But  $l-j$ has to be even, so $j=1$. Since $k$ is even, $k=0$ or 2. Thus, $\mathcal{F}'_3=\{(0,3,1), (2,3,1)\}$ and $\mathcal{G}'_3=\emptyset$.  Equation (\ref{equ:recurrence-init}) translates as 
	\[(a_{0,3,1}+a_{2,3,1}.s^2).C_0=0\Leftrightarrow a_{0,3,1}=a_{2,3,1}=0,\]
	which contradicts the assumption that such an equation should be non-trivial. Hence, we exclude $l=3$ from the list of admissible indices.
	
	If $l=4$, again since $l-j$ has to be even, we have that $j=0$ or 2. Since $k$ is even, in the former case, $k\in\{0,2,4\}$, and in the latter case $k\in\{0,2\}$. Thus, $\mathcal{F}'_4=\{(0,4,2), (2,4,2)\}$ and $\mathcal{G}'_2=\{(0,4,0), (2,4,0),(4,4,0)\}$.  Equation (\ref{equ:recurrence-init}) translates as 
	\begin{equation}\label{equ:exstep1}
		(a_{0,4,2}+a_{2,4,2}.s^2).{C_0}^2+a_{0,4,0}+a_{2,4,0}.s^2+a_{4,4,0}.s^4=0.
	\end{equation}
	Let us consider the corresponding Wilczynski matrices, where for simplicity the lines consists only of the coefficients of 1, $s^2$, $s^4$, etc.
	\[ M_{\mathcal{F}'_4,\mathcal{G}'_4} :=\left[\begin{array}{ccccc}1 & 0 & 0 & 1& 0 \\
		0 & 1 & 0 & -1& 1 \\
		0 & 0 & 1 & 1& -1 \\
		0 & 0 & 0 & -1& 1 \\
		0 & 0 & 0 & 1& -1 \\
		0 & 0 & 0 & -1& 1 \\
		\vdots & \vdots & \vdots & \vdots &\vdots \\
	\end{array}\right]\hspace{0.4cm}\textrm{ and }\hspace{0.4cm}
	M_{\mathcal{F}'_4,\mathcal{G}'_4}^{\textrm{red}} :=\left[\begin{array}{cc}
		-1& 1 \\
		1& -1 \\
		-1& 1 \\
		1& -1 \\
		-1& 1 \\
		\vdots &\vdots \\
	\end{array}\right]\]
	(Recall that here the reduced matrix is obtained by removing the 3 first rows and columns.) One can easily check that all the minors of maximal order vanish up to order $2d_sd=2\times 4\times 2=16$: as expected, $c_0$ is algebraic relatively to $(\mathcal{F}'_4,\mathcal{G}'_4)$. Moreover, a first non-zero minor of order 1 in $M_{\mathcal{F}'_4,\mathcal{G}'_4}^{\textrm{red}}$ is obtained e.g. with the coefficient 1 of the second column (this is the coefficient of $s^6$ in the expansion of $s^2.{c_0}^2$). Using the Cramer's rule, we identify it, up to a multiplicative constant $\lambda\in K$, with $a_{2,4,2}$, and we also get $a_{0,4,2}=\lambda$. According to (\ref{equ:terme-cst1}), we derive $a_{0,4,0}=-\lambda$ and $a_{2,4,0}=a_{4,4,0}=0$. 
	
	As a conclusion, the $K$-vector space $E_4$ of polynomials corresponding to Equation (\ref{equ:recurrence-init}) is 
	\[E_4:=\left\{ \lambda\left[(1+s^2)y^2-1\right]t^4+R(s,t,y)\ |\  \lambda\in K,\, R\in \left(K[s][[t]][y]\right)_{\mathcal{F},\mathcal{G}},\, w_t(R)\geq 5\right\}.\]
	Here, the linear form $\tilde{L}$ of Theorem \ref{theo:reconstr-optimise} is given by:
	\[\tilde{L}(n_1,n_2)=1n_1+n_2=n_1+n_2.\]
	We go back to the variables $(x_1,x_2)$ by the following transformation:
	\[Q(s, t,y)=\tilde{Q}(s^2t^2,t^2,ty).\]
	The space $E_4$ corresponds to the  space of polynomials in $K[[x_1,x_2]][y]$ of the form:
	\[\lambda \left[(x_1+x_2)y^2-{x_2}^2\right]+ \tilde{R}(x_1,x_2,y)\]
	\textrm{ with } $ \lambda\in K,\, \tilde{R}\in K[[x_1,x_2]][y]$ such that:
	\[\tilde{R}=\tilde{a}_0+\tilde{a}_1y+\tilde{a}_2y^2\]
	with $\ord_{\underline{x}}(\tilde{a}_0)\geq 3,\ \ord_{\underline{x}}(\tilde{a}_1)\geq 2$ and $\ord_{\underline{x}}(\tilde{a}_2)\geq 2$.
	\vspace{0.1cm}\\
	\emph{Step 2.} 
	
	Here, there isn't any $l'>4$ as in (\ref{equ:ineq}).
	\vspace{0.1cm}\\
	\emph{Step 3.} 
	
	We consider the case where $l+1=5$ corresponding to  Third Step \ref{sect:second-step1}. By applying Conditions (i), (ii), (iii) of Lemma \ref{lemma:support-equ} as before, we obtain:
	\[\mathcal{F}'_5=\left\{(0,5,1),(2,5,1),(4,5,1) \right\} \textrm{ and } \mathcal{G}'_5=\emptyset. \]
	The instance of (\ref{equ:recurrence_step2bis}) is:
	\begin{equation}\label{equ:ex3rd}
		\begin{array}{lcl}
			(a_{0,5,1}+a_{2,5,1}.s^2++a_{4,5,1}.s^4).{C_0}&=& -\left( a_{0,4,2}+a_{2,4,2}.s^2\right)2C_0C_1\\
			&=& -\lambda(1+s^2) 2C_0C_1.
		\end{array}
	\end{equation}
	Here, $c_1\neq 0$, and  $c_0$ is not algebraic relatively to $(\mathcal{F}'_5,\mathcal{G}'_5)$ since $\mathcal{G}'_5=\emptyset$, so we are in the case $\star\star$ of Third Step \ref{sect:second-step1}. Note that $\theta_{s,1}=(4+2)a+b$ with $a=1$, $b=0$ (see Lemma \ref{lemme:control-deg-supp}), so $\theta_{s,1}=6$.  According to Lemma \ref{lemma:depth-non-homog_bis},  we are assured to find a non zero reconstruction minor at depth at most $2.3.{\theta_{\underline{s},(0,\ldots,0,1)} }{d}^{d+1}=2\times 3\times 6\times 2^3=288$. However, here, the Wilczynski matrices (where again for simplicity we only consider the lines consisting of the coefficients of 1, $s^2$, $s^4$, etc.) are triangular with non zero diagonal coefficients:
	\[ M_{\mathcal{F}'_5,\mathcal{G}'_5}=M_{\mathcal{F}'_5,\mathcal{G}'_5}^{\textrm{red}} =\left[\begin{array}{ccc}
		1 & 0 & 0 \\
		-1/2 & 1& 0 \\
		3/8 & -1/2& 1 \\
		-5/16 & 3/8& -1/2 \\
		35/128 & -5/16& 3/8 \\
		\vdots & \vdots &\vdots \\
	\end{array}\right]. \]
	A first nonzero minor is obtained with the three first lines, and is equal to 1. But we notice that, here, Equation (\ref{equ:ex3rd}) can be simplified by $C_0$ (since $c_0\neq 0$) and we get:
	\[ a_{0,5,1}+a_{2,5,1}.s^2++a_{4,5,1}.s^4=  -\lambda(1+s^2) 2 C_1.\]
	By evaluating at $c_1=-{\frac {1}{2}{s}^{2}}+{\frac {1}{2}{s}^{4}}-{\frac {1}{2}{s}^{6}}+{\frac {1}{2}{s}^{8}}-{\frac {1}{2}{s}^{10}}+\cdots$, we see that:
	\[-\lambda(1+s^2) 2 c_1=\lambda s^2\]
	and therefore $a_{0,5,1}= a_{4,5,1}=0$ and $a_{2,5,1}=\lambda$. As a conclusion, the $K$-vector space $E_{4,5}$ of polynomials corresponding to  Third Step \ref{sect:second-step1} is 
	\begin{center}
		 	$E_{4,5}:=$\\
		 	$\left\{ \lambda\left[(1+s^2)y^2-1\right]t^4+(\lambda s^2 y)\, t^5+R(s,t,y)\ |\  \lambda\in K,\, R\in \left(K[s][[t]][y]\right)_{\mathcal{F},\mathcal{G}},\, w_t(R)\geq 6\right\}.$
	\end{center}

	As before, we go back to the variables $(x_1,x_2)$ by the following transformation:
	\[Q(s, t,y)=\tilde{Q}(s^2t^2,t^2,ty).\]
	The space $E_{4,5}$ corresponds to the  space of polynomials in $K[[x_1,x_2]][y]$ of the form:
	\[\lambda \left[(x_1+x_2)y^2+ x_1x_2 y-{x_2}^2\right]+ \tilde{R}(x_1,x_2,y)\]
	\textrm{ with } $ \lambda\in K,\, \tilde{R}\in K[[x_1,x_2]][y]$ such that:
	\[\tilde{R}=\tilde{a}_0+\tilde{a}_1y+\tilde{a}_2y^2\]
	with $\ord_{\underline{x}}(\tilde{a}_0)\geq 3,\ \ord_{\underline{x}}(\tilde{a}_1)\geq 3$ and $\ord_{\underline{x}}(\tilde{a}_2)\geq 2$.
	\vspace{0.1cm}\\
	\emph{Step 4.}

	We consider the case where $S(\tilde{l})=6$ corresponding to  Induction Step \ref{sect:induction-step}.  By applying Conditions (i), (ii), (iii) of Lemma \ref{lemma:support-equ} as before, we obtain:
	\[\mathcal{F}'_6=\left\{(0,6,2),(2,6,2),(4,6,2) \right\} \textrm{ and } \mathcal{G}'_6=\left\{(0,6,0),(2,6,0),(4,6,0),(6,6,0) \right\}. \]
	The instance of (\ref{equ:recurrence_step2bis}) is:
	\begin{equation}\label{equ:ex4th}
		\begin{array}{l}
			(a_{0,6,2}+a_{2,6,2}.s^2++a_{4,6,2}.s^4).{C_0}^2+	a_{0,6,0}+a_{2,6,0}.s^2+a_{4,6,0}.s^4+a_{6,6,0}.s^6\\ \hspace{0.4cm}=-\left(( a_{0,4,2}+a_{2,4,2}.s^2)(2C_0C_2+ {C_1}^2) +(a_{0,5,1}+a_{2,5,1}.s^2++a_{4,5,1}.s^4).{C_1}\right)\\
			\hspace{0.4cm}	= -\lambda\left[(1+s^2) (2C_0C_2+ {C_1}^2) + s^2 C_1\right].
		\end{array}
	\end{equation}
	Note that we are in the case $\star \bullet$ of  Induction Step \ref{sect:induction-step} since $c_0$ is algebraic relatively to $(\mathcal{F}'_6,\mathcal{G}'_6)$. Moreover, when evaluating at $c_0$, $c_1$ and $c_2={\frac {1}{8}}{s}^{4}-{\frac {3}{16}{s}^{6}}+{\frac {15}{64}{s}^{8}}-{\frac {35}{128}{s}^{10}}+\cdots$, we obtain that the right-hand side of (\ref{equ:ex4th}) vanishes. So we get:
	\[	(a_{0,6,2}+a_{2,6,2}.s^2++a_{4,6,2}.s^4).{C_0}^2+	a_{0,6,0}+a_{2,6,0}.s^2+a_{4,6,0}.s^4+a_{6,6,0}.s^6=0 \]  
	which is of the same type as (\ref{equ:exstep1}). The corresponding Wilczynski matrices (where again for simplicity the lines consists only of the coefficients of 1, $s^2$, $s^4$, etc.) are
	\[ M_{\mathcal{F}'_6,\mathcal{G}'_6} :=\left[\begin{array}{ccccccc}1&0 & 0 & 0 & 1& 0&0 \\
		0 & 1 & 0 &0  & -1& 1&0 \\
		0 & 0 & 1&0 & 1& -1&1 \\
		0 & 0 & 0&1 & -1& 1&-1 \\
		0 & 0 & 0&0 & 1& -1&1 \\
		0 & 0 & 0&0 & -1& 1&-1 \\
		\vdots & \vdots & \vdots & \vdots &\vdots&\vdots&\vdots \\
	\end{array}\right]\hspace{0.4cm}\textrm{ and }\hspace{0.4cm}
	M_{\mathcal{F}'_6,\mathcal{G}'_6}^{\textrm{red}} :=\left[\begin{array}{ccc}
		-1& 1 &-1\\
		1& -1 &1\\
		-1& 1 &-1\\
		1& -1 &1\\
		-1& 1 &-1 \\
		\vdots &\vdots&\vdots \\
	\end{array}\right]\]
	We apply the reconstruction method of Section \ref{sect:reconstr-alg} with maximal subfamily $\mathcal{F}''_6=\{(2,6,2)\}$. According to Lemma \ref{lemma:total-reconstr}, we obtain:
	\[a_{2,6,2}= a_{0,6,2}  \lambda_{2,6,2}^{0,6,2}+ a_{4,6,2}  \lambda_{2,6,2}^{4,6,2}\]
	where here $\lambda_{2,6,2}^{0,6,2}=-1$ is the coefficient relating the column $(0,6,2)$ to the column $(2,6,2)$. Likewise,  $  \lambda_{2,6,2}^{4,6,2}=-1$. Let us consider $a_{0,6,2} $ and $a_{4,6,2} $ as parameters $\alpha,\beta\in K$, so $a_{2,6,2}=-\alpha-\beta$. Moreover, we compute the coefficients of $\mathcal{G}'_6$ according to (\ref{equ:terme-cst1}) in Lemma \ref{lemma:total-reconstr}:
	\[\begin{array}{lclcl}
		a_{0,6,0}&=&-a_{0,6,2}. 1&=&-\alpha  \\
		a_{2,6,0}&=&a_{0,6,2}. 1 -a_{2,6,2}.1&=&2\alpha+\beta \\
		a_{4,6,0}&=&- a_{0,6,2}. 1 +a_{2,6,2}.1 -a_{4,6,2}.1&=&-2\alpha-2\beta \\
		a_{6,6,0}&=& a_{0,6,2}. 1 -a_{2,6,2}.1 +a_{4,6,2}.1&=&2\alpha+2\beta 
	\end{array}
	\]
	As a conclusion, the $K$-vector space $E_{4,6}$ of polynomials corresponding to Induction Step \ref{sect:induction-step} is 
	\[\begin{array}{l}
		E_{4,6}:=\left\{ \lambda\left[(1+s^2)y^2-1\right]t^4+(\lambda s^2 y)\, t^5 +\right.\\ 
		\left[ \left(\alpha - (\alpha +\beta )s^2 +\beta s^4\right) y^2 -\alpha +(2\alpha+\beta)s^2- 2(\alpha +\beta)s^4+  2(\alpha +\beta)s^6   \right]t^6 
		+R(s,t,y)\ |\  \\ 
		\hspace{5cm}\left.\lambda,\alpha,\beta\in K,\, R\in \left(K[s][[t]][y]\right)_{\mathcal{F},\mathcal{G}},\, w_t(R)\geq 7\right\}.
	\end{array}
	\]
	As before, we go back to the variables $(x_1,x_2)$ by the following transformation:
	\[Q(s, t,y)=\tilde{Q}(s^2t^2,t^2,ty).\]
	The space $E_{4,6}$ corresponds to the  space of polynomials in $K[[x_1,x_2]][y]$ of the form:
	\[ \begin{array}{l}
		(\lambda x_1+\lambda x_2+ \alpha {x_2}^2- (\alpha +\beta )x_1x_2 +\beta x_1^2)y^2+ \lambda x_1x_2 y \\
		\hspace{2cm} -\lambda{x_2}^2 -\alpha {x_2}^3 +(2\alpha+\beta)x_1{x_2}^2- 2(\alpha +\beta){x_1}^2x_2+  2(\alpha +\beta){x_1}^3  
		+ \tilde{R}(x_1,x_2,y)
	\end{array} \]
	\textrm{ with } $ \lambda,\alpha,\beta \in K,\, \tilde{R}\in K[[x_1,x_2]][y]$ such that:
	\[\tilde{R}=\tilde{a}_0+\tilde{a}_1y+\tilde{a}_2y^2\]
	with $\ord_{\underline{x}}(\tilde{a}_0)\geq 4,\ \ord_{\underline{x}}(\tilde{a}_1)\geq 3$ and $\ord_{\underline{x}}(\tilde{a}_2)\geq 3$.
	
	Note that we recover the beginning of the analytic expansion of $\tilde{P}$ at $\underline{0}$ in (\ref{eq:exemple}) for $\lambda=1$ and $\alpha=\beta=0$.
	
\end{ex}

\section{A generalization of the Flajolet-Soria Formula.}\label{section:hensel}
In the monovariate context, let $Q(x,y)=\displaystyle\sum_{i,j}a_{i,j}x^iy^j\,\in K[x,y]$ with $Q(0,0)=\displaystyle\frac{\partial Q}{\partial y}(0,0)=0$ and $Q(x,0)\neq 0$. 
In \cite{flajolet-soria:coeff-alg-series}, P. Flajolet and M. Soria give the following formula for the coefficients of the unique formal solution $y_0=\displaystyle\sum_{n\geq 1}c_nx^n$ of the implicit equation $y=Q(x,y)$: 

\begin{theo}[Flajolet-Soria's Formula \cite{flajolet-soria:coeff-alg-series}]\label{theo:formule-FS0}
	$$ c_n=\displaystyle\sum_{m=1}^{2n-1}\frac{1}{m}\displaystyle\sum_{|\underline{k}|=m,\ ||\underline{k}||=m-1,\ g(\underline{k})=n}\frac{m!}{\prod_{i,j}k_{i,j}!}\prod_{i,j}a_{i,j}^{k_{i,j}},$$
	where $\underline{k}=\displaystyle(k_{i,j})_{i,j}$, $\ |\underline{k}|=\displaystyle\sum_{i,j}k_{i,j}$,  $\ ||\underline{k}|| = \displaystyle\sum_{i,j}j\, k_{i,j}$ and $\ g(\underline{k}) = \displaystyle\sum_{i,j}i\, k_{i,j}$.
\end{theo}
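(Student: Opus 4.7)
My proposed proof reduces the formula to classical Lagrange inversion via the standard trick of introducing a marker variable. I consider the parametrized equation
\begin{equation*}
    y \;=\; t \, Q(x, y)
\end{equation*}
over the coefficient ring $R := K[[x]]$. Since $Q(x, y) \in R[y]$ is polynomial in $y$, a $(t)$-adic fixed-point iteration yields a unique solution $Y(x, t) = \sum_{m \geq 1} Y_m(x)\, t^m \in K[[x]][[t]]$. Applied with $\phi(y) := Q(x, y)$, the Lagrange inversion formula (valid over any commutative $\mathbb{Q}$-algebra) gives
\begin{equation*}
    [t^m]\, Y(x, t) \;=\; \frac{1}{m} \, [y^{m-1}]\, Q(x, y)^m.
\end{equation*}
Expanding by the multinomial theorem in Notation \ref{nota:FS},
\begin{equation*}
    Q(x, y)^m \;=\; \sum_{|\underline{k}| = m} \frac{m!}{\underline{k}!} \, \underline{a}^{\underline{k}} \, x^{g(\underline{k})} \, y^{\|\underline{k}\|},
\end{equation*}
so extracting the coefficient of $x^n y^{m-1}$ produces
\begin{equation*}
    [x^n t^m]\, Y(x, t) \;=\; \frac{1}{m} \sum_{\substack{|\underline{k}| = m,\ \|\underline{k}\| = m - 1 \\ g(\underline{k}) = n}} \frac{m!}{\underline{k}!} \, \underline{a}^{\underline{k}}.
\end{equation*}

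The two remaining points are the upper truncation $m \leq 2n - 1$ and the specialisation $t = 1$. For the truncation, the hypotheses $Q(0, 0) = 0$ and $(\partial Q / \partial y)(0, 0) = 0$ force $a_{0, 0} = a_{0, 1} = 0$, so any contributing $\underline{k}$ satisfies $k_{0, 0} = k_{0, 1} = 0$. Splitting $\underline{k} = \underline{k}^{(1)} + \underline{k}^{(2)}$ according to whether the index $(i, j)$ has $i \geq 1$ or $(i, j) = (0, j)$ with $j \geq 2$, the elementary inequalities $|\underline{k}^{(1)}| \leq g(\underline{k}^{(1)}) = n$ and $2 |\underline{k}^{(2)}| \leq \|\underline{k}^{(2)}\| \leq m - 1$ combine to give $m = |\underline{k}^{(1)}| + |\underline{k}^{(2)}| \leq 2n - 1$.

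For the specialisation, this bound implies $\ord_x Y_m \to \infty$ as $m \to \infty$, so $y^*(x) := \sum_{m \geq 1} Y_m(x)$ converges in the $(x)$-adic topology of $K[[x]]$. Since $Q(x, -)$ is polynomial in $y$ and therefore $(x)$-adically continuous, passing to the limit $t \to 1$ in $Y(x, t) = t \, Q(x, Y(x, t))$ yields $y^* = Q(x, y^*)$; by the uniqueness of the solution to $y = Q(x, y)$ with $y(0) = 0$, one concludes $y^* = y_0$. Summing $[x^n t^m]\, Y$ over $m \in \{1, \ldots, 2n - 1\}$ then yields the announced formula for $c_n$. The only genuinely delicate step is the $(x)$-adic continuity argument legitimising $t = 1$; everything else is a routine application of Lagrange inversion once the marker variable has been introduced.
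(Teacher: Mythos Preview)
Your proof is correct. The paper does not supply its own proof of this cited result, but your argument is exactly the strategy the paper uses for the multivariate generalisation in Theorem~\ref{theo:formule-FS}: there one forms the compositional inverse $\tilde f(\underline u,y)$ of $y/F(\underline u,y)$, reads off its coefficients $d_m$ by Lagrange inversion, bounds $m$ so that only finitely many $d_m$ contribute to each $\underline u^{\underline n}$, and then evaluates at $y=1$; your $Y(x,t)$ is precisely this $\tilde f$, with the marker $t$ playing the role of the inversion variable. One small point worth noting: the paper's generalisation is stated for \emph{strongly reduced} equations ($w(F)>_{\mathrm{grlex}}\underline 0$, i.e.\ all $a_{0,j}=0$), where the bound collapses to $m\le|\underline n|$, whereas Theorem~\ref{theo:formule-FS0} allows $a_{0,j}\neq 0$ for $j\ge 2$; your splitting $\underline k=\underline k^{(1)}+\underline k^{(2)}$ is exactly what is needed to recover the sharper bound $m\le 2n-1$ in that broader setting.
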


Note that in the particular case where the coefficients of $Q$ verify $a_{0,j}=0$ for all $j$, one has $m\leq n$ in the summation. 

One can derive immediately from  Theorems 3.5 and 3.6 in \cite{sokal:implicit-function}   a multivariate version of the Flajolet-Soria Formula in the case where $Q(\underline{x},y)\in K\left[\underline{x},y\right]$. The purpose of the present section is to generalize the latter result to the case where $Q\left(\underline{x},y\right)\in K\left(\left(u_1^{\mathbb{Z}},\ldots,u_r^\mathbb{Z}\right)\right)^{\textrm{grlex}}_{\textrm{Mod}}\left[y\right]$.\\


We will need a special version of Hensel's Lemma for multivariate power series elements of  $K((x_1^{\mathbb{Z}},\ldots,x_r^\mathbb{Z}))^{\textrm{grlex}}$. Recall that the latter denotes the field of generalized series $\left(K\left(\left(X^{\mathbb{Z}^r}\right)\right)^{\mathrm{grlex}},\, w\right)$  where $w$ is the graded lexicographic valuation as described in Section \ref{section:preliminaries}. Generalized series fields are known to be Henselian \cite[Theorem 4.1.3 and Remark 4.1.8]{engler-prestel:valued-fields}. For the convenience of the reader, we give a short proof in our particular context.

\begin{defi}\label{defi:equ-hensel-red}
	We call \textbf{strongly reduced Henselian equation} any equation of the following type:
	$$y=F(\underline{u},y)\ \textrm{ with }\ F(\underline{u},y)\in K\left(\left(u_1^{\mathbb{Z}},\ldots,u_r^\mathbb{Z}\right)\right)^{\textrm{grlex}}_{\textrm{Mod}},$$
	such that $w\left(F(\underline{u},y)\right)>_{\textrm{grlex}}\underline{0}$ and $F(\underline{u},0)\nequiv 0$.
\end{defi}

\begin{theo}[Hensel's lemma]\label{theo:hensel}
	Any strongly reduced Henselian equation admits a unique solution $y_0= \displaystyle\sum_{\underline{n}>_{\textrm{grlex}}\underline{0}}c_{\underline{n}}\underline{u}^{\underline{n}}\in  K((u_1^{\mathbb{Z}},\ldots,u_r^\mathbb{Z}))^{\mathrm{grlex}}$. 
\end{theo}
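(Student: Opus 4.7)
My strategy is the classical Hensel-style fixed-point iteration, adapted to the grlex-valued generalized series field. I would set $y^{(0)} := 0$ and recursively $y^{(k+1)} := F(\underline{u}, y^{(k)})$. Writing $F(\underline{u}, y) = \sum_{j\ge 0} F_j(\underline{u}) y^j$ with $F_j \in K((u_1^{\mathbb{Z}},\ldots,u_r^\mathbb{Z}))^{\mathrm{grlex}}_{\mathrm{Mod}}$, the hypothesis $w(F) >_{\mathrm{grlex}} \underline{0}$ is understood as $w(F_j) >_{\mathrm{grlex}} \underline{0}$ for all $j$, and $F(\underline{u},0) = F_0 \neq 0$ ensures the iteration moves off $0$. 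First I would verify by induction on $k$ that each $y^{(k)}$ lies in $K((u_1^{\mathbb{Z}},\ldots,u_r^\mathbb{Z}))^{\mathrm{grlex}}$ with $w(y^{(k)}) >_{\mathrm{grlex}} \underline{0}$; the well-orderedness of supports is preserved by sums and products thanks to Neumann's theorem (already invoked in Section \ref{section:preliminaries}), and positivity of valuations is inherited from $w(F_j) >_{\mathrm{grlex}} \underline{0}$ together with $w(y^{(k-1)}) \geq_{\mathrm{grlex}} \underline{0}$.

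Next I would establish the contraction estimate. For $k \geq 1$,
\[
y^{(k+1)} - y^{(k)} \;=\; F(\underline{u}, y^{(k)}) - F(\underline{u}, y^{(k-1)}) \;=\; (y^{(k)} - y^{(k-1)})\,H_k,
\]
where $H_k = \sum_{j \geq 1} F_j(\underline{u}) \sum_{i=0}^{j-1} (y^{(k)})^{i}(y^{(k-1)})^{j-1-i}$. Setting $\underline{\delta} := \min_{j\geq 1} w(F_j) >_{\mathrm{grlex}} \underline{0}$ (the minimum exists since $F$ is polynomial in $y$), one gets $w(H_k) \geq_{\mathrm{grlex}} \underline{\delta}$ and therefore
\[
w\bigl(y^{(k+1)} - y^{(k)}\bigr) \;\geq_{\mathrm{grlex}}\; w\bigl(y^{(k)} - y^{(k-1)}\bigr) + \underline{\delta} \;\geq_{\mathrm{grlex}}\; w\bigl(y^{(1)}\bigr) + k\,\underline{\delta}.
\]
Since $|\underline{\delta}| > 0$ or $\underline{\delta}$ is lex-positive of weight $0$, the right-hand side tends to $+\infty$ in $(\mathbb{Z}^r,\leq_{\mathrm{grlex}})$, i.e. eventually exceeds any fixed $\underline{N}$. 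Consequently, for each $\underline{m} \in \mathbb{Z}^r$ the coefficient of $\underline{u}^{\underline{m}}$ in $y^{(k)}$ stabilizes for $k$ large enough; define $c_{\underline{m}}$ as this stable value and set $y_0 := \sum_{\underline{m}} c_{\underline{m}} \underline{u}^{\underline{m}}$. The support of $y_0$ lies in $\bigcup_k \mathrm{Supp}(y^{(k)})$, and its intersection with any initial segment $\{\underline{n}\leq_{\mathrm{grlex}} \underline{N}\}$ coincides with the corresponding intersection of $\mathrm{Supp}(y^{(k)})$ for $k$ sufficiently large, hence is well-ordered; moreover $w(y_0) >_{\mathrm{grlex}} \underline{0}$.

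To check that $y_0$ solves the equation, note that $F(\underline{u}, y^{(k)}) = y^{(k+1)}$ agrees with $y_0$ modulo terms of grlex-valuation $>_{\mathrm{grlex}} w(y^{(k+1)} - y^{(k)})$, and the same kind of Taylor expansion gives $w(F(\underline{u}, y_0) - F(\underline{u}, y^{(k)})) \geq_{\mathrm{grlex}} w(y_0 - y^{(k)}) + \underline{\delta}$, which tends to infinity in $k$. Letting $k \to \infty$ yields $F(\underline{u}, y_0) = y_0$. For uniqueness, if $\tilde{y}_0$ were another solution with $w(\tilde{y}_0) >_{\mathrm{grlex}} \underline{0}$, the same factorization used for $H_k$ yields $y_0 - \tilde{y}_0 = (y_0 - \tilde{y}_0)\,H$ with $w(H) \geq_{\mathrm{grlex}} \underline{\delta} >_{\mathrm{grlex}} \underline{0}$, which forces $w(y_0 - \tilde{y}_0) >_{\mathrm{grlex}} w(y_0 - \tilde{y}_0)$ unless $y_0 = \tilde{y}_0$.

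The main obstacle I anticipate is the bookkeeping of supports: ensuring that at every step $y^{(k)}$ remains in $K((u_1^{\mathbb{Z}},\ldots,u_r^\mathbb{Z}))^{\mathrm{grlex}}$ (rather than merely in a formal completion), and that the grlex valuations $w(y^{(k+1)} - y^{(k)})$ genuinely escape to infinity in $(\mathbb{Z}^r,\leq_{\mathrm{grlex}})$ and not merely within a fixed weight level. The first point is handled by Neumann's closure property; the second is the reason for isolating the fixed lower bound $\underline{\delta} >_{\mathrm{grlex}} \underline{0}$ and exploiting the additive growth $k\,\underline{\delta}$.
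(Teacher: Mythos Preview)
Your Picard-iteration approach differs from the paper's: the paper constructs the solution by transfinite induction along the well-ordered submonoid $\mathcal{S} \subset (\mathbb{Z}^r)_{\geq_{\mathrm{grlex}} \underline{0}}$ generated by the exponents appearing in $F$ (well-orderedness coming from Neumann's theorem), after first proving a lemma that characterizes solutions via the valuations of their grlex-truncations. Your contraction scheme is a natural alternative, and your uniqueness argument is fine, but the existence argument as written has a genuine gap.

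The problem is the assertion that $w(y^{(1)}) + k\,\underline{\delta}$ tends to $+\infty$ in $(\mathbb{Z}^r, \leq_{\mathrm{grlex}})$. When $|\underline{\delta}| = 0$ this is false, because grlex on $\mathbb{Z}^r$ is not archimedean. Take $r = 2$ and $F(\underline{u}, y) = u_2 + u_1 u_2^{-1}\, y$: then $\underline{\delta} = (1,-1)$ and $w(y^{(k+1)} - y^{(k)}) = (k, 1-k)$, all of total degree $1$, hence never $>_{\mathrm{grlex}} \underline{N}$ for any $\underline{N}$ with $|\underline{N}| \geq 2$. You flag exactly this case (``lex-positive of weight $0$'') but then claim the additive growth still suffices; it does not. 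Consequently your stabilization step is unjustified (for $\underline{m} = (1,1)$ you never obtain $w(y^{(k+1)} - y^{(k)}) >_{\mathrm{grlex}} \underline{m}$), and so is your well-orderedness step: in the example, $\mathrm{Supp}(y_0) \cap \{\underline{n} \leq_{\mathrm{grlex}} (0,2)\}$ is the entire infinite set $\{(k,1-k) : k \geq 0\}$, never matching any $\mathrm{Supp}(y^{(k)}) \cap \{\underline{n} \leq_{\mathrm{grlex}} (0,2)\}$. The solution $y_0 = \sum_{k \geq 0} u_1^k u_2^{1-k}$ does exist and its support is well-ordered, but your argument does not establish either fact. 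The missing ingredient is precisely the monoid $\mathcal{S}$: once you observe that every $y^{(k)}$ has support in $\mathcal{S}$ and that $\mathcal{S}$ is well-ordered, a correct argument becomes possible---but at that point the paper's direct transfinite construction along $\mathcal{S}$ is simpler.
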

\begin{demo} Let 
	\begin{equation}\label{equ:hensel}y=F\left(\underline{u},y\right)
	\end{equation}
	be a strongly reduced Henselian equation and let  $y_0=\displaystyle\sum_{\underline{n}>_{\textrm{grlex}}\underline{0}}c_{\underline{n}}\underline{u}^{\underline{n}} \in K((u_1^{\mathbb{Z}},\ldots,u_r^{\mathbb{Z}}))^{\textrm{grlex}}$. For $\underline{n}\in\mathbb{Z}^r$, $\underline{n}>_{\textrm{grlex}}0$, let us denote $\tilde{z}_{\underline{n}}:= \displaystyle\sum_{\underline{m}<_{\textrm{grlex}}\underline{n}} c_{\underline{m}}\underline{u}^{\underline{m}}$.
	We get started with the following key lemma:
	\begin{lemma}\label{lemma:hensel} The following are equivalent:
		\begin{enumerate}
			\item a series $y_0$ is a solution of (\ref{equ:hensel});
			\item for any $\underline{n}\in\mathbb{Z}^r$, $\underline{n}>_{\mathrm{grlex}}\underline{0}$, 
			$$ w\left(\tilde{z}_{\underline{n}}-F\left(\underline{u},\tilde{z}_{\underline{n}}\right)\right)=w\left(y_0-\tilde{z}_{\underline{n}}\right);$$
			\item for any $\underline{n}\in\mathbb{Z}^r$, $\underline{n}>_{\textrm{grlex}}\underline{0}$, 
			$$ w\left(\tilde{z}_{\underline{n}}-F\left(\underline{u},\tilde{z}_{\underline{n}}\right)\right)\geq_{\mathrm{grlex}}\underline{n}.$$
		\end{enumerate}
	\end{lemma}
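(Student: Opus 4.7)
My plan is to establish the cyclic chain of implications (1) $\Rightarrow$ (2) $\Rightarrow$ (3) $\Rightarrow$ (1), using throughout the basic observation $w(y_0-\tilde{z}_{\underline{n}})\geq_{\textrm{grlex}}\underline{n}$, which is built into the very definition of $\tilde z_{\underline n}$ as the grlex-truncation of $y_0$ below $\underline n$. Before starting the cycle, I would record the key technical fact (a Taylor-type estimate) on which the whole argument rests: for any two series $a,b\in K((u_1^{\mathbb{Z}},\ldots,u_r^{\mathbb{Z}}))^{\textrm{grlex}}$ with $w(a),w(b)>_{\textrm{grlex}}\underline{0}$, one has
\[
F(\underline{u},a)-F(\underline{u},b)\;=\;(a-b)\cdot H(\underline{u},a,b),
\]
with $w(H)>_{\textrm{grlex}}\underline{0}$. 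If $F=\sum_{j}F_j(\underline{u})\,y^j$, this is just the algebraic identity $a^j-b^j=(a-b)(a^{j-1}+\cdots+b^{j-1})$ multiplied by $F_j$ and summed; the strong reduction hypothesis $w(F)>_{\textrm{grlex}}\underline{0}$ forces every $w(F_j)>_{\textrm{grlex}}\underline{0}$, which gives $w(H)>_{\textrm{grlex}}\underline{0}$.

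For (1) $\Rightarrow$ (2), I substitute $y_0=F(\underline{u},y_0)$ into the decomposition
\[
\tilde{z}_{\underline{n}}-F(\underline{u},\tilde{z}_{\underline{n}})
\;=\;(\tilde{z}_{\underline{n}}-y_0)\;-\;\bigl(F(\underline{u},\tilde{z}_{\underline{n}})-F(\underline{u},y_0)\bigr).
\]
By the Taylor estimate the second summand has strictly larger grlex-valuation than the first, so the whole expression has valuation exactly $w(y_0-\tilde{z}_{\underline{n}})$. The implication (2) $\Rightarrow$ (3) is then immediate from $w(y_0-\tilde{z}_{\underline{n}})\geq_{\textrm{grlex}}\underline{n}$.

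The implication (3) $\Rightarrow$ (1) is where I would be most careful: I need a genuine convergence argument in the grlex valuation, and this is the step most likely to cause trouble because one must ensure that $F(\underline{u},y_0)$ makes sense as a generalized series. For this, I would rely on the fact that $w(y_0^{\,j})=j\,w(y_0)$ has unbounded grlex-valuation (its total degree tends to $+\infty$), so that $\sum_j F_j(\underline u)y_0^{\,j}$ is a summable family in the grlex-complete ring. Granted this, I fix any $\underline{n}>_{\textrm{grlex}}\underline{0}$ and decompose
\[
y_0-F(\underline{u},y_0)\;=\;(y_0-\tilde{z}_{\underline{n}})\;+\;\bigl(\tilde{z}_{\underline{n}}-F(\underline{u},\tilde{z}_{\underline{n}})\bigr)\;+\;\bigl(F(\underline{u},\tilde{z}_{\underline{n}})-F(\underline{u},y_0)\bigr).
\]
The first summand has valuation $\geq_{\textrm{grlex}}\underline{n}$ by construction, the second by hypothesis (3), and the third by the Taylor estimate combined with the first. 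Hence $w(y_0-F(\underline{u},y_0))\geq_{\textrm{grlex}}\underline{n}$ for every $\underline{n}$; since the support of any nonzero series in $K((u_1^{\mathbb{Z}},\ldots,u_r^{\mathbb{Z}}))^{\textrm{grlex}}$ has a grlex-minimum in $\mathbb{Z}^r$, this forces $y_0-F(\underline{u},y_0)=0$, which is (1).
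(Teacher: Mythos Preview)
Your proof is correct and follows essentially the same line as the paper's. Both arguments rest on the single estimate that $F(\underline{u},a)-F(\underline{u},b)$ has grlex-valuation strictly larger than $w(a-b)$ whenever $w(a),w(b)>_{\mathrm{grlex}}\underline{0}$, which is forced by $w(F)>_{\mathrm{grlex}}\underline{0}$. The paper obtains this by writing out Taylor's formula for $G(\underline{u},y)=y-F(\underline{u},y)$ with an explicit quadratic remainder; you obtain it by the factorization $a^j-b^j=(a-b)(a^{j-1}+\cdots+b^{j-1})$. For $(3)\Rightarrow(1)$ the paper argues by contradiction (take $\underline{n}_0=w(y_0-F(\underline{u},y_0))$ and choose $\underline{n}>_{\mathrm{grlex}}\underline{n}_0$), while you give the equivalent direct three-term decomposition; these are purely cosmetic differences.

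One small remark: your summability aside is unnecessary here. Despite the slightly loose notation in Definition~\ref{defi:equ-hensel-red}, in this paper $F(\underline{u},y)$ is a \emph{polynomial} in $y$ with coefficients in $K((u_1^{\mathbb Z},\ldots,u_r^{\mathbb Z}))^{\mathrm{grlex}}_{\mathrm{Mod}}$ (see the paragraph preceding the definition and the Taylor remainder $H(\underline{u},y)\in K((u_1^{\mathbb Z},\ldots,u_r^{\mathbb Z}))^{\mathrm{grlex}}[y]$ in the paper's proof), so $F(\underline{u},y_0)$ is a finite sum and no convergence argument is needed.
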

	\begin{demo}
		For $\underline{n}>_{\textrm{grlex}}\underline{0}$, let us denote $\tilde{y}_{\underline{n}}:=y_0-\tilde{z}_{\underline{n}}=\displaystyle\sum_{\underline{m}\geq_{\textrm{grlex}}\underline{n}} c_{\underline{m}}\underline{u}^{\underline{m}}$. We apply Taylor's Formula to $G(\underline{u},y):=y-F(\underline{u},y)$ at $\tilde{z}_{\underline{n}}$:
		$$G\left(\underline{u},\tilde{z}_{\underline{n}}+y\right) =\tilde{z}_{\underline{n}}-F\left(\underline{u},\tilde{z}_{\underline{n}}\right)+\left(1-\displaystyle\frac{\partial F}{\partial y}\left(\underline{u},\tilde{z}_{\underline{n}}\right)\right)y +y^2H\left(\underline{u},y\right),$$
		where $H\left(\underline{u},y\right)\in K((u_1^{\mathbb{Z}},\ldots,u_r^{\mathbb{Z}}))^{\textrm{grlex}}[y]$ with $w\left(R(\underline{u},y)\right)>_{\textrm{grlex}}\underline{0}$. The series
		$y_0$ is a solution of (\ref{equ:hensel}) iff for any $\underline{n}$, $\tilde{y}_{\underline{n}}$ is a root of $G\left(\underline{u},\tilde{z}_{\underline{n}}+y\right)=0$, i.e.:
		\begin{equation}\label{equ:henselian-trunc}
			\tilde{z}_{\underline{n}}-F\left(\underline{u},\tilde{z}_{\underline{n}}\right)+\left(1-\displaystyle\frac{\partial F}{\partial y}\left(\underline{u},\tilde{z}_{\underline{n}}\right)\right)\tilde{y}_{\underline{n}}+ \tilde{y}_{\underline{n}}^2H\left(\underline{u},\tilde{y}_{\underline{n}}\right)=0.
		\end{equation}  Now consider $y_0$ a solution of (\ref{equ:hensel}) and $\underline{n}\in\mathbb{Z}^r$, $\underline{n}>_{\mathrm{grlex}}\underline{0}$. Either $\tilde{y}_{\underline{n}}=0$, i.e. $y_0=\tilde{z}_{\underline{n}}$: (2) holds trivially. Or  $\tilde{y}_{\underline{n}}\neq 0$, so we have:
		$$\underline{n}\leq_{\textrm{grlex}} w\left(\left(1-\displaystyle\frac{\partial G}{\partial y}\left(\underline{u},\tilde{z}_{\underline{n}}\right)\right)\tilde{y}_{\underline{n}}\right) =w\left(\tilde{y}_{\underline{n}}\right)<_{\textrm{grlex}} 2w\left(\tilde{y}_{\underline{n}}\right)<_{\textrm{grlex}} w\left(\tilde{y}_{\underline{n}}^2H\left(\underline{u},\tilde{y}_{\underline{n}}\right)\right).$$
		So we must have $w\left(\tilde{z}_{\underline{n}}-G\left(\underline{u},\tilde{z}_{\underline{n}}\right)\right)=w\left(\tilde{y}_{\underline{n}}\right)$. \\
		Now, $(2)\,\Rightarrow\, (3)$ since $w\left(\tilde{y}_{\underline{n}}\right)\geq_{\textrm{grlex}}\underline{n}$.\\
		Finally, suppose that for any $\underline{n}$,  $w\left(\tilde{z}_{\underline{n}}-F\left(\underline{u},\tilde{z}_{\underline{n}}\right)\right)\geq_{\textrm{grlex}}\underline{n}$. If $y_0-F\left(\underline{x},y_0\right)\neq 0$, denote  $\underline{n}_0:= w\left(y_0-F\left(\underline{u},y_0\right)\right)$. For $\underline{n}>_{\textrm{grlex}}\underline{n}_0$, one has $$\underline{n}_0=w\left(\tilde{z}_{\underline{n}}-F\left(\underline{u},\tilde{z}_{\underline{n}}\right)\right)\geq_{\textrm{grlex}}\underline{n}.$$ A contradiction.
	\end{demo}
	Let us return to the proof of Theorem \ref{theo:hensel}. Note that, if $y_0$ is a solution of (\ref{equ:hensel}), then its support needs to be included in the monoid $\mathcal{S}$ generated by the $\underline{i}$'s from the nonzero coefficients $a_{\underline{i},j}$ of $F(\underline{x},y)$. If not, consider the smallest index $\underline{n}$ for $\leq_{\mathrm{grlex}}$ which is not in  $\mathcal{S}$. Property (2) of Lemma \ref{lemma:hensel} gives a contradiction for this index.
	$\mathcal{S}$ is a well-ordered subset of $(\mathbb{Z}^r)_{\geq_\mathrm{grlex}\underline{0}}$ by \cite[Theorem 3.4]{neumann:ord-div-rings}.
	Let us prove  by transfinite induction on $\underline{n}\in \mathcal{S}$  the existence and uniqueness of a sequence of series  $\tilde{z}_{\underline{n}}$ as in the statement of the previous lemma.  Suppose that for some $\underline{n}\in\mathcal{S}$,  we are given a series $\tilde{z}_{\underline{n}}$ with support included in $\mathcal{S}$ and $<_{\textrm{grlex}}\underline{n}$, such that $w\left(\tilde{z}_{\underline{n}}-F\left(\underline{u},\tilde{z}_{\underline{n}}\right)\right)\geq_{\textrm{grlex}}\underline{n}$. Then by Taylor's formula as in the proof of the previous lemma, denoting by $\underline{m}$ the successor of $\underline{n}$ in $\mathcal{S}$ for $\leq_{\textrm{grlex}}$:
	$$G\left(\underline{u},\tilde{z}_{\underline{m}}\right)=G\left(\underline{u},\tilde{z}_{\underline{n}}+c_{\underline{n}}\underline{u}^{\underline{n}}\right) =\tilde{z}_{\underline{n}}-F\left(\underline{u},\tilde{z}_{\underline{n}}\right)+\left(1-\displaystyle\frac{\partial F}{\partial y}\left(\underline{u},\tilde{z}_{\underline{n}}\right)\right)c_{\underline{n}}\underline{u}^{\underline{n}} +c_{\underline{n}}^2\underline{u}^{2\underline{n}}H\left(\underline{u},\tilde{z}_{\underline{n}}\right).$$
	Note that $w\left(H\left(\underline{u},\tilde{z}_{\underline{n}}\right)\right)\geq_{\textrm{grlex}}\underline{0}$ since $w(\tilde{z}_{\underline{n}})>_{\textrm{grlex}}\underline{0}$ and $w\left(F\left(\underline{u},y\right)\right)>_{\textrm{grlex}}\underline{0}$.
	Therefore, one has:
	$$ w\left(G\left(\underline{u},\tilde{z}_{\underline{m}}\right)\right)=w\left(\tilde{z}_{\underline{m}}-F\left(\underline{u},\tilde{z}_{\underline{m}}\right)\right)\geq_{\textrm{grlex}}\underline{m}>_{\textrm{grlex}}\underline{n}$$
	if and only if $c_{\underline{n}}$ is equal to the coefficient of $\underline{u}^{\underline{n}}$ in $F\left(\underline{u},\tilde{z}_{\underline{n}}\right)$. This determines $\tilde{z}_{\underline{m}}$ in a unique way as desired.
\end{demo}

We prove now our generalized version of the Flajolet-Soria Formula \cite{flajolet-soria:coeff-alg-series}. Our proof, as the one in \cite{sokal:implicit-function}, uses the classical Lagrange Inversion Formula in one variable. We will use Notation \ref{nota:FS}.

\begin{theo}[Generalized multivariate Flajolet-Soria Formula]\label{theo:formule-FS}\indent \\
	Let $y=F\left(\underline{u},y\right)=\displaystyle\sum_{\underline{i},j}a_{\underline{i},j} \underline{u}^{\underline{i}}y^j$ be a strongly reduced Henselian equation.  Define $\underline{\iota}_0=(\iota_{0,1},\ldots,\iota_{0,r})$ by: $$-\iota_{0,k}:=\min\left\{0,\, i_k\, /\, a_{\underline{i},j}\neq 0,\, \underline{i} = (i_1,\ldots,i_k,\ldots,i_r)\right\},\ \ \ k=1,\ldots,r.$$
	Then the  coefficients $c_{\underline{n}}$ of the unique solution $y_0=\displaystyle\sum_{\underline{n}>_{\mathrm{grlex}}\underline{0} } c_{\underline{n}}\underline{u}^{\underline{n}}\in K((u_1^{\mathbb{Z}},\ldots,u_r^\mathbb{Z}))^{\mathrm{grlex}}$ are given by:
	\begin{equation}\label{eq:flajo-so}
		c_{\underline{n}}=\displaystyle\sum_{m=1}^{\mu_{\underline{n}}}\frac{1}{m}\displaystyle\sum_{|\underline{M}|=m,\ ||\underline{M}||=m-1,\ g(\underline{M})=\underline{n}}\frac{m!}{\underline{M}!}\underline{A}^{\underline{M}}
	\end{equation}
	where $\mu_{\underline{n}}$ is the greatest integer $m$ such that there exists an $\underline{M}$ with $|\underline{M}|=m,\ ||\underline{M}||=m-1$ and $g(\underline{M})=\underline{n}$. Moreover, for  $\underline{n}=(n_1,\ldots,n_r)$,  $\mu_{\underline{n}}\leq \displaystyle\sum_{k=1}^r\lambda_k\, n_k$ with:
	$$\lambda_k=\left\{ \begin{array}{ll}
		\displaystyle\prod_{j=k+1}^{r-1}(1+\iota_{0,j})+\displaystyle\prod_{j=1}^{r-1}(1+\iota_{0,j}) & \textrm{ if } k<r-1; \\
		1+\displaystyle\prod_{j=1}^{r-1}(1+\iota_{0,j})& \textrm{ if } k=r-1;\\
		\displaystyle\prod_{j=1}^{r-1}(1+\iota_{0,j})& \textrm{ if } k=r.\end{array}  \right.$$
\end{theo}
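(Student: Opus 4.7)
The plan is to adapt the Lagrange-Bürmann inversion argument used by Sokal \cite{sokal:implicit-function} in the polynomial case to the present generalized series setting, then derive the explicit bound on the summation index $\mu_{\underline{n}}$ by a weighted counting argument.

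First I would introduce an auxiliary formal variable $t$ and consider the enlarged equation
$$\tilde{y}(t) = t\, F(\underline{u}, \tilde{y}(t)) \qquad \textrm{in } K((u_1^{\mathbb{Z}}, \ldots, u_r^{\mathbb{Z}}))^{\textrm{grlex}}[[t]].$$
The explicit factor $t$ makes the right-hand side $t$-adically contracting, so a $t$-adic variant of the fixed-point iteration from Theorem \ref{theo:hensel} produces a unique solution $\tilde{y}(t) = \sum_{m \geq 1} y_m(\underline{u})\, t^m$ with coefficients in the generalized series field. The classical Lagrange-Bürmann inversion formula, valid over any commutative $\mathbb{Q}$-algebra of scalars and in particular over the Hahn series field $K((u_1^{\mathbb{Z}}, \ldots, u_r^{\mathbb{Z}}))^{\textrm{grlex}}$ (in which $F(\underline{u}, 0)$, being nonzero, is automatically invertible), then gives
$$y_m(\underline{u}) = \frac{1}{m}\, [y^{m-1}]\, F(\underline{u}, y)^m.$$
Expanding $F(\underline{u}, y)^m$ by the multinomial theorem and successively extracting $[y^{m-1}]$ and $[\underline{u}^{\underline{n}}]$ selects exactly the tuples $\underline{M}$ with $|\underline{M}| = m$, $||\underline{M}|| = m - 1$, and $g(\underline{M}) = \underline{n}$, producing the inner sum of \eqref{eq:flajo-so}. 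Since $\tilde{y}(1)$ formally satisfies $\tilde{y}(1) = F(\underline{u}, \tilde{y}(1))$, the uniqueness clause of Theorem \ref{theo:hensel} identifies it with $y_0$, provided the specialization $t = 1$ is coefficient-wise well defined.

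The main obstacle is precisely this specialization, which amounts to showing that for each fixed $\underline{n}$, only finitely many $m$ contribute to $[\underline{u}^{\underline{n}}]\, \tilde{y}(t)$. I would prove this by a weight argument: seek positive integers $\lambda_1, \ldots, \lambda_r$ such that
$$\sum_{k=1}^r \lambda_k\, i_k \geq 1 \qquad \textrm{for every } (\underline{i}, j) \in \supp(F).$$
Summing this inequality against the multiplicities $M_{\underline{i}, j}$ of a tuple $\underline{M}$ with $g(\underline{M}) = \underline{n}$ and $|\underline{M}| = m$ yields $\sum_k \lambda_k\, n_k \geq m$, whence $\mu_{\underline{n}} \leq \sum_k \lambda_k\, n_k$ is finite.

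It then remains to exhibit such weights and identify the minimal admissible values with those in the statement. The extremal exponents to test are the $\underline{i} >_{\textrm{grlex}} \underline{0}$ with as many components as possible at their lower bound $-\iota_{0, k}$. Two types occur: (a) $|\underline{i}| \geq 1$, with $i_k = -\iota_{0, k}$ for all $k < r$ and $i_r$ absorbing the remaining mass $1 + \sum_{k < r}\iota_{0, k}$; and (b) $|\underline{i}| = 0$, with the first positive entry at some position $k_0 < r$ of value $+1$, the entries for $k_0 < k < r$ pushed to $-\iota_{0, k}$, and $i_r$ set so that the total is zero. Writing the corresponding inequalities $\sum_k \lambda_k\, i_k \geq 1$ and solving by backward induction on $k$---first type (b) at $k_0 = r-1$ forces $\lambda_{r-1} - \lambda_r \geq 1$; then type (a) forces $\lambda_r \geq \prod_{j=1}^{r-1}(1 + \iota_{0, j})$; finally type (b) at $k_0 < r-1$ propagates the recurrence by an additional factor $(1 + \iota_{0, k_0+1})$ at each step---recovers exactly the values $\lambda_k$ stated in the theorem. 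This linear-programming bookkeeping is the technical heart of the argument.
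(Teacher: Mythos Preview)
Your Lagrange-inversion step is essentially the paper's own, repackaged with an auxiliary variable $t$ rather than the compositional inverse $f(\underline{u},y)=y/F(\underline{u},y)$; both routes yield $y_m=\frac{1}{m}[y^{m-1}]F(\underline{u},y)^m$ and both must legitimize the specialization at $1$ by showing that only finitely many $m$ contribute to each coefficient $c_{\underline n}$. Where you genuinely diverge from the paper is in establishing the bound $\mu_{\underline{n}}\le\sum_k\lambda_k n_k$. The paper argues by a direct combinatorial induction: it stratifies $m=|\underline{M}|$ according to the type of $\underline{i}$ (namely $|\underline{i}|>0$, or $|\underline{i}|=0$ with first nonzero coordinate at position $k$), bounds each stratum in terms of $n_1,\ldots,n_r$ using the equations $g(\underline{M})=\underline{n}$ together with $i_k\ge-\iota_{0,k}$, and only after summing all the pieces do the products $\prod_j(1+\iota_{0,j})$ emerge. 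Your approach replaces this computation by linear-programming duality: a weight vector with $\sum_k\lambda_k i_k\ge1$ on $\supp(F)$ immediately gives $m\le\sum_k\lambda_k n_k$, and your extremal configurations (a) and (b) are precisely the binding vertices. The resulting recursion $\mu_{k_0}=1+\sum_{k>k_0}\iota_{0,k}\mu_k$ for $\mu_k:=\lambda_k-\lambda_r$ does indeed collapse to $\mu_{k_0}=(1+\iota_{0,k_0+1})\mu_{k_0+1}$ (subtract consecutive equations), giving $\mu_{k}=\prod_{j=k+1}^{r-1}(1+\iota_{0,j})$, and the type-(a) constraint then fixes $\lambda_r=\prod_{j=1}^{r-1}(1+\iota_{0,j})$ via the telescoping identity $\sum_{k<r}\iota_{0,k}\mu_k=\lambda_r-1$. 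Your route is more conceptual and explains \emph{why} these particular products appear, at the cost of having to identify the correct extremal vertices; the paper's induction is more mechanical but self-contained.
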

\begin{remark}\label{rem:entier}
	\begin{enumerate}
		\item In (\ref{eq:flajo-so}), note that the second sum is finite. Indeed, let $\underline{M}=(m_{\underline{i},j})$ be such that $|\underline{M}|=m,\ ||\underline{M}||=m-1,\ g(\underline{M})=\underline{n}$. Since $F\in	K\left(\left(u_1^{\mathbb{Z}},\ldots,u_r^\mathbb{Z}\right)\right)^{\textrm{grlex}}_{\textrm{Mod}}[y]$, if $\underline{i}$ has a component negative enough, then $a_{\underline{i},j}=0$. On the other hand, since $|\underline{M}|=m$ and $g(\underline{M})=\underline{n}$, the positive components of $\underline{i}$ are bounded.
		
		\item By \cite[Lemma 2.6]{hickel-matu:puiseux-alg-multivar}, $\displaystyle\frac{1}{m}\cdot \displaystyle\frac{m!}{\underline{M}!}\in\mathbb{N}$. If we set $m_j:=\displaystyle\sum_{\underline{i}}m_{\underline{i },j}$ and $\underline{N}=(m_j)_j$, then $|\underline{N}|=m$, $\|\underline{N}\|=m-1$ and:
		$$\displaystyle\frac{1}{m}\cdot \displaystyle\frac{m!}{\underline{M}!}= \displaystyle\frac{1}{m}\cdot \displaystyle\frac{m!}{\underline{N}!} \cdot \displaystyle\frac{N!}{\underline{M}!},$$
		where $\displaystyle\frac{N!}{\underline{M}!}$ is a product of multinomial coefficients and $\displaystyle\frac{1}{m}\cdot \displaystyle\frac{m!}{\underline{N}!}$ is an integer again by  \cite[Lemma 2.6]{hickel-matu:puiseux-alg-multivar}. 
		Thus, each $c_n$ is the evaluation at the $a_{i,j}$'s of a polynomial with coefficients in $\mathbb{Z}$.
	\end{enumerate}
	
\end{remark}
\begin{demo}
	For a given strongly reduced Henselian equation $y=F(\underline{u},y)$, one can expand:
	$$f\left(\underline{u},y\right):=\displaystyle\frac{y}{F\left(\underline{u},y\right)}=\displaystyle\sum_{n\geq 1}b_n(\underline{u})y^n\,\in K((u_1^{\mathbb{Z}},\ldots,u_r^\mathbb{Z}))^{\mathrm{grlex}}[[y]]\ \mathrm{with}\ b_1\neq 0,$$
	which admits a unique formal inverse in $K((u_1^{\mathbb{Z}},\ldots,u_r^\mathbb{Z}))^{\mathrm{grlex}}[[y]]$:
	$$\tilde{f}\left(\underline{u},y\right)= \displaystyle\sum_{m\geq  1}d_m(\underline{u}) y^m.$$
	The Lagrange Inversion Theorem (see e.g. \cite[Theorem 2]{henrici:lagr-burmann} with $\mathcal{F}=K((u_1^{\mathbb{Z}},\ldots,u_r^\mathbb{Z}))^{\mathrm{grlex}}$ and $P=f(\underline{u},y)$) applies: for any $m$, $d_m(\underline{u})$ is equal to the coefficient of $y^{m-1}$ in  $\left[F\left(\underline{u},y\right)\right]^m$, divided by $m$. Hence, according to the multinomial expansion of $\left[F\left(\underline{u},y\right)\right]^m=\left[\displaystyle\sum_{\underline{i},j}a_{\underline{i},j} \underline{u}^{\underline{i}}y^j\right]^m$:
	$$d_m(\underline{u})=\displaystyle\frac{1}{m}\displaystyle\sum_{|\underline{M}|=m,\  ||\underline{M}||=m-1}\frac{m!}{\underline{M}!}\underline{A}^{\underline{M}}\underline{u}^{g(\underline{M})}.$$
	Note that the powers $\underline{n}$ of $\underline{u}$ that  appear in $d_m$ are nonzero elements of the monoid generated by the exponents $\underline{i} $ of the monomials $\underline{u}^{\underline{i}}y^j$ appearing  in $F\left(\underline{u},y\right)$, so they are $>_{\mathrm{grlex}}0$.
	Now, it will suffice to show that, for any fixed $\underline{n}$,  the number $\displaystyle\sum_{k=1}^r\lambda_k\, n_k$ is indeed a bound for the number $\mu_{\underline{n}}$ of $m$'s for which $d_m$ can contribute to the coefficient of $\underline{u}^{\underline{n}}$. Indeed, this will show that $\tilde{f}\left(\underline{u},y\right)\in K[y]((u_1^{\mathbb{Z}},\ldots,u_r^\mathbb{Z}))^{\textrm{grlex}}$. But, by definition of $\tilde{f}$, one has that:
	$$\tilde{f}\left(\underline{u},y\right)=y\,F\left(\underline{u},\tilde{f}\left(\underline{u},y\right)\right)\,\in K((u_1^{\mathbb{Z}},\ldots,u_r^\mathbb{Z}))^{\textrm{grlex}}[[y]].$$
	Hence, both members of this equality are in fact in $ K[y]((u_1^{\mathbb{Z}},\ldots,u_r^\mathbb{Z}))^{\textrm{grlex}}$.
	So, for $y=1$, we get that $\tilde{f}\left(\underline{u},1\right)\in K((u_1^{\mathbb{Z}},\ldots,u_r^\mathbb{Z}))^{\textrm{grlex}}$ is a solution with $w\left(\tilde{f}\left(\underline{u},1\right)\right)>_{\mathrm{grlex}}\underline{0}$ of the equation: $$f(\underline{u},y)=\displaystyle\frac{y}{F\left(\underline{u},y\right)}=1\, \Leftrightarrow\, y=F(\underline{u},y).$$
	It is equal to the unique solution $y_0$ of Theorem \ref{theo:hensel}: 
	$$y_0=\tilde{f}\left(\underline{u},1\right)= \displaystyle\sum_{m\geq  1}d_m(\underline{u}).$$
	We consider the relation:
	$$g(\underline{M})=\underline{n}\ \Leftrightarrow\ \left\{\begin{array}{lcl}
		\displaystyle\sum_{\underline{i},j}m_{\underline{i},j}\,i_1&=&n_1;\\
		&\vdots&\\
		\displaystyle\sum_{\underline{i},j}m_{\underline{i},j}\,i_r&=&n_r.
	\end{array}\right.$$
	Let us decompose $m=|M|=\displaystyle\sum_{\underline{i},j}m_{\underline{i},j}$ as follows:
	$$|M|=\displaystyle\sum_{|\underline{i}|>0}m_{\underline{i},j}+\displaystyle\sum_{|\underline{i}|=0,\, i_1>0}m_{\underline{i},j}+\cdots+ \displaystyle\sum_{|\underline{i}|=0=i_1=\cdots=i_{r-2},\, i_{r-1}>0}m_{\underline{i},j}.$$
	So, the relation $g(\underline{M})=\underline{n}$ can be written as:
	\begin{equation}\label{equ:borneFS}
		\left\{\begin{array}{ccl}
			\displaystyle\sum_{|\underline{i}|>0}m_{\underline{i},j}\,i_1+\displaystyle\sum_{|\underline{i}|=0,\, i_1>0}m_{\underline{i},j}\,i_1&=&n_1;\\
			&\vdots&\\
			\displaystyle\sum_{|\underline{i}|>0}m_{\underline{i},j}\,i_k+\displaystyle\sum_{|\underline{i}|=0,\, i_1>0}m_{\underline{i},j}\,i_k+\cdots+ \displaystyle\sum_{|\underline{i}|=0=i_1=\cdots=i_{k-1},\, i_{k}>0}m_{\underline{i},j}\,i_k&=&n_k;\\
			&\vdots&\\
			\displaystyle\sum_{\underline{i},j}m_{\underline{i},j}\,i_r&=&n_r.
		\end{array}\right.\end{equation}
	Firstly, let us show by induction on $k\in\{0,\ldots,r-1\}$ that:
	
	$$\begin{array}{l}
		\displaystyle\sum_{|\underline{i}|=0=i_1=\cdots=i_{k-1},\, i_{k}>0}m_{\underline{i},j}\ \ \ \ \leq\ \ \ \  
		\displaystyle\sum_{q=1}^{k-1} 
		\left[\iota_{0,k}\left(\displaystyle\prod_{p=q+1}^{k-1}(1+\iota_{0,p}) + \displaystyle\prod_{p=1}^{k-1}(1+\iota_{0,p}) \right)\right]n_q \\
		\ \ \ \ \ \ \ \ \ \ \ \ \ \ \ \ \ \ \ \ \	\ \ \ \ \ \ \ \ \ \ \ \ \ \ \ \ \ \ \ \ \ \ \ \ \ \ \ \ \ \ \ \ \ \ \ \ +\left[1+\iota_{0,k}\displaystyle\prod_{p=1}^{k-1}(1+\iota_{0,p}) \right]n_k\\
		\ \ \ \ \ \ \ \ \ \ \ \ \ \ \ \ \ \ \ \ \ \ \ \ \ \ \ \ \ \ \ \ \ \ \ \ +\left[\iota_{0,k}\displaystyle\prod_{p=1}^{k-1}(1+\iota_{0,p})\right]n_{k+1}
		+\cdots+\left[\iota_{0,k}\displaystyle\prod_{p=1}^{k-1}(1+\iota_{0,p})\right]n_r ,
	\end{array}$$
	the initial step $k=0$ being:
	$$ \displaystyle\sum_{|\underline{i}|>0}m_{\underline{i},j}\leq n_1+\ldots+n_r.        $$
	This case $k=0$ follows directly from (\ref{equ:borneFS}), by summing its $r$ relations:
	$$ \displaystyle\sum_{|\underline{i}|>0}m_{\underline{i},j}\leq\displaystyle\sum_{|\underline{i}|>0}m_{\underline{i},j}|\underline{i}|\leq n_1+\ldots+n_r.         $$
	Suppose that we have the desired property until some rank $k-1$. Recall that for any $\underline{i}$, $i_k\geq -\iota_{0,k}$. By the $k$'th equation in (\ref{equ:borneFS}), we have:\\
	
	$\begin{array}{l}
		\displaystyle\sum_{|\underline{i}|=0=i_1=\cdots=i_{k-1},\, i_{k}>0}m_{\underline{i},j}\, \leq \, \displaystyle\sum_{|\underline{i}|=0=i_1=\cdots=i_{k-1},\, i_{k}>0}m_{\underline{i},j}\,i_k
	\end{array}$
	
	$\ \ \ \ \ \ \ \ \ \ \ \ \ \ \ \ \ \ \ \ \ \  \begin{array}{lcl}
		&\leq & n_k-\left(
		\displaystyle\sum_{|\underline{i}|>0}m_{\underline{i},j}\,i_k+\displaystyle\sum_{|\underline{i}|=0,\, i_1>0}m_{\underline{i},j}\,i_k+\cdots+ \displaystyle\sum_{|\underline{i}|=0=i_1=\cdots=i_{k-2},\, i_{k-1}>0}m_{\underline{i},j}\,i_k\right)\\
		&\leq & n_k+\iota_{0,k}\left(
		\displaystyle\sum_{|\underline{i}|>0}m_{\underline{i},j} +\displaystyle\sum_{|\underline{i}|=0,\, i_1>0}m_{\underline{i},j} +\cdots+ \displaystyle\sum_{|\underline{i}|=0=i_1=\cdots=i_{k-2},\, i_{k-1}>0}m_{\underline{i},j} \right).
	\end{array}$\\
	
	We apply the induction hypothesis to these $k$ sums and obtain an inequality of type:
	$$\displaystyle\sum_{|\underline{i}|=0=i_1=\cdots=i_{k-1},\, i_{k}>0}m_{\underline{i},j}\leq \alpha_{k,1}\,n_1+\cdots+\alpha_{k,r}\,n_r.$$
	For $q>k$, let us compute:
	$$\begin{array}{lcl}
		\alpha_{k,q}&=&\iota_{0,k}\left( 1+  \iota_{0,1}+ \iota_{0,2}(1+\iota_{0,1})+\iota_{0,3}(1+\iota_{0,1})(1+\iota_{0,2})+\cdots + \iota_{0,k-1} \displaystyle\prod_{p=1}^{k-2}(1+\iota_{0,p})     \right)\\
		&=& \iota_{0,k}\displaystyle\prod_{p=1}^{k-1}(1+\iota_{0,p}).
	\end{array}$$
	For $q=k$, we have the same computation, plus the contribution of the isolated term $n_k$. Hence:
	$$\alpha_{k,k}=1+\iota_{0,k}\displaystyle\prod_{p=1}^{k-1}(1+\iota_{0,p}).$$
	For $q<k$, we have a part of the terms leading again by the same computation to the formula $\iota_{0,k} \displaystyle\prod_{p=1}^{k-1}(1+\iota_{0,p})$. The other part consists of terms starting to appear at the rank $q$ and whose sum can be computed as:
	$$\iota_{0,k}\left( 1+  \iota_{0,q+1}+ \iota_{0,q+2}(1+\iota_{0,q+1})+\cdots + \iota_{0,k-1} \displaystyle\prod_{p=q+1}^{k-2}(1+\iota_{0,p})     \right)\\
	= \iota_{0,k} \displaystyle\prod_{p=q+1}^{k-1}(1+\iota_{0,p}).$$
	So we obtain as desired:
	$$\alpha_{k,q}= \iota_{0,k}\left[ \displaystyle\prod_{p=q+1}^{k-1}(1+\iota_{0,p})+ \displaystyle\prod_{p=1}^{k-1}(1+\iota_{0,p})\right].  $$
	Subsequently, we obtain an inequality for  $m=|M|=\displaystyle\sum_{\underline{i},j}m_{\underline{i},j}$ of type:
	$$\begin{array}{lcl}
		m&=&\displaystyle\sum_{|\underline{i}|>0}m_{\underline{i},j}+\displaystyle\sum_{|\underline{i}|=0,\, i_1>0}m_{\underline{i},j}+\cdots+ \displaystyle\sum_{|\underline{i}|=0=i_1=\cdots=i_{r-2},\, i_{r-1}>0}m_{\underline{i},j}\\
		&\leq & \alpha_1\, n_1+\cdots +\alpha_r\,n_r,
	\end{array}$$
	with  $\alpha_k= 1+\displaystyle\sum_{l=1}^{r-1}\alpha_{l,k}$ for any $k$.   For $k=r$, let us compute in a similar way as before for $\alpha_{k,q}$:
	$$\begin{array}{lcl}
		\alpha_r&=&1+\iota_{0,1}+\iota_{0,2}(1+\iota_{0,1})+\cdots +\iota_{0,k}\displaystyle\prod_{p=1}^{k-1}(1+\iota_{0,p})+\cdots +\iota_{0,r-1}\displaystyle\prod_{p=1}^{r-2}(1+\iota_{0,p})\\
		&=& \displaystyle\prod_{p=1}^{r-1}(1+\iota_{0,p})=\lambda_r.
	\end{array}$$
	For $k=r-1$, we have the same computation plus 1 coming from the term $\alpha_{r-1,r-1}$. Hence:
	$$ \alpha_{r-1}=1+ \displaystyle\prod_{p=1}^{r-1}(1+\iota_{0,p})=\lambda_{r-1}.$$
	For $k\in \{1,\ldots,r-2\}$, we have a part of the terms leading again by the same computation to the formula $\displaystyle\prod_{p=1}^{r-1}(1+\iota_{0,p})$. The other part consists of terms starting to appear at the rank $k$ and whose sum can be computed as:
	$$1+\iota_{0,k+1}+\iota_{0,k+2}(1+\iota_{0,k+1})+\cdots+\iota_{0,r-1}\displaystyle\prod_{p=k+1}^{r-2}(1+\iota_{0,p})=\displaystyle\prod_{p=k+1}^{r-1}(1+\iota_{0,p})$$
	Altogether, we obtain as desired:
	$$\alpha_k=\displaystyle\prod_{p=k+1}^{r-1}(1+\iota_{0,p})+\displaystyle\prod_{p=1}^{r-1}(1+\iota_{0,p})=\lambda_k.$$
\end{demo}

\begin{remark}\label{rem:FS-contraint}\indent 
	\begin{enumerate}
		\item Note that  for any $k\in\{1,\ldots,r-1\}$,  $\lambda_k=\lambda_r\left(\displaystyle\frac{1}{(1+\iota_{0,1})\cdots(1+\iota_{0,k})}+1\right)$, so $\lambda_1\geq \lambda_k>\lambda_r$. Thus, we obtain that:
		$$ \mu_{\underline{n}}\leq \lambda_1|\underline{n}|.$$
		Moreover, in the particular case where $\underline{\iota}_0=\underline{0}$ -- i.e. when $Q(\underline{x},y)\in K[[\underline{x}]][y]$ and $y_0\in K[[\underline{x}]]$ as in \cite{sokal:implicit-function} -- we have $\lambda_k=2$ for $k\in\{1,\ldots,r-1\}$ and $\lambda_r=1$. Thus  we obtain:
		$$ \mu_{\underline{n}}\leq 2|\underline{n}|-n_r\leq 2|\underline{n}|.$$
		Note that :
		$$|\underline{n}| \leq 2|\underline{n}|-n_r\leq 2|\underline{n}|$$
		which can be related in this context with the effective bounds  $2|\underline{n}|-1$ (case\\ $w_{\underline{x}}(Q(\underline{x},y))\geq_{\mathrm{grlex}}\underline{0}$) and $|\underline{n}|$ (case $w_{\underline{x}}(Q(\underline{x},y))>_{\mathrm{grlex}}\underline{0}$) given in \cite[Remark 2.4]{sokal:implicit-function}.
		\item With the notation from Theorem \ref{theo:formule-FS}, any strongly reduced Henselian equation $y=Q(\underline{x},y)$ can be written:
		$$\underline{x}^{\underline{\iota}_0}y=\tilde{Q}(\underline{x},y)$$
		$\textrm{ with }\tilde{Q}(\underline{x},y)\in K[[\underline{x}]][y]$ and $w_{\underline{x}}(\tilde{Q}(\underline{x},y))>_{\mathrm{grlex}}\underline{\iota}_0$. 
		Any element $\underline{n}$ of $\mathrm{Supp}\, y_0$, being in the monoid $\mathcal{S}$ of the proof of Theorem \ref{theo:hensel}, is of the form:
		$$\underline{n}=\underline{m}-k\,\underline{\iota}_0\ \ \mathrm{with}\ \underline{m}\in\mathbb{N}^r,\ k\in\mathbb{N}\ \mathrm{and}\ k\,|\underline{\iota}_0|\leq |\underline{m}|.$$
	\end{enumerate}
\end{remark}

\begin{ex}\label{ex:FS-gene}
	Let us consider the following example of strongly reduced Henselian equation:
	$$\begin{array}{lcl}
		y &=& a_{1,-1,2}x_1{x_2}^{-1} y^2 + a_{-1,2,0}{x_1}^{-1}{x_2}^2 +a_{0,1,1}x_2y+ a_{-1,3,0}{x_1}^{-1}{x_2}^3 +a_{0,2,1}{x_2}^2y\\
		&&+\left(a_{1, 1, 0}+ a_{1,1,2}y^2\right)x_1 x_2  +a_{1,2,0} x_1{x_2}^2+a_{2,1,1}y{x_1}^2x_2\\
		&&+ a_{1,3,0} x_1{x_2}^3 +a_{2,2,1} y{x_1}^2{x_2}^2+a_{3,1,2}y^2{x_1}^3x_2.
	\end{array}$$
	The support of the solution is included in the monoid $\mathcal{S}$ generated by the exponents of $(x_1,x_2)$, which is equal to the pairs $\underline{n}=(n_1,n_2)\in\mathbb{Z}^2$ with $n_2=-n_1+ l$ and  $n_1\geq -l$ for  $l\in\mathbb{N}$. We have $\underline{\iota}_0=(1,1)$, so $(\lambda_1,\lambda_2)=(3,2)$ and $\mu_{\underline{n}}\leq 3n_1+2n_2=n_1+2l$. We are in position to compute the first coefficients of the unique solution $y_0$. Let us give the details for the computation of the first terms, for $l=0$. In this case, to compute $c_{n_1,-n_1}$, $n_1>0$, we consider $m$ such that  $1\leq m\leq \mu_{n_1,-n_1}\leq n_1$, and $\underline{M}=(m_{\underline{i},j})_{\underline{i},j}$ such that:
	$$\left\{\begin{array}{lcl}
		|\underline{M}|=m & \Leftrightarrow &  \displaystyle\sum_{\underline{i},j}m_{\underline{i},j}=m\leq n_1;\\
		\|\underline{M}\|=m-1 & \Leftrightarrow &  \displaystyle\sum_{\underline{i},j}m_{\underline{i},j}j=m-1\leq n_1-1;\\
		g(\underline{M})=\underline{n} &\Leftrightarrow &  \left\{\begin{array}{lcl}
			\displaystyle\sum_{\underline{i},j}m_{\underline{i},j}\,i_1&=&n_1>0;\\
			\displaystyle\sum_{\underline{i},j}m_{\underline{i},j}\,i_2&=&-n_1<0.
		\end{array}\right.
	\end{array}\right.$$
	The last condition implies that $m_{1,-1,2}\geq n_1$. 
	But, according to the second condition, this gives $n_1-1\geq \|\underline{M}\|\geq 2\, m_{1,-1,2} \geq 2\,n_1$, a contradiction. Hence, $c_{n_1,-n_1}=0$ for any $n_1>0$.\\
	In the case $l=1$, we consider the corresponding conditions to compute $c_{n_1,-n_1+1}$ for $n_1\geq -1$. We obtain that $1\leq m\leq \mu_{n_1,-n_1+1}\leq n_1+2$. Suming the two conditions in $g(\underline{M})=(n_1,-n_1+1)$, we get $m_{-1,2,0}+m_{0,1,1}=1$ and $m_{\underline{i},j}=0$ for any $\underline{i}$ such that $i_1+i_2\geq 2$. So we are left with the following linear system:
	$$\left\{\begin{array}{cccccccccc}
		(L_1)&m_{1,-1,2}&+&m_{-1,2,0}&+&m_{0,1,1}&=& m&\leq& n_1+2\\
		(L_2)&2\,m_{1,-1,2}&+&&&m_{0,1,1}&=& m-1&\leq& n_1+1\\
		(L_3)&m_{1,-1,2}&-&m_{-1,2,0}& &&=& n_1&&\\
		(L_4)&-m_{1,-1,2}&+&2\,m_{-1,2,0}&+&m_{0,1,1}&=& -n_1+1&&\\
	\end{array}\right.$$
	By comparing $(L_2)-(L_3)$ and $(L_1)$, we get that $m=m-1-n_1$, so $n_1=-1$. Consequently, by $(L_1)$, $m=1$, and by $(L_2)$, $m_{1,-1,2}=m_{0,1,1}=0$. Since $m_{-1,2,0}+m_{0,1,1}=1$, we obtain $m_{-1,2,0}=1$ which indeed gives the only solution. Finally,  $c_{n_1,-n_1+1}=0$ for any $n_1\geq 0$ and: 
	$$ c_{-1,2}=\displaystyle\frac{1}{1}\displaystyle\frac{1!}{1!0!}{a_{-1,2,0}}^1=a_{-1,2,0}.$$
	Similarly, we claim that one can determine that:
	$$\begin{array}{lcll}
		c_{-2,4}&=&0,&\mu_{\underline{n}}\leq 2;\\
		c_{-1,3}&=&a_{{-1,3,0}}+a_{{0,1,1}}a_{{-1,2,0}}+a_{{1,-1,2}}{a_{{-1,2,0}}}^{2},& \mu_{\underline{n}}\leq 3;\\
		c_{0,2}&=&0,&  \mu_{\underline{n}}\leq 4;\\
		c_{1,1}&=&a_{{1,1,0}},&  \mu_{\underline{n}}\leq 5;\\
		c_{n_1,-n_1+2}&=&0\ \ \ \ \mathrm{for}\ \ \ \ n_1\geq 0,\ n_1\neq 1&\mu_{\underline{n}}\leq n_1+4;\\
		c_{n_1,-n_1+3}&=&0\ \ \ \ \mathrm{for}\ \ \ \ -3\leq n_1\leq -2,&\mu_{\underline{n}}\leq n_1+6;\\
		c_{-1,4}&=&a_{{0,2,1}}a_{{-1,2,0}}+a_{{0,1,1}}a_{{-1,3,0}}+2\,a_{{1,-1,2}}a_{{-1,2,0}}a_{{-1,3,0}} &\\
		&&+{a_{{0,1,1}}}^{2}a_{{-1,2,0}}+3\,a_{{0,1,1}}a_{{1,-1,2}}{a_{{-1,2,0}}}^{2}+2\,{a_{{1,-1,2}}}^{2}{a_{{-1,2,0}}}^{3},& \mu_{\underline{n}}\leq 5;\\
		&\vdots&\end{array}$$
\end{ex}

\section{Closed-form expression of an algebroid multivariate series.}\label{section:flajo-soria}

The field $K$ of coefficients has still characteristic zero. Our purpose is to determine the coefficients of an algebroid series in terms of the coefficients of a vanishing polynomial. We consider the following polynomial of degree  in $y$ bounded by $d_y$ and satisfying the conditions (i) to (iii) of Lemma \ref{lemma:support-equ}: 
$$ \begin{array}{lcl}
	P(\underline{u},y)&=&\displaystyle\sum_{\underline{i}\in \N^r}\displaystyle\sum_{j=0}^{d_y}a_{\underline{i},j}\underline{u}^{\underline{i}}y^j ,\ \textrm{  with } P(\underline{u},y)\in K[[\underline{u}]][y]\setminus\{0\}\\
	&=& \displaystyle\sum_{\underline{i}\in\N^r}\pi_{\underline{i}}^P(y)\underline{u}^{\underline{i}}\\
	&=& \displaystyle\sum_{j=0}^{d_y}a_{j}^P(\underline{u})y^j,
\end{array}$$
and a formal power series:
$$y_0=\displaystyle\sum_{\underline{n}\geq_{\mathrm{grlex}} \underline{0}}c_ {\underline{n}}\underline{u}^{\underline{n}},  \textrm{  with } y_0\in K[[\underline{u}]],\   c_{\underline{0}}\neq 0.$$
The field $K((\underline{u}))$ is endowed with the  graded lexicographic valuation $w$.\\

	\begin{notation}\label{nota:P_k}
		For any $\underline{k}\in\mathbb{N}^r$ and for any $Q(\underline{u},y)=\displaystyle\sum_{j=0}^da_j^Q(\underline{u})y^j\in K((u_1^{\mathbb{Z}},\ldots,u_r^{\mathbb{Z}}))^{\textrm{grlex}}[y]$, we denote: 
		\begin{itemize}
			\item $S(\underline{k})$ the successor element of $\underline{k}$ in $(\mathbb{N}^r,\leq_{\textrm{grlex}})$;
			\item $w(Q):=\min\left\{w \left(a_j^Q(\underline{u})\right),\ j=0,..,d\right\}$;
			\item For any $\underline{k}\in\N^r $, $z_{\underline{k}}:=\displaystyle\sum_{\underline{n}=\underline{0}}^{\underline{k}}c_{\underline{n}}\underline{u }^{\underline{n}}$;
			\item $y_{\underline{k}}:=y_0-z_{\underline{k}}=\displaystyle\sum_{\underline{n}\geq_{\textrm{grlex}}S( \underline{k})}c_{\underline{n}}\underline{u}^{\underline{n}}$;
			\item $Q_{\underline{k}}(\underline{u},y):=Q(\underline{u},z_{\underline{k}}+\underline{u}^{S(\underline{k})}y) =\displaystyle\sum_{\underline{i}\geq_{\textrm{grlex}}\underline{i}_{\underline{k}}}\pi^Q_{\underline{k},\underline{i}}(y)\underline{u}^{\underline{i}}$ where $\underline{i}_{\underline{k}}:=w( Q_{\underline{k}})$.
			Note that the sequence $(\underline{i}_{\underline{k}})_{\underline{k}\in\mathbb{N}^r}$ is nondecreasing since $Q_{S(\underline{k})}(\underline{u},y)=Q_{\underline{k}}(\underline{u},c_{S(\underline{k})}+\underline{u}^{\underline{n}}y)$ for  $\underline{n}=S^2({\underline{k}})-S({\underline{k}})>_{\textrm{grlex}}\underline{0}$, $\underline{n}\in \mathbb{Z}^r$.
		\end{itemize}
	\end{notation}
	
	As for the algebraic case \cite{hickel-matu:puiseux-alg-multivar}, we consider $y_0$ solution of the equation $P=0$ via an adaptation in several variables of the  algorithmic method of Newton-Puiseux, also with two stages:
	\begin{enumerate}
		\item a first stage of separation of the  solutions, which illustrates the following fact: $y_0$ may share an initial part with other roots of $P$.  But, if $y_0$ is a simple root of $P$, this step concerns \emph{only finitely many} of the first terms of $y_0$ since $w\left(\partial P/\partial y\,(\underline{u},y_0)\right)$ is finite.
		\item a second stage of unique "automatic" resolution: for $y_0$ a simple root of $P$, once it has been separated from the other solutions, we will show that the remaining part of $y_0$ is a root of a strongly reduced  Henselian equation, in the sense of Definition \ref{defi:equ-hensel-red}, naturally derived from $P$ and an initial part of $y_0$.
	\end{enumerate}

	\begin{lemma}\label{lemme:double-simple}
		\begin{itemize}
			\item[(i)] The series $y_0$ is a root of  $P(\underline{u},y)$ if and only if the sequence $(\underline{i}_{\underline{k}})_{\underline{k}\in\mathbb{N}^r}$ where $\underline{i}_{\underline{k}}:=w( P_{\underline{k}})$ is strictly increasing.
			\item[(ii)] The series $y_0$ is a simple root of $P(\underline{u},y)$ if and only if the sequence $(\underline{i}_{\underline{k}})_{\underline{k}\in\mathbb{N}^r}$ is strictly increasing and there exists a lowest multi-index $\underline{k}_0$ such that $\underline{i}_{S(\underline{k}_0)}=\underline{i}_{\underline{k}_0}-S(\underline{k}_0)+S^2(\underline{k}_0)$. In that case, one has that  $\underline{i}_{S(\underline{k})}=\underline{i}_{\underline{k}}-S(\underline{k})+S^2(\underline{k})=\underline{i}_{\underline{k}_0}-S(\underline{k}_0)+S^2(\underline{k})$  for any $\underline{k}\geq_{\textrm{grlex}} \underline{k}_0$. 
		\end{itemize}
	\end{lemma}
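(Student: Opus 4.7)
The starting point will be a recursion relating $P_{S(\underline{k})}$ to $P_{\underline{k}}$. Setting $\underline{m}:=S^2(\underline{k})-S(\underline{k})$, which satisfies $\underline{m}>_{\mathrm{grlex}}\underline{0}$ since $\leq_{\mathrm{grlex}}$ is compatible with addition on $\mathbb{Z}^r$, a direct substitution in Notation~\ref{nota:P_k} gives $P_{S(\underline{k})}(\underline{u},y)=P_{\underline{k}}(\underline{u},c_{S(\underline{k})}+\underline{u}^{\underline{m}}y)$. Denoting by $a_j^{(\underline{k})}(\underline{u})$ the coefficient of $y^j$ in $P_{\underline{k}}(\underline{u},y)$ (so $\underline{i}_{\underline{k}}=\min_j w(a_j^{(\underline{k})})$), the binomial theorem yields
\[
a_l^{(S(\underline{k}))}(\underline{u})\;=\;\underline{u}^{l\underline{m}}\sum_{j\geq l}\binom{j}{l}c_{S(\underline{k})}^{j-l}\,a_j^{(\underline{k})}(\underline{u}).
\]
In particular $a_0^{(S(\underline{k}))}(\underline{u})=P(\underline{u},z_{S(\underline{k})})$ and $a_1^{(S(\underline{k}))}(\underline{u})=\underline{u}^{\underline{m}}\,\partial P_{\underline{k}}/\partial y(\underline{u},c_{S(\underline{k})})$. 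For $l\geq 1$ one reads off $w(a_l^{(S(\underline{k}))})\geq \underline{i}_{\underline{k}}+l\underline{m}>_{\mathrm{grlex}}\underline{i}_{\underline{k}}$, while $w(a_0^{(S(\underline{k}))})\geq \underline{i}_{\underline{k}}$, with strict inequality iff the coefficient of $\underline{u}^{\underline{i}_{\underline{k}}}$ in $P(\underline{u},z_{S(\underline{k})})$ vanishes.

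For part~(i), the key observation is that the coefficient of $\underline{u}^{\underline{i}_{\underline{k}}}$ in $P(\underline{u},y_0)$ equals $\tilde{P}_{\underline{k}}(c_{S(\underline{k})})$, where the initial polynomial $\tilde{P}_{\underline{k}}(y):=\sum_j\tilde{\alpha}_j y^j$ has $\tilde{\alpha}_j$ equal to the coefficient of $\underline{u}^{\underline{i}_{\underline{k}}}$ in $a_j^{(\underline{k})}$ (not all zero by definition of $\underline{i}_{\underline{k}}$). This follows from rewriting $P(\underline{u},y_0)=P_{\underline{k}}(\underline{u},y_{\underline{k}}/\underline{u}^{S(\underline{k})})$ in the generalized series field and noting that $y_{\underline{k}}/\underline{u}^{S(\underline{k})}=c_{S(\underline{k})}$ plus terms of valuation $>_{\mathrm{grlex}}\underline{0}$. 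Forward direction: $P(\underline{u},y_0)=0$ forces $\tilde{P}_{\underline{k}}(c_{S(\underline{k})})=0$, hence $w(a_0^{(S(\underline{k}))})>_{\mathrm{grlex}}\underline{i}_{\underline{k}}$ and therefore $\underline{i}_{S(\underline{k})}>_{\mathrm{grlex}}\underline{i}_{\underline{k}}$. Reverse direction: the estimates above give $w(P(\underline{u},y_0))\geq_{\mathrm{grlex}}\underline{i}_{\underline{k}}$ for every $\underline{k}$, and since $(\N^r,\leq_{\mathrm{grlex}})$ has order type $\omega$, a strictly increasing sequence is unbounded, forcing $P(\underline{u},y_0)=0$.

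For part~(ii), set $\underline{j}_0:=w(\partial P/\partial y(\underline{u},y_0))$. Simplicity amounts to $\underline{j}_0\in\N^r$. In the forward direction, Taylor-expanding $\partial P/\partial y$ at $z_{\underline{k}}$ and using $w(y_{\underline{k}})\geq S(\underline{k})$ yields $w(\partial P/\partial y(\underline{u},z_{\underline{k}}))=\underline{j}_0$ as soon as $S(\underline{k})>_{\mathrm{grlex}}\underline{j}_0$. Combined with the identity $a_1^{(\underline{k})}(\underline{u})=\underline{u}^{S(\underline{k})}\,\partial P/\partial y(\underline{u},z_{\underline{k}})$, the crude bound $w(a_j^{(\underline{k})})\geq jS(\underline{k})$ for $j\geq 2$, and a Taylor estimate $w(a_0^{(\underline{k})})\geq \underline{j}_0+S(\underline{k})$ deduced from $P(\underline{u},y_0)=0$, the minimum defining $\underline{i}_{\underline{k}}$ is uniquely attained at $j=1$, giving $\underline{i}_{\underline{k}}=\underline{j}_0+S(\underline{k})$ for $\underline{k}$ sufficiently large; letting $\underline{k}_0$ be the smallest such multi-index yields the stated relation. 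For the converse, the hypothesis $\underline{i}_{S(\underline{k}_0)}=\underline{i}_{\underline{k}_0}+\underline{m}_0$ combined with the bounds $w(a_l^{(S(\underline{k}_0))})\geq\underline{i}_{\underline{k}_0}+l\underline{m}_0$ for $l\geq 2$ forces the minimum to be achieved at $l=1$, giving $w(a_1^{(S(\underline{k}_0))})=\underline{i}_{\underline{k}_0}+\underline{m}_0$ and thus $w(\partial P/\partial y(\underline{u},z_{S(\underline{k}_0)}))=\underline{j}_0:=\underline{i}_{\underline{k}_0}-S(\underline{k}_0)$. By induction on $\underline{k}\geq_{\mathrm{grlex}}\underline{k}_0$, both the pattern $\underline{i}_{\underline{k}}=\underline{j}_0+S(\underline{k})$ and the invariant $w(\partial P/\partial y(\underline{u},z_{\underline{k}}))=\underline{j}_0$ propagate; since $z_{\underline{k}}\to y_0$ in the valuation topology, passing to the limit yields $w(\partial P/\partial y(\underline{u},y_0))=\underline{j}_0\in\N^r$, i.e.\ $y_0$ is simple.

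The main obstacle will be the induction step in the converse of~(ii): showing that the Taylor corrections to $\partial P/\partial y(\underline{u},z_{S(\underline{k})})$ arising from higher derivatives have valuation strictly greater than $\underline{j}_0$ requires combining the bounds $w(a_j^{(\underline{k})})\geq \underline{i}_{\underline{k}}$ from the inductive hypothesis with the trivial bound $w(\partial^j P/\partial y^j(\underline{u},z_{\underline{k}}))\geq_{\mathrm{grlex}}\underline{0}$, exploiting that the minimum in the recursion formula is achieved precisely at $l=1$. A secondary subtlety, present throughout, is that $\leq_{\mathrm{grlex}}$ is a total (not product) order on $\mathbb{Z}^r$, so additive manipulations of multi-indices must be justified via its compatibility with addition rather than by coordinatewise comparison.
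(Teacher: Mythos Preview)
Your treatment of part~(i) is correct and essentially the same as the paper's.

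For part~(ii), your approach differs from the paper's and contains one genuine gap. In the converse direction you write that the hypothesis $\underline{i}_{S(\underline{k}_0)}=\underline{i}_{\underline{k}_0}+\underline{m}_0$, together with $w\bigl(a_l^{(S(\underline{k}_0))}\bigr)\geq\underline{i}_{\underline{k}_0}+l\underline{m}_0$ for $l\geq 2$, ``forces the minimum to be achieved at $l=1$.'' This does not exclude the possibility that the minimum is achieved only at $l=0$: your bounds allow $w\bigl(a_0^{(S(\underline{k}_0))}\bigr)=\underline{i}_{\underline{k}_0}+\underline{m}_0$ while $w\bigl(a_1^{(S(\underline{k}_0))}\bigr)>\underline{i}_{\underline{k}_0}+\underline{m}_0$. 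The fix is short: in that situation your initial polynomial $\tilde P_{S(\underline{k}_0)}$ would be a nonzero constant, so $\tilde P_{S(\underline{k}_0)}(c_{S^2(\underline{k}_0)})\neq 0$, giving $\underline{i}_{S^2(\underline{k}_0)}=\underline{i}_{S(\underline{k}_0)}$ and contradicting the strictly-increasing hypothesis. Once $w\bigl(a_1^{(S(\underline{k}_0))}\bigr)=\underline{i}_{S(\underline{k}_0)}$ is secured (together with strict inequality for $l\ge 2$), your induction propagates as you indicate, because the $j\ge 2$ contributions to $a_1^{(S(\underline{k}))}$ have valuation strictly above $\underline{i}_{\underline{k}}$ by the inductive hypothesis.

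It is worth noting that the paper handles (ii) by a cleaner device. Using that $\tilde P_{\underline{k}}$ has $c_{S(\underline{k})}$ as a root whenever $y_0$ is a root of $P$, one gets $(\tilde P_{\underline{k}})'\not\equiv 0$ and hence the identity $w\bigl((\partial P/\partial y)_{\underline{k}}\bigr)=\underline{i}_{\underline{k}}-S(\underline{k})$ for all $\underline{k}$. The equivalence in (ii) then reduces to applying (i) to $\partial P/\partial y$: the sequence $\underline{i}_{\underline{k}}-S(\underline{k})$ is strictly increasing iff $y_0$ is a root of $\partial P/\partial y$, and the existence of $\underline{k}_0$ with $\underline{i}_{S(\underline{k}_0)}-S^2(\underline{k}_0)=\underline{i}_{\underline{k}_0}-S(\underline{k}_0)$ is exactly the failure of strict increase. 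This bypasses your inductive propagation argument entirely; your route is more computational but, once patched, reaches the same conclusion.
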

	\begin{proof}
		(i) Note that for any $\underline{k}\in\mathbb{N}^r,\ $ $\underline{i}_{\underline{k}}\leq_{\textrm{grlex}} w(P_{\underline{k}}(\underline{u},0)=w(P(\underline{u},z_{\underline{k}}))$. Hence, if the sequence $(\underline{i}_{\underline{k}})_{\underline{k}\in\mathbb{N}^r}$ is strictly increasing in $(\mathbb{N}^r,\leq_{\textrm{grlex}})$, it tends to  $+\infty$ (i.e.  $\forall \underline{n}\in\mathbb{N}^r$, $\exists \underline{k}_0\in\mathbb{N}^r$, $\forall \underline{k}\geq_{\mathrm{grlex}} \underline{k}_0$, $\underline{i}_{\underline{k}}\geq_{\mathrm{grlex}}\underline{n}$),  and so does $w(P(\underline{u},z_{\underline{k}}))$. The series $y_0$ is indeed a root of $P(\underline{u},y)$. Conversely, suppose that there exist $\underline{k}<_{\textrm{grlex}}\underline{l}$ such that $\underline{i}_{\underline{k}}\geq_{\textrm{grlex}} \underline{i}_{\underline{l}}$.
		Since the sequence $(\underline{i}_{\underline{n}})_{\underline{n}\in\mathbb{N}^r}$ is nondecreasing,  one has that $\underline{i}_{\underline{l}}\geq \underline{i}_{\underline{k}}$, so  $\underline{i}_{\underline{l}}=\underline{i}_{\underline{k}}$.
		We apply the multivariate Taylor's formula to $P_{\underline{j}}(\underline{u},y)$ for $\underline{j}>_{\textrm{grlex}}\underline{k}$:
		\begin{equation}\label{equ:taylor}
			\begin{array}{lcl}
				P_{\underline{j}}(\underline{u},y)&=&P_{\underline{k}}\left(\underline{u},c_{S(\underline{k})}+ c_{S^2(\underline{k})}\underline{u}^{S^2(\underline{k})-S(\underline{k})} +\cdots+c_{\underline{j}}\underline{u}^{\underline{j}-S(\underline{k})}+\underline{u}^{S(\underline{j})-S(\underline{k})}y\right)\\
				&=& \displaystyle\sum_{\underline{i}\geq_{\textrm{grlex}} \underline{i}_{\underline{k}}} \pi^P_{\underline{k},\underline{i}} \left(c_{S(\underline{k})}+ c_{S^2(\underline{k})}\underline{u}^{S^2(\underline{k})-S(\underline{k})} +\cdots+\underline{u}^{S(\underline{j})-S(\underline{k})}y\right) \underline{u}^{\underline{i}}\\
				&=& \pi^P_{\underline{k},\underline{i}_{\underline{k}}}(c_{S(\underline{k})})\underline{u}^{\underline{i}_{\underline{k}}}+b_{S(\underline{i}_{\underline{k}})} \underline{u}^{S(\underline{i}_{\underline{k}})}+ \cdots.
			\end{array}
		\end{equation}
		Note that $b_{S(\underline{i}_{\underline{k}})}= \pi^P_{\underline{k},S(\underline{i}_{\underline{k}})}(c_{S(\underline{k})})$ or  $b_{S(\underline{i}_{\underline{k}})}=  (\pi^P_{\underline{k},\underline{i}_{\underline{k}}} )'(c_{S(\underline{k})})\, c_{S^2(\underline{k})}+\pi^P_{\underline{k},S(\underline{i}_{\underline{k}})}(c_{S(\underline{k})})$ depending on whether $S(\underline{i}_{\underline{k}})<_{\textrm{grlex}} \underline{i}_{\underline{k}}+S^2(\underline{k})-S(\underline{k})$ or $S(\underline{i}_{\underline{k}})=\underline{i}_{\underline{k}}+S^2(\underline{k})-S(\underline{k})$.
		For $\underline{j}=\underline{l}$, we deduce that $\pi^P_{\underline{k},\underline{i}_{\underline{k}}}(c_{S(\underline{k})})\neq 0$. This implies that for any $\underline{j}>_{\mathrm{grlex}}\underline{k}$, $\underline{i}_{\underline{j}}=\underline{i}_{\underline{k}}$
		and    $w\left(P_{\underline{j}}(\underline{u},0)\right)=w\left(P(\underline{u},z_{\underline{j}})\right)=\underline{i}_{\underline{k}}$. Hence $w\left(P(\underline{u},y_0)\right)=\underline{i}_{\underline{k}}\neq +\infty$.
		\vspace{2 pt} 
		
		\noindent (ii) The series $y_0$ is a double root of $P$ if and only if it is a root of $P$ and $\partial P/\partial y$. Let $y_0$ be a root of $P$. Let us expand the multivariate Taylor's formula (\ref{equ:taylor}) for  $\underline{j}=S(\underline{k})$: 
		\begin{equation}\label{equ:p_{k+1}}
			\begin{array}{l}
				\begin{array}{lcl}
					P_{S(\underline{k})}(\underline{u},y)
					&=&  \pi^P_{\underline{k},\underline{i}_{\underline{k}}}(c_{S(\underline{k})})\underline{u}^{\underline{i}_{\underline{k}}}+ \pi^P_{\underline{k},S(\underline{i}_{\underline{k}})}(c_{S(\underline{k})})\underline{u}^{S(\underline{i}_{\underline{k}})}+\cdots\\
					&& +\left[(\pi^P_{\underline{k},\underline{i}_{\underline{k}}})'(c_{S(\underline{k})})\, y+\pi^P_{\underline{k},\underline{i}_{\underline{k}}+S^2(\underline{k})-S(\underline{k})}(c_{S(\underline{k})})\right]\underline{u}^{\underline{i}_{\underline{k}}+S^2(\underline{k})-S(\underline{k})}+\cdots +
				\end{array}\\
				\left[\displaystyle\frac{(\pi^P_{\underline{k},\underline{i}_{\underline{k}}})''(c_{S(\underline{k})})}{2}\, y^2+(\pi^P_{\underline{k},\underline{i}_{\underline{k}}+S^2(\underline{k})-S(\underline{k})})'(c_{S(\underline{k})})\,y+\pi^P_{\underline{k},\underline{i}_{\underline{k}}+2(S^2(\underline{k})-S(\underline{k}))}(c_{S(\underline{k})})\right]\underline{u}^{\underline{i}_{\underline{k}}+2(S^2(\underline{k})-S(\underline{k}))}+\cdots
			\end{array}\\
		\end{equation}
		
		\noindent Note that if $S(i_{\underline{k}})=\underline{i}_{\underline{k}}+S^2(\underline{k})-S(\underline{k})$, then there are no intermediary terms between the first one and the one with valuation $\underline{i}_{\underline{k}}+S^2(\underline{k})-S(\underline{k})$.
		We have by definition of $P_{\underline{k}}$:
		\begin{center}
			$\displaystyle\frac{\partial P_{\underline{k}}}{\partial y}(\underline{u},y)=\underline{u}^{S(\underline{k})}\left(\displaystyle\frac{\partial P}{\partial y}\right)_{\underline{k}}(\underline{u},y)=\displaystyle\sum_{\underline{i}\geq_{\textrm{grlex}} \underline{i}_{\underline{k}}}(\pi^P_{\underline{k},\underline{i}})'(y)\underline{u}^{\underline{i}}$
		\end{center}
		One has that $\pi^P_{\underline{k},\underline{i}_{\underline{k}}}(y)\nequiv 0$ and $\pi^P_{\underline{k},\underline{i}_{\underline{k}}}(c_{S(\underline{k})})=0$ (see the point (i) above), so $(\pi^P_{\underline{k},\underline{i}_{\underline{k}}})'(y)\nequiv 0$. Thus: \begin{equation}\label{equ:val-DP} w\left(\left(\displaystyle\frac{\partial P}{\partial y}\right)_{\underline{k}}\right)=\underline{i}_{\underline{k}}-S(\underline{k}).
		\end{equation}
		We perform the Taylor's expansion of   $\left(\displaystyle\frac{\partial P}{\partial y}\right)_{S(\underline{k})}$:
		$$\begin{array}{lcl}
			\left(\displaystyle\frac{\partial P}{\partial y}\right)_{S(\underline{k})}(\underline{u},y)&=& \left(\displaystyle\frac{\partial P}{\partial y}\right)_{\underline{k}}\left(\underline{u},c_{S(\underline{k})}+\underline{u}^{S^2(\underline{k})-S(\underline{k})}y\right)\\
			&=&( \pi^P_{\underline{k},\underline{i}_{\underline{k}}})'(c_{S(\underline{k})})u^{\underline{i}_{\underline{k}}-S(\underline{k})}+\cdots\\
			&&+ \left[(\pi^P_{\underline{k},\underline{i}_{\underline{k}}})''(c_{S(\underline{k})})\, y+(\pi^P_{\underline{k},\underline{i}_{\underline{k}}+S^2(\underline{k})-S(\underline{k})})'(c_{S(\underline{k})})\right]\underline{u}^{\underline{i}_{\underline{k}}+S^2(\underline{k})-2S(\underline{k})}+\cdots.
		\end{array}$$
		By the point (i) applied to  $\displaystyle\frac{\partial P}{\partial y}$, if $y_0$ is a double root $P$, we must have $ (\pi^P_{\underline{k},\underline{i}_{\underline{k}}})'(c_{S(\underline{k})})=0$. Moreover, if  $\pi^P_{\underline{k},\underline{i}}(c_{S(\underline{k})})\neq 0$ for some $\underline{i}\in\left\{S(\underline{i}_{\underline{k}}),\,\ldots\,,\,\underline{i}_{\underline{k}}+S^2(\underline{k})-S(\underline{k})\right\}$, by Formula (\ref{equ:p_{k+1}}) we would have $i_{S(\underline{k})}\leq_{\textrm{grlex}}\underline{i}_{\underline{k}}+S^2(\underline{k})-S(\underline{k})$ and even $\underline{i}_{\underline{j}}\leq_{\textrm{grlex}}\underline{i}_{\underline{k}}+S^2(\underline{k})-S(\underline{k})$ for every  $\underline{j}>_{\textrm{grlex}}\underline{k}$ according to Formula (\ref{equ:taylor}): $y_0$ could not be a root of $P$. So, $\pi^P_{\underline{k},\underline{i}}(c_{S(\underline{k})})= 0$ for $\underline{i}=S(\underline{i}_{\underline{k}}),..,\underline{i}_{\underline{k}}+S^2(\underline{k})-S(\underline{k})$, and, accordingly, $i_{S(\underline{k})}>_{\textrm{grlex}} \underline{i}_{\underline{k}}+S^2(\underline{k})-S(\underline{k})$.\\
		If $y_0$ is a simple root of $P$, from the point (i) and its proof there exists a lowest  $\underline{k}_0$ such that the sequence $(\underline{i}_{\underline{k}}-S(\underline{k}))_{\underline{k}\in\mathbb{N}^r}$ is no longer strictly increasing, that is to say, such that  $(\pi^P_{\underline{k}_0,\underline{i}_{\underline{k}_0}})'(c_{S(\underline{k}_0)})\neq 0$. For any $\underline{k}\geq_{\mathrm{grlex}} \underline{k}_0$, we consider the  Taylor's expansion of $\left(\displaystyle\frac{\partial P}{\partial y}\right)_{S(\underline{k})}=\left(\displaystyle\frac{\partial P}{\partial y}\right)_{\underline{k}_0}(c_{S(\underline{k}_0)}+\cdots+\underline{u}^{S^2(\underline{k})-S(\underline{k_0 })}y)$:
		\begin{equation}\label{equ:partialP}
			\begin{array}{l}
				\left(\displaystyle\frac{\partial P}{\partial y}\right)_{S(\underline{k})}(\underline{u},y)\,=\,(\pi^P_{\underline{k}_0,\underline{i}_{\underline{k}_0}})'(c_{S(\underline{k}_0)})\underline{u}^{\underline{i}_{\underline{k}_0}-S(\underline{k}_0)}+\cdots\\
				\ \ \ \ \ \  \ \ \  \ \ \ +\left[(\pi^P_{\underline{k}_0,\underline{i}_{\underline{k}_0}})''(c_{S(\underline{k}_0)})c_{S^2(k_0)}+(\pi^P_{\underline{k}_0, \underline{i}_{\underline{k}_0}+S^2(\underline{k}_0)-S(\underline{k}_0)})' (c_{S(\underline{k}_0)})\right]\underline{u}^{\underline{i}_{\underline{k}_0}+ S^2(\underline{k}_0)-S(\underline{k}_0)} +\cdots
			\end{array}
		\end{equation}
		and we get that:
		\begin{equation}\label{equ:taylor-deriv}
			w\left(\displaystyle\frac{\partial P}{\partial y}\left(z_{S(\underline{k})},0\right) \right)=w\left(\left(\displaystyle\frac{\partial P}{\partial y}\right)_{S(\underline{k})}(\underline{u},0)\right)=w\left(\left(\displaystyle\frac{\partial P}{\partial y}\right)_{S(\underline{k})}\right)=\underline{i}_{\underline{k}_0}-S(\underline{k}_0).
		\end{equation} 
		By Equation (\ref{equ:val-DP}), we obtain that $w\left(\left(\displaystyle\frac{\partial P}{\partial y}\right)_{S(\underline{k})}\right)=\underline{i}_{S(\underline{k})}-S^2(\underline{k})$. So, $\underline{i}_{S(k)}=\underline{i}_{\underline{k}_0}-S(\underline{k}_0)+S^2(\underline{k})$. As every $\underline{k}>_{\mathrm{grlex}} \underline{k}_0$ is the successor of some $\underline{k}'\geq_{\mathrm{grlex}} \underline{k}_0$, we get that for every $\underline{k}\geq_{\mathrm{grlex}} \underline{k}_0$, $\underline{i}_{\underline{k}}-S(\underline{k})=\underline{i}_{\underline{k}_0}-S(\underline{k}_0)$. So, finally, $\underline{i}_{S(k)}=\underline{i}_{\underline{k}}-S(\underline{k})+S^2(\underline{k})$ as desired.
	\end{proof}
	
	Resuming the notations  of Lemma \ref{lemme:double-simple}, the  multi-index $\underline{k}_0$ represents the length of the initial part in the stage of separation of the solutions. In the following lemma, we bound it using  the discriminant $\Delta_P$ of $P$ (see just before Notation \ref{nota:FS}). 
	
	\begin{lemma}\label{lemme:partie-princ}
		Let $P(\underline{u},y)$ be a  nonzero polynomial with  $\deg_y(P)\leq d_y$ and with only simple roots. Let  $y_0=\displaystyle\sum_{\underline{n}\in\N^r}c_ {\underline{n}}\underline{u}^{\underline{n}},\   c_{\underline{0}}\neq 0$ be one of these roots.
		The  multi-index $\underline{k}_0$ of Lemma \ref{lemme:double-simple} verifies that:
		$$|\underline{k}_0|\leq \ord_{\underline{u}}(\Delta_P(\underline{u})).$$
	\end{lemma}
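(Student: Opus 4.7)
The strategy is a three-step chain: a B\'ezout-type identity for $P$ and $\partial P/\partial y$, a reading of $w\bigl(\partial P/\partial y(\underline{u}, y_0)\bigr)$ directly from the proof of Lemma~\ref{lemme:double-simple}, and a pigeonhole estimate in the well-ordered set $(\mathbb{N}^r, \leq_{\mathrm{grlex}})$. Since $y_0$ is a simple root, $P$ and $\partial P/\partial y$ are coprime in $K(\underline{u})[y]$ and their $y$-resultant equals $\Delta_P(\underline{u})$; Lemma~\ref{lemme:bezout-coeff} then provides $A, B \in K[[\underline{u}]][y]$ with
\[
A\,P \,+\, B\,\frac{\partial P}{\partial y} \,=\, \Delta_P(\underline{u}).
\]
Evaluating at $y = y_0$ and using $P(\underline{u}, y_0) = 0$ gives $B(\underline{u}, y_0)\,\frac{\partial P}{\partial y}(\underline{u}, y_0) = \Delta_P(\underline{u})$; since $B(\underline{u}, y_0) \in K[[\underline{u}]]$ has nonnegative $\underline{u}$-adic order, I would deduce
\[
\ord_{\underline{u}}(\Delta_P) \,\geq\, \ord_{\underline{u}}\bigl(\tfrac{\partial P}{\partial y}(\underline{u}, y_0)\bigr).
\]

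The second step reads the valuation of $\frac{\partial P}{\partial y}(\underline{u}, y_0)$ from Equation~(\ref{equ:taylor-deriv}): for every $\underline{k} \geq_{\mathrm{grlex}} \underline{k}_0$ one has $w\bigl(\frac{\partial P}{\partial y}(\underline{u}, z_{S(\underline{k})})\bigr) = \underline{i}_{\underline{k}_0} - S(\underline{k}_0)$, and since $z_{S(\underline{k})} \to y_0$ in the ultrametric associated to $w$ as $\underline{k}$ grows in grlex, the valuation of $\frac{\partial P}{\partial y}(\underline{u}, y_0)$ equals $\underline{i}_{\underline{k}_0} - S(\underline{k}_0)$ as well. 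This element lies in $\mathbb{N}^r$ because $\bigl(\frac{\partial P}{\partial y}\bigr)_{\underline{k}_0} \in K[[\underline{u}]][y]$, and since $\leq_{\mathrm{grlex}}$ sorts $\mathbb{N}^r$ primarily by total degree, I would obtain $\ord_{\underline{u}}\bigl(\frac{\partial P}{\partial y}(\underline{u}, y_0)\bigr) = |\underline{i}_{\underline{k}_0} - S(\underline{k}_0)|$.

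The core of the argument is then the bound $|\underline{i}_{\underline{k}_0} - S(\underline{k}_0)| \geq |\underline{k}_0|$. I would introduce $v_{\underline{k}} := \underline{i}_{\underline{k}} - S(\underline{k})$, which lies in $\mathbb{N}^r$ for every $\underline{k}$ by the same reason as above. The minimality of $\underline{k}_0$ established in the proof of Lemma~\ref{lemme:double-simple}(ii) yields $(\pi^P_{\underline{k}, \underline{i}_{\underline{k}}})'(c_{S(\underline{k})}) = 0$ for every $\underline{k} <_{\mathrm{grlex}} \underline{k}_0$, which as seen in that proof translates into the strict grlex inequality $v_{S(\underline{k})} >_{\mathrm{grlex}} v_{\underline{k}}$ for every such $\underline{k}$. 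Combined with $v_{\underline{0}} \geq_{\mathrm{grlex}} \underline{0}$ (the constant term of $P_{\underline{0}}$ is $P(\underline{u}, c_{\underline{0}})$ which vanishes at the origin, while for $j \geq 1$ the coefficient of $y^j$ in $P_{\underline{0}}$ is divisible by $\underline{u}^{j\,S(\underline{0})}$, hence $\underline{i}_{\underline{0}} \geq_{\mathrm{grlex}} S(\underline{0})$), the finite family $(v_{\underline{k}})_{\underline{k} \leq_{\mathrm{grlex}} \underline{k}_0}$ is a strictly grlex-increasing sequence in $\mathbb{N}^r$. Since $(\mathbb{N}^r, \leq_{\mathrm{grlex}})$ has order type $\omega$, letting $N$ denote the rank of $\underline{k}_0$ (its number of strict grlex-predecessors in $\mathbb{N}^r$), the sequence has $N + 1$ terms, so its last term $v_{\underline{k}_0}$ must have rank $\geq N$ as well. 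The unique element of rank $N$ in $(\mathbb{N}^r, \leq_{\mathrm{grlex}})$ being $\underline{k}_0$ itself, I conclude that $v_{\underline{k}_0} \geq_{\mathrm{grlex}} \underline{k}_0$, and the total-degree priority of $\leq_{\mathrm{grlex}}$ yields $|v_{\underline{k}_0}| \geq |\underline{k}_0|$.

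Chaining the three estimates produces $\ord_{\underline{u}}(\Delta_P) \geq |\underline{k}_0|$ as desired. I expect the main obstacle to be the third step: one must check carefully that the strict grlex increase of $(v_{\underline{k}})$ extends all the way up to and including the transition from $A(\underline{k}_0)$ to $\underline{k}_0$ (which relies on $A(\underline{k}_0) <_{\mathrm{grlex}} \underline{k}_0$, so that the vanishing condition $(\pi^P_{A(\underline{k}_0), \underline{i}_{A(\underline{k}_0)}})'(c_{\underline{k}_0}) = 0$ still applies), and that the pigeonhole bound in $(\mathbb{N}^r, \leq_{\mathrm{grlex}})$ truly propagates to a lower bound on $|\,\cdot\,|$. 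The first two steps should follow from tools already in place in Section~\ref{section:preliminaries} and from rereading the computations of Lemma~\ref{lemme:double-simple}.
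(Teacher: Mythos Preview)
Your proof is correct and follows essentially the same three-step approach as the paper: B\'ezout identity, identification of $w(\partial P/\partial y(\underline{u},y_0))$ with $\underline{i}_{\underline{k}_0}-S(\underline{k}_0)$ via (\ref{equ:taylor-deriv}), and the pigeonhole bound $v_{\underline{k}_0}\geq_{\mathrm{grlex}}\underline{k}_0$ from the strict grlex increase of $(v_{\underline{k}})_{\underline{k}\leq_{\mathrm{grlex}}\underline{k}_0}$. The paper compresses the third step into the single line ``$w((\partial P/\partial y)_{S(\underline{k}_0)})\geq_{\mathrm{grlex}}\underline{k}_0$'' after noting the strict increase, whereas you spell out the rank argument in $(\mathbb{N}^r,\leq_{\mathrm{grlex}})$ explicitly.

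One small imprecision: you invoke Lemma~\ref{lemme:bezout-coeff}, which is stated for $K[\underline{x},y]$, not $K[[\underline{u}]][y]$. The paper simply asserts the B\'ezout identity with $A,B\in K[[\underline{u}]][y]$ without citing that lemma. This causes no real problem, since the Sylvester-matrix argument underlying Lemma~\ref{lemme:bezout-coeff} works verbatim over any commutative ring, but strictly speaking you should either note this extension or invoke the B\'ezout identity for the resultant directly.
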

	\begin{proof}
		By definition of $\underline{k}_0$ and by Formula (\ref{equ:taylor-deriv}), for any $\underline{k}\geq_{\mathrm{grlex}} \underline{k}_0$, $$w\left( \displaystyle\frac{\partial P}{\partial y}\left(\underline{u},z_{S(\underline{k})}\right)\right)=w\left(\displaystyle\frac{\partial P}{\partial y}\left(\underline{u},z_{S(\underline{k}_0)}\right)\right)=\underline{i}_{\underline{k}_0}-S(\underline{k}_0).$$ So, $w\left(\displaystyle\frac{\partial P}{\partial y}(\underline{u},y_0)\right)=w\left(\displaystyle\frac{\partial P}{\partial y}(\underline{u},z_{S(\underline{k}_0)})\right)$. 
		Moreover,  by minimality of $\underline{k}_0$, the sequence $(\underline{i}_{\underline{k}}-S(\underline{k}))_{\underline{k}}$ is strictly increasing up to $\underline{k}_0$, so by Formula (\ref{equ:val-DP}): $$w\left( \displaystyle\frac{\partial P}{\partial y}(\underline{u},y_0)\right)=w\left(\displaystyle\frac{\partial P}{\partial y}(\underline{u},z_{S(\underline{k}_0)})\right)=w\left(\left(\displaystyle\frac{\partial P}{\partial y}\right)_{S(\underline{k}_0)}(\underline{u},0)\right)\geq_{\mathrm{grlex}} w\left(\left(\displaystyle\frac{\partial P}{\partial y}\right)_{S(\underline{k}_0)}\right)\geq_{\mathrm{grlex}} \underline{k}_0.$$
		So:
		$$\left|\underline{k}_0\right|\leq \left|w\left( \displaystyle\frac{\partial P}{\partial y}(\underline{u},y_0)\right)\right|=\mathrm{ord}_{\underline{u}} \displaystyle\frac{\partial P}{\partial y}(\underline{u},y_0).$$
		Since  $P$  has only  simple roots, its discriminant  $\Delta_P$  is nonzero and one has a Bezout identity:
		$$A(\underline{u},y)P(\underline{u},y)+B(\underline{u},y)\frac{\partial P}{\partial y}(\underline{u},y)=\Delta_P(\underline{u})$$
		with $A,B\in K[[\underline{u}]][y]$. 
		By evaluating this identity at $y=y_0$, we obtain that $\ord_{\underline{u}} \left(\displaystyle\frac{\partial P}{\partial y}(\underline{u},y_0) \right)\leq \ord_{\underline{u}}(\Delta_P(\underline{u}))$, so $\left|\underline{k}_0\right|\leq \ord_{\underline{u}}(\Delta_P(\underline{u}))$ as desired.
	\end{proof}

	\begin{notation}\label{nota:omega_0}
		Resuming Notation \ref{nota:P_k} and the content of Lemma \ref{lemme:double-simple}, we set: $$\omega_0:=(\pi^P_{\underline{k}_0,\underline{i}_{\underline{k}_0}})'(c_{S(\underline{k}_0)}).$$
		By Formula (\ref{equ:partialP}), we note that $$
		\left(\displaystyle\frac{\partial P}{\partial y}\right)(\underline{u},y_0)=\omega_0\,\underline{u}^{\underline{i}_{\underline{k}_0}-S(\underline{k}_0)}+\cdots.$$
		Thus, $\omega_0$ is the  initial coefficient of $\left(\displaystyle\frac{\partial P}{\partial y}\right)(\underline{u},y_0)$ with respect to $\leq_{\mathrm{grlex}}$, hence $\omega_0\neq 0$.
	\end{notation}
	
	\begin{theo}\label{theo:FS}
		Consider the following nonzero polynomial in $K[[\underline{u}]][y]$ of degree in $y$ bounded by $d_y$: 
		$$ P(\underline{u},y)=\displaystyle\sum_{\underline{i}\in\N^r}\displaystyle\sum_{j=0}^{d_y}a_{\underline{i},j}\underline{u}^{\underline{i}}y^j = \displaystyle\sum_{\underline{i}\geq_{\mathrm{grlex}} \underline{0}} \pi^P_{\underline{i}}(y)\underline{u}^{\underline{i}},$$
		and a formal power series which is a simple root:
		$$y_0=\displaystyle\sum_{\underline{n}\geq_{\mathrm{grlex}} \underline{0}}c_{\underline{n}}\underline{u}^{\underline{n}}\ \in K[[\underline{u}]],\   c_{\underline{0}}\neq 0.$$
		Resuming Notations \ref{nota:P_k} and \ref{nota:omega_0} and the content of Lemma \ref{lemme:double-simple}, recall that \\ $\omega_0:=(\pi^P_{\underline{k}_0,\underline{i}_{\underline{k}_0}})'(c_{S(\underline{k}_0)})\neq 0$.
		Then, for any $\underline{k}>_{\mathrm{grlex}}\underline{k}_0$:
		\begin{itemize}
			\item either the polynomial $z_{S(\underline{k})}=\displaystyle\sum_{\underline{n}=\underline{0}}^{S(\underline{k})}c_{\underline{n}}\underline{u}^{\underline{n}}$ is a solution of $P(\underline{u},y)=0$;
			\item or $ _{\underline{k}}R(\underline{u},y):=\displaystyle\frac{P_{\underline{k}}(\underline{u},y+c_{S(\underline{k})})}{-\omega_0\underline{u}^{\underline{i}_{\underline{k}}}}=-y+\, _{\underline{k}}Q(\underline{u},y)\in K\left(\left(u_1^{\mathbb{Z}},\ldots,u_r^\mathbb{Z}\right)\right)^{\textrm{grlex}}_{\textrm{Mod}}[y]$  defines a strongly reduced Henselian equation:
			$$ y=\, _{\underline{k}}Q(\underline{u},y)$$
				as in Definition \ref{defi:equ-hensel-red} and satisfied by:
				$$t_{S(\underline{k})}:=\frac{y_0-z_{S(\underline{k})}}{\underline{u}^{S(\underline{k})}}=c_{S^2(\underline{k})}\underline{u}^{S^2(\underline{k})-S(\underline{k})}+c_{S^3(\underline{k})}\underline{u}^{S^3(\underline{k})-S(\underline{k})}+\cdots.$$
			\end{itemize} 
		\end{theo}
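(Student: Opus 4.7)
The approach is a direct computation using Taylor expansion. Since $z_{S(\underline{k})} = z_{\underline{k}} + c_{S(\underline{k})} \underline{u}^{S(\underline{k})}$, I first rewrite
\[
P_{\underline{k}}(\underline{u}, y + c_{S(\underline{k})}) = P\bigl(\underline{u}, z_{S(\underline{k})} + \underline{u}^{S(\underline{k})} y\bigr)
= P(\underline{u}, z_{S(\underline{k})}) + \underline{u}^{S(\underline{k})}\tfrac{\partial P}{\partial y}(\underline{u}, z_{S(\underline{k})})\,y
+ \sum_{j \geq 2}\tfrac{\underline{u}^{jS(\underline{k})}}{j!}\tfrac{\partial^j P}{\partial y^j}(\underline{u}, z_{S(\underline{k})})\,y^j.
\]
Dividing by $-\omega_0\,\underline{u}^{\underline{i}_{\underline{k}}}$ places $_{\underline{k}}R$ in $K((u_1^{\mathbb{Z}},\ldots,u_r^{\mathbb{Z}}))^{\mathrm{grlex}}_{\mathrm{Mod}}[y]$, since the numerator is in $K[[\underline{u}]][y]$ and the divisor is a monomial. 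The plan is then to inspect the three contributions in $y^0$, $y^1$, and $y^j$ ($j \geq 2$), checking that $_{\underline{k}}R$ has exactly the form $-y + {_{\underline{k}}Q}$ with $_{\underline{k}}Q$ satisfying the two defining conditions of Definition \ref{defi:equ-hensel-red}.

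For the $y^0$ term, I observe that $P(\underline{u}, z_{S(\underline{k})})$ vanishes identically iff $z_{S(\underline{k})}$ is a root of $P$, which is exactly the first alternative of the dichotomy; otherwise, by Lemma \ref{lemme:double-simple}(ii) we have $w(P(\underline{u}, z_{S(\underline{k})})) \geq \underline{i}_{S(\underline{k})} = \underline{i}_{\underline{k}} + S^2(\underline{k}) - S(\underline{k})$, so the $y^0$ coefficient of $_{\underline{k}}R$ is nonzero with grlex-valuation $\geq S^2(\underline{k}) - S(\underline{k}) >_{\mathrm{grlex}} \underline{0}$, securing $_{\underline{k}}Q(\underline{u}, 0) \not\equiv 0$. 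For the $y^1$ term, Formula (\ref{equ:partialP}) together with Notation \ref{nota:omega_0} gives $\tfrac{\partial P}{\partial y}(\underline{u}, z_{S(\underline{k})}) = \omega_0 \underline{u}^{\underline{i}_{\underline{k}_0} - S(\underline{k}_0)} + (\text{higher})$, and Lemma \ref{lemme:double-simple}(ii) applied to $\underline{k} >_{\mathrm{grlex}} \underline{k}_0$ yields $\underline{i}_{\underline{k}} = \underline{i}_{\underline{k}_0} - S(\underline{k}_0) + S(\underline{k})$; combining these, the $y^1$ coefficient of $_{\underline{k}}R$ is $-1 + \epsilon(\underline{u})$ with $w(\epsilon) >_{\mathrm{grlex}} \underline{0}$, which justifies the notation $-y + {_{\underline{k}}Q}$ and shows that the $y^1$ coefficient of $_{\underline{k}}Q$ has positive grlex-valuation. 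For the $y^j$ terms with $j \geq 2$, since $\tfrac{\partial^j P}{\partial y^j}(\underline{u}, z_{S(\underline{k})}) \in K[[\underline{u}]]$ has grlex-valuation $\geq \underline{0}$, the corresponding coefficient of $_{\underline{k}}R$ has valuation at least $jS(\underline{k}) - \underline{i}_{\underline{k}} = (j-1)S(\underline{k}) - (\underline{i}_{\underline{k}_0} - S(\underline{k}_0))$, which I will check is $>_{\mathrm{grlex}} \underline{0}$ using the grlex-specific criterion on the total-degree component.

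For the solution claim, I rely on $P(\underline{u}, y_0) = 0$ together with $y_0 = z_{S(\underline{k})} + \underline{u}^{S(\underline{k})} t_{S(\underline{k})}$: substitution gives $P_{\underline{k}}(\underline{u}, c_{S(\underline{k})} + t_{S(\underline{k})}) = 0$, whence $_{\underline{k}}R(\underline{u}, t_{S(\underline{k})}) = 0$, that is, $t_{S(\underline{k})} = {_{\underline{k}}Q}(\underline{u}, t_{S(\underline{k})})$. Since $w(t_{S(\underline{k})}) = w(y_0 - z_{S(\underline{k})}) - S(\underline{k}) = S^2(\underline{k}) - S(\underline{k}) >_{\mathrm{grlex}} \underline{0}$, Hensel's Lemma \ref{theo:hensel} applies and ensures uniqueness. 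The main obstacle will be the $y^j$ valuation estimate for $j \geq 2$: one must exploit that $\underline{i}_{\underline{k}_0} - S(\underline{k}_0) = w(\tfrac{\partial P}{\partial y}(\underline{u}, y_0)) \in \mathbb{N}^r$ is a fixed quantity and compare it, via the total-degree component of grlex, with $(j-1)S(\underline{k})$ for every $\underline{k} >_{\mathrm{grlex}} \underline{k}_0$, so that the bound is strictly positive in the graded lexicographic order.
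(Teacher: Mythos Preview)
Your direct Taylor expansion handles the $y^0$ and $y^1$ contributions correctly, but the argument for $j\geq 2$ has a genuine gap. From
\[
\frac{\partial^{j}P}{\partial y^{j}}(\underline{u},z_{S(\underline{k})})\in K[[\underline{u}]]
\]
you only obtain $w\bigl(\tfrac{\partial^{j}P}{\partial y^{j}}(\underline{u},z_{S(\underline{k})})\bigr)\geq_{\mathrm{grlex}}\underline{0}$, hence after division the $y^{j}$ coefficient of $_{\underline{k}}R$ has valuation $\geq_{\mathrm{grlex}}(j-1)S(\underline{k})-\underline{v}$ with $\underline{v}:=\underline{i}_{\underline{k}_0}-S(\underline{k}_0)=w\bigl(\tfrac{\partial P}{\partial y}(\underline{u},y_0)\bigr)$. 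Your ``total-degree'' check would require $|S(\underline{k})|>|\underline{v}|$, but $|\underline{v}|$ can be arbitrarily large while $\underline{k}$ sits just above $\underline{k}_0$. For instance, with $r=1$ and $P(u,y)=u^{10}(y-1)(y-1-u)$, $y_0=1+u$, one finds $\underline{k}_0=0$, $\underline{v}=11$, and for $\underline{k}=1$ the bound becomes $(j-1)\cdot 2-11$, which is $-9$ at $j=2$: useless. Yet the theorem holds here (the actual valuation is $1$), because the trivial estimate on $\partial^{j}P/\partial y^{j}$ throws away the relevant information.

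What is actually needed is that $\pi^{P}_{\underline{k},\underline{i}_{\underline{k}}}(y)$ has degree $\leq 1$ in $y$ for every $\underline{k}>_{\mathrm{grlex}}\underline{k}_0$; equivalently, that the leading $\underline{u}^{\underline{i}_{\underline{k}}}$-slice of $P_{\underline{k}}$ contributes nothing to $y^{j}$ for $j\geq 2$. This is precisely what the paper establishes, and it does so by \emph{induction} on $\underline{k}$: from the inductive hypothesis one writes
\[
P_{\underline{k}}(\underline{u},y)=\omega_0\,(y-c_{S(\underline{k})})\,\underline{u}^{\underline{i}_{\underline{k}}}+\underline{u}^{\underline{i}_{\underline{k}}}\cdot{_{\underline{k}}\check{Q}}(\underline{u},y),\qquad w({_{\underline{k}}\check{Q}})>_{\mathrm{grlex}}\underline{0},
\]
and passes to $P_{S(\underline{k})}$ via $P_{S(\underline{k})}(\underline{u},y)=P_{\underline{k}}\bigl(\underline{u},c_{S(\underline{k})}+\underline{u}^{S^{2}(\underline{k})-S(\underline{k})}y\bigr)$, which immediately gives the same linear leading form at the next step. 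Your global Taylor expansion at $z_{S(\underline{k})}$ cannot see this, because the linearity of $\pi^{P}_{\underline{k},\underline{i}_{\underline{k}}}$ is not a consequence of the raw data $P\in K[[\underline{u}]][y]$ and $y_0\in K[[\underline{u}]]$ alone; it is a propagated structural fact. To repair your argument you must either prove $\deg_y\pi^{P}_{\underline{k},\underline{i}_{\underline{k}}}\leq 1$ directly (which amounts to redoing the induction), or replace the naive bound on $\partial^{j}P/\partial y^{j}$ by something sharp enough---and no such sharpening is available without tracking how $P_{\underline{k}}$ evolves with $\underline{k}$.
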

		\begin{proof}
			We show by induction on $\underline{k}\in(\mathbb{N}^r,\leq_{\mathrm{grlex}})$, $\underline{k}>_{\mathrm{grlex}}\underline{k}_0$,  that $_{\underline{k}}R(u,y)=-y+ \, _{\underline{k}}Q(u,y)$ with $_{\underline{k}}Q(\underline{u},y)\in K\left(\left(u_1^{\mathbb{Z}},\ldots,u_r^\mathbb{Z}\right)\right)^{\textrm{grlex}}_{\textrm{Mod}}\left[y\right]$ is
			such that $w\left( _{\underline{k}}Q\left(\underline{u},y\right)\right) >_{\textrm{grlex}}\underline{0}$.  Let us apply  Formula  (\ref{equ:p_{k+1}}) with parameter $\underline{k}=\underline{k}_0$. Since $\underline{i}_{S(\underline{k}_0)}=\underline{i}_{\underline{k}_0}+S^2(\underline{k}_0)-S(\underline{k}_0)$, we have that  $\pi^P_{\underline{k}_0,\underline{i}}(c_{S(\underline{k}_0)})=0$ for $\underline{i}_{\underline{k}_0}\leq_{\mathrm{grlex}} \underline{i}<_{\mathrm{grlex}}i_{\underline{k}_0}+S^2(\underline{k}_0)-S(\underline{k}_0)$,  and accordingly:
			$$P_{S(\underline{k}_0)}(\underline{u},y)=\left[\omega_0\,y+\pi^P_{\underline{k}_0,\underline{i}_{\underline{k}_0}+S^2(\underline{k}_0)-S(\underline{k}_0)}(c_{S(\underline{k}_0)})\right]\underline{u}^{\underline{i}_{\underline{k}_0}+S^2(\underline{k}_0)-S(\underline{k}_0)}+\, _{S(\underline{k}_0)}T(\underline{u},y)$$
			where $_{S(\underline{k}_0)}T(\underline{u},y)\in K[[\underline{u}]][y]$ with $w\left( _{S(\underline{k}_0)}T(\underline{u},y)\right)>_{\mathrm{grlex}}\underline{i}_{\underline{k}_0}+S^2(\underline{k}_0)-S(\underline{k}_0)$.
			Since $\underline{i}_{S^2(\underline{k}_0)}=\underline{i}_{\underline{k}_0}+S^3(\underline{k}_0)-S(\underline{k}_0)>_{\mathrm{grlex}}i_{\underline{k}_0}+S^2(\underline{k}_0)-S(\underline{k}_0)$, we obtain that: $$\pi^P_{S(\underline{k}_0),i_{\underline{k}_0}+S^2(\underline{k}_0)-S(\underline{k}_0)}(y)=\omega_0\,y+\pi^P_{\underline{k}_0,i_{\underline{k}_0}+S^2(\underline{k}_0)-S(\underline{k}_0)}(c_{S(\underline{k}_0)})$$ vanishes at $c_{S^2(\underline{k}_0)}$, which implies that $$c_{S^2(\underline{k}_0)}= \displaystyle\frac{-\pi^P_{\underline{k}_0,\underline{i}_{\underline{k}_0}+S^2(\underline{k}_0)-S(\underline{k}_0)}(c_{S(\underline{k}_0)})}{\omega_0}.$$ Computing $_{S(\underline{k}_0)}R(\underline{u},y)$, it follows that:
			\begin{center}
				$_{S(\underline{k}_0)}R(\underline{u},y)=-y+\, _{S(\underline{k}_0)}Q(\underline{u},y) $,\end{center} \textrm{ with }$_{S(\underline{k}_0)}Q(\underline{u},y)=\displaystyle\frac{_{S(\underline{k}_0)}T(\underline{u},y +c_{S^2(\underline{k}_0)})}{-\omega_0\underline{u}^{i_{\underline{k}_0}+S^2(\underline{k}_0)-S(\underline{k}_0)}}$.
			So $_{S(\underline{k}_0)}Q(\underline{u},y)\in K\left(\left(u_1^{\mathbb{Z}},\ldots,u_r^\mathbb{Z}\right)\right)^{\textrm{grlex}}_{\textrm{Mod}}\left[y\right]$ with $w\left( _{S(\underline{k}_0)}Q(\underline{u},y)\right)>_{\mathrm{grlex}} \underline{0}$.  
			
			Now suppose that the property holds true at a rank $\underline{k}\geq_{\mathrm{grlex}}S(\underline{k}_0)$, which means that $_{\underline{k}}R(\underline{u},y):=\displaystyle\frac{P_{\underline{k}}(\underline{u},y+c_{S(\underline{k})})}{-\omega_0\underline{u}^{\underline{i}_{\underline{k}}}}=-y+\, _{\underline{k}}Q(\underline{u},y)$. Therefore, for $_{\underline{k}}\check{Q}(\underline{u},y)=-\omega_0\, _{\underline{k}}Q(\underline{u},y-c_{S(\underline{k})})\in K\left(\left(u_1^{\mathbb{Z}},\ldots,u_r^\mathbb{Z}\right)\right)^{\textrm{grlex}}_{\textrm{Mod}}\left[y\right]$ which is such that $w\left( _{\underline{k}}\check{Q}(\underline{u}, y)\right)  >_{\textrm{grlex}}\underline{0}$, we can write:
			$$\begin{array}{lcl}
				P_{\underline{k}}(\underline{u},y)&=&\omega_0(y-c_{S(\underline{k})})\underline{u}^{\underline{i}_{\underline{k}}}+ \underline{u}^{\underline{i}_{\underline{k}}} \cdot\, _{\underline{k}}\check{Q}(\underline{u},y)\\
				&=&\pi^P_{\underline{k},\underline{i}_{\underline{k}}}(y)\underline{u}^{\underline{i}_{\underline{k}}}+\pi^P_{\underline{k},S(\underline{i}_{\underline{k}})}(y)\underline{u}^{S(\underline{i}_{\underline{k}})}+ \cdots.
			\end{array}$$
			Since $P_{S(\underline{k})}(\underline{u},y)= P_{\underline{k}}\left(\underline{u},c_{S(\underline{k})}+\underline{u}^{S^2(\underline{k})-S(\underline{k})}y\right)$ and $\underline{i}_{S(\underline{k})}=\underline{i}_{\underline{k}}+S^2(\underline{k})-S(\underline{k})$ by Lemma \ref{lemme:double-simple}, we have that:
			$$ P_{S(\underline{k})}(\underline{u},y)=\left[\omega_0\,y+\pi^P_{\underline{k},\underline{i}_{\underline{k}}+S^2(\underline{k})-S(\underline{k})}(c_{S(\underline{k})})\right]\underline{u}^{\underline{i}_{\underline{k}}+S^2(\underline{k})-S(\underline{k})}+\pi^P_{S(\underline{k}),S(\underline{i}_{S(\underline{k})})}(y)\underline{u}^{S(\underline{i}_{S(\underline{k})})}+\cdots.$$
			But, again by Lemma \ref{lemme:double-simple}, $i_{S^2(\underline{k})}=\underline{i}_{S(\underline{k})}+S^3(\underline{k})-S^2(\underline{k}) >_{\mathrm{grlex}} \underline{i}_{S(\underline{k})}=\underline{i}_{\underline{k}}+S^2(\underline{k})-S(\underline{k})$. So we must have $\pi^P_{S(\underline{k}),\underline{i}_{S(\underline{k})}}(c_{S^2(\underline{k})})=0$, i.e. $c_{S^2(\underline{k})}=\displaystyle\frac{-\pi^P_{\underline{k},\underline{i}_{\underline{k}}+S^2(\underline{k})-S(\underline{k})}(c_{S(\underline{k})})}{\omega_0}$. It follows that:
			$$P_{S(\underline{k})}(\underline{u},y)=\omega_0\left(y-c_{S^2(\underline{k})}\right)\underline{u}^{\underline{i}_{\underline{k}}+S^2(\underline{k})-S(\underline{k})}+\pi^P_{S(\underline{k}),S(\underline{i}_{S(\underline{k})})}(y)\underline{u}^{S(\underline{i}_{S(\underline{k})})}+\cdots,$$
			Since, by definition, $_{S(\underline{k})}R(\underline{u},y):=\displaystyle\frac{P_{S(\underline{k})}(\underline{u},y+c_{S^2(\underline{k})})}{-\omega_0\underline{u}^{\underline{i}_{S(\underline{k})}}}=-y+\, _{S(\underline{k})}Q(\underline{u},y)$, we get that:
			$$\begin{array}{lcl}
				_{S(\underline{k})}R(\underline{u},y)&=&-y-
				\displaystyle\frac{\pi^P_{S(\underline{k}),S(\underline{i}_{S(\underline{k})})}(y+c_{S^2(\underline{k})})}{\omega_0}\underline{u}^{S(\underline{i}_{S(\underline{k})})-\underline{i}_{S(\underline{k})}}+ \cdots\\
				&=&-y+\, _{S(\underline{k})}Q(\underline{u},y),\ \ \ \ _{S(\underline{k})}Q\in K\left(\left(u_1^{\mathbb{Z}},\ldots,u_r^\mathbb{Z}\right)\right)^{\textrm{grlex}}_{\textrm{Mod}}\left[y\right],
			\end{array} $$
			with $w\left( _{\underline{k}}Q\left(\underline{u},y\right)\right) >_{\textrm{grlex}}\underline{0}$ as desired.
			
			To conclude the proof, it suffices to note that the equation $ _{\underline{k}}R(\underline{u},y)=0$ is strongly reduced Henselian if and only if  $ _{\underline{k}}Q\left(\underline{u},0\right)\nequiv 0$, which is equivalent to $z_{S(\underline{k})}$ not being a root of $P$.
		\end{proof}

		We will need the following lemma:
		\begin{lemma}\label{lemme:coincidence}
			Let $P(\underline{u},y)\in K[[\underline{u}]][y]\setminus\{0\}$ be a  polynomial of degree $\deg_y(P)\leq d_y$ with only simple roots. Assume that $y_0, y_1\in K[[\underline{u}]]$  are two distinct roots. One has that:
			$$\mathrm{ord}_{\underline{u}}\, (y_0-y_1)\leq \ord_{\underline{u}}(\Delta_P(\underline{u})).$$
		\end{lemma}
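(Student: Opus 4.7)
The plan is to combine two standard facts: a Bézout identity bounds $\ord_{\underline{u}} \frac{\partial P}{\partial y}(\underline{u},y_0)$ by $\ord_{\underline{u}}(\Delta_P(\underline{u}))$, and a Taylor expansion of $P$ around $y_0$ shows that $y_0-y_1$ divides $\frac{\partial P}{\partial y}(\underline{u},y_0)$ in the domain $K[[\underline{u}]]$. Chaining these two inequalities yields the claim.

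More precisely, since $P$ has only simple roots, its discriminant $\Delta_P(\underline{u})\in K[[\underline{u}]]$ is nonzero, and so $P$ and $\partial P/\partial y$ are coprime in $K((\underline{u}))[y]$. There exists a Bézout identity
\begin{equation*}
A(\underline{u},y)P(\underline{u},y)+B(\underline{u},y)\frac{\partial P}{\partial y}(\underline{u},y)=\Delta_P(\underline{u})
\end{equation*}
with $A,B\in K[[\underline{u}]][y]$ (exactly as used in the proof of Lemma \ref{lemme:partie-princ}). Evaluating at $y=y_0$ and using $P(\underline{u},y_0)=0$ gives
\begin{equation*}
B(\underline{u},y_0)\cdot\frac{\partial P}{\partial y}(\underline{u},y_0)=\Delta_P(\underline{u}),
\end{equation*}
so $\ord_{\underline{u}}\bigl(\frac{\partial P}{\partial y}(\underline{u},y_0)\bigr)\leq \ord_{\underline{u}}(\Delta_P(\underline{u}))$.

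Next, I will apply Taylor's formula to $P(\underline{u},y)$ centered at $y_0$, setting $h:=y_1-y_0\in K[[\underline{u}]]\setminus\{0\}$:
\begin{equation*}
0=P(\underline{u},y_1)=P(\underline{u},y_0)+h\cdot\frac{\partial P}{\partial y}(\underline{u},y_0)+h^2\cdot \tilde{R}(\underline{u}),
\end{equation*}
where $\tilde{R}(\underline{u}):=\sum_{j\geq 2}\frac{1}{j!}\frac{\partial^j P}{\partial y^j}(\underline{u},y_0)h^{j-2}\in K[[\underline{u}]]$. Since $P(\underline{u},y_0)=0$ and $K[[\underline{u}]]$ is a domain, we may cancel $h$ and obtain
\begin{equation*}
\frac{\partial P}{\partial y}(\underline{u},y_0)=-(y_1-y_0)\,\tilde{R}(\underline{u}),
\end{equation*}
whence $\ord_{\underline{u}}(y_0-y_1)\leq \ord_{\underline{u}}\bigl(\frac{\partial P}{\partial y}(\underline{u},y_0)\bigr)$. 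Combining the two inequalities yields the lemma.

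The argument involves no real obstacle: the only subtlety is to notice that $K[[\underline{u}]]$ is an integral domain so that the factor $h=y_1-y_0\neq 0$ may be cancelled, and that the characteristic zero hypothesis on $K$ legitimizes the Taylor expansion with the denominators $j!$. Both points are already in force in the paper's setting.
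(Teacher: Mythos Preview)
Your proof is correct and follows essentially the same approach as the paper's: Taylor expansion of $P$ at $y_0$ evaluated at $y_1$ to show $(y_1-y_0)$ divides $\frac{\partial P}{\partial y}(\underline{u},y_0)$ in $K[[\underline{u}]]$, combined with the B\'ezout identity to bound $\ord_{\underline{u}}\frac{\partial P}{\partial y}(\underline{u},y_0)$ by $\ord_{\underline{u}}(\Delta_P)$. The only cosmetic difference is that the paper phrases the Taylor step via the graded-lexicographic valuation $w$ before passing to $\ord_{\underline{u}}$, whereas you work with $\ord_{\underline{u}}$ directly.
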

		\begin{proof}
			Note that the hypothesis imply that $d_y\geq 2$. Let us write $y_1-y_0=\delta_{1,0}$ and $\underline{k}:=w(y_1-y_0)=w(\delta_{1,0})\in \mathbb{N}^r$. By Taylor's Formula, we have:
			$$\begin{array}{lcl}
				P(\underline{u},y_0+\delta_{1,0})&=&0\\
				&=& P(\underline{u},y_0)+\displaystyle\frac{\partial P}{\partial y}(\underline{u},y_0) \delta_{1,0}+\cdots+\displaystyle\frac{1}{d_y!}\displaystyle\frac{\partial^{d_y} P}{\partial y^{d_y}}(\underline{u},y_0){\delta_{1,0}}^{d_y}\\
				&=&\delta_{1,0}\left(\displaystyle\frac{\partial P}{\partial y}(\underline{u},y_0)+\cdots+\displaystyle\frac{1}{d_y!}\displaystyle\frac{\partial^{d_y} P}{\partial y^{d_y}}(\underline{u},y_0){\delta_{1,0}}^{d_y-1}\right).
			\end{array}$$
			Since $\delta_{1,0}\neq 0$ and $\displaystyle\frac{\partial P}{\partial y}(\underline{u},y_0)\neq 0$, one has that:
			$$\displaystyle\frac{\partial P}{\partial y}(\underline{u},y_0)=-\delta_{1,0}\left(\displaystyle\frac{1}{2}\displaystyle\frac{\partial^2 P}{\partial y^2}(\underline{u},y_0)+\cdots+\displaystyle\frac{1}{d_y!}\displaystyle\frac{\partial^{d_y} P}{\partial y^{d_y}}(\underline{u},y_0){\delta_{1,0}}^{d_y-2}\right)$$
			The valuation of the right hand side being at least $\underline{k}$, we obtain that:
			$$ w\left(\displaystyle\frac{\partial P}{\partial y}(\underline{u},y_0)\right)\geq_{\mathrm{grlex}} \underline{k}.$$
			But, by Lemma \ref{lemme:partie-princ}, we must have $\mathrm{ord}_u\left(\displaystyle\frac{\partial P}{\partial y}(\underline{u},y_0)\right)\leq  \ord_{\underline{u}}(\Delta_P(\underline{u}))$. So $|\underline{k}|\leq  \ord_{\underline{u}}(\Delta_P(\underline{u}))$.
		\end{proof}

		For the courageous reader, in the case where $y_0$ is a series which is not a polynomial, we deduce from Theorem \ref{theo:FS} and from the generalized  Flajolet-Soria's Formula \ref{theo:formule-FS} a closed-form expression for the coefficients of $y_0$ in terms of the coefficients $a_{\underline{i},j}$ of $P$ and of the coefficients of an initial part  $z_{\underline{k}}$ of $y_0$ sufficiently large, in particular for any $\underline{k}\in\mathbb{N}^r$ such that $|\underline{k}|\geq \ord_{\underline{u}}(\Delta_P(\underline{u}))+1$. Recall that $i_{\underline{k}}=w\left( P_{\underline{k}}(\underline{u},y)\right)$. Note that for such a $\underline{k}$, since $y_0$ is not a polynomial, by Lemma \ref{lemme:coincidence}, $z_{S(\underline{k})}$ cannot be a root of $P$.

		\begin{coro}\label{coro:FS} Let $P(\underline{u},y)\in K[[\underline{u}]][y]\setminus\{0\}$ be a  polynomial of degree $\deg_y(P)\leq d_y$ with only simple roots. 
			Let $\underline{k}\in\mathbb{N}^r$ be such that $|\underline{k}|\geq \ord_{\underline{u}}(\Delta_P(\underline{u}))+1$. For any $\underline{p}>_{\mathrm{grlex}} S(\underline{k})$, consider $\underline{n}:=\underline{p}-S(\underline{k})$. Then:
			$$c_{\underline{p}}=c_{S(\underline{k})+\underline{n}}=\displaystyle\sum_{q=1}^{\mu_{\underline{n}}} \displaystyle\frac{1}{q}\left(\displaystyle\frac{-1}{\omega_0}\right)^q\displaystyle\sum_{|\underline{S}|=q,\ \|\underline{S}\|\geq q-1}\underline{A}^{\underline{S}}\left(\displaystyle\sum_{|\underline{T}_{\underline{S}}|=\|\underline{S}\|-q+1 \atop  g(\underline{T}_{\underline{S}})=\underline{n}+q\underline{i}_{\underline{k}}-(q-1)S(\underline{k})-g(\underline{S})}e_{\underline{T}_{\underline{S}}}\underline{C}^{\underline{T}_{\underline{S}}}\right),$$
			where $\mu_{\underline{n}}$ is as in Theorem \ref{theo:formule-FS} for the equation $ y=\, _{\underline{k}}Q(\underline{u},y)$ of Theorem \ref{theo:FS}, $S=(s_{\underline{i},j})_{ \underline{i}\in\N^r,  \atop j=0,\ldots,d_y}$ with finite support, and as in Notation \ref{nota:FS}, $\underline{A}^{\underline{S}}=\displaystyle\prod_{\underline{i},\, j}a_{\underline{i},j}^{s_{\underline{i},j}}$, $\underline{T}_{\underline{S}}=(t_{\underline{S},\underline{i}})$,  $\underline{C}^{\underline{T}_{\underline{S}}}=\displaystyle\prod_{\underline{i}=\underline{0}}^{S(\underline{k})}c_{\underline{i}}^{t_{\underline{S},\underline{i}}}$, 
			and $e_{\underline{T}_{\underline{S}}}\in\mathbb{N}$ is of the form:\\
			\begin{center}
				 $\ \ e_{\underline{T}_{\underline{S}}}=$\\
				$\displaystyle\sum_{\left(n^{\underline{l},m}_{\underline{i},j,\underline{L}}\right)} \displaystyle\frac{q!}{\displaystyle\prod_{{\underline{l} =S(\underline{i}_{\underline{k}})-\underline{i}_{\underline{k}},\ldots,\atop d_yS(\underline{k})+(d_u,0,\ldots,0)-\underline{i}_{\underline{k}}}  \atop m=0,\ldots,m_{\underline{l}}}\displaystyle\prod_{|\underline{i}|=0,\ldots,d_u \atop j=m,\ldots,d_y}\displaystyle\prod_{|\underline{L}|=j-m \atop g(\underline{L})=\underline{l}+\underline{i}_{\underline{k}}-mS(\underline{k})-\underline{i}}n^{\underline{l},m}_{\underline{i},j,\underline{L}}!} \displaystyle\prod_{{\underline{l}=S(\underline{i}_{\underline{k}})-\underline{i}_{\underline{k}},\ldots,\atop d_y S(\underline{k})+(d_u,0,\ldots,0)-\underline{i}_{\underline{k}}} \atop m=0,\ldots,m_{\underline{l}}}\displaystyle\prod_{|\underline{i}|=0,\ldots,d_u \atop j=m,\ldots,d_y}\displaystyle\prod_{|\underline{L}|=j-m \atop g(\underline{L})=\underline{l}+\underline{i}_{\underline{k}}-mS(\underline{k})-\underline{i}}\left(\displaystyle\frac{j!}{m!\,L!}\right)^{n^{\underline{l},m}_{\underline{i},j,\underline{L}}},$ 
			\end{center}			
			where we denote $m_{\underline{l}}:=\min\left\{d_y,\ \max\left\{m\in\mathbb{N}\, / \, mS(\underline{k})\leq_{\mathrm{grlex}}\underline{l }+\underline{i}_{\underline{k}}\right\}\right\}$,\\ $\underline{L}=\underline{L}_{\underline{i},j}^{\underline{l},m}=\left(l_{\underline{i},j,\underline{0}}^{\underline{l},m},\ldots,l_{\underline{i},j,S(\underline{k})}^{\underline{l},m}\right)$, 
			and where the sum is taken over the set of tuples\\  $\left(n^{\underline{l},m}_{\underline{i},j,\underline{L}}\right)_{\underline{l}= S(\underline{i}_{\underline{k}})-\underline{i}_{\underline{k}},\ldots,d_yS(\underline{k})+(d_u,0,\ldots,0)-\underline{i}_{\underline{k}},\ m=0,\ldots,m_{\underline{l}} \atop |\underline{i}|=0,\ldots,d_u,\ j=m,\ldots,d_y,\ |\underline{L}|=j-m,\  g(\underline{L})=\underline{l}+\underline{i}_{\underline{k}}-mS(\underline{k})-\underline{i}}$ such that:
			\begin{center}
				$\displaystyle\sum_{\underline{l},m}\displaystyle\sum_{\underline{L}} n^{\underline{l},m}_{\underline{i},j,\underline{L}}=s_{\underline{i},j}$,  $\ \ \displaystyle\sum_{\underline{l},m}\displaystyle\sum_{\underline{i},j} \displaystyle\sum_{\underline{L}}n^{\underline{l},m}_{\underline{i},j,\underline{L}}=q\ \ \ $ and $\ \ \ \displaystyle\sum_{\underline{l},m}\displaystyle\sum_{\underline{i},j} \displaystyle\sum_{\underline{L}}n^{\underline{l},m}_{\underline{i},j,\underline{L}}\underline{L}= \underline{T}_{\underline{S}}$.
			\end{center}
		\end{coro}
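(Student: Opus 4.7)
The plan is to combine Theorem \ref{theo:FS} with the generalized Flajolet--Soria Formula (Theorem \ref{theo:formule-FS}) and then to make the resulting expression explicit by carefully unraveling the multinomial expansion of ${}_{\underline{k}}Q(\underline{u},y)$.

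First I would verify that, under the hypothesis $|\underline{k}|\geq\ord_{\underline{u}}(\Delta_P)+1$, we are in the setting of Theorem \ref{theo:FS}. By Lemma \ref{lemme:partie-princ}, $|\underline{k}_0|\leq \ord_{\underline{u}}(\Delta_P)$, hence $\underline{k}>_{\mathrm{grlex}}\underline{k}_0$ so that $\omega_0$ and $\underline{i}_{\underline{k}}$ are meaningful and Theorem \ref{theo:FS} applies. Moreover, since $y_0\in K[[\underline{u}]]\setminus K[\underline{u}]$, Lemma \ref{lemme:coincidence} forces $z_{S(\underline{k})}$ not to be a root of $P$, so we are in the second alternative of Theorem \ref{theo:FS}: the series $t_{S(\underline{k})}=\sum_{\underline{p}>_{\mathrm{grlex}}S(\underline{k})}c_{\underline{p}}\underline{u}^{\underline{p}-S(\underline{k})}$ is the unique solution in $K((u_1^{\mathbb{Z}},\ldots,u_r^{\mathbb{Z}}))^{\mathrm{grlex}}$, with positive graded-lex valuation, of the strongly reduced Henselian equation $y={}_{\underline{k}}Q(\underline{u},y)$.

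Next I would apply Theorem \ref{theo:formule-FS} to this equation. Writing ${}_{\underline{k}}Q(\underline{u},y)=\sum_{\underline{l},m}b_{\underline{l},m}\underline{u}^{\underline{l}}y^{m}$, the coefficient of $\underline{u}^{\underline{n}}$ in $t_{S(\underline{k})}$, i.e.\ $c_{S(\underline{k})+\underline{n}}$, equals
\[
\sum_{q=1}^{\mu_{\underline{n}}}\frac{1}{q}\sum_{|\underline{M}|=q,\,\|\underline{M}\|=q-1,\,g(\underline{M})=\underline{n}}\frac{q!}{\underline{M}!}\,\underline{B}^{\underline{M}},
\]
where $\underline{M}=(m_{\underline{l},j})$ and $\underline{B}^{\underline{M}}=\prod b_{\underline{l},j}^{m_{\underline{l},j}}$. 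The task then reduces to rewriting each $b_{\underline{l},m}$ as a polynomial in the $a_{\underline{i},j}$ and the $c_{\underline{0}},\ldots,c_{S(\underline{k})}$, and repackaging the product $\underline{B}^{\underline{M}}$ into the announced form.

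For this third step I would use the definitions: by Theorem \ref{theo:FS},
\[
-\omega_0\,\underline{u}^{\underline{i}_{\underline{k}}}\,{}_{\underline{k}}Q(\underline{u},y)\;=\;P_{\underline{k}}(\underline{u},y+c_{S(\underline{k})})+\omega_0\,\underline{u}^{\underline{i}_{\underline{k}}}y\;=\;P\bigl(\underline{u},z_{S(\underline{k})}+\underline{u}^{S(\underline{k})}y\bigr)+\omega_0\,\underline{u}^{\underline{i}_{\underline{k}}}y,
\]
and the right-hand side is expanded by substituting $z_{S(\underline{k})}=\sum_{\underline{i}\leq_{\mathrm{grlex}} S(\underline{k})}c_{\underline{i}}\underline{u}^{\underline{i}}$ into $P(\underline{u},Y)=\sum_{\underline{i},j}a_{\underline{i},j}\underline{u}^{\underline{i}}Y^{j}$ and applying the multinomial theorem to $Y^j=(z_{S(\underline{k})}+\underline{u}^{S(\underline{k})}y)^{j}$. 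Collecting the coefficient of $y^{m}\underline{u}^{\underline{l}+\underline{i}_{\underline{k}}}$ yields
\[
-\omega_0\,b_{\underline{l},m}=\sum_{|\underline{i}|,\,j\geq m}a_{\underline{i},j}\binom{j}{m}\sum_{\substack{|\underline{L}|=j-m\\ g(\underline{L})=\underline{l}+\underline{i}_{\underline{k}}-mS(\underline{k})-\underline{i}}}\frac{(j-m)!}{\underline{L}!}\,\underline{c}^{\underline{L}},
\]
with $\underline{L}=(l_{\underline{0}},\ldots,l_{S(\underline{k})})$ and $\underline{c}^{\underline{L}}=\prod_{\underline{i}\leq S(\underline{k})}c_{\underline{i}}^{l_{\underline{i}}}$. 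Substituting this expression into $\underline{B}^{\underline{M}}=\prod_{\underline{l},m}b_{\underline{l},m}^{m_{\underline{l},m}}$ and expanding each factor by the multinomial formula produces the triple sum over tuples $(n^{\underline{l},m}_{\underline{i},j,\underline{L}})$ appearing in the statement; regrouping the $a_{\underline{i},j}$'s into monomials $\underline{A}^{\underline{S}}$ (where $s_{\underline{i},j}=\sum n^{\underline{l},m}_{\underline{i},j,\underline{L}}$) and the $c_{\underline{i}}$'s into $\underline{C}^{\underline{T}_{\underline{S}}}$ (where $\underline{T}_{\underline{S}}=\sum n^{\underline{l},m}_{\underline{i},j,\underline{L}}\underline{L}$) delivers precisely the coefficients $e_{\underline{T}_{\underline{S}}}$. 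The relations $|\underline{S}|=q$, $\|\underline{S}\|\geq q-1$ and $g(\underline{T}_{\underline{S}})=\underline{n}+q\,\underline{i}_{\underline{k}}-(q-1)S(\underline{k})-g(\underline{S})$ follow directly from $|\underline{M}|=q$, $\|\underline{M}\|=q-1$ and $g(\underline{M})=\underline{n}$ together with the shifts $\underline{l}+\underline{i}_{\underline{k}}-mS(\underline{k})-\underline{i}$ in the indexing.

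The main obstacle, and essentially the only delicate point, is precisely this bookkeeping: producing the formula for $e_{\underline{T}_{\underline{S}}}$ requires tracking simultaneously three layers of multinomial expansions (the outer Flajolet--Soria sum, the expansion of $(z_{S(\underline{k})}+\underline{u}^{S(\underline{k})}y)^j$, and the expansion of a product of such factors indexed by $\underline{M}$), and verifying that the constraints on $(n^{\underline{l},m}_{\underline{i},j,\underline{L}})$ listed in the statement correspond exactly to the simultaneous constraints on $\underline{M}$ and on the internal multinomial indices. Once these identifications are made, the formula follows. Finally, the bound $\mu_{\underline{n}}$ is exactly the one furnished by Theorem \ref{theo:formule-FS} applied to the Henselian equation $y={}_{\underline{k}}Q(\underline{u},y)$.
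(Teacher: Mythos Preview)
Your proposal is correct and follows essentially the same route as the paper: reduce to the strongly reduced Henselian equation via Theorem~\ref{theo:FS}, apply the generalized Flajolet--Soria formula of Theorem~\ref{theo:formule-FS}, then unpack $b_{\underline{l},m}$ by multinomial expansion of $P(\underline{u},z_{S(\underline{k})}+\underline{u}^{S(\underline{k})}y)$ and regroup. One minor slip: in your displayed identity for $-\omega_0\,\underline{u}^{\underline{i}_{\underline{k}}}\,{}_{\underline{k}}Q$ the extra term should be $-\omega_0\,\underline{u}^{\underline{i}_{\underline{k}}}y$ rather than $+\omega_0\,\underline{u}^{\underline{i}_{\underline{k}}}y$, but since this term lives at $\underline{l}=\underline{0}$ it does not enter the range $\underline{l}\geq_{\mathrm{grlex}}S(\underline{i}_{\underline{k}})-\underline{i}_{\underline{k}}$ and your formula for $b_{\underline{l},m}$ is unaffected. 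The paper's proof differs only in that it carries out in full the verification you flag as ``the main obstacle'': it explicitly constructs, from any tuple $(n^{\underline{l},m}_{\underline{i},j,\underline{L}})$ satisfying the constraints in the statement, the corresponding $\underline{Q}=(q_{\underline{l},m})$ with $q_{\underline{l},m}:=\sum_{\underline{i},j,\underline{L}}n^{\underline{l},m}_{\underline{i},j,\underline{L}}$ and checks that $|\underline{Q}|=q$, $\|\underline{Q}\|=q-1$, $g(\underline{Q})=\underline{n}$, thereby establishing that the index set for $e_{\underline{T}_{\underline{S}}}$ is exactly the disjoint union over admissible $\underline{Q}$ of the index sets arising from the Flajolet--Soria side; this is what makes the $\underline{Q}$-summation disappear in the final formula.
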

		\begin{remark}\label{rem:coeff-e_T}
			Note that the coefficients $e_{\underline{T}_{\underline{S}}}$ are indeed natural numbers, since they are sums of products of multinomial coefficients because  $\displaystyle\sum_{\underline{l},m}\displaystyle\sum_{\underline{i},j}\displaystyle\sum_{\underline{L}} n^{\underline{l},m}_{\underline{i},j,\underline{L}}=q$ and $m+|\underline{L}|=j$. In fact,  $\displaystyle\frac{1}{q}e_{\underline{T}_{\underline{S}}}\in\mathbb{N}$ by Remark \ref{rem:entier} as we will see along the proof.
		\end{remark}
		\begin{proof}
			We get started by computing the coefficients of $\omega_0\underline{u}^{\underline{i}_{\underline{k}}}\, _{\underline{k}}R$,  in order to get those of $\, _{\underline{k}}Q$:
			$$\begin{array}{lcl}
				-\omega_0\underline{u}^{\underline{i}_{\underline{k}}}\, _{\underline{k}}R&=&P_{\underline{k}}(\underline{u},\,y+c_{S(\underline{k})})\\
				&=&P(\underline{u},z_{S(\underline{k})}+\underline{u}^{S(\underline{k})}y)\\
				&=& \displaystyle\sum_{\underline{i}\in \N^r\, ,\ j=0,\ldots,d_y}a_{\underline{i},j}\underline{u}^{\underline{i}}\left(z_{S(\underline{k})}+\underline{u }^{S(\underline{k})}y\right)^{j}\\
				&=& \displaystyle\sum_{\underline{i}\in \N^r\, ,\ j=0,\ldots,d_y}a_{\underline{i},j}\underline{u}^{\underline{i}}\displaystyle\sum_{m=0}^{j}\displaystyle\frac{j!}{m!\,(j-m)!}z_{S(\underline{k})}^{j-m}\underline{u}^{mS(\underline{k})}y^m.
			\end{array}$$
			For $L=(l_{\underline{0} },\cdots,l_{S(\underline{k})})$, we denote 
			$\underline{C}^{\underline{L}}:=c_{\underline{0}}^{l_{\underline{0}}}\cdots c_{S(\underline{k})}^{l_{S(\underline{k})}}$. One  has that:
			$$z_{S(\underline{k})}^{j-m}=\displaystyle\sum_{|\underline{L}|=j-m}\displaystyle\frac{(j-m)!}{\underline{L}!}\underline{C}^{\underline{L}}\underline{u}^{g(\underline{L})}.$$
			So:
			$$ -\omega_0\underline{u}^{\underline{i}_{\underline{k}}}\, _{\underline{k}}R=\displaystyle\sum_{m=0}^{d_y} \displaystyle\sum_{\underline{i}\in \N^r\atop j=m,\ldots,d_y}a_{\underline{i},j}\displaystyle\sum_{|\underline{L}|=j-m}\displaystyle\frac{j!}{m!\,\underline{L}!}\underline{C}^{\underline{L}}\underline{u}^{g(\underline{L}) +mS(\underline{k})+\underline{i}}\,y^m.$$
			We set $\underline{\hat{l}}=g(\underline{L})+mS(\underline{k})+\underline{i}$. It verifies: $\underline{\hat{l}}\geq mS(\underline{k})$. Thus: 
			$$-\omega_0\underline{u}^{\underline{i}_{\underline{k}}}\, _{\underline{k}}R=\displaystyle\sum_{m=0,\ldots,d_y } \displaystyle\sum_{\hat{l}\,\geq\, mS(\underline{k})} \displaystyle\sum_{\underline{i}\,\leq\, \underline{\hat{l}}- mS(\underline{k}) \atop j=m,\ldots,d_y}a_{\underline{i},j}\displaystyle\sum_{|\underline{L}|=j-m \atop g(\underline{L})=\underline{\hat{l}}-mS(\underline{k})-\underline{i}}\displaystyle\frac{j!}{m!\,\underline{L}!}\underline{C}^{\underline{L}}\underline{u}^{\underline{\hat{l}}}y^m.$$
			Since $_{\underline{k}}R(\underline{u},y)=-y+\, _{\underline{k}}Q(\underline{u},y)$ with $w(\, _{\underline{k}}Q(\underline{u},y))>_{\mathrm{grlex}} \underline{0}$, the coefficients of $_{\underline{k}}Q$ are obtained for $\underline{\hat{l}}\geq_{\mathrm{grlex}}S(\underline{i}_{\underline{k}})$. 
			We set $\underline{l}:=\underline{\hat{l}}-\underline{i}_{\underline{k}}$
			and  $$m_{\underline{l}}:=\min\left\{d_y,\ \max\left\{m\in\mathbb{N}\, / \, mS(\underline{k})\leq \underline{l }+\underline{i}_{\underline{k}}\right\}\right\}.$$
			We obtain: $$_{\underline{k}}Q(\underline{u},y)=\displaystyle\sum_{\underline{l}\,\geq_{\mathrm{grlex}} S(\underline{i}_{\underline{k}})-\underline{i}_{\underline{k}} \atop m=0,\ldots,m_{\underline{l}}}b_{\underline{l},m}\underline{u}^{\underline{l}}y^m,$$
			with:
			$$b_{\underline{l},m}=\displaystyle\frac{-1}{\omega_0}\displaystyle\sum_{\underline{i}\,\leq\, \underline{l}+\underline{i}_{\underline{k}}- mS(\underline{k})  \atop j=m,\ldots,d_y}a_{\underline{i},j}\displaystyle\sum_{|\underline{L}|=j-m \atop g(\underline{L})=\underline{l}+\underline{i}_{\underline{k}}-mS(\underline{k})-\underline{i}}\displaystyle\frac{j!}{m!\,\underline{L}!}\underline{C}^{\underline{L}}.$$
			According to Lemma \ref{lemme:partie-princ}, Theorem \ref{theo:FS} and Lemma \ref{lemme:coincidence}, we are in position to apply  the generalized Flajolet-Soria's Formula of Theorem \ref{theo:formule-FS}  in order to compute the coefficients of the solution $t_{S(\underline{k})}=c_{S^2(\underline{k})}\underline{u}^{S^2(\underline{k})-S(\underline{k})}+c_{S^3(\underline{k})}\underline{u}^{S^3(\underline{k})-S(\underline{k})}+\cdots$.  Thus, denoting $\underline{B}:=(b_{\underline{l},m})$, $\underline{Q}:=(q_{\underline{l},m})$ with finite support and $\underline{B}^{\underline{Q}}:=\displaystyle\prod_{\underline{l},m} b_{\underline{l},m}^{q_{\underline{l},m}}$  for $\underline{l}\geq_{\mathrm{grlex}}S(\underline{i}_{\underline{k}})-\underline{i}_{\underline{k}}$ and $m=0,\ldots,m_{\underline{l}}$, we obtain for $\underline{n}>_{\mathrm{grlex}}\underline{0}$: 
			$$c_{S(\underline{k})+\underline{n}}=\displaystyle\sum_{q=1}^{\mu_{\underline{n}}}\displaystyle\frac{1}{q}\displaystyle\sum_{|\underline{Q}|=q,\,\|\underline{Q}\|=q-1 ,\, g(\underline{Q})=\underline{n}}\displaystyle\frac{q!}{\underline{Q}!}\underline{B}^{\underline{Q}}.$$
			As in Remark \ref{rem:entier} (1), the previous sum is finite, and as in Remark \ref{rem:entier} (2), we have $\displaystyle\frac{1}{q}\cdot\displaystyle\frac{q!}{\underline{Q}!}\in\mathbb{N}$.
			Let us compute:
			\begin{equation}\label{equ:b_lm}
				\begin{array}{l}
					\begin{array}{lcl}
						b_{\underline{l},m}^{q_{\underline{l},m}}&=&\left(\displaystyle\frac{-1}{\omega_0}\right)^{q_{\underline{l},m}}\left(\displaystyle\sum_{\underline{i}\,\leq\, \underline{l}+\underline{i}_{\underline{k}}- mS(\underline{k})   \atop j=m,\ldots,d_y}a_{\underline{i},j}\displaystyle\sum_{|\underline{L}|=j-m \atop g(\underline{L})=\underline{l} +\underline{i}_{\underline{k}}-mS(\underline{k})-\underline{i}}\displaystyle\frac{j!}{m!\,\underline{L}!}\underline{C}^{\underline{L}}\right)^{q_{\underline{l},m}}\\
						&=&\left(\displaystyle\frac{-1}{\omega_0}\right)^{q_{\underline{l},m}}\displaystyle\sum_{|\underline{M}_{\underline{l},m}|=q_{\underline{l},m}}\displaystyle\frac{q_{\underline{l},m}!}{\underline{M}_{\underline{l},m}!} \underline{A}^{\underline{M}_{\underline{l},m}} \displaystyle\prod_{\underline{i}\,\leq\, \underline{l}+\underline{i}_{\underline{k}}- mS(\underline{k})   \atop j=m,\ldots,d_y}\left(\displaystyle\sum_{|\underline{L}|=j-m \atop g(\underline{L})=\underline{l}+\underline{i}_{\underline{k}}-mS(\underline{k})- \underline{i}}\displaystyle\frac{j!}{m!\,\underline{L}!}\underline{C}^{\underline{L}}\right)^{m^{\underline{l},m}_{\underline{i},j}}
					\end{array}\\
					\textrm{where } \underline{M}_{\underline{l},m}=(m^{\underline{l},m}_{\underline{i},j})\textrm{ for } \underline{i}\leq\underline{l}+\underline{i}_{\underline{k}}- mS(\underline{k})  ,\  j=0,\ldots,d_y\textrm{ and }m^{\underline{l},m}_{\underline{i},j}=0\textrm{ for }j<m.
				\end{array}
			\end{equation}
			Note that, in the previous formula, $\left(-\omega_0\right)^{q_{\underline{l},m}}b_{\underline{l},m}^{q_{\underline{l},m}}$ is the evaluation at $\underline{A}$ and $\underline{C}$ of a polynomial with coefficients in $\mathbb{N}$. Since  $\displaystyle\frac{1}{q}\cdot\displaystyle\frac{q!}{\underline{Q}!}\in\mathbb{N}$, the expansion of $\left(-\omega_0\right)^{q}\displaystyle\frac{1}{q}\cdot\displaystyle\frac{q!}{\underline{Q}!}\underline{B}^{\underline{Q}}$ as a polynomial in $\underline{A}$ and $\underline{C}$ will only have natural numbers as coefficients.

			Let us expand the expression $\displaystyle\prod_{\underline{i}\,\leq\, \underline{l}+\underline{i}_{\underline{k}}- mS(\underline{k})   \atop j=m,\ldots,d_y}\left(\displaystyle\sum_{|\underline{L}|=j-m \atop g(\underline{L})=\underline{l}+\underline{i}_{\underline{k}}-mS(\underline{k})-\underline{i}}\displaystyle\frac{j!}{m!\,\underline{L}!}\underline{C}^{\underline{L}}\right)^{m^{\underline{l},m}_{\underline{i},j}}$.
			For each $(\underline{l},m,\underline{i},j)$, we enumerate the terms $\displaystyle\frac{j!}{m!\,\underline{L}!}\underline{C}^{\underline{L}}$ with $h=1,\ldots,\alpha_{\underline{i},j}^{\underline{l},m}$. Subsequently:
			$$\begin{array}{lcl}
				\left(\displaystyle\sum_{|\underline{L}|=j-m \atop g(\underline{L})=\underline{l}+\underline{i}_{\underline{k}}-mS(\underline{k})-\underline{i}}\displaystyle\frac{j!}{m!\,\underline{L}!}\underline{C}^{\underline{L}}\right)^{m^{\underline{l},m}_{\underline{i},j}}&=& \left(\displaystyle\sum_{h=1}^{\alpha_{\underline{i},j}^{\underline{l},m}}\displaystyle\frac{j!}{m!\,\underline{L}_{\underline{i},j,h}^{\underline{l},m}!} \underline{C}^{\underline{L}_{\underline{i},j,h}^{\underline{l},m}}\right)^{ m^{\underline{l},m}_{\underline{i},j}}\\
				&=& \displaystyle\sum_{|\underline{N}^{\underline{l},m}_{\underline{i},j}|=m^{\underline{l},m}_{\underline{i},j}}\displaystyle\frac{m^{\underline{l},m}_{\underline{i},j}!}{\underline{N}^{\underline{l},m}_{\underline{i},j}!}\left( \displaystyle\prod_{h=1}^{\alpha_{\underline{i},j}^{\underline{l},m}} \left(\displaystyle\frac{j!}{m!\,\underline{L}_{\underline{i},j,h}^{\underline{l},m}!}\right)^{ n^{\underline{l},m}_{\underline{i},j,h}}\right) \underline{C}^{\sum_{h=1}^{\alpha^{\underline{l},m}_{\underline{i},j}} n^{\underline{l},m}_{\underline{i},j,h} \underline{L}_{\underline{i},j,h}^{\underline{l},m}},
			\end{array} $$
			where $\underline{N}^{\underline{l},m}_{\underline{i},j}= \left(n^{\underline{l},m}_{\underline{i},j,h}\right)_{h=1,\ldots,\alpha_{\underline{i},j}^{\underline{l},m}}$, %
			$\, \underline{N}^{\underline{l},m}_{\underline{i},j}!= \displaystyle\prod_{h=1}^{\alpha_{\underline{i},j}^{\underline{l},m}} n^{\underline{l},m}_{\underline{i},j,h}!$. 
			Denoting  \[\underline{H}_{\underline{l},m}=\left(h^{\underline{l},m}_{\underline{0}},\ldots,h^{\underline{l},m}_{S(\underline{k})}\right):= \displaystyle\sum_{\underline{i}\,\leq\, \underline{l}+\underline{i}_{\underline{k}}- mS(\underline{k})\atop j=m,\ldots,d_y}\displaystyle\sum_{h=1}^{ \alpha_{\underline{i},j}^{\underline{l},m}}n^{\underline{l},m}_{\underline{i},j,h} \underline{L}_{\underline{i},j,h}^{\underline{l},m},\] one computes:
			\begin{equation}\label{equ:Ulm}
				\begin{array}{lcl}
					|\underline{H}_{\underline{l},m}|&=&\displaystyle\sum_{\underline{i}\,\leq\, \underline{l}+\underline{i}_{\underline{k}}- mS(\underline{k}) \atop j=m,\ldots,d_y}\displaystyle\sum_{h=1}^{\alpha_{\underline{i},j}^{\underline{l},m}} n^{\underline{l},m}_{\underline{i},j,h}|\underline{L}_{\underline{i},j,h}^{\underline{l},m}|\\
					&=&\displaystyle\sum_{\underline{i}\,\leq\, \underline{l}+\underline{i}_{\underline{k}}- mS(\underline{k}) \atop j=m,\ldots,d_y}\left(\displaystyle\sum_{h=1}^{\alpha_{\underline{i},j}^{\underline{l},m}} n^{\underline{l},m}_{\underline{i},j,h}\right)(j-m)\\
					&=&\displaystyle\sum_{\underline{i}\,\leq\, \underline{l}+\underline{i}_{\underline{k}}- mS(\underline{k}) \atop j=m,\ldots,d_y}m^{\underline{l},m}_{\underline{i},j}(j-m)\\
					&=& \|\underline{M}_{\underline{l},m}\|-m\,q_{\underline{l},m}.
			\end{array} \end{equation} 
			Likewise, one computes:
			\begin{equation}\label{equ:Ulm2}\begin{array}{lcl}
					g(\underline{H}_{\underline{l},m})&=&\displaystyle\sum_{\underline{i}\,\leq\, \underline{l}+\underline{i}_{\underline{k}}- mS(\underline{k}) \atop j=m,\ldots,d_y}\displaystyle\sum_{h=1}^{\alpha_{\underline{i},j}^{\underline{l},m}} n^{\underline{l},m}_{\underline{i},j,h}g(\underline{L}_{\underline{i},j,h}^{\underline{l},m})\\
					&=&\displaystyle\sum_{\underline{i}\,\leq\, \underline{l}+\underline{i}_{\underline{k}}- mS(\underline{k}) \atop j=m,\ldots,d_y}\left(\displaystyle\sum_{h=1}^{\alpha_{\underline{i},j}^{\underline{l},m}} n^{\underline{l},m}_{\underline{i},j,h}\right)(\underline{l}+\underline{i}_{\underline{k}}-mS(\underline{k})-\underline{i})\\
					&=&\displaystyle\sum_{\underline{i}\,\leq\, \underline{l}+\underline{i}_{\underline{k}}- mS(\underline{k}) \atop j=m,\ldots,d_y}m^{\underline{l},m}_{\underline{i},j}(\underline{l}+\underline{i}_{\underline{k}}-mS(\underline{k})-\underline{i})\\
					&=& q_{\underline{l},m}[\underline{l}+\underline{i}_{\underline{k}}-mS(\underline{k})]-g(\underline{M}_{\underline{l},m}).
			\end{array} \end{equation} 
			So, according to Formula (\ref{equ:b_lm}) and the new way of writing the expression\\ $\displaystyle\prod_{\underline{i}\,\leq\, \underline{l}+\underline{i}_{\underline{k}}- mS(\underline{k}) \atop j=m,\ldots,d_y}\left(\displaystyle\sum_{|\underline{L}|=j-m \atop g(\underline{L})=\underline{l}+\underline{i}_{\underline{k}}-mS(\underline{k})-\underline{i}}\displaystyle\frac{j!}{m!\,\underline{L}!}\underline{C}^{\underline{L}}\right)^{m^{\underline{l},m}_{\underline{i},j}}$, we obtain:
			\begin{center}
				$\begin{array}{lcl}
					b_{\underline{l},m}^{q_{\underline{l},m}}&=& \left(\displaystyle\frac{-1}{\omega_0}\right)^{q_{\underline{l},m}}\displaystyle\sum_{|\underline{M}_{\underline{l},m}|=q_{\underline{l},m}}\underline{A}^{\underline{M}_{\underline{l},m}}\displaystyle\sum_{|\underline{H}_{\underline{l},m}|=\|\underline{M}_{\underline{l},m}\|-m\,q_{\underline{l},m} \atop g(\underline{H}_{\underline{l},m})=q_{\underline{l},m}[\underline{l}+\underline{i}_{\underline{k}}-mS(\underline{k})]-g(\underline{M}_{\underline{l},m})} d_{\underline{H}_{\underline{l},m}}\underline{C}^{\underline{H}_{\underline{l},m}}\\
					&& \textrm{ with }d_{\underline{H}_{\underline{l},m}}:=\displaystyle\sum_{ \left(\underline{N}^{\underline{l},m}_{\underline{i},j}\right)}\displaystyle\frac{q_{\underline{l},m}!}{\displaystyle\prod_{\underline{i}\,\leq\, \underline{l}+\underline{i}_{\underline{k}}- mS(\underline{k}) \atop j=m,\ldots,d_y}\underline{N}^{\underline{l},m}_{\underline{i},j}!}\displaystyle\prod_{\underline{i}\,\leq\, \underline{l}+\underline{i}_{\underline{k}}- mS(\underline{k}) \atop j=m,\ldots,d_y}\displaystyle\prod_{h=1}^{\alpha_{\underline{i},j}^{\underline{l},m}} \left(\displaystyle\frac{j!}{m!\,\underline{L}_{\underline{i},j,h}^{\underline{l},m}!}\right)^{n^{\underline{l},m}_{\underline{i},j,h}},
				\end{array}$
			\end{center} 
			\textrm{ where the sum is taken over }$$\left\{\left(\underline{N}^{\underline{l},m}_{\underline{i},j}\right)_{\underline{i}\,\leq\, \underline{l}+\underline{i}_{\underline{k}}- mS(\underline{k}) \atop j=m,\ldots,d_y}\textrm{ such that }|\underline{N}^{\underline{l},m}_{\underline{i},j}|=m^{\underline{l},m}_{\underline{i},j}\textrm{ and }\displaystyle\sum_{\underline{i}\,\leq\, \underline{l}+\underline{i}_{\underline{k}}- mS(\underline{k}) \atop j=m,\ldots,d_y}\sum_{h=1}^{\alpha_{\underline{i},j}^{\underline{l},m}} n^{\underline{l},m}_{\underline{i},j,h} \underline{L}_{\underline{i},j,h}^{\underline{l},m}=\underline{H}_{\underline{l},m}\right\}.$$
			Note that, if the latter set is empty, then $d_{\underline{H}_{\underline{l},m}}=0$.\\
			
			Recall that we consider $\underline{Q}:=(q_{\underline{l},m})$ with finite support and such that $|\underline{Q}|=q$, $\|\underline{Q}\|=q-1$ and $g(\underline{Q})=\underline{n}$. We deduce that:
			$$\begin{array}{lcl}
				\underline{B}^{\underline{Q}}&=&\displaystyle\prod_{\underline{l}\,\geq_{\mathrm{grlex}}S(\underline{i}_{\underline{k}})-\underline{i}_{\underline{k}}\atop m=0,\ldots,m_{\underline{l}}}b_{\underline{l},m}^{q_{\underline{l},m}}\\
				&=& \left(\displaystyle\frac{-1}{\omega_0}\right)^{q}\displaystyle\prod_{\underline{l},m} \left[\displaystyle\sum_{|\underline{M}_{\underline{l},m}|=q_{\underline{l},m}}\underline{A}^{\underline{M}_{\underline{l},m}}\displaystyle\sum_{|\underline{H}_{\underline{l},m}|=\|\underline{M}_{\underline{l},m}\|-m\,q_{\underline{l},m} \atop \|\underline{H}_{\underline{l},m}\|=q_{\underline{l},m}\left(\underline{l}+\underline{i}_{\underline{k}}-mS(\underline{k})\right)-g(\underline{M}_{\underline{l},m})}d_{\underline{H}_{\underline{l},m}} \underline{C}^{\underline{H}_{\underline{l},m}}\right].
			\end{array}$$ 
			Now, in order to expand the latter product of sums, we consider the corresponding sets:	\begin{center} $\mathcal{S}_{\underline{Q}}:=$
		
				$\left\{\displaystyle\sum_{\underline{l},m}\underline{M}_{\underline{l},m}\ \ / \ \   \exists (\underline{M}_{\underline{l},m})\ \mathrm{s.t.}\ |\underline{M}_{\underline{l},m}|=q_{\underline{l},m}\textrm{ and }\forall \underline{l},m,\  m^{\underline{l},m}_{\underline{i},j}=0\textrm{ for }j<m \textrm{ or } \underline{i}\,\not\leq\, \underline{l}+\underline{i}_{\underline{k}}- mS(\underline{k})\right\}$	\end{center}
				and, for any   $\underline{S}\in\mathcal{S}_{\underline{Q}}$,

			\begin{center} $ \mathcal{H}_{\underline{Q},\underline{S}}:=\left\{\left(\underline{H}_{\underline{l},m}\right)\ \  / \ \  \exists (\underline{M}_{\underline{l},m})\ \mathrm{s.t.}\ |\underline{M}_{\underline{l},m}|=q_{\underline{l},m}\textrm{ and }\forall \underline{l},m,\  m^{\underline{l},m}_{\underline{i},j}=0\textrm{ for }j<m  \textrm{ or } \underline{i}\,\not\leq\, \underline{l}+\underline{i}_{\underline{k}}- mS(\underline{k}), \right.$\\
				$\ \ \ \ \    \ \ \  \ \  \    \left. \displaystyle\sum_{\underline{l},m}\underline{M}_{\underline{l},m}=\underline{S},\,  |\underline{H}_{\underline{l},m}|=\|\underline{M}_{\underline{l},m}\|-m\,q_{\underline{l},m} \textrm{ and }g(\underline{H}_{\underline{l},m})=q_{\underline{l},m}\left(\underline{l}+\underline{i}_{\underline{k}}-mS(\underline{k})\right)-g(\underline{M}_{\underline{l},m})  \displaystyle\frac{}{}\right\}$\end{center}
			and
			\begin{center} $\mathcal{T}_{\underline{Q},\underline{S}}:=\left\{\displaystyle\sum_{\underline{l},m} \underline{H}_{\underline{l},m} \ \  / \ \  \left(\underline{H}_{\underline{l},m}\right)\in  \mathcal{H}_{\underline{Q},\underline{S}} \right\}.$\end{center}
			We have:
			\begin{equation}\label{equ:BQ}
				\begin{array}{lcl}
					\underline{B}^{\underline{Q}}&=& \left(\displaystyle\frac{-1}{\omega_0}\right)^{q}\displaystyle\sum_{\underline{S}\in\mathcal{S}_{\underline{Q}}} \underline{A}^{\underline{S}}\displaystyle\sum_{\underline{T}_{\underline{S}} \in\mathcal{T}_{\underline{Q},\underline{S}}} \left(\displaystyle\sum_{ \left(\underline{H}_{\underline{l},m}\right)\in  \mathcal{H}_{\underline{Q},\underline{S}} \atop \sum_{\underline{l},m} \underline{H}_{\underline{l},m}=\underline{T}_{\underline{S}}}\displaystyle\prod_{\underline{l},m} d_{\underline{H}_{\underline{l},m}}\right)       \underline{C}^{\underline{T}_{\underline{S}}}\\
					&=&\left(\displaystyle\frac{-1}{\omega_0}\right)^{q}\displaystyle\sum_{\underline{S}\in\mathcal{S}_{\underline{Q}}} \underline{A}^{\underline{S}}\displaystyle\sum_{\underline{T}_{\underline{S}} \in\mathcal{T}_{\underline{Q},\underline{S}}}e_{\underline{Q},\underline{T}_{\underline{S}}} \underline{C}^{\underline{T}_{\underline{S}}}.\end{array}\end{equation}
			\textrm{ where }: $$e_{\underline{Q},\underline{T}_{\underline{S}}}:= \displaystyle\sum_{\left(\underline{N}^{\underline{l},m}_{\underline{i},j}\right)} \displaystyle\frac{\displaystyle\prod_{\underline{l},m}q_{\underline{l},m}!}{\displaystyle\prod_{\underline{l},m}\displaystyle\prod_{\underline{i},j} \underline{N}^{\underline{l},m}_{\underline{i},j}!} \displaystyle\prod_{\underline{l},m} \displaystyle\prod_{\underline{i},j}\displaystyle\prod_{h}\left(\displaystyle\frac{j!}{m!\,L_{\underline{i},j,h}^{\underline{l},m}!}\right)^{n^{\underline{l},m}_{\underline{i},j,h}}$$
			and  where the previous sum is taken over:  
			\begin{center}$ \mathcal{E}_{\underline{Q},\underline{T}_{\underline{S}}}:=\left\{\left( \underline{N}^{\underline{l},m}_{\underline{i},j}\right)_{\underline{l}\,\geq_{\mathrm{grlex}}S(\underline{i}_{\underline{k}})-\underline{i}_{\underline{k}},\, m=0,\ldots,m_{\underline{l}} \atop \underline{i}\,\leq\, \underline{l}+\underline{i}_{\underline{k}}- mS(\underline{k}),\ j=m,\ldots,d_y}\ \ /\ \  \forall \underline{i},j,\ \displaystyle\sum_{\underline{l},m} \sum_{h=1}^{\alpha_{\underline{i},j}^{\underline{l},m}}n^{\underline{l},m}_{\underline{i},j,h}=s_{\underline{i},j}, \right.$\\
				$\left. \ \ \ \ \ \  \ \ \ \ \  \ \ \ \ \ \ \ \  \  \ \  \ \ \ \ \ \ \ \  \ \ \ \ \ \  \ \ \ \ \ \ \ \ \ \  \forall \underline{l},m,\ \displaystyle\sum_{\underline{i},j}|\underline{N}^{\underline{l},m}_{\underline{i},j}|=q_{\underline{l},m}, \textrm{ and }  \displaystyle\sum_{\underline{l},m} \displaystyle\sum_{\underline{i}, j}\sum_{h=1}^{\alpha_{\underline{i},j}^{\underline{l},m}} n^{\underline{l},m}_{\underline{i},j,h} \underline{L}_{\underline{i},j,h}^{\underline{l},m} =\underline{T}_{\underline{S}} \right\}.$\end{center}
			Note that, if the latter set is empty, then $e_{\underline{Q},\underline{T}_{\underline{S}}}=0$.\\
			
			Observe that $\displaystyle\frac{1}{q}\displaystyle\frac{q!}{Q!}e_{\underline{Q},\underline{T}_{\underline{S}}}$ lies in $\mathbb{N}$ as a coefficient of $(-{\omega_0}) ^{q}\displaystyle\frac{1}{q}\displaystyle\frac{q!}{Q!}B^{Q}$ as seen before.
			Note also that, for any $\underline{Q}$ and for any $\underline{S}\in\mathcal{S}_{\underline{Q}}$,  $|\underline{S}|=\displaystyle\sum_{\underline{l},m}q_{\underline{l},m}=q$ and $\|\underline{S}\|\geq \displaystyle\sum_{\underline{l},m}mq_{\underline{l},m}=\|\underline{Q}\|=q-1$. Moreover, for any $\underline{T}_{\underline{S}}\in\mathcal{T}_{\underline{Q},\underline{S}}$: 
			$$\begin{array}{lcl}
				|\underline{T}_{\underline{S}}|&=&\displaystyle\sum_{\underline{l},m} \|\underline{M}_{\underline{l},m}\|-m\,q_{\underline{l},m}\\
				&=&\|\underline{S}\|-\|\underline{Q}\|\\
				&=&\|\underline{S}\|-q+1
			\end{array}$$ and:
			$$\begin{array}{lcl}
				g(\underline{T}_{\underline{S}})&=& \displaystyle\sum_{l,m}q_{l,m}\left(\underline{l}+\underline{i}_{\underline{k}}-mS(\underline{k})\right)-g(\underline{M}_{\underline{l},m})\\
				&=&g(\underline{Q})+|\underline{Q}|\,\underline{i}_{\underline{k}}-\|\underline{Q}\|\,S(\underline{k})-g(\underline{S})\\
				&=&\underline{n}+q\,\underline{i}_{\underline{k}}-(q-1)\,S(\underline{k})-g(\underline{S}).
			\end{array}$$ Let us show that:
			\begin{equation}\label{equ:BQAS}
				\begin{array}{lcl}
					\displaystyle\sum_{|\underline{Q}|=q,\,\|\underline{Q}\|=q-1,\, g(\underline{Q})=\underline{n}}\displaystyle\frac{q!}{\underline{Q}!}\underline{B}^{\underline{Q}}&=&  \left(\displaystyle\frac{-1}{\omega_0}\right)^{q}\displaystyle\sum_{|\underline{S}|=q,\, \|\underline{S}\|\geq q-1}\underline{A}^{\underline{S}}\displaystyle\sum_{|\underline{T}_{\underline{S}}|=\|\underline{S}\|-q+1 \atop g(\underline{T}_{\underline{S}})=\underline{n}+q\underline{i}_{\underline{k}}-(q-1)S(\underline{k})-g(\underline{S})}e_{\underline{T}_{\underline{S}}}\underline{C}^{\underline{T}_{\underline{S}}},\end{array} 
			\end{equation} 
			\textrm{ where } $e_{\underline{T}_{\underline{S}}}:=\displaystyle\sum_{ \left(\underline{N}^{\underline{l},m}_{\underline{i},j}\right)}\displaystyle\frac{q!}{\displaystyle\prod_{\underline{l},m}\displaystyle\prod_{\underline{i},j} \underline{N}^{\underline{l},m}_{\underline{i},j}!}\displaystyle\prod_{\underline{l},m} \displaystyle\prod_{\underline{i},j}\displaystyle\prod_{h}\left(\displaystyle\frac{j!}{m!\,\underline{L}_{\underline{i},j,h}^{\underline{l},m}!}\right)^{n^{\underline{l},m}_{\underline{i},j,h}}$  and  \textrm{ where the sum is taken over }
			
			\begin{center}
				$\mathcal{E}_{\underline{T}_{\underline{S}}}:=\left\{\left(\underline{N}^{\underline{l},m}_{\underline{i},j}\right)_{\underline{l}\,\geq_{\mathrm{grlex}}S(\underline{i}_{\underline{k}})-\underline{i}_{\underline{k}},\, m=0,\ldots,m_{\underline{l}} \atop \underline{i}\,\leq\, \underline{l}+\underline{i}_{\underline{k}}- mS(\underline{k}),\ j=m,\ldots,d_y}\textrm{ s.t. }\displaystyle\sum_{\underline{l},m}\displaystyle\sum_{h} n^{\underline{l},m}_{\underline{i},j,h}=s_{\underline{i},j},\ 
				\displaystyle\sum_{\underline{l},m}\displaystyle\sum_{\underline{i},j}|\underline{N}^{\underline{l},m}_{\underline{i},j}|=q\right.$\\
				$ \  \ \ \ \ \ \ \ \  \  \ \  \ \ \ \ \ \ \ \  \ \ \ \ \ \  \ \ \ \ \ \ \ \ \ \  \  \ \ \ \ \ \ \ \  \  \ \  \ \ \ \ \ \ \ \  \ \ \ \ \ \  \ \ \ \ \ \ \  \ \ \ \ \ \ \  \ \ \  \left.\textrm{ and } \displaystyle\sum_{\underline{l},m}\displaystyle\sum_{\underline{i},j} \sum_{h=1}^{\alpha_{\underline{i},j}^{\underline{l},m}}n^{\underline{l},m}_{\underline{i},j,h} \underline{L}_{\underline{i},j,h}^{\underline{l},m}=\underline{T}_{\underline{S}}\right\}.$	 
			\end{center} 
			Note that, if the latter set is empty, then $e_{ \underline{T}_{\underline{S}}}=0$.\\
			Recall that  $\underline{N}^{\underline{l},m}_{\underline{i},j}!= \displaystyle\prod_{h=1}^{\alpha_{\underline{i},j}^{\underline{l},m}} n^{\underline{l},m}_{\underline{i},j,h}!$ and that the $\underline{L}^{\underline{l},m}_{\underline{i},j,h}$'s enumerate the $\underline{L}$'s such that $|\underline{L}|=j-m$ and $g(\underline{L})=\underline{l}+\underline{i}_{\underline{k}}-m\,S(\underline{k})-\underline{i}$ for given $\underline{l},m,\underline{i},j$.

			Let us consider  $\underline{S}$ and  $\underline{T}_{\underline{S}}$ such that $|\underline{S}|=q,\, \|\underline{S}\|\geq q-1$, $|\underline{T}_{\underline{S}}|=\|\underline{S}\|-q+1,\  g(\underline{T}_{\underline{S}})=\underline{n}+q\underline{i}_{\underline{k}}-(q-1)S(\underline{k})-g(\underline{S})$ and such that $\mathcal{E}_{T_S}\neq \emptyset$. Take an element $( n^{l,m}_{i,j,h})\in \mathcal{E}_{\underline{T}_{\underline{S}}}$. Define $ m^{\underline{l},m}_{\underline{i},j}:=\displaystyle\sum_{h=1}^{\alpha_{\underline{i},j}^{\underline{l},m}} n^{\underline{l},m}_{\underline{i},j,h}$ for each $\underline{i},\,j,\,\underline{l},\,m$ with $j\geq m$, and $m^{\underline{l},m}_{\underline{i},j}:=0$ if $j<m$  \textrm{ or } $\underline{i}\,\not\leq\, \underline{l}+\underline{i}_{\underline{k}}- mS(\underline{k})$. Set $\underline{M}_{\underline{l},m}:=(m^{\underline{l},m}_{\underline{i},j})_{\underline{i},j}$ for each $\underline{l},\,m$. So, $\displaystyle\sum_{\underline{l},m}m^{\underline{l},m}_{\underline{i},j}=\displaystyle\sum_{\underline{l},m}\sum_{h=1}^{\alpha_{\underline{i},j}^{\underline{l},m}} n^{\underline{l},m}_{\underline{i},j,h}=s_{\underline{i},j}$, and $\underline{S}=\displaystyle\sum_{\underline{l},m}\underline{M}_{\underline{l},m}$. Define $q_{\underline{l},m}:=\displaystyle\sum_{\underline{i},j}m^{\underline{l},m}_{\underline{i},j}=|\underline{M}_{\underline{l},m}|$ for each $\underline{l},\,m$, and $\underline{Q}:=(q_{\underline{l},m})$. Let us show that $|\underline{Q}|=q$, $ g(\underline{Q})=\underline{n}$ and $\|\underline{Q}\|=q-1$. By definition of $\mathcal{E}_{\underline{T}_{\underline{S}}}$, $$|\underline{Q}|:=\displaystyle\sum_{\underline{l},m}q_{\underline{l},m}= \displaystyle\sum_{\underline{l},m}\displaystyle\sum_{\underline{i},j} \sum_{h=1}^{\alpha_{\underline{i},j}^{\underline{l},m}}n^{\underline{l},m}_{\underline{i},j,h}=q.$$
			Recall that $\|\underline{Q}\|:=\displaystyle\sum_{\underline{l},m}mq_{\underline{l},m}$. We have:
			$$\begin{array}{ll}
				&|\underline{T}_{\underline{S}}|= \left|\displaystyle\sum_{\underline{l},m}\displaystyle\sum_{\underline{i},j} \sum_{h=1}^{\alpha_{\underline{i},j}^{\underline{l},m}}n^{\underline{l},m}_{\underline{i},j,h}\underline{L}_{\underline{i},j,h}^{\underline{l},m}\right|=\|\underline{S}\| -q+1\\
				\Leftrightarrow & \displaystyle\sum_{\underline{l},m}\displaystyle\sum_{\underline{i},j} \sum_{h=1}^{\alpha_{\underline{i},j}^{\underline{l},m}}n^{\underline{l},m}_{\underline{i},j,h}|\underline{L}_{\underline{i},j,h}^{\underline{l},m}|=\displaystyle\sum_{\underline{i},j}js_{\underline{i},j}-q+1\\
				\Leftrightarrow & \displaystyle\sum_{\underline{l},m}\displaystyle\sum_{\underline{i},j} \sum_{h=1}^{\alpha_{\underline{i},j}^{\underline{l},m}}n^{\underline{l},m}_{\underline{i},j,h}(j-m)= \displaystyle\sum_{\underline{i},j}js_{\underline{i},j}-q+1\\
				\Leftrightarrow &\displaystyle\sum_{\underline{i},j} j\displaystyle\sum_{\underline{l},m} \sum_{h=1}^{\alpha_{\underline{i},j}^{\underline{l},m}}n^{\underline{l},m}_{\underline{i},j,h}- \displaystyle\sum_{\underline{l},m}m\displaystyle\sum_{\underline{i},j} \sum_{h=1}^{\alpha_{\underline{i},j}^{\underline{l},m}}n^{\underline{l},m}_{\underline{i},j,h}= \displaystyle\sum_{\underline{i},j}js_{\underline{i},j}-q+1\\
				\Leftrightarrow &\displaystyle\sum_{\underline{i},j} js_{\underline{i},j}-\displaystyle\sum_{\underline{l},m}mq_{\underline{l},m} =\displaystyle\sum_{\underline{i},j}js_{\underline{i},j}-q+1\\
				\Leftrightarrow & \|\underline{Q}\|=q-1.
			\end{array} $$
			Recall that $ g(\underline{Q}):=\displaystyle\sum_{\underline{l},m}q_{\underline{l},m}\underline{l}$. We have:
			$$\begin{array}{ll}
				&g(\underline{T}_{\underline{S}})= g\left(\displaystyle\sum_{\underline{l},m}\displaystyle\sum_{\underline{i},j} \sum_{h=1}^{\alpha_{\underline{i},j}^{\underline{l},m}}n^{\underline{l},m}_{\underline{i},j,h}\underline{L}_{\underline{i},j,u }^{\underline{l},m}\right)=\underline{n}+q\,\underline{i}_{\underline{k}} -(q-1)S(\underline{k}) -g(\underline{S})\\
				\Leftrightarrow & \displaystyle\sum_{\underline{l},m}\displaystyle\sum_{\underline{i},j} \sum_{h=1}^{\alpha_{\underline{i},j}^{\underline{l},m}}n^{\underline{l},m}_{\underline{i},j,h}g(\underline{L}_{\underline{i},j,h}^{\underline{l},m})=\underline{n}+q\,\underline{i}_{\underline{k}} -(q-1)S(\underline{k}) -g(\underline{S})\\
				\Leftrightarrow & \displaystyle\sum_{\underline{l},m}\displaystyle\sum_{\underline{i},j} \sum_{h=1}^{\alpha_{\underline{i},j}^{\underline{l},m}}n^{\underline{l},m}_{\underline{i},j,h}(\underline{l}+\underline{i}_{\underline{k}} -mS(\underline{k}) -\underline{i})= \underline{n}+q\, \underline{i}_{\underline{k}} -(q-1)S(\underline{k}) - g(\underline{S})\\
				\Leftrightarrow & \begin{array}{l}
					\displaystyle\sum_{\underline{l},m}\underline{l}\displaystyle\sum_{\underline{i},j} \sum_{h=1}^{\alpha_{\underline{i},j}^{\underline{l},m}}n^{\underline{l},m}_{\underline{i},j,h}+
					\underline{i}_{\underline{k}} \displaystyle\sum_{\underline{l},m}\displaystyle\sum_{\underline{i},j} \sum_{h=1}^{\alpha_{\underline{i},j}^{\underline{l},m}}n^{\underline{l},m}_{\underline{i},j,h}
					-S(\underline{k}) \displaystyle\sum_{\underline{l},m}m\displaystyle\sum_{\underline{i},j} \sum_{h=1}^{\alpha_{\underline{i},j}^{\underline{l},m}}n^{\underline{l},m}_{\underline{i},j,h}
					\\     
					\ \ \ \ \ \ \ \ \ \ \ \ \ \ \  \ \ \ \ \ \ \ \ \ \ \ \ \ \ \ \ \  \ \ \ \ \ \ \ \ \ \ \ \ \ \ \  -\displaystyle\sum_{\underline{i},j}\underline{i} \displaystyle\sum_{\underline{l},m} \sum_{h=1}^{\alpha_{\underline{i},j}^{\underline{l},m}}n^{\underline{l},m}_{\underline{i},j,h}=\underline{n}+q\, \underline{i}_{\underline{k}} -(q-1)S(\underline{k}) -g(\underline{S}) \\
					
				\end{array}\\
				\Leftrightarrow & \displaystyle\sum_{\underline{l},m}q_{\underline{l},m}\underline{l}+q\,\underline{i}_{\underline{k}}-S(\underline{k}) \displaystyle\sum_{\underline{l},m}m\, q_{\underline{l},m}-\displaystyle\sum_{\underline{i},j} s_{i,j}\underline{i}= \underline{n}+q\, \underline{i}_{\underline{k}} -(q-1)S(\underline{k}) -g(\underline{S}) \\
				\Leftrightarrow & g(\underline{Q})+q\, \underline{i}_{\underline{k}}-\|Q\|S(\underline{k})-g(\underline{S})=\underline{n}+q\, \underline{i}_{\underline{k}} -(q-1)S(\underline{k}) -g(\underline{S}).
			\end{array} $$
			Since $\|\underline{Q}\|=q-1$, we deduce that $g(\underline{Q})=\underline{n}$ as desired. So, $\underline{S}\in \mathcal{S}_{\underline{Q}}$ for $\underline{Q}$ as in the left-hand side of (\ref{equ:BQAS}).\\
			Now, set $\underline{H}_{\underline{l},m}:=\displaystyle\sum_{\underline{i},j} \sum_{h=1}^{\alpha_{\underline{i},j}^{\underline{l},m}}n^{\underline{l},m}_{\underline{i},j,h} \underline{L}_{\underline{i},j,h}^{\underline{l},m}$, so $\displaystyle\sum_{\underline{l},m}\underline{H}_{\underline{l},m}=\underline{T}_{\underline{S}}$. Let us show that $(\underline{H}_{\underline{l},m})\in \mathcal{H}_{\underline{Q},\underline{S}}$, which implies that $\underline{T}_{\underline{S}}\in \mathcal{T}_{\underline{Q},\underline{S}}$ as desired. The existence of $(\underline{M}_{\underline{l},m})$ such that $|\underline{M}_{\underline{l},m}|=q_{\underline{l},m}\textrm{ and } \  m^{\underline{l},m}_{\underline{i},j}=0\textrm{ for }j<m$ and $\displaystyle\sum_{\underline{l},m}\underline{M}_{\underline{l},m}=\underline{S}$ follows by construction. Conditions $|\underline{H}_{\underline{l},m}|=\|\underline{M}_{\underline{l},m}\|-m\,q_{\underline{l},m}  \textrm{ and }g(\underline{H}_{\underline{l},m})=q_{\underline{l},m}[\underline{l}+\underline{i}_{\underline{k}}-mS(\underline{k})]-g(\underline{M}_{\underline{l},m})$ are obtained exactly as in (\ref{equ:Ulm}) and (\ref{equ:Ulm2}). This shows that $(n^{\underline{l },m}_{\underline{i},j,h}) \in \mathcal{E}_{\underline{Q},\underline{T}_{\underline{S}}}$, so:
			$$ \mathcal{E}_{\underline{T}_{\underline{S}}}\subseteq \bigcupdot_{|\underline{Q}|=q,\, g(\underline{Q})=\underline{n},\,\|Q\|=q-1}  \mathcal{E}_{\underline{Q},\underline{T}_{\underline{S}}}. $$
			The reverse inclusion holds trivially since $|\underline{Q}|=q$, so:
			$$ \mathcal{E}_{\underline{T}_{\underline{S}}} = \bigcupdot_{|\underline{Q}|=q,\, g(\underline{Q})=\underline{n},\,\|\underline{Q}\|=q-1}  \mathcal{E}_{\underline{Q},\underline{T}_{\underline{S}}}. $$
			We deduce that:
			$$ e_{\underline{T}_{\underline{S}}}=\displaystyle\sum_{|\underline{Q}|=q,\, g(\underline{Q})=\underline{n},\,\|\underline{Q}\|=q-1}\displaystyle\frac{q!}{\underline{Q}!} e_{\underline{Q},\underline{T}_{\underline{S}}}.$$
			We conclude that any term occuring in the right-hand side of (\ref{equ:BQAS}) comes from a term from the left-hand side. 
			
			Conversely, for any $\underline{Q}$ as in the  left-hand side of  Formula (\ref{equ:BQAS}), $\underline{S}\in\mathcal{S}_{\underline{Q}}$ and $\underline{T}_{\underline{S}} \in \mathcal{T}_{\underline{Q},\underline{S}}$ verify the following conditions:
			$$|\underline{S}|=q,\ \ \|\underline{S}\|\geq q-1,\ \ |\underline{T}_{\underline{S}}|=\|\underline{S}\|-q+1 ,\ \ \|\underline{T}_{\underline{S}}\|=\underline{n}+q\,\underline{i}_{\underline{k}}-(q-1)S(\underline{k})-g(\underline{S})$$
			and
			$$ \mathcal{E}_{\underline{T}_{\underline{S}}} = \bigcupdot_{|\underline{Q}|=q,\, g(\underline{Q})=\underline{n},\,\|\underline{Q}\|=q-1}  \mathcal{E}_{\underline{Q},\underline{T}_{\underline{S}}},\ \ \ \  e_{\underline{T}_{\underline{S}}}=\displaystyle\sum_{|\underline{Q}|=q,\, g(\underline{Q})=\underline{n},\,\|\underline{Q}\|=q-1}\displaystyle\frac{q!}{\underline{Q}!} e_{\underline{Q},\underline{T}_{\underline{S}}}. $$
			Hence, any term occuring in the expansion of $\underline{B}^{\underline{Q}}$ contributes to the right hand side of Formula (\ref{equ:BQAS}).
			
			Thus we obtain  Formula (\ref{equ:BQAS}) from which the statement of Corollary \ref{coro:FS} follows. Note also that:
			$$ \displaystyle\frac{1}{q}e_{\underline{T}_{\underline{S}}}=\displaystyle\sum_{|\underline{Q}|=q,\, g(\underline{Q})=\underline{n},\,\|\underline{Q}\|=q-1}\displaystyle\frac{1}{q}\displaystyle\frac{q!}{\underline{Q}!} e_{\underline{Q},\underline{T}_{\underline{S}}},$$
			so $ \displaystyle\frac{1}{q}e_{\underline{T}_{\underline{S}}}\in\N$.
		\end{proof}
		
		\begin{remark}\label{rem:omega_0}
			We have seen in Theorem \ref{theo:FS} and its proof (see Formula (\ref{equ:p_{k+1}}) with $\underline{k}=\underline{k}_0$) that $\omega_0=(\pi^P_{\underline{k}_0,\underline{i}_{\underline{k}_0}})'(c_{S(\underline{k}_0)})$ is the coefficient of the monomial $\underline{u}^{\underline{i}_{S(\underline{k}_0)}}y$ in the expansion of $P_{S(\underline{k}_0)}(\underline{u},y)=P(\underline{u},c_{\underline{0}}u_r+\cdots+c_{S(\underline{k}_0)}\underline{u}^{S(\underline{k}_0)}+ \underline{u}^{S^2(\underline{k}_0)}y)$, and that  $c_{S^2(\underline{k}_0)}=\displaystyle\frac{-\pi^P_{\underline{k}_0,\underline{i}_{S(\underline{k}_0)}}(c_{S(\underline{k}_0)})}{\omega_0}$ where $\pi^P_{\underline{k}_0,\underline{i}_{S(\underline{k}_0)}}(c_{S(\underline{k}_0)})$ is the coefficient of $\underline{u}^{\underline{i}_{S(\underline{k}_0)}}$ in the expansion of $P_{S(\underline{k}_0)}(\underline{u},y)$. Expanding $P_{S(\underline{k}_0)}(\underline{u},y)$, having done the whole computations, we deduce that:
			$$\left\{\begin{array}{lcl}
				\omega_0&=&\displaystyle\sum_{\underline{i}\,\leq\, \underline{l}+\underline{i}_{\underline{k}}- mS(\underline{k}),\ j=1,..,d_y}\ \ \displaystyle\sum_{|\underline{L}|=j-1,\  g(\underline{L})=\underline{i}_{\underline{k}_0}-S(\underline{k}_0)-\underline{i}}\displaystyle\frac{j!}{\underline{L}!}a_{\underline{i},j}\underline{C}^{\underline{L}}\ ;\\
				c_{S^2(k_0)}&=& \displaystyle\frac{-1}{\omega_0}\displaystyle\sum_{\underline{i}\,\leq\, \underline{l}+\underline{i}_{\underline{k}}- mS(\underline{k}),\ j=0,..,d_y}\ \ \displaystyle\sum_{|\underline{L}|=j,\  g(\underline{L})=\underline{i}_{S(\underline{k}_0)}-\underline{i}}\ \ \displaystyle\frac{j!}{L!}a_{\underline{i},j}\underline{C }^{\underline{L}},
			\end{array}\right. $$
			where $\underline{C}:=\left(c_{\underline{0}},\ldots,c_{S(\underline{k}_0)}\right)$ and $\underline{L}:=\left(l_{\underline{0}},\ldots,l_{S(\underline{k}_0)}\right)$.
		\end{remark}


		
		
		\bibliographystyle{amsalpha}

		\newcommand{\etalchar}[1]{$^{#1}$}
		\def\cprime{$'$} \def\cprime{$'$}
		\providecommand{\bysame}{\leavevmode\hbox to3em{\hrulefill}\thinspace}
		\providecommand{\MR}{\relax\ifhmode\unskip\space\fi MR }
		\providecommand{\MRhref}[2]{%
			\href{http://www.ams.org/mathscinet-getitem?mr=#1}{#2}
		}
		\providecommand{\href}[2]{#2}


		%
	\end{document}